\renewcommand{\baselinestretch}{1.05}
\theoremstyle{plain}\newtheorem{maintheorem}{Theorem}
\newtheorem{maincorollary}{Corollary}
\newtheorem{mainproposition}{Proposition}
\newtheorem{mainlemma}{Lemma}
\newtheorem{Thm}{Theorem}[section]
\newtheorem{Lem}[Thm]{Lemma}
\newtheorem{Prop}[Thm]{Proposition}
\theoremstyle{remark}
\newtheorem{Def}[Thm] {Definition}
\newtheorem{Rem}[Thm] {Remark}
\newtheorem{Que}[Thm] {Question}
\DeclareMathOperator{\card}{card}
\DeclareMathOperator{\orb}{orb}
\DeclareMathOperator{\dist}{dist}
\DeclareMathOperator{\cov}{cov}
\DeclareMathOperator{\interior}{Int}
\DeclareMathOperator{\Span}{span}
\DeclareMathOperator{\Int}{Int}
\newcommand{\eps}{\varepsilon}
\newcommand{\N}{\mathbb{N}}
\newcommand{\Z}{\mathbb{Z}}
\newcommand{\set}[1]{\left\{#1\right\}}
\newcommand{\htop}{h_{\text{top}}}
\newcommand{\M}{\mathfrak{M}}
\newcommand{\E}{\mathcal{E}}
\newcommand{\B}{\mathcal{B}}
\newcommand{\C}{\mathfrak{C}}
\begin{document}

\title{Different Statistical Future of Dynamical Orbits over Expanding or Hyperbolic
Systems (II): Nonempty Syndetic Center}
%Non-recurrent Points}

%{\bf Non-recurrence, minimality and multi-fractal analysis  }

%{\bf Irregular points attracted by minimal sets carry full entropy}
\author{{Yiwei Dong$^{1}$ and Xueting Tian$^{2}$ $^{3}$}\\
{\em\small $^{1,2}$ School of Mathematical Science,  Fudan University}\\
{\em\small Shanghai 200433, People's Republic of China}\\
{\small Email: dongyiwei06@gmail.com;     xuetingtian@fudan.edu.cn}\\
}
\date{}
\maketitle

\renewcommand{\baselinestretch}{1.2}
\large\normalsize
\footnotetext {$^{3}$ Tian is the corresponding author.}
\footnotetext { Key words and phrases:  Minimality or almost periodic; Non-recurrence; Multifractal analysis;  Irregular points or points with historic behavior; Level sets; 
%Topological mixing subshift of finite type; Variational principle;
$\omega$-limit set.  }
\footnotetext {AMS Review:   37D20;  37C50;  37B20;  37B40;  37C45.   }

\begin{abstract}

In \cite{DongTian2016-nosyndetic} different statistical behavior of dynamical orbits without syndetic center are considered. In  present paper we continue this project and consider different statistical behavior of dynamical orbits with nonempty syndetic center: Two of sixteen cases appear (for which other fourteen cases are still unknown) in transive topologically expanding or hyperbolic systems and are discovered to have full topological entropy for which it is also true if combined with non-recurrence and  multifractal analysis such as quasi-regular set, irregular set and level sets. In this process a strong entropy-dense property, called minimal-entropy-dense,  is established.

 In particular, we show that points that are minimal  (or called almost periodic), a classical and important concept in the study of dynamical systems, form a set with full topological entropy if the dynamical system satisfies shadowing or almost specification property.

\end{abstract}

\tableofcontents

%Let $X$ be the set $\{0\}\cup\{1/n : n=1,  2,  3,  \ldots\}$.   Then $X$ with the topology of a subset of the real line is a countable metric compact space.   Let $T\colon X\to X$ be a continuous map defined by $T(0)=0$,   $T(1)=1$ and $T(1/n)=1/(n-1)$ for $n=2,  3,  \ldots$.   Let $x_l=0$ for $l=1,  2,  \ldots$.   Then $\{x_l\}_{l\in N}$ is a $\delta$-average pseudo-orbit for every $\delta>0$ as in Lemma 3.  1 in the paper.   Let $y_k=1/k$ for $k=2,  3,  \ldots$.   Then each point $y_k$ average traces the average pseudo-orbit $\{x_l\}_{l\in N}$,   BUT the limit point of the sequence $y_k$ is $0$.   If the reasoning in the paper is valid,   then $0=1$.

\section{Introduction}

In this paper, a topological dynamic system $(X,f)$ means a continuous map $f$ operate on a compact metric space $(X,d)$.
%When we mention $(\Sigma_{\beta},f)$, $f$ is shift action on $\beta$-shift space which induces the discrete product topology of full shift space as a topological dynamic system. %Here we study $\beta$-shift from the viewpoint of topological dynamical system.
  In \cite{DongTian2016-nosyndetic}, the authors introduced several concepts on statistical $\omega$-limit sets by using natural density and Banach density and used them to differ asymptotic behavior of dynamical orbits. In the case of dynamical orbits without syndetic center, it is proved in \cite{DongTian2016-nosyndetic} that exactly twelve different asymptotic behavior  appear  and they all carry full topological entropy when the system is transitive and has expansiveness and shadowing. In present paper we continue this program mainly to consider dynamical orbits with syndetic center.
%It is mentionable that Dong and Tian prove the same result when the dynamic system is expansive, transitive and has shadowing property in \cite{DT}.
 %But for $\Sigma_{\beta}$, it has shadowing property just when $\Sigma_{\beta}$ is a subshift of finite type. From the third section of \cite{Sm} we know that $\{\beta\in (1,+\infty)\ |\ (\Sigma_{\beta},f)\text{ has  shadowing  property} \}$ is countable and has zero Lebesgue measure so that  the results of \cite{DT} only apply into very few $\beta$-shifts in the viewpoint of Lebesgue measure. In present paper we mainly to generalize  the results of \cite{DongTian2016-nosyndetic}  to most $\beta$-shifts for which the parameters of $\beta$ form a set with Lebesgue full measure.

\subsection{Statistical $\omega-$limit sets}

%A point $x\in X$ is called {\it wandering},   if there is a neighborhood $U$ of $x$ such that the sets $f^{-n}U,  \,  \,  n\geq 0$,
% are mutually disjoint.   Otherwise,   $x$ is called {non-wandering.  }

%\begin{Def}
%%We call $x\in X$ to be {\it recurrent},   if  $x\in \omega_f (x).  $
 %  A point $x\in X$ is called {\it wandering},   if there is a neighborhood $U$ of $x$ such that the sets $f^{-n}U,  \,  \,  n\geq 0$,   are mutually disjoint.   Otherwise,   $x$ is called {non-wandering.  }
  %    A point $x\in X$ is {\it almost periodic},   if for every open neighborhood $U$ of $x$,   there exists $N\in\N$ such that $f^k (x)\in U$ for some $k\in [n,  n+N]$ and every $n\in\N.  $
% \end{Def}
%\begin{Def}

Throughout this paper,  we denote the sets of natural numbers,   integer numbers and nonnegative numbers by $\N,  \Z,  \Z^+$ respectively.
Now we recall some notions of density and then we use them to describe different statistical structure of dynamical orbits.
Let $S\subseteq \mathbb{N}$,
 define
$$\bar{d} (S):=\limsup_{n\rightarrow\infty}\frac{|S\cap \{0,  1,  \cdots,  n-1\}|}n,  \,  \,
  \underline{d} (S):=\liminf_{n\rightarrow\infty}\frac{|S\cap \{0,  1,  \cdots,  n-1\}|}n,  $$
where $|Y|$ denotes the cardinality of the set $Y$.
 These two concepts are called {\it upper density} and {\it lower density} of $S$,   respectively.
 If $\bar{d} (S)=\underline{d}(S)=d,  $ we call $S$ to have density of $d.  $
 Define
  $$B^* (S):=\limsup_{|I|\rightarrow\infty}\frac{|S\cap I|}{|I|},  \,  \,   B_* (S):=\liminf_{|I|\rightarrow\infty}\frac{|S\cap I|}{|I|},  $$
here $I\subseteq \mathbb{N}$ is taken from finite continuous integer intervals.
These two concepts are called
{\it Banach upper density}
and
{\it Banach lower density} of $S$,   respectively.
 A set $S\subseteq \mathbb{N}$ is called {\it syndetic},
  if there is $N\in\N$ such that for any $n\in\N,  $ $ S\cap \{n,  n+1,  \cdots,  n+N\}\neq \emptyset.    $
 % \end{Def}
 These concepts of density are basic and have played important roles in the field of dynamical systems,   ergodic theory and number theory,   etc.
%\begin{Def}
Let $U,  V\subseteq X$ be two nonempty open subsets and $x\in X.  $
 Define
  sets of visiting time
 $$N (U,  V):=\{n\geq 1|\,  U\cap f^{-n} (V)\neq \emptyset\} \,  \,  \text{ and } \,  \,  N (x,  U):=\{n\ge 1|\,  f^n (x)\in U\}.  $$
 By using these concepts of density and visiting time,
 Dong and Tian  defined some statistical $\omega$-limit sets in \cite{DongTian2016-nosyndetic} as follows.
 %\end{Def}

\begin{Def} (Statistical $\omega-$limit sets)
 For $x\in X$ and $\xi=\overline{d},   \,  \underline{d},   \,  B^*,  \,   B_*$,   a point $y\in X$ is called $x-\xi-$accessible,   if for any $ \epsilon>0,  \,  N (x,  V_\epsilon (y))\text{ has positive   density w.  r.  t.   }\xi,  $ where     $V_\epsilon (x)$ denotes  the  ball centered at $x$ with radius $\epsilon$.
 Let $$\omega_{\xi}(x):=\{y\in X\,  |\,   y\text{ is } x-\xi-\text{accessible}\}.  $$  For convenience,   it is called {\it $\xi-\omega$-limit set of $x$}.
 %namely,   $\omega_{\xi}(x)=\{y\in X:~\textrm{for any}~\eps>0,   \xi(N(x,  V_{\eps}(y)))>0\}$.
 \end{Def}

% \begin{Rem}

 %We learned from \cite{AAN} for maps and \cite{AF} for flows  that $\omega_{\overline{d}}(x)$ is called essential $\omega$-limit set of $x$.

% %{\color{red} some paper has essentially defined $\omega_{\xi}(x)$ by using closure union}

 %\end{Rem}

For any $x\in X$,   the orbit of $x$ is $\{f^nx\}_{n=0}^{\infty}$ which we denote by $orb(x,  f)$.
 Let $$\omega_f(x):=\bigcap_{n\geq1}\overline{\bigcup_{k\geq n}\{f^kx\}}=\{y\in X: \exists~n_i\to\infty~s.  t.  ~f^{n_i}x\to y\}.  $$
It is clear that $\omega_f(x)$ is a nonempty compact $f$-invariant set.
 Note that
\begin{equation}\label{X-three-omega}
  \omega_{B_*}(x)\subseteq \omega_{\underline{d}}(x)\subseteq \omega_{\overline{d}}(x)\subseteq \omega_{B^*}(x)\subseteq \omega_f(x).
\end{equation}
Moreover,   they are all compact and invariant (with possibility that some sets are empty). If $\omega_{B_*}(x)=\emptyset,$ there are twelve different statistical behavior and they have been studied in the previous article \cite{DongTian2016-nosyndetic}.
If $\omega_{B_*}(x)\neq\emptyset,$ it is clear that $x$ belongs to one of following sixteen cases with different statistical behavior.

\begin{Thm}\label{thm-14-cases} Suppose $(X,  f)$ is a topological dynamical system. If $\omega_{B_*}(x)\neq\emptyset,$ then
 each point $x\in X$ satisfies only one of following sixteen cases:
  \begin{description}

   \item[Case  (1).    ]  \,  \,   $ \emptyset\neq\omega_{B_*}(x)= \omega_{\underline{d}}(x)= \omega_{\overline{d}}(x)
= \omega_{B^*}(x)= \omega_f(x) ;$
 \item[Case  (2).    ]  \,  \,   $ \emptyset\neq\omega_{B_*}(x)= \omega_{\underline{d}}(x)= \omega_{\overline{d}}(x)
= \omega_{B^*}(x)\subsetneq \omega_f(x);  $

  \item[Case  (3).    ] \,   $  \emptyset \neq \omega_{B_*}(x)\subsetneq\omega_{\underline{d}}(x)= \omega_{\overline{d}}(x)= \omega_{B^*}(x)= \omega_f(x);$
  \item[Case  (4).    ]  \,  \,   $  \emptyset \neq \omega_{B_*}(x)\subsetneq\omega_{\underline{d}}(x)= \omega_{\overline{d}}(x)= \omega_{B^*}(x)\subsetneq\omega_f(x);$

  \item[Case  (5).    ]  \,  \,   $\emptyset \neq \omega_{B_*}(x)\subsetneq    \omega_{\underline{d}}(x)=
\omega_{\overline{d}}(x)  \subsetneq \omega_{B^*}(x)= \omega_f(x) ;$
\item[Case  (6).    ]  \,  \,   $\emptyset \neq \omega_{B_*}(x)\subsetneq  \omega_{\underline{d}}(x)=
\omega_{\overline{d}}(x)  \subsetneq \omega_{B^*}(x)\subsetneq \omega_f(x) ;$

  \item[Case  (7).    ]  \,  \,   $    \emptyset \neq \omega_{B_*}(x)=\omega_{\underline{d}}(x)\subsetneq
\omega_{\overline{d}}(x)= \omega_{B^*}(x)= \omega_f(x) ;$
  \item[Case  (8).    ]  \,  \,   $\emptyset \neq \omega_{B_*}(x)=\omega_{\underline{d}}(x)\subsetneq
\omega_{\overline{d}}(x)= \omega_{B^*}(x)\subsetneq \omega_f(x) ;$
\item[Case  (9).    ]  \,  \,
$\emptyset \neq \omega_{B_*}(x)\subsetneq  \omega_{\underline{d}}(x)\subsetneq
\omega_{\overline{d}}(x)= \omega_{B^*}(x)= \omega_f(x) ;$
 \item[Case  (10).    ]  \,  \,
$\emptyset \neq \omega_{B_*}(x)\subsetneq  \omega_{\underline{d}}(x)\subsetneq
\omega_{\overline{d}}(x)= \omega_{B^*}(x)\subsetneq \omega_f(x) ;$

  \item[Case  (11).    ]  \,  \,   $\emptyset \neq \omega_{B_*}(x)=  \omega_{\underline{d}}(x) \subsetneq \omega_{\overline{d}}(x)\subsetneq \omega_{B^*}(x)= \omega_f(x);$
\item[Case  (12).    ]  \,  \,   $\emptyset \neq \omega_{B_*}(x)=  \omega_{\underline{d}}(x) \subsetneq \omega_{\overline{d}}(x)\subsetneq \omega_{B^*}(x)\subsetneq \omega_f(x);$
     \item[Case  (13).    ]  \,  \,
$\emptyset \neq \omega_{B_*}(x)\subsetneq \omega_{\underline{d}}(x) \subsetneq \omega_{\overline{d}}(x)\subsetneq \omega_{B^*}(x)= \omega_f(x);$

 \item[Case  (14).    ]  \,  \,
$\emptyset \neq \omega_{B_*}(x)\subsetneq \omega_{\underline{d}}(x) \subsetneq \omega_{\overline{d}}(x)\subsetneq \omega_{B^*}(x)\subsetneq \omega_f(x);$

  % \item[Case  (15).    ]  \,  \,
%$\emptyset \neq \omega_{B_*}(x)=  \omega_{\underline{d}}(x)=
%\omega_{\overline{d}}(x)= \omega_{B^*}(x)= \omega_f(x) ;$
% \item[Case  (16).    ]  \,  \,
%$\emptyset \neq \omega_{B_*}(x)=  \omega_{\underline{d}}(x)=
%\omega_{\overline{d}}(x)= \omega_{B^*}(x)\subsetneq \omega_f(x) ;$

  \item[Case  (15).    ]  \,  \,   $\emptyset \neq \omega_{B_*}(x)=  \omega_{\underline{d}}(x) = \omega_{\overline{d}}(x)\subsetneq \omega_{B^*}(x)= \omega_f(x);$
\item[Case  (16).    ]  \,  \,   $\emptyset \neq \omega_{B_*}(x)=  \omega_{\underline{d}}(x) = \omega_{\overline{d}}(x)\subsetneq \omega_{B^*}(x)\subsetneq \omega_f(x).$
  \end{description}

 \end{Thm}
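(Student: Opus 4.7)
The proof is essentially a combinatorial consequence of the inclusion chain \eqref{X-three-omega}, so my plan is very short.

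First, I would invoke the already-displayed chain
$$\omega_{B_*}(x)\subseteq \omega_{\underline{d}}(x)\subseteq \omega_{\overline{d}}(x)\subseteq \omega_{B^*}(x)\subseteq \omega_f(x),$$
whose proof uses only the elementary comparisons $B_*\le\underline d\le\overline d\le B^*$ between the four density functions, plus the trivial inclusion $\omega_{B^*}(x)\subseteq \omega_f(x)$ coming from the fact that any accumulation-density neighborhood frequency is bounded by plain accumulation. Under the hypothesis $\omega_{B_*}(x)\neq\emptyset$, all five sets in the chain are nonempty.

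Next I would note that the chain consists of exactly four inclusions, and each one is, in isolation, either an equality or a strict inclusion; hence there are at most $2^4=16$ possible patterns for the quadruple of status flags attached to these inclusions, and these patterns partition the set $\{x\in X:\omega_{B_*}(x)\neq\emptyset\}$. The bulk of the argument is then just a bookkeeping check: I would encode each of Cases (1)--(16) as a length-4 string over the alphabet $\{=,\subsetneq\}$ (for example, Case (1) is ${=}{=}{=}{=}$, Case (14) is ${\subsetneq}{\subsetneq}{\subsetneq}{\subsetneq}$, Case (5) is ${\subsetneq}{=}{\subsetneq}{=}$, etc.) and verify by direct inspection that the map from the sixteen cases to $\{=,\subsetneq\}^4$ is a bijection. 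Grouping by the number $k$ of strict inclusions gives $\binom{4}{k}$ patterns for $k=0,1,2,3,4$, i.e.\ $1+4+6+4+1=16$, matching the enumeration in the statement.

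The only thing that might look like an obstacle is the exhaustiveness/disjointness bookkeeping, but this is purely routine once the chain \eqref{X-three-omega} is in hand: any $x$ with $\omega_{B_*}(x)\neq\emptyset$ determines a unique such quadruple of flags, hence lies in exactly one of the sixteen cases. No dynamical input beyond the monotonicity of the densities is required at this stage; the dynamical substance (entropy computations, minimal-entropy-density, etc.) enters later when one shows that particular cases are realized by large sets of points.
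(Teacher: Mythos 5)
Your proposal is correct and is essentially the paper's own argument: the paper treats the theorem as an immediate consequence of the chain \eqref{X-three-omega}, stating only that once $\omega_{B_*}(x)\neq\emptyset$ it ``is clear'' that $x$ falls into one of the sixteen cases, since each of the four inclusions is independently either an equality or strict. Your explicit bookkeeping --- checking that the sixteen listed cases biject with the patterns in $\{=,\subsetneq\}^4$ and hence partition $\{x: \omega_{B_*}(x)\neq\emptyset\}$ --- is precisely the routine verification the paper leaves implicit.
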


 In this paper we mainly discuss the complexity on sets of points with asymptotical  behavior of Case (1) and Case (2) (For others, it is still unknown whether they appear or not in `good' dynamical systems).

\begin{Def}  A topological dynamical system $(X,  f)$ is called {\it topologically expanding},   if   $X$ has infinitely many points, $f$  is positively expansive and satisfies the shadowing property.   A homeomorphism $(X,  f)$ is called {\it topologically hyperbolic},   if $X$ has infinitely many points,  $f$ is expansive and satisfies the shadowing property.
\end{Def}

\begin{Rem} If $X$ has finite many points, then the topological dynamical system $(X,  f)$ is simple to study so that in present paper we  always require that $X$ has infinitely many points.
%  in the definitions of  topologically expanding and topologically hyperbolic.

\end{Rem}

%Let $\Upsilon_T:=\{x\in X\,  |\,   f|_{\omega_f(x)} \text{ is transitive} \}$ and  $\Upsilon_S:=\{x\in X\,  |\,   f|_{\omega_f(x)} \text{ has the shadowing property} \}$.
% \begin{Rem} Note that for any $x\in X,  $ $\omega_f(f)$ is internally chain transitive \cite{HSZ}.   If $x\in\Upsilon_S$,   then $\omega_f(x)$ is transitive.   So one has $\Upsilon_S\subseteq \Upsilon_T.  $ Therefore,   if we let $\Upsilon:=\Upsilon_S\cap \Upsilon_T=\Upsilon_S$,   $X$ can be written as the disjoint union
%$$X=\Upsilon\sqcup \Upsilon_T^c\sqcup (\Upsilon_S^c\cap\Upsilon_T),  $$ where $A^c$ denote the complement of $A$.
%  % $\Xi_\Lambda :=\{S_\mu\,  |\,   S_\mu\subseteq
%%\Lambda,   \mu\in M(f,  X)\}.  $
%%Let $\Xi_*:=\cup_{\Lambda\in \Theta}\Xi_\Lambda.  $
%\end{Rem}

 We will combine (non-)recurrence with above statistical $\omega$-limit sets to state the first main result.  We call $x\in X$ to be {\it recurrent},   if  $x\in \omega_f (x).  $ Otherwise,   $x$ is called {non-recurrent.}
 A point $x\in X$ is called {\it wandering},   if there is a neighborhood $U$ of $x$ such that the sets $f^{-n}U,  \,  \,  n\geq 0$,   are mutually disjoint.   Otherwise,   $x$ is called {non-wandering.}
Let $Rec(f)$, $NRec(f)$ and  $\Omega(f)$ denote the sets of recurrent  points, nonrecurrent points,  non-wandering points
respectively.
%Denote the set of almost periodic points by $AP(f)$.
Fix an arbitrary $x\in X.  $  Let  $V_\epsilon (x)$ denote a ball centered at $x$ with radius $\epsilon$.   Then it is easy to check that
\begin{eqnarray*}
 %x \in AP(f) &\Leftrightarrow& \,  \forall \,  \epsilon>0,  \,
 %N (x,  V_\epsilon (x))\text{ is syndetic }\Leftrightarrow \,  \forall \,  \epsilon>0,  \,  B_*(N (x,  V_\epsilon (x)))>0 \\
  %&\Leftrightarrow&   \,x\in \omega_{B_*}(x)\Leftrightarrow    \,x \in \omega_{B_*}(x)= \omega_{\underline{d}}(x)= \omega_{\overline{d}}(x)
%= \omega_{B^*}(x)= \omega_f(x),  \\
 % x  \in W &\Leftrightarrow&\,  \forall \,  \epsilon>0,  \,  N (x,  V_\epsilon (x))\text{ has positive lower}  \text{   density,  } \,  \,  \,  \\
%x \in QW &\Leftrightarrow& \,  \forall \,  \epsilon>0,  \,  N (x,  V_\epsilon (x))\text{ has positive upper} \text{ density,  } \,  \,   \\
%x  \in Tran &\Leftrightarrow& \,   \forall\,  \text{ nonempty  open set  }   U \subseteq X,  \,   \,  N (x,  U )  \neq \emptyset,  \\
x   \in Rec(f) &\Leftrightarrow& \,  \forall \,  \epsilon>0,  \,  N (x,  V_\epsilon (x))\neq \emptyset,  \\
x \in \Omega(f) &\Leftrightarrow& \,   \forall \,  \epsilon>0,  \,  N (V_\epsilon (x),  V_\epsilon (x))\neq \emptyset.
%x\in UAP &\Leftrightarrow& \,   \omega_{B_*}(x)=\omega_f(x) \,  \Leftrightarrow \,  \omega_{B_*}(x)= \omega_{\underline{d}}(x)= \omega_{\overline{d}}(x)= \omega_{B^*}(x)= \omega_f(x).
\end{eqnarray*}
Other kind of recurrence such as weak almost periodic,   quasi-weak almost periodic and Banach recurrent have been discussed in \cite{HTW,  T16}. One can find more information.

 Let $\Lambda\subseteq X$ and denote by $M_{erg}(f,\Lambda)$ the set of all ergodic measures supported on $\Lambda.$  Define $\mathbb{E}_k:=\{x\in X\,  |\,  \# M_{erg}(f, \omega_f(x))=k\}$ where $\# A$ denotes the cardinality of   $A$.  A point  $x\in X$ is generic for some invariant measure  $\mu$ means that   Birkhoff averages of any
   continuous function  converge to the integral of $\mu$ (roughly speaking, time average $=$ space average).    Let $G_\mu$   denote the set of all generic points for  $\mu $.
    Let $QR(f)  = \cup_{\mu\in M(f,X)} G_\mu,$ where $M(f,X)$ denotes the space of all invariant measures of $f.$  The points in $QR(f)$ are called quasiregular points of   $f$ in \cite{DGS,Oxtoby}.

\begin{maintheorem}\label{thm-A0000000000}
  Suppose $(X,  f)$ is topologically expanding and transitive (resp.  ,   $(X,  f)$ is
a homeomorphism that is topologically hyperbolic and transitive).     Then $h_{top}(f)>0$ and
\begin{description}

\item[(I)] $\htop(\{x\in X\,  |\,  x \text{ satisfies Case } (i) \}\cap NRec(f)\cap \mathbb{E}_k\cap QR(f))=\htop(f),\, i=1,2,\,k=1,2,3,\cdots.  $
%Alternatively,
 % $\htop(UAP(f)\cap NRec(f))=\htop(f).  $

  %  \item[(1)] $\htop(\{x\in X\,  |\,  \emptyset\neq \omega_{B_*}(x)= \omega_{\underline{d}}(x)= \omega_{\overline{d}}(x)
%= \omega_{B^*}(x)= \omega_f(x)\}\cap NRec(f)\cap I_{\varphi}(f))=\htop(f).  $ Alternatively,
 %$$\htop(UAP(f)\cap NRec(f)\cap I_{\varphi}(f))=\htop(f).  $$

 \item[(II)] $\htop(\{x\in X\,  |\,   x \text{ satisfies Case } (1) \}\cap Rec(f)\cap   \mathbb{E}_k\cap QR(f))=\htop(f), k=1,2,3,\cdots.  $
 % Alternatively,
   %$\htop( AP(f) )=\htop(f).  $

 %\item[(1')] $\htop(\{x\in X\,  |\,  \emptyset\neq \omega_{B_*}(x)= \omega_{\underline{d}}(x)= \omega_{\overline{d}}(x)
%= \omega_{B^*}(x)= \omega_f(x)\}\cap Rec(f)\cap I_{\varphi}(f))=\htop(f).  $ Alternatively,
%

 %\item[(II)] $\htop(\{x\in X\,  |\, x \text{ satisfies Case } (2) \}\cap NRec(f)）=\htop(f).  $
 % Alternatively,
  %\htop((BUAP(f)\setminus UAP(f))\cap NRec(f) )=\htop(f).  $

   \end{description}

\end{maintheorem}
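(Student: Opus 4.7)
The overall strategy is to leverage the \emph{minimal-entropy-dense} property announced in the abstract: for every invariant measure $\mu$ and every $\eta>0$, one produces a minimal subsystem $M'\subseteq X$ carrying an ergodic measure $\nu$ with $\nu$ weak-$*$-close to $\mu$ and $h(\nu)>h(\mu)-\eta$. The standard entropy-density of ergodic measures (which holds in systems with shadowing and expansiveness) is upgraded to this minimal version by gluing $(\delta,n)$-separated orbit pieces into a pseudo-orbit with syndetic return times to every prescribed $\delta$-net, then shadowing. Taking $\mu$ of maximal entropy and $\eta\to 0$ gives minimal subsystems of near-maximal entropy; a further Kakutani--Rokhlin style coupling, inserting $k$ ergodic components into a single minimal hull, produces for each $k\ge 1$ a minimal subsystem $M_k$ with $\# M_{erg}(f,M_k)=k$ and $\htop(f|_{M_k})\to\htop(f)$.

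For \textbf{part (II)}, I would fix such an $M_k$, a near-maximal ergodic $\nu\in M_{erg}(f,M_k)$, and a $\nu$-generic point $x\in M_k$. Since every point of a minimal set is almost periodic, its return times to any open neighborhood are syndetic, so all four statistical $\omega$-limit sets equal $M_k=\omega_f(x)$ and Case (1) holds; in addition $x\in Rec(f)\cap QR(f)\cap\mathbb{E}_k$. The Bowen--Katok entropy estimate applied to $G_\nu\cap M_k$ gives topological entropy at least $h_\nu(f)$, which by construction tends to $\htop(f)$ as $\nu$ is chosen of higher entropy; a union over such $\nu$ then gives full topological entropy for the set in (II).

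For \textbf{part (I)}, I would add a non-recurrent prefix and, for Case (2), sparse excursions. Pick a $\nu$-generic almost-periodic $y\in M_k$, an auxiliary point $z\in X\setminus M_k$, and a transitive orbit segment ending near $y$; shadowing produces a point $x$ whose initial $N$ iterates trace $z$ and whose tail shadows $\orb(y,f)$. Choosing $z$ and $N$ so that $x$ never returns to a small neighborhood of itself forces $x\notin\omega_f(x)$, while the asymptotic statistics are inherited from $y$, giving Case (1) with non-recurrence, genericity, and $\# M_{erg}(f,\omega_f(x))=k$. For Case (2), splice in addition, at a sequence of times $t_j$ with $t_{j+1}/t_j\to\infty$, orbit blocks accumulating at a single chosen $w\in X\setminus M_k$; the zero Banach upper density of these insertions preserves $\omega_{B^*}(x)=M_k$ and all Birkhoff averages, yet places $w$ in $\omega_f(x)\setminus\omega_{B^*}(x)$. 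The Bowen entropy of the resulting Moran-like Cantor set of constructed $x$ still dominates $h_\nu(f)$, giving the lower bound.

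The hardest step will be ensuring in the Case (2) construction that the enlarged $\omega_f(x)$ still supports exactly $k$ ergodic invariant measures, since the splicing can in principle create new invariant measures on the orbit closure of the excursion blocks. I would arrange the excursion targets to lie on a single non-recurrent transient trajectory so that no extra ergodic measure appears beyond those already concentrated on $M_k$. Once this combinatorial bookkeeping is in place, the entropy lower bound is a routine Moran-set computation and the upper bound $\le\htop(f)$ is automatic.
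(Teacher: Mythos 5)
Your part (II) is essentially the paper's own argument: take the minimal set $\Lambda_k$ with exactly $k$ ergodic measures and near-maximal entropy supplied by the minimal-entropy-dense property (Lemma \ref{lem-minimalentropy-dense}), pick an ergodic measure $\mu$ of maximal entropy on it, and use $\mu(G_\mu\cap\Lambda_k)=1$, so $\htop(G_\mu\cap\Lambda_k)\geq h_\mu$, together with minimality (every invariant measure on a minimal set has full support, so Case (1) holds, $x\in Rec(f)\cap QR(f)\cap\mathbb{E}_k$). One caution: your one-line justification of minimal-entropy-density itself --- gluing separated orbit pieces into a pseudo-orbit with syndetic returns and then shadowing --- is not sound, since the orbit closure of a shadowing point is in general far from minimal, and producing a minimal set with \emph{exactly} $k$ ergodic measures is the hard content of Proposition \ref{prop-symbolic}, proved in the appendix by a Grillenberger-type symbolic construction. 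Citing Lemma \ref{lem-minimalentropy-dense} is legitimate (the paper proves it before Theorem \ref{thm-A0000000000}), but your sketch of it is not a substitute.

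The genuine gaps are in part (I). First, the entropy lower bound for the sets of \emph{non-recurrent} points is not a ``routine Moran-set computation'': it is precisely the nonrecurrently-star-saturated property, Lemma \ref{prop-locally-star-implies-nonrecurrently-star} (imported from the companion paper), which gives $\htop(f,G_K^{\Lambda,N})=\inf\{h_\mu\,|\,\mu\in K\}$. Your substitute --- send each generic $y\in G_\nu\cap M_k$ to a shadowing point $x(y)$ with a non-recurrent prefix --- can be made to work in the positively expansive case, because there expansiveness forces $f^Nx(y)=y$ exactly, so $f^N$ maps your set onto $G_\nu\cap M_k$ and Bowen entropy does not increase under images; but in the topologically hyperbolic (homeomorphism) case, forward shadowing within the expansive constant yields only asymptotic convergence $d(f^{N+t}x,f^ty)\to0$, not coincidence, and then $\omega_f(x)=M_k$, genericity, membership in $\mathbb{E}_k$, and the entropy count all need arguments you do not supply. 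Second, the Case (2) bookkeeping is defective as stated: once a single excursion point $w$ lies in $\omega_f(x)$, invariance and closedness force $\overline{\orb(w,f)}\subseteq\omega_f(x)$, and $\omega_f(x)$ also contains all limit points of the transition segments between $M_k$-blocks and excursion blocks; so the issue is not the Banach density of the insertion times (which is in fact $1$, not $0$, if the excursion blocks grow so as to trace $\orb(w,f)$), but controlling $M(f,\omega_f(x))=M(f,M_k)$, which is exactly what Lemma \ref{lem-omega-A} (producing $\Theta$ with $A\subseteq\Theta\subseteq\cup_{l}f^{-l}A$) and Lemma \ref{lem-Lambda-Lambda-0} accomplish. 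Your overall skeleton for (I) --- non-recurrent generic points over a minimal core, plus a transient excursion set for Case (2), done uniformly in $k$ --- actually mirrors the paper's treatment of the $k=1$ case (the paper routes $k\geq2$ through Theorem \ref{thm-DDDDDDDDDD} only for economy), so the strategy is viable; but the two cited lemmas are precisely the missing substance. Finally, you never address the assertion $\htop(f)>0$, which the paper settles in Remark \ref{Rem-positiveentropy}.
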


 \begin{Rem} For the set $\{x\in X\,  |\, x \text{ satisfies Case } (2) \}\cap Rec(f)$, it is nonempty in full shifts over finite symbols. Let us explain why as follows:
  We learned a example from \cite{HZ}  that  there is  a topological mixing but uniquely ergodic subshift $\Lambda$ of
 two symbols for which the unique invariant measure  is supported on a fixed point so that in this example $\emptyset
  \neq Tran(f|_\Lambda)\subseteq   Rec(f) \cap  \{x|\,\emptyset\neq \omega_{B_*}(x)=\omega_{\underline{d}}(x)=
  \omega_{\overline{d}}(x)= \omega_{B^*}(x)\subsetneq \omega_f(x)\},$  where $Tran(f|_\Lambda)$ denotes the set of all transitive points of subsystem $f|_\Lambda.$  However, we do not know the set of $\{x\in X\,  |\, x \text{ satisfies Case } (2) \}\cap Rec(f)$ has full toplogical entropy or not, though  this set is nonempty and  any full shift over finite symbols
  satisfies the assumption of Theorem \ref{thm-A0000000000}.
\end{Rem}

 \begin{Rem}\label{Rem-positiveentropy} It was proved in \cite[Corollary 4]{Moo2011} that if a dynamical
system with the shadowing property has a recurrent not minimal point, or a minimal
sensitive point, then the entropy must be positive. Thus, for a transitive dynamical system $(X,f)$ with shadowing property, if $h_{top}(f)=0,$ then $f$ is minimal and equicontinuous (implying unique ergodicity, see \cite{Brown}) so that all points in $X$ belong to Case (1). If further $(X,  f)$ is expansive, then there must exist periodic points so that $X$ is composed of a periodic orbit. This is why $h_{top}(f)>0$ under the assumptions of Theorem  \ref{thm-A0000000000}.
\end{Rem}

\subsection{Full Topological Entropy of Almost Periodic Points}

%Another method to differ asymptotic
%behavior of dynamical orbit is from the perspective of periodic-like recurrence. There are many  such concepts, for example, periodic points, almost periodic points,
 % weakly almost periodic points etc \cite{DGS,Kal,Katok,Gottschalk46,Grillenberger,Gottschalk44-2222,Gottschalk44,Rees,Zhou93,T16}.

Periodic points and minimal points (or called almost periodic points) are two classical and basic concept in the study of dynamical systems.
    For periodic points, it is well-known that  the exponential growth of the periodic points %with fixed period
    %$\#P_n(f)$
      equals to topological entropy for hyperbolic systems but Kaloshin showed that
in general periodic points can grow much faster than entropy \cite{Kal}. Moreover,
it is well known that for $C^1$ generic diffeomorphisms, all periodic points are hyperbolic so that countable and they form a
 dense subset of the non-wandering set (by  classical  Kupka-Smale theorem, Pugh's  or Ma\~{n}$\acute{\text{e}}$'s
 Ergodic Closing lemma from   Smooth Ergodic Theory, for example,  see  \cite{Kupka1,Smale1,Pugh1,Pugh2,Mane1}).
 However, periodic point does not exist naturally. For example there is no periodic points in any irrational rotation. Almost periodic point is a good generalization which exists naturally  since it is equivalent that it belongs to a minimal set(
 see  \cite{Birkhoff,Gottschalk44-2222,Gottschalk44,Gottschalk46,Mai}) and
 %,  it  is well-known  from    \cite{Birkhoff}   that a point $x$ is almost periodic if and only if $x$ is minimal, i.e., $x\in\omega_f (x)$ and $\omega_f (x)$ is minimal where $\omega_f (x)$ denotes the $\omega$-limit set (see
% \cite{Gottschalk44-2222,Gottschalk44,Gottschalk46,Mai} for more related discussion in the sense that  the space $X$ is more  general, not necessarily being compact metric space). Here an invariant  set $E\subseteq X$ is called  minimal, if for every point $y\in E,$
%$\omega_T (y)=E.$
%In particular, if $X$ is minimal, we say the system  $T$ to be minimal.
 %Remark that the almost periodic set  $AP(f)$ can be written as the union of all minimal sets.
  by Zorn's lemma  any dynamical system contains at least one
minimal invariant subset.    There are many examples of subshifts which are strictly ergodic (so that every point in the subshift is almost periodic)
 and has positive entropy, for example, see \cite{Grillenberger}. In other words, almost periodic points have strong dynamical complexity.
 %%So different from periodic points, almost periodic points naturally exist   and thus played important roles
 %%in the study of  all topological  dynamical systems. Moreover, c
% Constructions of minimal examples are studied a lot by many researchers. For homeomorphisms, there are many examples of subshifts which are strictly ergodic and
%has positive entropy    \cite{Gri,HaKa}. For systems on manifolds, from    \cite{Rees} there are minimal homeomorphisms on 2-torus with positive entropy and it was proved in    \cite{BCorL} that any compact manifold of dimension $d \geq 2 $ which carries a minimal uniquely
%ergodic homeomorphism also carries a minimal uniquely ergodic homeomorphism with positive topological
%entropy.  M. Herman asked whether, for diffeomorphisms, positive topological
%entropy was compatible with minimality or strict ergodicity. It was  constructed in    \cite{Herman} a 4-dimensional example of a minimal
% (but not strictly ergodic) diffeomorphism with positive topological entropy.
 For weakly almost periodic points, quasi-weakly almost periodic points and Banach-recurrent points, it is shown recently that all their gap-sets with same asymptotic behavior carry high dynamical complexity in the sense of full topological entropy over dynamical system with specification-like property and expansiveness\cite{HTW,T16}.

  \begin{Def}

      A point $x\in X$ is {\it almost periodic} or {\it minimal},   if for every open neighborhood $U$ of $x$,   there exists $N\in\N$ such that $f^k (x)\in U$ for some $k\in [n,  n+N]$ and every $n\in\N.  $ Equivalently,
       $x\in X$ is {\it almost periodic} means that $x$ is recurrent and $\omega_f(x)$ is minimal.
   %A point $x\in X$ is called {\it wandering},   if there is a neighborhood $U$ of $x$ such that the sets $f^{-n}U,  \,  \,  n\geq 0$,   are mutually disjoint.   Otherwise,   $x$ is called {non-wandering.  }
     % A point $x\in X$ is {\it almost periodic},   if for every open neighborhood $U$ of $x$,   there exists $N\in\N$ such that $f^k (x)\in U$ for some $k\in [n,  n+N]$ and every $n\in\N.  $
 \end{Def}

Denote the set of almost periodic points by $AP(f)$. It is easy to check that
 $$x\in AP(f) \Leftrightarrow x\in \omega_{B_*}(x) \Leftrightarrow x\text{ satisfies Case (1) and }x\in Rec(f). $$
It is well-known that minimal subsystem always exists so that $AP(f)$ is nonempty and thus Case (1) always appear in any dynamical system.

 By item (II)  of Theorem \ref{thm-A0000000000}  $AP(f)$ carries full topological entropy when the system is transitive and expansive with shadowing property.
We point out that shadowing is enough to get full topological entropy of $AP(f).$

\begin{maintheorem}\label{thm-AABBBB}
  Suppose that $(X,  f)$ has shadowing property or  almost specification. %$g$-product property.     
   Then
   $\htop( AP(f) )=\htop( AP(f)\cap QR(f))=\htop( AP(f)\cap QR(f)\cap \cup_{k\geq l} \mathbb{E}_k )=\htop(f),\,l=1,2,3,\cdots.  $

\end{maintheorem}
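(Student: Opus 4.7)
The plan is to reduce Theorem B to a single structural ingredient that the paper promises and calls \emph{minimal-entropy-density}: under shadowing or almost specification, for every $\eps>0$ and every $l\in\N$ there exists a minimal subsystem $Y\subseteq X$ carrying at least $l$ distinct ergodic measures, each of entropy exceeding $\htop(f)-\eps$. I record that even the weaker case $l=1$ already yields the first two equalities in Theorem B, while the full statement requires the $l$--multiplicity.

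Granting this, the reduction is short. Fix $\eps>0$ and $l\geq 1$, and let $Y$ and $\nu_1,\dots,\nu_l\in M_{erg}(f,Y)$ be as above; put $\nu:=\nu_1$. Since $Y$ is minimal, $Y\subseteq AP(f)$ and $\omega_f(x)=Y$ for every $x\in Y$, so $\#M_{erg}(f,\omega_f(x))\geq l$, that is, $Y\subseteq \bigcup_{k\geq l}\mathbb{E}_k$. The set $G_\nu$ of generic points has $\nu(G_\nu)=1$ by Birkhoff, and by definition $G_\nu\subseteq QR(f)$. Hence
\[
G_\nu\cap Y \;\subseteq\; AP(f)\cap QR(f)\cap \bigcup_{k\geq l}\mathbb{E}_k.
\]
Applying Bowen's entropy--distribution principle (or equivalently the Brin--Katok local entropy formula) to the ergodic measure $\nu$ gives $\htop(G_\nu\cap Y)\geq h_\nu(f)>\htop(f)-\eps$. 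The chain of three sets in the theorem is nested and all are contained in $X$, so letting $\eps\downarrow 0$ produces equality with $\htop(f)$ at every link.

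The real work is the construction of the minimal set $Y$. The plan is: by the variational principle fix $l$ ergodic measures $\mu_1,\dots,\mu_l\in M_{erg}(f,X)$ with pairwise disjoint weak-$*$ neighbourhoods and with entropy close to $\htop(f)$; for each $i$ produce, via the Katok entropy formula, a large $(n,\delta)$--separated set of orbit segments whose Birkhoff sums are $\mu_i$-typical. One then concatenates blocks drawn from the $l$ families in a prescribed syndetic pattern, using shadowing (in the shadowing case) or the gap functions of almost specification (in the other case) to glue the concatenations into genuine orbits. At each inductive scale the pattern is refined so that every type recurs within a bounded window (forcing minimality of the orbit-closure) while the block frequencies remain separated between the $l$ types (forcing $l$ distinct weak-$*$ limits, hence $l$ distinct ergodic measures on $Y$). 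Counting the exponentially many admissible block choices at each scale controls entropy and yields $h_{\nu_i}(f)\geq h_{\mu_i}(f)-\eps$.

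The main obstacle is precisely the tension between minimality and multiplicity of ergodic measures on $Y$: syndetic return of every block pattern is what makes $Y$ minimal, but tends to average out the $l$ block frequencies and collapse $Y$ to unique ergodicity. The remedy, which is the technical heart of the argument, is a multi-scale hierarchical construction in which the windows guaranteeing recurrence of each type grow much more slowly than the windows over which a chosen type is allowed to dominate; this keeps $Y$ minimal at every fixed scale while ensuring that along suitable subsequences of times the Birkhoff averages persistently approach each of the $\mu_i$'s. Once this balance is achieved for arbitrary $l$ and $\eps$, Theorem B follows from the reduction above.
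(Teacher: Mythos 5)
Your reduction of Theorem \ref{thm-AABBBB} to the existence, for every $\eps>0$ and every $l\in\N$, of a minimal set $Y$ with at least $l$ ergodic measures and $\htop(f,Y)>\htop(f)-\eps$ is correct, and it is also how the paper concludes (generic points of an ergodic measure of near-maximal entropy on $Y$ give the $QR(f)$ part; minimality gives $AP(f)$ and $\bigcup_{k\geq l}\mathbb{E}_k$). But that existence statement is the entire content of the theorem, and your sketch of its proof breaks down at the minimality step. Shadowing (or almost specification) glues your concatenated blocks into a true orbit only at one fixed precision $\eps_0$: the shadowing point $z$ stays $\eps_0$-close to the prescribed pseudo-orbit and nothing more. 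Hence syndetic recurrence of every block pattern in the pseudo-orbit yields syndetic returns of $\orb(z,f)$ only to balls of radius comparable to $\eps_0$; below that scale the orbit of $z$ is completely uncontrolled, so $\overline{\orb(z,f)}$ may contain proper closed invariant subsets and $z$ need not be almost periodic at all. This cannot be repaired by refining the pattern ``at each inductive scale'': minimality is a statement about all scales simultaneously, and you cannot re-shadow the same limit orbit at finer and finer precisions. (With expansiveness the problem disappears, because the shadowing point is unique and the coding becomes a conjugacy; but Theorem \ref{thm-AABBBB} deliberately drops expansiveness --- that is exactly what distinguishes it from Theorem \ref{thm-A0000000000}(II).)

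The idea missing from your proposal is that the paper never tries to make a constructed orbit closure minimal. Instead: (i) by Lemma \ref{horseshoe} (shadowing, from \cite{DOT}) or Lemma \ref{horseshoe2} (almost specification), positive entropy yields, for any $\alpha<\htop(f)$, integers $m,k$ with $\log m/k>\alpha$ and a semiconjugacy $\pi\colon(\Lambda,f^k)\to(\Sigma_m^+,\sigma)$; (ii) the hierarchical, Grillenberger-type construction is carried out purely inside the full shift (Proposition \ref{prop-symbolic}, equivalently Theorem \ref{thm-A0000000000}(II) applied to $\Sigma_m^+$), where minimality can be forced combinatorially because the orbit literally is its block sequence and there is no shadowing loss; this gives a minimal subshift $\Sigma\subseteq\Sigma_m^+$ with exactly $l$ ergodic measures and $\htop(\sigma,\Sigma)>k\alpha$; (iii) one pulls back by Zorn's lemma: take \emph{any} $f^k$-minimal set $\Delta\subseteq\pi^{-1}(\Sigma)$; since $\pi(\Delta)$ is a nonempty closed invariant subset of the minimal set $\Sigma$, necessarily $\pi(\Delta)=\Sigma$, so $\pi|_\Delta$ is still a factor map onto $\Sigma$. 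Then minimality of $\Delta$ comes for free from Zorn, $\htop(f^k,\Delta)\geq\htop(\sigma,\Sigma)>k\alpha$ because the entropy of an extension dominates that of its factor, and every ergodic measure of $\Sigma$ lifts to an ergodic measure of $\Delta$, so $\Delta$ carries at least $l$ ergodic measures; together with $AP(f^k)=AP(f)$ and Proposition \ref{prop-f-f-k}, this places $\Delta$ inside $AP(f)\cap\bigcup_{t\geq l}\mathbb{E}_t$, and your reduction then finishes the proof. Without this detour through a symbolic factor (or some substitute playing the same role), the direct gluing construction does not deliver minimality, so your proposal is incomplete precisely at what you yourself call its technical heart.
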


 \begin{Rem} From \cite{T16} we know the set of periodic points (denoted by $Per(f)$)  carries zero topological entropy in any dynamical system and from \cite[Theorem 1.10]{T16} $AP(f)\setminus Per(f)$ carries full topological entropy for full shifts of finite symbols. By
 Theorem \ref{thm-AABBBB}   $AP(f)\setminus Per(f)$ carries full topological entropy for any dynamical system with shadowing property or  almost specification  so that Theorem \ref{thm-AABBBB} can be as a generalization of \cite[Theorem 1.10]{T16} from full shift to a large class of general dynamical systems as follows:\\
 (1)  subshifts of finite type;\\
 (2)   $\beta$-shifts; \\
 (3) mixing interval maps.\\
  This is because from \cite{Walters2} we know a subshift is finite type if and only if it has shadowing property; from \cite{PS2007} any $\beta$-shift has $g$-product property which is litte stronger than almsot specification\cite{Thompson2012}; and it is well-known that any mixing interval map has specification property which is stronger than $g$-product property.
\end{Rem}

\begin{Rem} It was proved in \cite[Corollary 4]{Moo2011} that if a dynamical
system with the shadowing property has a recurrent not minimal point, or a minimal
sensitive point, then the entropy must be positive. Thus, for a  dynamical system $(X,f)$ with shadowing property, if $h_{top}(f)=0,$ then $Rec(f)=AP(f)$ and  for any $x\in X,$ $\omega_{B^*}(x)$ must be minimal and equicontinuous (implying unique ergodicity in $\omega_{B^*}(x)$ and thus also in $\omega_f(x)$, see \cite{Brown}) so that $X= \cup_{\mu\in M_{erg}(f,X)} G_\mu=QR(f)$ and  any point in $X$ belongs to Case (1) or Case (2). It is little different from Remark \ref{Rem-positiveentropy} since here $f$ is not assumed transitive.  % If further $(X,  f)$ is expansive, then there must exist periodic points so that $X$ is composed of   periodic orbits.
% This is why $h_{top}(f)>0$ under the assumptions of Theorem  \ref{thm-A0000000000}.
\end{Rem}

Let    $C(M) $be the set of continuous maps on the space $ M$ and
$H(M)$ the set of homeomorphisms on $M$.    Recall that $C^0$ generic $f\in H(M)$ (or $f\in C(M)$) has the shadowing property (see \cite{KMO2014} and \cite{ Koscielniak2,  Koscielniak},   respectively) so that by  Theorem \ref{thm-AABBBB}  one has
\begin{maincorollary}\label{Corollary-A}
Let $M$ be a compact topological manifold (with or without boundary) of dimension at least $2$ and assume that $M$ admits a decomposition.   Then there is a residual subset $\mathcal {R}\subseteq H(M)$ (or $ \mathcal {R}\subseteq  C(M)$) such that for any $f\in \mathcal {R}$, $$\htop(AP(f))=\htop( AP(f)\cap QR(f) )=\htop( AP(f)\cap QR(f)\cap \cup_{k\geq l} \mathbb{E}_k )=\htop(f),\,l=1,2,\cdots.$$
%\begin{description}
 % \item[(1)] $\htop(AP(f))=\htop(f)$.
  %\item[(2)] $\htop(NRec(f))=\htop(f)$.
%  \end{description}
\end{maincorollary}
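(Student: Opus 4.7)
The plan is to combine the cited $C^0$-generic shadowing results with Theorem \ref{thm-AABBBB} in an almost mechanical way, since the statement of Corollary \ref{Corollary-A} is essentially the intersection of two independent facts.

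First, I would invoke the genericity theorems from the literature: by \cite{KMO2014} (for $H(M)$) and \cite{Koscielniak2, Koscielniak} (for $C(M)$), under the hypothesis that $M$ is a compact manifold of dimension at least $2$ admitting a decomposition, there exists a residual subset $\mathcal{R} \subseteq H(M)$ (respectively $\mathcal{R} \subseteq C(M)$) such that every $f \in \mathcal{R}$ has the shadowing property. This is exactly where the dimension and decomposition assumptions on $M$ are consumed; they enter only through the cited genericity statements and play no further role in our argument.

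Second, I would pick an arbitrary $f \in \mathcal{R}$. Since $f$ has the shadowing property, the hypothesis of Theorem \ref{thm-AABBBB} is satisfied (the ``shadowing'' alternative of the two sufficient conditions stated there). Applying Theorem \ref{thm-AABBBB} directly to this $f$ yields
\[
\htop(AP(f)) = \htop(AP(f) \cap QR(f)) = \htop\bigl(AP(f) \cap QR(f) \cap \textstyle\bigcup_{k \geq l} \mathbb{E}_k\bigr) = \htop(f)
\]
for every $l = 1, 2, 3, \ldots$, which is precisely the conclusion of the corollary.

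Since both steps are invocations of previously established results, there is no genuine obstacle here, only the need to check that the setup of the cited genericity results indeed matches the manifold hypotheses stated in Corollary \ref{Corollary-A}, and that the set $\mathcal{R}$ of $f$ with shadowing can be taken residual in the $C^0$ topology in both cases (homeomorphisms and continuous self-maps). The only subtlety worth remarking is that Theorem \ref{thm-AABBBB} imposes \emph{no} transitivity or expansiveness assumption, so the transition from ``shadowing'' to ``full-entropy almost periodic points'' is immediate for every $f \in \mathcal{R}$, and no further $C^0$-perturbation argument is needed.
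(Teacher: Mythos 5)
Your proposal is correct and follows exactly the paper's own argument: the paper derives Corollary \ref{Corollary-A} precisely by citing the $C^0$-genericity of shadowing (\cite{KMO2014} for $H(M)$, \cite{Koscielniak2,Koscielniak} for $C(M)$) and then applying Theorem \ref{thm-AABBBB} to each $f$ in the residual set, with no transitivity or expansiveness needed. Nothing to add.
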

%\begin{proof}
%The result follows from the fact that $C^0$ generic $f\in H(M)$ (or $f\in C(M)$) has the shadowing property (see \cite{KMO2014} and \cite{ Koscielniak2,  Koscielniak},   respectively) and Theorem \ref{thm-C} above.
%\end{proof}

%{\color{red} QR and Irregular restricted all true.}

%It is also an open question that the  sets of $Rec_{i,j}$ and their gap-sets are nonempty in what kind of dynamical systems.

 % \begin{Thm} Let $\sigma:\{0,1,\cdots,k-1\}^\mathbb{N}\rightarrow \{0,1,\cdots,k-1\}^\mathbb{N}$ be a full shift where $k\geq 2.$ Then $\{x\in X\,  |\, x \text{ satisfies Case } (2) \}\cap Rec(f)\neq \emptyset.$

 %\end{Thm}

%{\bf Proof.} We only need to consider $k=2.$
%We learned a example from \cite{HZ}  that  there is  a topological mixing but uniquely ergodic subshift $\Lambda$ of
% two symbols for which the unique invariant measure  is supported on a fixed point so that in this example $\emptyset
% \neq Tran(\sigma|_\Lambda)\subseteq    \{x|\,\emptyset\neq \omega_{B_*}(x)=\omega_{\underline{d}}(x)=
 % \omega_{\overline{d}}(x)= \omega_{B^*}(x)\subsetneq \omega_f(x)\}.$ \qed %We know that the full shift of

%\medskip

 \subsection{Combination of Multifractal Analysis and Nonrecurrence}%{Main Results}

%----------------------------
% and $UAP(f)\cap Rec(f)= BUAP(f)\cap Rec(f)=AP(f)$.
%-------------------------

{
%\begin{Def}  A topological dynamical system $(X,  f)$ is called {\it topologically expanding},   if   $X$ has infinitely many points, $f$  is positively expansive and satisfies the shadowing property.   A homeomorphism $(X,  f)$ is called {\it topologically hyperbolic},   if $X$ has infinitely many points,  $f$ is expansive and satisfies the shadowing property.
%\end{Def}}

%\begin{Rem} If $X$ has finite many points, then the topological dynamical system $(X,  f)$ is simple to study so that in present paper we require that $X$ has infinitely many points  in the definitions of  topologically expanding and topologically hyperbolic.

%\end{Rem}

%Let $\Upsilon_T:=\{x\in X\,  |\,   f|_{\omega_f(x)} \text{ is transitive} \}$ and  $\Upsilon_S:=\{x\in X\,  |\,   f|_{\omega_f(x)} \text{ has the shadowing property} \}$.   Note that for any $x\in X,  $ $\omega_f(f)$ is internally chain transitive.   If $x\in\Upsilon_S$,   then $\omega_f(x)$ is transitive.   So one has $\Upsilon_S\subseteq \Upsilon_T.  $ Therefore,   if we let $\Upsilon:=\Upsilon_S\cap \Upsilon_T=\Upsilon_S$,   $X$ can be written as the disjoint union
%$$X=\Upsilon\sqcup \Upsilon_T^c\sqcup (\Upsilon_S^c\cap\Upsilon_T).  $$
  %% $\Xi_\Lambda :=\{S_\mu\,  |\,   S_\mu\subseteq
%%\Lambda,   \mu\in M(f,  X)\}.  $
%%Let $\Xi_*:=\cup_{\Lambda\in \Theta}\Xi_\Lambda.  $

For a continuous function $\varphi$ on $X$,   define the \emph{$\varphi-$irregular set} as
\begin{eqnarray*}
% \nonumber to remove numbering (before each equation)
  I_{\varphi}(f) &:=& \left\{x\in X: \lim_{n\to\infty}\frac1n\sum_{i=0}^{n-1}\varphi(f^ix) \,\, \text{ diverges }\right\}.
 % R_{\varphi}(a)^* &=& \{x\in X~|~\lim_{n\to\infty}\sum_{i=0}^{n-1}\varphi(f^ix)=a,  ~x\notin\omega(x)\};\\
 % R_{\varphi}(a)^{\#} &=&  \{x\in X~|~\lim_{n\to\infty}\sum_{i=0}^{n-1}\varphi(f^ix)=a,  ~x\notin\omega(x)~\text{and}~\omega(x)~\text{is minimal}\}.
\end{eqnarray*}
 Define
 $L_\varphi:=\left[\varphi_*,  \,  \varphi^*\right]
~\textrm{and}~
\interior L_\varphi:=\left(\varphi_*,\varphi^*\right)\text{ where } \varphi_*=\inf_{\mu\in M(f,  X)}\int\varphi d\mu \text{ and } \varphi^*=\sup_{\mu\in M(f,  X)}\int\varphi d\mu.$
 % $$L_\varphi:=\left[\inf_{\mu\in M(f,  X)}\int\varphi d\mu,  \,  \sup_{\mu\in M(f,  X)}\int\varphi d\mu\right]
%~\textrm{and}~
%\interior L_\varphi:=\left(\inf_{\mu\in M(f,  X)}\int\varphi d\mu,  \,  \sup_{\mu\in M(f,  X)}\int\varphi d\mu\right).$$
 Let $IR(f):=\cup_{\phi\in C^0(X)} I_\phi(f)\,(=X\setminus QR(f))$ where $C^0(X)$ denotes the space of all continuous functions.  The points  are called points with historic behavior, %if   there is a continuous function such that $ \lim_n\frac{1}{n}\sum_{i=0}^{n-1}\varphi(f^{i}(x))$  does not exist,
  see \cite{Ruelle,Takens}. The set of points with historic behavior is also called  irregular set, denoted by $IR(f),$ see \cite{Pesin-Pitskel1984,Barreira-Schmeling2000,BPS,  Pesin,  FFW,CTS,Thompson2012,DOT}.
 From the viewpoint of ergodic theory,   the irregular points are negligible by Birkhoff Ergodic Theorem.   However,  they describe the points with same asymptotic behavior in the sense of ergodic average divergence.   Pesin and Pitskel \cite{Pesin-Pitskel1984} are the first to notice the phenomenon of the irregular set
carrying full topological entropy in the case of the full shift on two symbols from  There are lots of advanced results to show that the irregular points can carry full entropy in symbolic systems, hyperbolic systems, non-uniformly expanding or hyperbolic systems,  and systems with specification-like or shadowing-like properties, for example, see \cite{Barreira-Schmeling2000,BPS,  Pesin,  FFW,CTS,Thompson2012,DOT,LLST,TianVarandas}.

{ \color{red}  }

\begin{maintheorem}\label{thm-CCCCCCCCCCCCC}
   Suppose $(X,  f)$ is topologically expanding and transitive (resp.  ,   $(X,  f)$ is a homeomorphism that is topologically hyperbolic and transitive).
Let $\varphi$ be a continuous function on $X$ with $Int(L_\varphi)\neq \emptyset$.   Then
%\begin{description}
 %\item[(I)]
$\htop(\{x\in X\,  |\,  x\text{ satisfies Case (i) } \}\cap \mathbb{E}_k\cap NRec(f)\cap I_{\varphi}(f))=\htop(f)>0,\, i=1, 2,\,k=2,3,4,\cdots.  $
    %Alternatively,
 %$$\htop(UAP(f)\cap NRec(f)\cap I_{\varphi}(f))=\htop(f).  $$

% \item[(II)] $\htop(\{x\in X\,  |\,   x\text{ satisfies Case (1) }\}\cap  \mathbb{E}_k\cap  Rec(f)\cap I_{\varphi}(f))=\htop(f),\, k=2,3,4,\cdots.  $ In particular,
 %$$\htop( AP(f) \cap I_{\varphi}(f))=\htop(f).  $$

   % \item[(I)] $\htop(\{x\in X\,  |\,  x\text{ satisfies Case (i) and } \#M_{erg}(f,\omega_f(x))=k \}\cap NRec(f)\cap I_{\varphi}(f))=\htop(f),\, i=1, 2,\,k=2,3,4,5,\cdots.  $
   % %Alternatively,
 %%$$\htop(UAP(f)\cap NRec(f)\cap I_{\varphi}(f))=\htop(f).  $$

 %\item[(II)] $\htop(\{x\in X\,  |\,   x\text{ satisfies Case (1) and } \#M_{erg}(f,\omega_f(x))=k}\}\cap Rec(f)\cap I_{\varphi}(f))=\htop(f),\, k=2,3,4,5,\cdots.  $ In particular,
 %$$\htop( AP(f) \cap I_{\varphi}(f))=\htop(f).  $$

\end{maintheorem}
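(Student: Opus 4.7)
The plan is to adapt the Moran-style construction underlying Theorem~\ref{thm-A0000000000} so that the orbits produced have divergent Birkhoff averages for $\varphi$ instead of convergent ones. Since $\Int(L_\varphi)\neq\emptyset$, invariant measures with distinct $\varphi$-integrals exist. Combining this with the entropy density of ergodic measures (which holds under expansiveness plus shadowing), I would first pick ergodic measures $\mu_1,\dots,\mu_k$ with $h_{\mu_i}(f)>\htop(f)-\varepsilon$ for each $i$ and with $\int\varphi\, d\mu_1\neq \int\varphi\, d\mu_2$. The standard convex-combination trick, perturbing a near-maximal-entropy ergodic measure by a small amount of a measure with a different $\varphi$-integral and then re-approximating by an ergodic one, supplies such a pair while keeping entropy close to $\htop(f)$.

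Next I would invoke the minimal-entropy-dense property established earlier in this paper to produce minimal subsystems $\Lambda_1,\dots,\Lambda_k$ carrying invariant measures $\tilde\mu_i$ that are weak-$*$ close to $\mu_i$ and with $\htop(f|_{\Lambda_i})>h_{\mu_i}(f)-\varepsilon$; in particular $|\int\varphi\, d\tilde\mu_1-\int\varphi\, d\tilde\mu_2|\geq \delta/2$ for some uniform $\delta>0$. Using shadowing, glue finite orbit pieces chosen from the $\Lambda_i$ into long pseudo-orbits organized in consecutive superblocks $B_1,B_2,\ldots$ with the following properties: every $\Lambda_i$ contributes pieces in infinitely many superblocks, so $\omega_f(x)\supseteq \bigcup_i \Lambda_i$ and, by controlling empirical limits, $\#M_{erg}(f,\omega_f(x))=k$; odd-indexed superblocks are dominated by $\Lambda_1$-pieces and even-indexed ones by $\Lambda_2$-pieces, with superblock lengths growing fast enough that $\frac{1}{n}\sum_{j<n}\varphi(f^jx)$ oscillates between values close to $\int\varphi\, d\tilde\mu_1$ and $\int\varphi\, d\tilde\mu_2$, giving $x\in I_\varphi(f)$; visits to each $\Lambda_i$ are arranged to be syndetic for Case~(1), forcing $\omega_{B_*}(x)=\omega_f(x)$, while for Case~(2) rare extra pieces are inserted into an auxiliary minimal subsystem at positions of zero upper Banach density, placing those visits in $\omega_f(x)\setminus\omega_{B^*}(x)$; the initial piece is chosen in $\Lambda_1$ far from a prescribed neighborhood of $x$ (the standard trick from Theorem~\ref{thm-A0000000000} for enforcing $x\notin\omega_f(x)$, i.e.\ $x\in NRec(f)$).

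The shadowing property then realizes each admissible pseudo-orbit as a true orbit of a point $x$, and distinct pseudo-orbits within the first $n$ iterates yield $(n,\eta)$-separated orbits for $\eta$ below the expansivity constant, so a Bowen-type count gives
\begin{equation*}
\htop\bigl(\{x:\text{Case }(i)\}\cap\mathbb{E}_k\cap NRec(f)\cap I_\varphi(f)\bigr)\geq \min_{j}\htop(f|_{\Lambda_j})\geq \htop(f)-2\varepsilon,
\end{equation*}
and sending $\varepsilon\to 0$ finishes the argument; positivity of $\htop(f)$ is already recorded in Remark~\ref{Rem-positiveentropy}.

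The hard part will be scheduling the superblocks so that all four conditions coexist. Forcing divergence of Birkhoff averages requires letting superblock lengths grow quickly and concentrating time near one $\Lambda_i$ per superblock, which tends either to eject some $\Lambda_j$ from $\omega_{B_*}(x)$ (spoiling Case~(1)), or to enlarge $\omega_{B^*}(x)$ beyond the intended sparse inclusion (spoiling Case~(2)), or to shrink $\#M_{erg}(f,\omega_f(x))$ below $k$. Reconciling the unbounded oscillation demanded by irregularity with the density structure demanded by the prescribed Case, while retaining the entropy of the maximal-entropy $\Lambda_i$'s, will be the core technical challenge; it should be resolvable by the same nested bookkeeping of segment lengths and spacings already used in the quasi-regular cases of Theorem~\ref{thm-A0000000000}.
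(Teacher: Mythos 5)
There is a genuine gap, and it is structural rather than technical. Your construction glues orbit pieces drawn from $k\geq 2$ \emph{distinct} minimal subsystems $\Lambda_1,\dots,\Lambda_k$, so that $\omega_f(x)\supseteq\bigcup_i\Lambda_i$. But distinct minimal sets are pairwise disjoint, and by Theorem \ref{thm-density-basic-property}(3) one has
$$\omega_{B_*}(x)=\bigcap_{\mu\in M(f,\,\omega_f(x))}S_\mu\subseteq S_{\mu_1}\cap S_{\mu_2}\subseteq\Lambda_1\cap\Lambda_2=\emptyset,$$
where $\mu_i$ is any ergodic measure supported on $\Lambda_i$. Hence every point your construction produces has \emph{empty} syndetic center, i.e.\ it falls into the twelve cases of the companion paper, not into Case (1) or Case (2), both of which require $\omega_{B_*}(x)\neq\emptyset$. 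The tension you flag in your last paragraph (``tends to eject some $\Lambda_j$ from $\omega_{B_*}(x)$'') is therefore not a bookkeeping problem that nested scheduling can resolve: it is a mathematical impossibility as long as the oscillation of Birkhoff averages is realized by alternating between disjoint minimal pieces. In Case (1) the set $\omega_f(x)$ itself must be minimal (again by Theorem \ref{thm-density-basic-property}(3), since there $\omega_{B_*}(x)=\omega_f(x)$), so \emph{all} the irregularity and all $k$ ergodic measures must live inside a single minimal set.

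This is exactly where the paper's route differs, and where you misread Lemma \ref{lem-minimalentropy-dense}: the minimal-entropy-dense property produces \emph{one} minimal set $\Lambda$ supporting \emph{exactly $k$ ergodic measures} close to the prescribed ones (its proof is the nontrivial symbolic construction of Proposition \ref{prop-symbolic}, modifying Grillenberger), not $k$ separate minimal sets. The paper then takes two ergodic measures $\nu^1,\nu^2$ on this single $\Lambda$ with distinct $\varphi$-integrals and entropy above $\htop(f)-\eta$, forms $K=\cov\{\mu^1,\mu^2\}$ with $\mu^i=\theta_i\nu^1+(1-\theta_i)\nu^2$, and considers $G_K^{\Lambda,N}$, the nonrecurrent points with $\omega_f(x)=\Lambda$ and $V_f(x)=K$; since every invariant measure on a minimal set has full support, such points automatically satisfy Case (1), lie in $\mathbb{E}_k$, and are $\varphi$-irregular, and their entropy equals $\inf_{\mu\in K}h_\mu$ by the nonrecurrently-star-saturated property (Lemma \ref{prop-locally-star-implies-nonrecurrently-star}), which plays the role of your hand-rolled Moran/shadowing count. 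Case (2) is then obtained not by inserting sparse blocks, but by enlarging $\Lambda$ to a non-transitive internally chain transitive set $\Theta$ with $C^*_\Theta=\Lambda\subsetneq\Theta$ and $M(f,\Theta)=M(f,\Lambda)$ (via Lemmas \ref{Lem-locally-omega-f-equalto-ICT} and \ref{lem-omega-A}), so that all statistical $\omega$-limit sets equal $\Lambda$ while $\omega_f(x)=\Theta$. To repair your proposal you would have to replace the $k$ glued minimal systems by this single minimal set with $k$ ergodic measures; without that ingredient the approach cannot reach the stated cases.
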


\begin{maincorollary}\label{Corollary-thm-CCCCCCCCCCCCC}
   Suppose $(X,  f)$ is topologically expanding and transitive (resp.  ,   $(X,  f)$ is a homeomorphism that is topologically hyperbolic and transitive).
    Then
%\begin{description}
 %\item[(I)]
$\htop(\{x\in X\,  |\,  x\text{ satisfies Case (i) } \}\cap \mathbb{E}_k\cap NRec(f)\cap IR(f))=\htop(f)>0,\, i=1, 2,\,k=2,3,4,\cdots.  $
    %Alternatively,
 %$$\htop(UAP(f)\cap NRec(f)\cap I_{\varphi}(f))=\htop(f).  $$

% \item[(II)] $\htop(\{x\in X\,  |\,   x\text{ satisfies Case (1) }\}\cap  \mathbb{E}_k\cap  Rec(f)\cap I_{\varphi}(f))=\htop(f),\, k=2,3,4,\cdots.  $ In particular,
 %$$\htop( AP(f) \cap I_{\varphi}(f))=\htop(f).  $$

   % \item[(I)] $\htop(\{x\in X\,  |\,  x\text{ satisfies Case (i) and } \#M_{erg}(f,\omega_f(x))=k \}\cap NRec(f)\cap I_{\varphi}(f))=\htop(f),\, i=1, 2,\,k=2,3,4,5,\cdots.  $
   % %Alternatively,
 %%$$\htop(UAP(f)\cap NRec(f)\cap I_{\varphi}(f))=\htop(f).  $$

 %\item[(II)] $\htop(\{x\in X\,  |\,   x\text{ satisfies Case (1) and } \#M_{erg}(f,\omega_f(x))=k}\}\cap Rec(f)\cap I_{\varphi}(f))=\htop(f),\, k=2,3,4,5,\cdots.  $ In particular,
 %$$\htop( AP(f) \cap I_{\varphi}(f))=\htop(f).  $$

\end{maincorollary}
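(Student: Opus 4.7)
The plan is to deduce this corollary directly from Theorem \ref{thm-CCCCCCCCCCCCC} together with the trivial inclusion $I_\varphi(f)\subseteq IR(f)$ that holds for every $\varphi\in C^0(X)$. Since topological entropy is monotone under set inclusion, it suffices to produce a single continuous function $\varphi$ with $\Int(L_\varphi)\neq\emptyset$ and then feed it into Theorem \ref{thm-CCCCCCCCCCCCC}.

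The only real content, therefore, is to verify that such a $\varphi$ exists under the hypotheses. First I would argue that $M(f,X)$ contains at least two distinct invariant measures. Under the assumptions of Theorem \ref{thm-CCCCCCCCCCCCC} we have $\htop(f)>0$ (this is recalled in Remark \ref{Rem-positiveentropy}), so by the variational principle there is an ergodic measure $\mu_1$ with $h_{\mu_1}(f)>0$; in particular $\mu_1$ is not supported on any finite set. On the other hand, a transitive system with the shadowing property and positive entropy contains periodic points, and any periodic Dirac measure $\mu_2$ is supported on a finite set, hence $\mu_1\neq\mu_2$. By the Riesz representation theorem (invariant measures are distinguished by their integrals of continuous functions) we can pick $\varphi\in C^0(X)$ with $\int\varphi\,d\mu_1\neq \int\varphi\,d\mu_2$, so that $\varphi_*<\varphi^*$ and $\Int(L_\varphi)\neq\emptyset$.

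With this $\varphi$ in hand, Theorem \ref{thm-CCCCCCCCCCCCC} gives
\[
\htop\bigl(\{x\in X\,|\,x\text{ satisfies Case }(i)\}\cap\mathbb{E}_k\cap NRec(f)\cap I_\varphi(f)\bigr)=\htop(f)>0
\]
for $i=1,2$ and $k\ge 2$. Since $I_\varphi(f)\subseteq IR(f)$, the set on the left is contained in the corresponding set with $IR(f)$ in place of $I_\varphi(f)$, so monotonicity of $\htop$ yields the lower bound $\htop(\,\cdot\,)\ge \htop(f)$. The reverse inequality is automatic from $IR(f)\subseteq X$, completing the proof.

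The only step that requires any thought is the existence of two distinct invariant measures, and even that is a well-known consequence of positivity of topological entropy plus the presence of periodic points in transitive shadowing systems; no further obstacle is anticipated, since the heavy lifting (construction of nonrecurrent minimal-centre points with prescribed divergence and prescribed number of ergodic measures on the $\omega$-limit) is already carried out in Theorem \ref{thm-CCCCCCCCCCCCC}.
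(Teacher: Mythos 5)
Your route is exactly the paper's own route: produce two distinct invariant measures, use the Riesz representation to get a continuous $\varphi$ with $\Int(L_\varphi)\neq\emptyset$, feed $\varphi$ into Theorem \ref{thm-CCCCCCCCCCCCC}, and finish with the inclusion $I_\varphi(f)\subseteq IR(f)$ and monotonicity of Bowen entropy (the paper's proof is precisely these three lines, except that it asserts non-unique ergodicity without any justification). So the only substantive content in your write-up is the justification that $M(f,X)$ is not a singleton, and that is where there is a flaw.

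The claim you invoke, ``a transitive system with the shadowing property and positive entropy contains periodic points,'' is false as a general statement. Counterexample: let $(\Sigma_2^+,\sigma)$ be the full shift and $(O,T)$ the dyadic odometer. Both have shadowing (for the odometer, any sufficiently fine pseudo-orbit projects to a genuine rotation orbit on each finite factor $\mathbb{Z}/2^n\mathbb{Z}$, so any point over the correct initial fiber shadows it), hence the product $\sigma\times T$ has shadowing; the product of a mixing system with a minimal system is transitive; $\htop(\sigma\times T)=\log 2>0$; yet the product has no periodic point at all, since the odometer has none. What rescues your argument is the hypothesis you never used: in the corollary $f$ is positively expansive (resp.\ expansive) in addition to having shadowing, and then the standard closing argument produces periodic points. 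Indeed, take a periodic $\delta$-pseudo-orbit of period $n$ (it exists because recurrent points do), and let $y$ be a point that $\eps$-shadows it with $2\eps$ smaller than the expansive constant $c$; since the pseudo-orbit is $n$-periodic, $f^ny$ also $\eps$-shadows it, so $d(f^iy,f^i(f^ny))<2\eps<c$ for all $i\geq 0$, and (positive) expansiveness forces $f^ny=y$ (the homeomorphism case is the classical Anosov-type closing lemma for expansive maps with shadowing). With this repair the periodic measure $\mu_2$ exists, it is distinct from your positive-entropy measure $\mu_1$, and the rest of your proof goes through verbatim.
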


%{\color{blue} (1') means minimal point or almost periodic}

For a continuous function $\varphi$ on $X$ and any $a\in  L_\varphi,  $ denote
$$t_a=\sup_{\mu\in M(f,  X)}\left\{h_\mu:\,  \int\varphi d\mu=a\right\}$$
and consider the level set
\begin{eqnarray*}
% \nonumber to remove numbering (before each equation)
  R_{\varphi}(a) &:=& \left\{x\in X: \lim_{n\to\infty}\frac1n\sum_{i=0}^{n-1}\varphi(f^ix)=a\right\}.
 % R_{\varphi}(a)^* &=& \{x\in X~|~\lim_{n\to\infty}\sum_{i=0}^{n-1}\varphi(f^ix)=a,  ~x\notin\omega(x)\};\\
 % R_{\varphi}(a)^{\#} &=&  \{x\in X~|~\lim_{n\to\infty}\sum_{i=0}^{n-1}\varphi(f^ix)=a,  ~x\notin\omega(x)~\text{and}~\omega(x)~\text{is minimal}\}.
\end{eqnarray*}
 Barreira and Saussol proved in \cite{Barreira---Saussol2001} (or see \cite{Bar2008}) the following properties for a dynamical system $(X,f)$ whose function of metric entropy is upper semi-continuous.  Consider a H$\ddot{\text{o}}$lder continuous function $\phi$ (see~\cite{Bar2011,BD} for almost additive functions with tempered variation) which has a unique equilibrium measure, then for any constant $a\in\Int (L_\phi)$ % where $\Int(L_\phi)$ denotes the interior of $L_\phi$, it satisfies
 \begin{equation}\label{eq-1} h_{top}(R_\varphi(a)=t_a.
 \end{equation}
%Furthermore,  there is an ergodic measure $\mu_a$ such that
%\begin{equation}\label{eq-2}
%\int\phi(x)d\mu_a=a\,\text{ and }\, h_{\mu_a}(f)=h_{top}(X_a).
%\end{equation}
%For H$\ddot{\text{o}}$lder continuous functions, the results  are dependent on the  thermodynamic formalism   but it does not work in our case because there is a lack of diﬀerentiability of
%pressure function and a lack of Gibbs property %uniqueness of equibrium states
%is essentially important.
%Note that that (i) expansiveness implies the upper semi-continuity of metric entropy and (ii) specification (see Definition~\ref{Def:specification}) plus expansiveness imply the existence and uniqueness of  equilibrium states, to which the mixing hyperbolic dynamics apply.
%with the maximum replaced by the supremum)
 For $\phi$ being an arbitrary continuous function (hence there may exist more than one equilibrium measures), \eqref{eq-1} was established by Takens and Verbitski \cite{Takens-Verbitskiy} under the assumption that $f$
has the specification property. This result was further generalized by Pfister and Sullivan  \cite{PS2007} to dynamical systems with $g$-product property, see \cite{FengH,Thompson2009,T16} for more related discussions.  The  method used in \cite{BD,Barreira---Saussol2001} mainly depends on  thermodynamic formalism such as differentiability of
 pressure function while the method in \cite{Takens-Verbitskiy,PS2007} is a direct approach by constructing fractal sets.

\begin{maintheorem}\label{thm-DDDDDDDDDD} Suppose $(X,  f)$ is topologically expanding and transitive (resp.  ,   $(X,  f)$ is a homeomorphism that is topologically hyperbolic and transitive).
For a continuous function $\varphi$ on $X$ with $Int(L_\varphi)\neq \emptyset$ and $a\in Int(L_\varphi)$,     we have the following conditional variational principle: \\
 (I) $\htop(\{x\in X\,  |\,  x\text{ satisfies Case (i) } \}\cap \mathbb{E}_k\cap NRec(f)\cap R_{\varphi}(a)\cap QR(f))=t_a>0,\, i=1, 2,\,k=2,3,4,\cdots;  $ \\
  (II) $\htop(\{x\in X\,  |\,  x\text{ satisfies Case (i) } \}\cap \mathbb{E}_k\cap NRec(f)\cap R_{\varphi}(a)\cap IR(f))=t_a>0,\, i=1, 2,\,k=3,4,5,\cdots.  $ \\
   Moreover, for any $k=2,3,4,\cdots$,
 $$t_a=\sup\left\{h_{\mu}~|~\int\varphi d\mu=a,  \mu\in M(f,  X),  ~S_{\mu}\neq X,  ~S_\mu~\text{is minimal}~\text{and}~ \#M_{erg}(f,  S_\mu)=k\right\}.$$

\end{maintheorem}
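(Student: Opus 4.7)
The plan is to combine a standard upper bound with a fractal/saturated-set construction for the lower bound, guided first by the ``moreover'' variational formula. The upper bound is immediate: the set in question is contained in the level set $R_\varphi(a)$, and expansiveness plus shadowing plus transitivity gives the specification property, so the Takens--Verbitskiy / Pfister--Sullivan conditional variational principle yields $\htop(R_\varphi(a)) = t_a$. Positivity of $t_a$ on $\Int(L_\varphi)$ follows from entropy-density of ergodic measures combined with $\htop(f) > 0$ (cf.\ Remark \ref{Rem-positiveentropy}).

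For the ``moreover'' formula, the key input is the minimal-entropy-dense property established earlier in the paper. Starting from an ergodic $\nu$ with $\int \varphi\,d\nu = a$ and $h_\nu$ close to $t_a$ (classical conditional variational principle), I would approximate $\nu$ by an ergodic measure supported on a proper minimal subsystem with $\int \varphi$ and $h$ close to those of $\nu$. To upgrade the ergodic count to a prescribed $k$, concatenate $k$ minimal pieces into a single minimal set supporting exactly $k$ ergodic measures via a standard symbolic construction exploiting specification, then take a convex combination whose $\varphi$-integral equals $a$. This produces measures $\mu$ witnessing the supremum formula, with $S_\mu$ proper minimal, $\#M_{erg}(f,S_\mu)=k$, and $h_\mu$ arbitrarily close to $t_a$.

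For part (I), fix such a $\mu$. Using specification/shadowing, build a Bowen-type Cantor subset $F$ of $\mu$-generic points such that each $x \in F$ satisfies: (a) $\omega_f(x) = S_\mu$, hence $x \in \mathbb{E}_k$; (b) $x \in QR(f) \cap R_\varphi(a)$ by genericity for $\mu$; (c) $x \notin S_\mu$, enforced by forcing the initial pseudo-orbit segment to lie in a fixed open set disjoint from $S_\mu$, giving $x \in NRec(f)$; (d) for Case (1), $\omega_{B_*}(x) = S_\mu$, which follows because shadowing any almost-periodic orbit in $S_\mu$ transfers bounded-gap visits to every open set of $S_\mu$ onto the orbit of $x$; for Case (2), insert sparse excursions towards an auxiliary point $y \notin S_\mu$ at a density-zero set of times, so that $y \in \omega_f(x) \setminus \omega_{B^*}(x)$ while the statistical averages are unaffected. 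A standard entropy-distribution estimate gives $\htop(F) \geq h_\mu$; letting $\mu$ realise the supremum yields $t_a$.

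For part (II) replace the single $\mu$ by two distinct measures $\mu_1, \mu_2$ from the first stage with $\int \varphi\,d\mu_j = a$, common minimal proper support $S$, $\#M_{erg}(f, S) = k \geq 3$, and entropies close to $t_a$, chosen so some continuous $\psi$ separates them. The construction alternates long $\mu_1$-generic and $\mu_2$-generic blocks at rates arranged so that $\tfrac{1}{n}\sum_{i=0}^{n-1} \varphi(f^i x) \to a$ holds while the $\psi$-averages oscillate, placing $x$ in $IR(f)$; the restriction $k \geq 3$ is needed so that the alternation preserves a rich enough ergodic structure on $\omega_f(x)$ rather than collapsing to the two endpoint ergodic components forced by the oscillation. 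The principal technical obstacle in both parts is the tension between Case (1)'s strong statistical recurrence $\omega_{B_*}(x) = \omega_f(x)$ and the non-recurrence $x \notin \omega_f(x)$: these push $x$ in opposite directions relative to $S_\mu$, and reconciling them while also preserving the entropy bound requires a careful choice of initial pseudo-orbit placement together with a growing-block schedule ensuring the statistics of an almost-periodic shadow are inherited by the forward orbit of $x$ without $x$ itself being absorbed into the limit set.
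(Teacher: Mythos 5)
Your skeleton (a Bowen-type upper bound, then a lower bound built from proper minimal sets carrying exactly $k$ ergodic measures of entropy close to $t_a$ whose $\varphi$-integrals straddle $a$ --- i.e.\ the minimal-entropy-dense property --- followed by production of nonrecurrent generic/irregular points) is the paper's strategy, and your ``moreover'' argument and the $k\geq 3$ setup for item (II) match the paper's choice of $\mu_1=\theta_1\omega_1+(1-\theta_1)\omega_2$, $\mu_2=\theta_2\omega_1+(1-\theta_2)\omega_3$ with $\int\varphi\,d\mu_i=a$. But two steps of your lower bound have genuine problems. First, you lean on the specification property, asserting it follows from expansiveness, shadowing and transitivity; it does not, since specification implies topological mixing, while the theorem assumes only transitivity (an infinite transitive, non-mixing subshift of finite type is expanding, transitive, has shadowing, and fails specification). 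The paper deliberately avoids specification: the upper bound uses only Lemma \ref{lem-Bowen}, and the entire lower-bound construction is packaged into the nonrecurrently-star-saturated property (Lemma \ref{prop-locally-star-implies-nonrecurrently-star}) applied to the saturated sets $G_{K_1}^{\Lambda,N}$ and $G_{K_2}^{\Lambda,N}$ with $K_1=\{\mu_1\}$, $K_2=\cov\{\mu_1,\mu_2\}$, the Case/$\mathbb{E}_k$/$QR$/$IR$ memberships then falling out of Theorem \ref{thm-density-basic-property}. Your hand-built Cantor set would in effect have to reprove that lemma without specification.

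Second, and more seriously, your Case (2) mechanism is incorrect as stated: excursions toward $y$ at a set of times of \emph{natural} density zero only remove $y$ from $\omega_{\overline{d}}(x)$, whereas Case (2) requires $\omega_{B^*}(x)=\omega_{B_*}(x)$, hence Banach upper density zero of the visit times near every extra limit point; a density-zero time set can have Banach upper density one (e.g.\ $\bigcup_j[2^j,2^j+j]$), and this distinction is precisely the subject of the paper. Moreover, reaching ever smaller neighborhoods of $y$ and returning to $S_\mu$ forces excursion durations to grow, which typically creates arbitrarily long visit blocks and thus positive Banach density. The paper sidesteps all visit-time bookkeeping: by Lemma \ref{lem-omega-A} it realizes a set $\Theta=\omega_f(x_0)$ containing $\orb(z,f)\cup\Lambda$ (where $z\notin\Lambda$, $\omega_f(z)=\Lambda$) with $M(f,\Theta)=M(f,\Lambda)$, so that for any $x$ with $\omega_f(x)=\Theta$, Theorem \ref{thm-density-basic-property} gives $\omega_{B_*}(x)=\omega_{B^*}(x)=C^*_\Theta=\Lambda\subsetneq\Theta=\omega_f(x)$ automatically. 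Finally, your stated reason for $k\geq3$ in item (II) (``the alternation preserves a rich enough ergodic structure'') misses the actual obstruction: on a minimal set with only two ergodic measures whose integrals straddle $a$, the slice $\{\mu\in M(f,\Lambda):\int\varphi\,d\mu=a\}$ is a singleton, so the two distinct measures with integral $a$ that any such irregular-point construction needs do not exist --- which is exactly why the $k=2$ case of item (II) is left open in the paper.
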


\begin{Rem} It is still unknown whether Theorem \ref{thm-DDDDDDDDDD} item (I) can  hold also for $k=1$ and Theorem \ref{thm-DDDDDDDDDD} item (II) can  hold also for $k=2.$

\end{Rem}

Define $\phi-$regular set $R_\varphi(f):=\cup_{a\in \mathbb{R}} R_\varphi(a) \,(=X\setminus I_\varphi(f)).$

\begin{maincorollary}\label{Cor--thm-DDDDDDDDDD} Suppose $(X,  f)$ is topologically expanding and transitive (resp.  ,   $(X,  f)$ is a homeomorphism that is topologically hyperbolic and transitive).
For a continuous function $\varphi$ on $X$,     we have  \\
(I).
 $\htop(\{x\in X\,  |\,  x\text{ satisfies Case (i) } \}\cap \mathbb{E}_k\cap NRec(f)\cap R_{\varphi}(f)\cap QR(f))=\htop(f)>0,\, i=1, 2,\,k=1,2,3,4,\cdots;$\\
 (II).
 $\htop(\{x\in X\,  |\,  x\text{ satisfies Case (i) } \}\cap \mathbb{E}_k\cap NRec(f)\cap R_{\varphi}(f)\cap IR(f))=\htop(f)>0,\, i=1, 2,\,k=3,4,5,\cdots; $ \\
  (III) $\htop(  \mathbb{E}_k\cap AP(f)\cap R_{\varphi}(f)\cap QR(f))=\htop(f)>0,\,k=1,2,3,\cdots.$

\end{maincorollary}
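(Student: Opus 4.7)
The plan is to deduce all three parts of the corollary from Theorems \ref{thm-A0000000000} and \ref{thm-DDDDDDDDDD} (together with Corollary \ref{Corollary-thm-CCCCCCCCCCCCC}) by exploiting two elementary inclusions. First, any $x\in QR(f)$ is generic for some invariant measure $\mu$, so $\frac{1}{n}\sum_{i=0}^{n-1}\varphi(f^ix)\to\int\varphi\,d\mu$ and therefore $QR(f)\subseteq R_\varphi(f)$ for every continuous $\varphi$. Second, by the characterization recorded just before Theorem \ref{thm-AABBBB}, one has $AP(f)=\{x\mid x\text{ satisfies Case (1)}\}\cap Rec(f)$. The upper bound $\leq\htop(f)$ is automatic in every item, so only the lower bounds require argument.

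Parts (I) and (III) then follow immediately. For (I), the factor $R_\varphi(f)$ is redundant given $QR(f)$, and the set collapses to $\{x\mid x\text{ satisfies Case (i)}\}\cap\mathbb{E}_k\cap NRec(f)\cap QR(f)$, whose entropy equals $\htop(f)$ by Theorem \ref{thm-A0000000000}. For (III), the same observation plus the identification of $AP(f)$ reduces the set to $\{x\mid x\text{ satisfies Case (1)}\}\cap Rec(f)\cap\mathbb{E}_k\cap QR(f)$, covered by Theorem \ref{thm-A0000000000}(II) for every $k\geq 1$. For part (II), I would split on whether $\Int(L_\varphi)=\emptyset$. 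If so, $\int\varphi\,d\mu$ is constant on $M(f,X)$, and since every weak-$*$ limit of empirical measures is invariant, the Birkhoff average of $\varphi$ exists pointwise and equals this constant, so $R_\varphi(f)=X$; the assertion then reduces to Corollary \ref{Corollary-thm-CCCCCCCCCCCCC}, which handles $k\geq 2$ and a fortiori $k\geq 3$.

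If $\Int(L_\varphi)\neq\emptyset$, I would approximate by measures whose $\varphi$-integral lies in the interior. Fix $\epsilon>0$ and, via the variational principle, choose $\mu\in M(f,X)$ with $h_\mu>\htop(f)-\epsilon$. Form $\mu'=(1-t)\mu+t\nu$, where $\nu\in M(f,X)$ realizes a value of $\int\varphi$ different from $\int\varphi\,d\mu$ (such $\nu$ exists because $L_\varphi$ is a nontrivial interval: take a measure achieving $\varphi_*$ or $\varphi^*$, whichever differs). Affinity of the entropy map on $M(f,X)$ and linearity of $\mu\mapsto\int\varphi\,d\mu$ imply that, for all sufficiently small $t>0$, simultaneously $a:=\int\varphi\,d\mu'\in\Int(L_\varphi)$ and $h_{\mu'}>\htop(f)-2\epsilon$, so $t_a\geq h_{\mu'}>\htop(f)-2\epsilon$. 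Theorem \ref{thm-DDDDDDDDDD}(II) then yields the lower bound $t_a$ on the $R_\varphi(a)$-restricted subset, and $R_\varphi(a)\subseteq R_\varphi(f)$ propagates it to the full set in the corollary. Sending $\epsilon\to 0$ concludes the case $k\geq 3$. The only nontrivial point is this perturbation, whose engine is the compatibility of the two affine structures on $M(f,X)$, letting us slide $\int\varphi\,d\mu$ into $\Int(L_\varphi)$ at arbitrarily small entropy cost; no further estimates are required.
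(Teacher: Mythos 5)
Your proof is correct and follows essentially the same route as the paper: items (I) and (III) via the inclusion $QR(f)\subseteq R_\varphi(f)$ (together with $AP(f)=\{x\,|\,x\text{ satisfies Case }(1)\}\cap Rec(f)$) and Theorem \ref{thm-A0000000000}, and item (II) by splitting into a degenerate case ($R_\varphi(f)=X$, reduced to Corollary \ref{Corollary-thm-CCCCCCCCCCCCC}) and a case where Theorem \ref{thm-DDDDDDDDDD}(II) is applied to a well-chosen level $a\in\Int(L_\varphi)$ and propagated through $R_\varphi(a)\subseteq R_\varphi(f)$. The only substantive difference is that you explicitly justify, via affinity of the entropy map and a convex perturbation, the existence of $a\in\Int(L_\varphi)$ with $t_a>\htop(f)-2\epsilon$, a step the paper merely asserts.
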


{}

It is still unknown that
\begin{Que}  Whether it can be stated in Theorem  \ref{thm-CCCCCCCCCCCCC} and Corollary \ref{Corollary-thm-CCCCCCCCCCCCC}  that
 $$\htop(A(f)\cap I_{\varphi}(f))=\htop(A(f)\cap IR(f))=\htop(f)>0,  $$ and
 whether it can be stated in Theorem  \ref{thm-DDDDDDDDDD} and Corollary \ref{Cor--thm-DDDDDDDDDD} that  $$\htop(A(f)\cap R_{\varphi}(a))=\htop(A(f)\cap R_{\varphi}(a)\cap QR(f))=\htop(A(f)\cap R_{\varphi}(a)\cap IR(f))=t_a>0  $$ and $\htop(    AP(f)\cap R_{\varphi}(f)\cap IR(f))=\htop(f)>0?$

\end{Que}

This question is meaningful by following theorem.

\begin{maintheorem}\label{theorem-minimal-irregular}
   Suppose $(X,  f)$ is topologically expanding and transitive (resp.  ,   $(X,  f)$ is a homeomorphism that is topologically hyperbolic and transitive).
Let $\varphi$ be a continuous function on $X$ with $Int(L_\varphi)\neq \emptyset$.   Then
%\begin{description}
 %\item[(I)]
 $A(f)\cap I_{\varphi}(f)\cap \mathbb{E}_k$ is nonempty   in $X,$  where $\,k=2,3,4,\cdots.  $

\end{maintheorem}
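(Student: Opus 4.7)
Interpreting $A(f)$ as $AP(f)$, the goal becomes the following: for each $k\ge 2$, exhibit an almost periodic point $x\in X$ with $\omega_f(x)\in\mathbb{E}_k$ and such that the Birkhoff averages $\frac1n\sum_{i=0}^{n-1}\varphi(f^ix)$ fail to converge. Because $x\in AP(f)$ forces $\omega_f(x)$ to be minimal, the plan is to first build a minimal subset $Y\subseteq X$ whose ergodic decomposition has exactly $k$ extreme points that do not all assign the same integral to $\varphi$, and then to locate a non-generic point inside $Y$.

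Using $\Int(L_\varphi)\ne\emptyset$ there exist invariant measures on $X$ with two different values of $\int\varphi\,d\mu$; the minimal-entropy-dense property established earlier in the paper then lets me replace them by ergodic measures $\nu_1,\dots,\nu_k\in M(f,X)$, each supported on its own minimal set $M_i\subseteq X$, while keeping $\int\varphi\,d\nu_1\ne\int\varphi\,d\nu_2$. I next assemble the target minimal set $Y$ by a hierarchical Toeplitz-style gluing of long orbit segments drawn from $M_1,\dots,M_k$: at level $n$, cyclic blocks of common length $p_n$ approximating $\nu_1,\dots,\nu_k$ respectively are inserted into the unfilled positions of the level-$(n-1)$ template, with transition gaps small enough that the whole sequence is a $\delta_n$-pseudo-orbit with $\delta_n\downarrow 0$. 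The shadowing (or almost specification) hypothesis then produces a genuine orbit whose closure $Y$ is minimal (by the syndetic repetition of the template), carries exactly $k$ ergodic measures approximating the $\nu_i$ (by the cyclic structure of the filling), and preserves two distinct $\varphi$-integrals among those ergodic measures, so that $Y$ contributes points to $\mathbb{E}_k$.

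Within the same construction I introduce sparse but arbitrarily long \emph{pure} sub-stretches: at infinitely many scales only $\nu_1$-typical segments are used, and at interleaved scales only $\nu_2$-typical ones. The resulting shadow point $y$ then has $\liminf\frac1n\sum_{i=0}^{n-1}\varphi(f^iy)$ arbitrarily close to $\int\varphi\,d\nu_1$ and $\limsup\frac1n\sum_{i=0}^{n-1}\varphi(f^iy)$ arbitrarily close to $\int\varphi\,d\nu_2$, placing $y\in I_\varphi(f)$. Syndetic repetition of the underlying template forces $y\in AP(f)$ with $\omega_f(y)=Y$, so $y$ is the required member of $AP(f)\cap I_\varphi(f)\cap\mathbb{E}_k$.

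The main obstacle is the combinatorial balance in this hierarchical construction: the scales $(p_n,\delta_n)$ must be chosen so that the template is simultaneously a $\delta_n$-pseudo-orbit (for shadowing), syndetically repetitive (for minimality and almost periodicity), cyclically balanced (so that $Y$ supports \emph{exactly} $k$ ergodic measures, not fewer and not more), and yet tolerates the sparse pure stretches of the previous step without destroying any of these properties. This amounts to a quantitative Toeplitz-type lemma transplanted from symbolic dynamics into general topologically expanding or hyperbolic systems via shadowing; once it is in place, almost periodicity of the shadow, the $\mathbb{E}_k$-count, and the oscillation of the $\varphi$-averages all follow fairly directly from the design of the template.
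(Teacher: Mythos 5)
Your overall plan---produce one minimal set carrying exactly $k$ ergodic measures with a non-degenerate $\varphi$-spectrum, then locate a non-generic point inside it---is the right shape, but your execution has a genuine gap precisely where you flag ``the main obstacle.'' The step ``the shadowing (or almost specification) hypothesis then produces a genuine orbit whose closure $Y$ is minimal (by the syndetic repetition of the template)'' does not follow: a shadowing point merely stays $\eps$-close to the template, and syndetic repetition of the \emph{pseudo}-orbit does not make the \emph{true} orbit almost periodic, nor does it control how many ergodic measures its closure supports. Without invoking expansiveness (which makes tracing essentially unique and yields a conjugation onto a subshift) none of these claims can be extracted from shadowing alone. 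The paper does carry out exactly this kind of hierarchical gluing, but only in the symbolic category (Proposition \ref{prop-symbolic}, proved in the appendix following Grillenberger/DGS, with careful bookkeeping of transition blocks, recognizability via the marker $C_i$, and frequency estimates), and then transports it into $(X,f)$ through the conjugations supplied by topological expansiveness plus shadowing (Lemma \ref{Mainlemma-convex-by-horseshoe} together with Proposition \ref{prop-f-f-k}); the outcome is packaged as the minimal-entropy-dense property (Lemma \ref{lem-minimalentropy-dense}). So the ``quantitative Toeplitz-type lemma'' you defer is not a routine verification: it is the entire content of the appendix, and your sketch leaves it unproved.

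You also misread what minimal-entropy-dense provides: it gives \emph{one} minimal set $\Lambda\subsetneq X$ supporting exactly $k$ ergodic measures close to prescribed ones, not $k$ ergodic measures ``each supported on its own minimal set $M_i$,'' so the assembly you attempt to redo is already done for you. Once such a $\Lambda$ is chosen with
$\inf_{\mu\in M(f,\Lambda)}\int\varphi\,d\mu<\sup_{\mu\in M(f,\Lambda)}\int\varphi\,d\mu$,
no explicit irregular point needs to be constructed; the paper finishes with a soft Baire-category argument. If every $x\in\Lambda$ were $\varphi$-regular, the limit function $\varphi^*(x)=\lim_{n\to\infty}\frac1n\sum_{i=0}^{n-1}\varphi(f^ix)$ would be a pointwise limit of continuous functions on the compact set $\Lambda$, hence continuous at some $y\in\Lambda$; since $\varphi^*$ is constant along orbits and, by minimality, every orbit of $\Lambda$ accumulates at $y$, it follows that $\varphi^*\equiv\varphi^*(y)$ on $\Lambda$, contradicting the existence of two ergodic measures on $\Lambda$ with distinct integrals (whose generic points realize distinct values of $\varphi^*$). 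Since every point of the minimal set $\Lambda$ lies in $AP(f)\cap\mathbb{E}_k$, this yields the theorem without any of the delicate template combinatorics your argument depends on.
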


%\begin{Rem} From \cite{Oxtoby} we know that $A(f)\cap I(f)\neq \emptyset$ if there exists a minimal set with at least two ergodic measures. By minimal entropy-dense property below, any transitive expanding or hyperbolic system satisfies that $AP(f)\cap I(f)\cap \mathbb{E}_k\neq \emptyset$ for any $k=2,3,\cdots$.

%\end{Rem}

\subsection{Examples}
\subsubsection{Symbolic dynamics}
%Theorem \ref{thm-CCCCCCCCCCCCC} and \ref{thm-DDDDDDDDDD}

All above results are suitable for all transitive subshift of finite type because it is naturally expansive
and from \cite{Walters2} a subshift satisfies shadowing property if and only if it is a subshift of finite type.
%Symbol spaces and their shift maps have a long
%history in dynamics beginning at the latest in 1921 with Morse \cite{Morse}.
%The definition
%of subshift of finite type (f.  t.   subshift for short) is due to Parry \cite{Parry1964,  Parry1966} in the study of intrinsic Markov chain.   It also evolved independently from
%Smale \cite{Smale1965} via Bowen-Lanford \cite{Bowen-Lanford}.   See \cite{Williams} for proofs and related results.

\subsubsection{Smooth dynamics}
We now suppose that $f:M\to M$ is a diffeomorphism of a compact $C^{\infty}$ Riemannian manifold $M$.   Then the derivative of $f$ can be considered a map $df: TM\to TM$ where $TM=\bigcup_{x\in M}T_xM$ is the tangent bundle of $M$ and $df_x:T_xM\to T_{f(x)}M$.
A closed subset $\Lambda\subset M$ is \emph{hyperbolic} if $f(\Lambda)=\Lambda$ and each tangent space $T_xM$ with $x\in\Lambda$ can be written as a direct sum
 $T_xM=E_x^u\oplus E_x^s$
of subspaces so that
\begin{enumerate}
  \item $Df(E_x^s)=E_{f(x)}^s,  Df(E_x^u)=E_{f(x)}^u$;
  \item there exist constants $c>0$ and $\lambda\in(0,  1)$ so that
  $$\|Df^n(v)\|\leq c\lambda^n\|v\|~\textrm{when}~v\in E_x^s,  n\geq 1,\, \text{   and } \,\|Df^{-n}(v)\|\leq c\lambda^n\|v\|~\textrm{when}~v\in E_x^u,  n\geq 1.$$
  %\item $E_x^s,  E_x^u$ vary continuously with $x$.
\end{enumerate}
Furthermore,   we say $f$ satisfies \emph{Axiom A} if $\Omega(f)$ is hyperbolic and the periodic points are dense in $\Omega(f).  $
%If the whole space $M$ is hyperbolic, it is called Anosov.
It is well known that the basic set of Axiom A systems is expansive,   transitive and satisfies  the shadowing property so that all above results
 %Theorem \ref{thm-CCCCCCCCCCCCC} and \ref{thm-DDDDDDDDDD}
  are suitable for the subsystem restricted on basic set. In particular, all above results
 %Theorem \ref{thm-CCCCCCCCCCCCC} and \ref{thm-DDDDDDDDDD}
  are suitable for all transitive Anosov dfiffeomorphisms (for which the whole space $M$ is hyperbolic).

\subsubsection{Nonuniformly Hyperbolic Dynamical Systems}

It is known that uniform hyperbolicity is not a dense property in the space of differential dynamical systems so that Pesin and Katok etc. introduced a more general concept in the probabilistic perspective, called nonuniform hyperbolicity, which plays important roles in the study of modern dynamics, see \cite{BP,Katok}.
Let $f$ be a $C^{1+\alpha}$ diffeomorphism over a compact Riemannian manifold $M.$
An ergodic measure is called {\it hyperbolic} if its all Lyapunov exponents are non-zero. It was proved in \cite{Katok,Katok1} that the metric entropy of a hyperbolic ergodic measure can be approximated by the topological entropy of a transitive topological hyperbolic set (called horseshoe there). Thus by  Corollary \ref{Corollary-thm-CCCCCCCCCCCCC} and Theorem \ref{thm-A0000000000} one has

\begin{maincorollary}\label{Corollary-B-nonuniformlyhyperbolic}
Let $M$ be a compact Riemannian manifold  of dimension at least $2$ and $f$ be a $C^{1+\alpha}$ diffeomorphism.   Then
\\(I).   $\htop(\{x\in X\,  |\,  x\text{ satisfies Case (i) } \}\cap \mathbb{E}_k\cap NRec(f)\cap IR(f))\geq h_{hyp},\, i=1, 2,\,k=2,3,4,\cdots;  $
\\(II).   $\htop(\{x\in X\,  |\,  x\text{ satisfies Case (i) } \}\cap \mathbb{E}_k\cap NRec(f)\cap QR(f))\geq h_{hyp},\, i=1, 2,\,k=1,2,3,\cdots;$ and
\\(III).$  \htop( AP(f)\cap QR(f) )\geq h_{hyp},$
where $$h_{hyp}:=\sup\{\htop(f|_\Lambda)|\, \Lambda\text{ is a transitive topological hyperbolic set}\}$$$$=\sup\{h_\mu(f)|\, \mu\text{ is a hyperbolic ergodic measure}\}.$$
%\begin{description}
%  \item[(1)] $\htop(AP(f))=\htop(f)$.
%  \item[(2)] $\htop(NRec(f))=\htop(f)$.
%  \end{description}
\end{maincorollary}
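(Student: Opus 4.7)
The plan is to reduce all three items to the already-established Theorem \ref{thm-A0000000000}, Theorem \ref{thm-AABBBB} and Corollary \ref{Corollary-thm-CCCCCCCCCCCCC} by applying them to a suitable topologically hyperbolic \emph{subsystem} produced by Katok's horseshoe theorem, and then to invoke monotonicity of topological entropy under inclusion of subsets.

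First I would fix $\eta>0$ and pick a hyperbolic ergodic measure $\mu$ of $f$ with $h_\mu(f)\geq h_{hyp}-\eta$. By Katok's theorem \cite{Katok,Katok1}, there is a compact $f$-invariant set $\Lambda\subseteq M$ which is topologically conjugate to a transitive subshift of finite type (a ``horseshoe''), with $\htop(f|_\Lambda)\geq h_\mu(f)-\eta$. Being uniformly hyperbolic, $f|_\Lambda$ is a homeomorphism that is transitive, expansive and has the shadowing property, so $(\Lambda,f|_\Lambda)$ is topologically hyperbolic and satisfies the hypotheses of Theorem \ref{thm-A0000000000}, Theorem \ref{thm-AABBBB} and Corollary \ref{Corollary-thm-CCCCCCCCCCCCC}.

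The next step is to check that all the dynamical quantities in the statement behave well under passage to the subsystem. Since the orbit of any $x\in\Lambda$ stays in $\Lambda$ and $\omega_f(x)\subseteq\Lambda$, the statistical $\omega$-limit sets $\omega_\xi(x)$, the case label in Theorem \ref{thm-14-cases}, (non-)recurrence, almost periodicity, the cardinality $\#M_{erg}(f,\omega_f(x))=k$, and quasi-regularity all coincide whether computed for $f$ on $M$ or for $f|_\Lambda$ on $\Lambda$; in particular $AP(f|_\Lambda)\subseteq AP(f)$, $QR(f|_\Lambda)\subseteq QR(f)$, $NRec(f|_\Lambda)\subseteq NRec(f)$, and $\mathbb{E}_k(f|_\Lambda)=\mathbb{E}_k(f)\cap\Lambda$. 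For the irregular set, Tietze's extension theorem lets every $\varphi\in C^0(\Lambda)$ be extended to some $\widetilde\varphi\in C^0(M)$ whose Birkhoff averages along any orbit in $\Lambda$ agree with those of $\varphi$, so $IR(f|_\Lambda)\subseteq IR(f)$. Consequently each set on the left-hand side of (I)--(III) contains its intrinsic analogue inside $\Lambda$.

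Finally I would apply Corollary \ref{Corollary-thm-CCCCCCCCCCCCC} to $f|_\Lambda$ to get that the analogue of the set in (I) has topological entropy $\htop(f|_\Lambda)$, Theorem \ref{thm-A0000000000}(I) to get the same for (II), and Theorem \ref{thm-AABBBB} to get the same for (III). By monotonicity of the Bowen topological entropy of (not necessarily invariant) subsets under inclusion, each corresponding left-hand side is bounded below by $\htop(f|_\Lambda)\geq h_{hyp}-2\eta$. Letting $\eta\to 0$ and taking the supremum over hyperbolic ergodic $\mu$ yields the claimed $\geq h_{hyp}$ in all three items. The only non-formal obstacle is the invocation of Katok's horseshoe construction to approximate an arbitrary hyperbolic ergodic measure by a uniformly hyperbolic transitive subsystem; the remaining work is a clean transfer of the subsystem statements to the ambient manifold via the Tietze argument and entropy monotonicity, with no new fractal construction required.
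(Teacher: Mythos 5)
Your proposal is correct and is essentially the paper's own argument: the paper obtains this corollary in one line by combining Katok's horseshoe theorem with Corollary \ref{Corollary-thm-CCCCCCCCCCCCC} and Theorem \ref{thm-A0000000000} applied to the resulting transitive topological hyperbolic subsystem, plus monotonicity of Bowen entropy --- exactly your reduction, with the restriction/Tietze-extension bookkeeping ($NRec$, $QR$, $\mathbb{E}_k$, $IR$ passing from the subsystem to the ambient manifold) left implicit. The only cosmetic difference is that you invoke Theorem \ref{thm-AABBBB} for item (III) where the paper uses Theorem \ref{thm-A0000000000}(II); both apply to the horseshoe and yield the same bound.
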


\begin{Rem} This corollary also can be stated for $C^1$ diffeomorphisms with dominated splitting by replacing Katok's horseshoe lemma by the  one in \cite{SunTian-C1Pesinset,STV}. %SunTianVargas}.

\end{Rem}

For a surface diffeomorphism,  any ergodic measure with positive metric entropy should be hyperbolic by classical  Ruelle's inequality on metric entropy and Lyapunov exponents \cite{Ru}. In other words,  any  surface diffeomorphism $f$ satisfies that  $$ \htop(f)=\sup\{h_\mu(f)|\, \mu\text{ is     ergodic  }\}=\sup\{h_\mu(f)|\, \mu\text{ is   hyperbolic and ergodic}\}$$ and thus we have following.

\begin{maincorollary}\label{Corollary-B-nonuniformlyhyperbolic}
Let  $f$ be a $C^{1+\alpha}$ surface diffeomorphism.   Then
\\(I). $\htop(\{x\in X\,  |\,  x\text{ satisfies Case (i) } \}\cap \mathbb{E}_k\cap NRec(f)\cap IR(f))  =\htop(f),\, i=1, 2,\,k=2,3,4,\cdots;  $
\\(II).  $\htop(\{x\in X\,  |\,  x\text{ satisfies Case (i) } \}\cap \mathbb{E}_k\cap NRec(f)\cap QR(f))=\htop(f),\, i=1, 2,\,k=1,2,3,\cdots;$  and
\\(III).$  \htop( AP(f)\cap QR(f)\cap \mathbb{E}_k )=\htop(f), k=1,2,3,\cdots.$
%\begin{description}
%  \item[(1)] $\htop(AP(f))=\htop(f)$.
%  \item[(2)] $\htop(NRec(f))=\htop(f)$.
%  \end{description}
\end{maincorollary}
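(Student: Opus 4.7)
The plan is to derive this corollary from the preceding nonuniformly hyperbolic corollary by upgrading its $h_{hyp}$ lower bounds to equalities via Ruelle's inequality on surfaces. First I would invoke the identity $h_{hyp}=\htop(f)$ already displayed in the paragraph preceding the statement: on a compact surface, Ruelle's inequality forces any ergodic $\mu$ with $h_\mu(f)>0$ to have a positive top Lyapunov exponent, while applying the same inequality to $f^{-1}$ forces the bottom exponent to be negative. Hence every ergodic measure with positive entropy is hyperbolic, and combined with the variational principle and Katok's horseshoe approximation (which realises $h_{hyp}$ as the supremum of $h_\mu$ over hyperbolic ergodic measures) this yields $\htop(f)=h_{hyp}$.

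Items (I) and (II) are then immediate: the preceding nonuniformly hyperbolic corollary already provides $\htop(\cdot)\ge h_{hyp}$ for precisely the same sets appearing here, and the trivial upper bound $\htop(\cdot)\le\htop(f)$ together with $h_{hyp}=\htop(f)$ closes the estimate. For item (III), the preceding corollary only yields $\htop(AP(f)\cap QR(f))\ge h_{hyp}$, without the restriction to $\mathbb{E}_k$, so a small extra step is needed. Given a hyperbolic ergodic $\mu$ and $\eps>0$, Katok's horseshoe lemma produces a transitive topologically hyperbolic set $\Lambda\subseteq X$ with $\htop(f|_\Lambda)>h_\mu(f)-\eps$. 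The subsystem $f|_\Lambda$ is transitive, expansive and has the shadowing property, so Corollary \ref{Cor--thm-DDDDDDDDDD}(III) applies inside $\Lambda$ (for any continuous $\varphi$, since $QR\subseteq R_\varphi$ makes that factor redundant) and gives
$$\htop\bigl(\mathbb{E}_k(f|_\Lambda)\cap AP(f|_\Lambda)\cap QR(f|_\Lambda)\bigr)=\htop(f|_\Lambda)$$
for every $k\ge 1$. Taking a supremum over $\mu$ and letting $\eps\downarrow 0$ then yields $\htop(AP(f)\cap QR(f)\cap\mathbb{E}_k)\ge h_{hyp}=\htop(f)$, with the reverse inequality trivial.

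The whole argument is bookkeeping on top of previously-established results. The only potential subtlety, and what I would identify as the main obstacle, is verifying that the membership conditions defining $AP$, $QR$ and $\mathbb{E}_k$ relative to $f|_\Lambda$ transfer to the ambient system $f$; but this is immediate from the invariance of $\Lambda$: for $x\in\Lambda$ one has $\omega_f(x)\subseteq\Lambda$, the ergodic $f|_\Lambda$-invariant measures on $\omega_f(x)$ coincide with the ergodic $f$-invariant ones, and being almost periodic or quasi-regular depends only on the orbit of $x$. Thus no analytic ingredient beyond Ruelle's inequality and Katok's horseshoe lemma is needed.
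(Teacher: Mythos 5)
Your proposal is correct and follows the paper's own (largely implicit) route: Ruelle's inequality applied to $f$ and $f^{-1}$ gives $h_{hyp}=\htop(f)$ on surfaces, and the stated equalities then follow from the preceding nonuniformly hyperbolic corollary together with the trivial upper bound, which is exactly how the paper presents it. Your extra step for item (III) --- re-running Katok's horseshoe approximation and applying Corollary \ref{Cor--thm-DDDDDDDDDD}(III) (equivalently Theorem \ref{thm-A0000000000}(II)) inside the horseshoe to recover the $\mathbb{E}_k$ constraint that the preceding corollary's item (III) omits, plus the check that $AP$, $QR$ and $\mathbb{E}_k$ restrict correctly to closed invariant subsets --- is precisely the detail the paper glosses over, and it is the same mechanism by which the paper derives that corollary in the first place.
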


\subsection{Organization of the paper}

The remainder of this paper is organized as follows.  In Section \ref{section-Perliminaries}
we will recall the notions of entropy, shadowing and shadowing-like properties, saturated property.
% and recall
%some classical  results including saturated property and entropy-dense property.
 In   Section \ref{section-minimalentropydense} we show minimal-entropy-dense property for which the proof for symbolic version is given in the last section.  In Section \ref{Proof} we give the proofs of our main results.

%\subsection{Periodic-like Recurrence}\label{section-periodic-like-recurrent}

\section{{  Preliminaries}}\label{section-Perliminaries}

%\subsection{Notions and Notations}
Consider a metric space $(O,  d).  $ Let $A,  B$ be two nonempty subsets,   then the distance from $x\in X$ to $B$ is defined as
$$\dist(x,  A):=\inf_{y\in B}d(x,  y).  $$
Furthermore,   the distance from $A$ to $B$ is defined as
$$\dist(A,  B):=\sup_{x\in A}\dist(x,  B).  $$
Finally,   the Hausdorff distance between $A$ and $B$ is defined as
$$d_H(A,  B):=\max\set{\dist(A,  B),  \dist(B,  A)}.  $$
 Now consider a TDS $(X,  f).  $ If for every pair of non-empty open sets $U,  V$ there is an integer $n$ such that $f^n(U)\cap V\neq \emptyset$ then we call $(X,  f)$ \textit{topologically transitive}.
Furthermore,   if for every pair of non-empty open sets $U,  V$ there exists an integer $N$ such that $f^n(U)\cap V\neq \emptyset$ for every $n>N$,   then we call $(X,  f)$ \textit{topologically mixing}.
We say that $f$ is \emph{positively expansive} if there exists a constant $c>0$ such that for any $x,  y\in X$,   $d(f^ix,  f^iy)> c$ for some $i\in\Z^+$.   When $f$ is a homeomorphism,   we say that $f$ is \emph{expansive} if there exists a constant $c>0$ such that for any $x,  y\in X$,   $d(f^ix,  f^iy)> c$ for some $i\in\Z$.   We call $c$ the expansive constant.
We say $(Y,  f|_Y)$ is a subsystem of $(X,  f)$ if $Y$ is a closed $f$-invariant subset of $X$ and $f|_Y$ is the restriction of $f$ on $Y$.   It is not hard to check that $Rec(f|_Y)=Rec(f)\cap Y$.   Consequently,   $NRec(f|_Y)=NRec(f)\cap Y$.

A finite sequence $\C=\langle x_1,  \cdots,  x_l\rangle,  l\in\N$ is called a \emph{chain}.   Furthermore,   if $d(fx_i,  x_{i+1})<\eps,  1\leq i\leq l-1$,   we call $\C$ an $\eps$-chain.
%For any $m\in\N$,   if there are $m$ chains $\mathfrak{C}_i=\langle x_{i,  1},  \cdots,  x_{i,  l_i}\rangle$,   $l_i\in\N,  1\leq i\leq m$ satisfying that $x_{i,  l_i}=x_{i+1,  1},   1\leq i\leq m-1$,   then can concatenate $\mathfrak{C}_i$s to constitute a new chain
%$$\langle x_{1,  1},  \cdots,  x_{1,  l_1},  x_{2,  2},  \cdots,  x_{2,  l_2},  \cdots,  x_{m,  2},  \cdots,  x_{m,  l_m}\rangle$$
%which we denote by $\mathfrak{C}_1\mathfrak{C}_2\cdots\mathfrak{C}_m$.   When $m=\infty$,   we can also concatenate $\mathfrak{C}_i$s to obtain a pseudo-orbit
%$$\langle x_{1,  1},  \cdots,  x_{1,  l_1},  x_{2,  2},  \cdots,  x_{2,  l_2},  x_{3,  2},  \cdots,  x_{3,  l_3},  \cdots\rangle$$
%which we denote by $\mathfrak{C}_1\mathfrak{C}_2\cdots\mathfrak{C}_m\cdots$.
Let $A\subseteq X$ be a nonempty invariant set.   We call $A$ \emph{internally chain transitive} if for any $a,  b\in A$ and any $\eps>0$,   there is an $\eps$-chain $\mathfrak{C}_{ab}$ in $A$ with connecting $a$ and $b$.

\begin{Lem}\label{Lem-omega-naturally-in-ICT} \cite{HSZ} For any $x\in X,$ $\omega_f(x)$ is chain transitive.

\end{Lem}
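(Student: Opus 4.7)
The plan is to establish chain transitivity of $\omega_f(x)$ by a direct shadowing-by-orbit argument: since the tail of the forward orbit of $x$ accumulates exactly on $\omega_f(x)$, I can use long orbit segments of $x$ passing close to $a$ and then close to $b$, and then approximate each point of such a segment by a nearby point of $\omega_f(x)$ to manufacture the required $\varepsilon$-chain.

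Fix $a,b\in\omega_f(x)$ and $\varepsilon>0$. First I would use uniform continuity of $f$ on the compact space $X$ to choose $\delta\in(0,\varepsilon/3)$ with the property that $d(y,z)<\delta$ implies $d(f(y),f(z))<\varepsilon/3$. The crucial preliminary step is to show that the forward orbit of $x$ eventually enters and remains in the $\delta$-neighborhood of $\omega_f(x)$, i.e.\ there exists $N$ such that $\dist(f^n(x),\omega_f(x))<\delta$ for all $n\geq N$. This is a standard compactness argument: otherwise one could extract a subsequence $f^{n_j}(x)$ with $\dist(f^{n_j}(x),\omega_f(x))\geq\delta$, and by compactness of $X$ a further subsequence would converge to some point $y$; by definition $y\in\omega_f(x)$, contradicting $\dist(y,\omega_f(x))\geq\delta$.

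Next, using $a,b\in\omega_f(x)$, I would pick integers $N\leq n_1<m_1$ with $d(f^{n_1}(x),a)<\delta$ and $d(f^{m_1}(x),b)<\delta$. Then for each $k$ with $n_1<k<m_1$ I choose $z_k\in\omega_f(x)$ with $d(z_k,f^k(x))<\delta$ (which is possible by the previous paragraph), and set $z_{n_1}=a$ and $z_{m_1}=b$. The sequence $z_{n_1},z_{n_1+1},\dots,z_{m_1}$ is then a candidate chain in $\omega_f(x)$ connecting $a$ to $b$.

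Finally I would verify the chain condition by the triangle inequality:
$$
d(f(z_k),z_{k+1})\leq d(f(z_k),f(f^k(x)))+d(f^{k+1}(x),z_{k+1})<\varepsilon/3+\delta<\varepsilon,
$$
using uniform continuity for the first term and the $\delta$-closeness $d(z_k,f^k(x))<\delta$ for the input, together with $d(f^{k+1}(x),z_{k+1})<\delta$ for the second. Hence $(z_{n_1},\dots,z_{m_1})$ is an $\varepsilon$-chain in $\omega_f(x)$ from $a$ to $b$, proving internal chain transitivity. The only mildly delicate point is the compactness argument guaranteeing that the orbit eventually lives in a $\delta$-tube around $\omega_f(x)$; everything else is triangle inequality plus uniform continuity, so I do not anticipate a serious obstacle.
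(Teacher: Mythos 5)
Your proof is correct, and it is essentially the standard argument behind the result: the paper itself gives no in-text proof of this lemma, merely citing \cite{HSZ}, and the proof there proceeds exactly as you do --- the orbit tail is eventually trapped in a $\delta$-neighborhood of $\omega_f(x)$ by compactness, orbit segments running from near $a$ to near $b$ are projected onto $\omega_f(x)$, and uniform continuity plus the triangle inequality verifies the $\varepsilon$-chain condition. All the delicate points (choosing $n_1<m_1$ after the trapping time $N$, and including the endpoints $z_{n_1}=a$, $z_{m_1}=b$ in the $\delta$-closeness estimates) are handled correctly in your write-up.
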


Let $\omega_f=\{A\subseteq X:\exists x\in X~\text{with}~\omega_f(x)=A\}$ and denote the collection of internally chain transitive sets by $ICT(f)$.
For any open $U\subseteq X,  $ define
$$\omega_f^U:=\{A\subseteq X: \text{there exists } x\in X \text{ such that } \omega_f(x)=A\}.  $$

\begin{Lem} \label{Lem-locally-omega-f-equalto-ICT}  \cite{DongTian2016-nosyndetic}
Suppose $(X,  f)$ is topologically transitive and topologically expanding. % (resp.  ,   a transitive and topologically hyperbolic homeomorphism).
  Then  for any nonempty open $U\subseteq X,  $ $ICT(f)=\omega_f=\omega^U_f$.
\end{Lem}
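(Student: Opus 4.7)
The statement decomposes into three inclusions, $\omega_f^U\subseteq\omega_f\subseteq ICT(f)\subseteq\omega_f^U$. The first is immediate from the definitions (points of $U$ are points of $X$, under the reading that $\omega_f^U$ is realized by points in $U$), and the second is exactly Lemma~\ref{Lem-omega-naturally-in-ICT}. The real content of the lemma is the third inclusion: given $A\in ICT(f)$ and a nonempty open $U\subseteq X$, produce a point $x\in U$ with $\omega_f(x)=A$.

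The plan is a pseudo-orbit/shadowing construction that combines topological transitivity of $X$ (to move from $U$ to the set $A$) with internal chain transitivity of $A$ (to spread out inside $A$). Fix a countable dense sequence $\{a_k\}_{k\ge 1}\subseteq A$, an expansive constant $c>0$, a ball $V$ with $\overline V\subseteq U$, and let $u_0$ be its center. Choose $\epsilon_n:=\min\{1/n,\,c/4,\,r(V)/2\}$ and let $\delta_n>0$ be the corresponding shadowing modulus. I would assemble a pseudo-orbit $\mathcal P=y_0 y_1 y_2\cdots$ by concatenating: (i) $y_0=u_0\in V$; (ii) a $\delta_1$-chain in $X$ from $u_0$ into $A$, supplied by chain transitivity of $X$; (iii) successive blocks $B_n$ ($n\ge 1$) of $\delta_n$-chains inside $A$, produced by internal chain transitivity, arranged so that block $B_n$ passes through each of $a_1,\dots,a_n$ at prescribed control times $T_k^{(n)}$.

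Applying shadowing block-by-block yields, for each $n$, a point $x_n\in B(u_0,\epsilon_n)\subseteq V$ whose orbit is $\epsilon_n$-close to $\mathcal P$ from some time onward; extracting a subsequential limit $x=\lim x_{n_k}\in\overline V\subseteq U$ is the candidate. The inclusion $A\subseteq\omega_f(x)$ then comes from passing the estimate $d(f^{T_k^{(n)}}x_n,a_k)<2\epsilon_n$ to the limit: each $a_k$ appears as a subsequential limit of $\{f^m x\}$, and density plus closedness give $A\subseteq\omega_f(x)$.

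The reverse inclusion $\omega_f(x)\subseteq A$ is where the main obstacle lies, since $\omega_f$ is generally discontinuous in $x$. Expansiveness has to enter decisively here: any two orbits staying $c/2$-close for all positive time coincide, which both pins down the shadowing point uniquely and lets one upgrade the finite-scale shadowing to an asymptotic (limit) shadowing statement $d(f^i x, y_i)\to 0$. Once such a limit-shadowing conclusion is available, $\omega_f(x)$ equals the set of accumulation points of $\{y_i\}$, which by the construction (all blocks $B_n$ lying inside the closed set $A$, with visits dense in $A$) is exactly $A$. Making this last step rigorous---either through a careful diagonalization on the sequence $(x_n)$ forced by expansiveness, or by first extracting an asymptotic pseudo-orbit and then invoking limit shadowing---is the technical heart of the argument.
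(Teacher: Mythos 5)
Your overall strategy is the intended one (note that this paper does not prove the lemma at all; it is quoted from the companion paper \cite{DongTian2016-nosyndetic}, whose argument is exactly: a transitivity chain from $U$ into $A$, then finer and finer internal chains visiting a dense subset of $A$, then shadowing plus positive expansiveness). However, the middle of your write-up contains a step that fails, and the decisive step is deferred rather than proved, so as it stands the proposal is incomplete exactly where the lemma's content lies. First, the points $x_n$ are not produced by the shadowing property as you describe: since $\mathcal{P}$ is only a $\delta_1$-pseudo-orbit on its initial segment (the transitivity chain and the early blocks), no application of shadowing can give a point that is $\epsilon_n$-close to $u_0$ and whose orbit is eventually $\epsilon_n$-close to $\mathcal{P}$; what shadowing at modulus $\delta_n$ actually yields is a point $z_n$ that $\epsilon_n$-shadows the \emph{tail} $(y_i)_{i\geq N_n}$ of $\mathcal{P}$, and such a point lies near the start of the block $B_n$, not near $u_0$. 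Second, the subsequential limit $x=\lim x_{n_k}$ cannot deliver $A\subseteq\omega_f(x)$: the control times $T_k^{(n)}$ tend to infinity with $n$, and convergence of the base points gives no control over $f^{T_k^{(n)}}x$ --- this is precisely the discontinuity of $x\mapsto\omega_f(x)$ that you yourself flag, and no compactness argument closes it.

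The repair is the mechanism you gesture at in your last paragraph, and once written out it makes the $x_n$ and any limit of points unnecessary. Let $x$ be a point that $\epsilon_1$-shadows \emph{all} of $\mathcal{P}$; then $d(x,u_0)\leq\epsilon_1<r(V)$, so $x\in V\subseteq U$. For each $n$, let $z_n$ be a point that $\epsilon_n$-shadows the tail $(y_i)_{i\geq N_n}$. Then for every $i\geq 0$,
$d\bigl(f^i(f^{N_n}x),f^i z_n\bigr)\leq d\bigl(f^{N_n+i}x,y_{N_n+i}\bigr)+d\bigl(y_{N_n+i},f^i z_n\bigr)\leq \epsilon_1+\epsilon_n\leq c/2<c$,
so positive expansiveness forces $f^{N_n}x=z_n$. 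Hence $\sup_{i\geq N_n}d(f^ix,y_i)\leq\epsilon_n$ for every $n$, i.e.\ $d(f^ix,y_i)\to 0$, and therefore $\omega_f(x)$ equals the accumulation set of $(y_i)$, which is $A$: all $y_i$ beyond the initial segment lie in the closed set $A$, and every $a_k$ occurs in infinitely many blocks. This single paragraph is the ``limit shadowing from shadowing plus expansiveness'' statement you invoke without proof; with it inserted in place of the $x_n$-and-subsequential-limit step, your construction does prove $ICT(f)\subseteq\omega_f^U$, and combined with your two easy inclusions the lemma follows.
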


For any two TDSs $(X,  f)$ and $(Y,  g)$,   if $\pi:(X,  f)\to (Y,  g)$ is a continuous surjection such that $\pi\circ f=g\circ\pi$,   then we say $\pi$ is a semiconjugation.   We have the following conclusions:
\begin{equation}\label{AP-Rec}
 \pi(AP(f))=AP(g)~\text{and}~\pi(Rec(f))=Rec(g).
\end{equation}
If in addition,   $\pi$ is a homeomorphism,   we call $\pi$ a conjugation.

\subsection{The Space of Borel Probability Measures}
%\subsubsection{Metric Compatible with the Weak$^*$ Topology}
%\subsubsection{The Space of Invariant Measures}
\subsubsection{Probability Measures and Invariant Measures}

The space of Borel probability measures on $X$ is denoted by $M(X)$ and the set of continuous functions on $X$ by $C(X)$.   We endow $\varphi\in C(X)$ the norm $\|\varphi\|=\max\{|\varphi(x)|:x\in X\}$.
Let ${\{\varphi_{j}\}}_{j\in\mathbb{N}}$ be a dense subset of $C(X)$,   then
$$\rho(\xi,  \tau)=\sum_{j=1}^{\infty}\frac{|\int\varphi_{j}d\xi-\int\varphi_{j}d\tau|}{2^{j}\|\varphi_{j}\|}$$
defines a metric on $M(X)$ for the $weak^{*}$ topology  \cite{Walters}.
%\begin{Prop}\cite{Walters}\label{convergence-of-measure}
%Let $\mu_n,  \mu\in M(X)$.   Then the following conditions are equivalent:
%\begin{enumerate}
 % \item $\mu_n$ converges weakly to $\mu.  $
 % \item $\lim_{n\to\infty}\int\varphi d\mu_n=\int\varphi d\mu$ for all $\varphi\in C(X).  $
 % \item $\limsup_{n\to\infty}\mu_n(C)\leq\mu(C)$ for all closed $C\subseteq X.  $
 % \item $\liminf_{n\to\infty}\mu_n(U)\geq\mu(U)$ for all open $U\subseteq X.  $
%\end{enumerate}
%\end{Prop}
For $\nu\in M(X)$ and $r>0$,   we denote a ball in $M(X)$ centered at $\nu$ with radius $r$ by
$$\mathcal{B}(\nu,  r):=\{\rho(\nu,  \mu)<r:\mu\in M(X)\}.  $$
%One notices that
%\begin{equation}\label{diameter-of-Borel-pro-meas}
%\rho(\xi,  \tau)\leq2~~\textrm{for any}~~\xi,  \tau\in M(X).
%\end{equation}
%It is also well known that the natural imbedding $j:x\mapsto \delta_x$ is continuous.   Since $X$ is compact and $M(X)$ is Hausdorff,   one sees that there is a homeomorphism between $X$ and its image $j(X)$.   Therefore,   without loss of generality
%we will assume that
%\begin{equation}\label{metric-on-X}
%  d(x,  y)=\rho(\delta_x,  \delta_y).
%\end{equation}
%A straight calculation using \eqref{diameter-of-Borel-pro-meas} and \eqref{metric-on-X} gives
%\begin{Lem}\label{lem:prohorov}
% Let $(X,  f)$ be a dynamical system and let $x\in X$.
% \begin{enumerate}
%   \item\label{lem:prohorov:1} Let $0\leq k< m <n$ and $x\in X$.   Then
  % $$\rho(\E_m(x),  \E_n(f^k(x)))\leq \frac{2}{n}(n-m+k),  $$
  % \item\label{lem:prohorov:2} Given $\eps > 0$ and $p\in \N$,   for every $y  \in B_p(x,  \eps)$ we have $\rho(\E_p(y),   \E_p(x)) <\eps$.
  % \item\label{lem:prohorov:3} Given $\eps > 0$ and $p,  q \in \N$ satisfying $p \leq q \leq (1+\eps/2)p$,   for every $y  \in B_p(x,  \eps)$ we have $\rho(\E_q(y),   \E_p(x)) <2\eps$.
  %\end{enumerate}
%\end{Lem}
\begin{Def}
For $\mu\in M(X)$,   the set of all $x\in X$ with the property that $\mu(U)>0$ for all neighborhood $U$ of $x$ is called the support of $\mu$ and denoted by $S_{\mu}$.   Alternatively,   $S_{\mu}$ is the (well defined) smallest closed set $C$ with $\mu(C)=1$.
\end{Def}

We say $\mu\in M(X)$ is an $f$-invariant measure if for any Borel measurable set $A$,   one has $\mu(A)=\mu(f^{-1}A)$.   The set of $f$-invariant measures are denoted by $M(f,  X)$.   We remark that if $\mu\in M(f,  X)$,   then the support of $\mu$ $S_{\mu}$ is a closed $f$-invariant set.

{
Let $(X,  f)$ and $(Y,  g)$ be two dynamical systems and $\pi:X\to Y$ is a continuous map.   Define $\pi_*:M(X)\to M(Y)$ as $\pi_*\mu:=\mu\circ\pi^{-1}$ and $\pi^*:C(Y)\to C(X)$ as $\pi^*\varphi:=\varphi\circ\pi.  $ We call $\pi_*\mu$ the push-forward of $\mu$ and $\pi^*\varphi$ the pull-back of $\varphi$.   It is not hard to see that
\begin{equation}\label{eq-pi-up-down}
  \int\varphi d\pi_*\mu=\int\pi^*\varphi d\mu~\textrm{for any}~\varphi\in C(Y)~\textrm{and}~\mu\in M(X).
\end{equation}
Moreover,   one notices that $\mu\in M(f,  X)$ is equivalent to $f_*\mu=\mu.  $ The following are not hard to check.
\begin{Lem}\label{lem-pi-star}
If $\pi:(X,  f)\to (Y,  g)$ is a conjugation,
then
\begin{description}
  \item[(1)] there is a metric $\rho_X$ on $M(X)$ and a metric $\rho_Y$ on $M(Y)$ such that $\pi_*:(M(X),  \rho_X)\to(M(Y),  \rho_Y)$ is an isometric isomorphism.   In particular,   $\pi_*:M(X)\to M(Y)$ is a homeomorphism.
  \item[(2)] $\pi_*(M(f,  X))= M(g,  Y)$ and $\pi_*|_{M(f,  X)}:(M(f,  X),  \rho_X)\to(M(f,  Y),  \rho_Y)$ is also an isometry isomorphism.
\end{description}
\end{Lem}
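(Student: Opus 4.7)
The plan is to exploit the fact that a conjugation $\pi:X\to Y$ is in particular a homeomorphism, so the pull-back $\pi^{*}:C(Y)\to C(X)$, $\psi\mapsto\psi\circ\pi$, is an isometric isomorphism with respect to the sup-norms; we will then transport a dense sequence chosen on one side to the other and read off the isometry from the very definition of the weak$^{*}$ metric together with the transfer formula \eqref{eq-pi-up-down}.

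For part (1), first fix any countable dense subset $\{\psi_{j}\}_{j\in\N}$ of $C(Y)$ and use it to define the metric
\[
\rho_Y(\xi,\tau)=\sum_{j=1}^{\infty}\frac{|\int\psi_j\,d\xi-\int\psi_j\,d\tau|}{2^{j}\|\psi_j\|}
\]
on $M(Y)$. Because $\pi^{*}$ is an isometric bijection of $C(Y)$ onto $C(X)$, the sequence $\{\pi^{*}\psi_{j}\}_{j\in\N}$ is dense in $C(X)$, and moreover $\|\pi^{*}\psi_{j}\|=\|\psi_{j}\|$. Define $\rho_X$ on $M(X)$ using this latter sequence. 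Then by \eqref{eq-pi-up-down}, $\int\psi_{j}\,d\pi_{*}\xi=\int\pi^{*}\psi_{j}\,d\xi$ for each $j$, so the two defining series agree term by term and $\rho_Y(\pi_{*}\xi,\pi_{*}\tau)=\rho_X(\xi,\tau)$. Injectivity of $\pi_{*}$ is then automatic from the isometry, and surjectivity follows from observing that $(\pi^{-1})_{*}$ is a two-sided inverse. The second assertion of (1), that $\pi_{*}$ is a homeomorphism for the ordinary weak$^{*}$ topologies, is then immediate because any two metrics of the above form induce the weak$^{*}$ topology on $M(X)$ (resp.\ on $M(Y)$), as recalled after the definition of $\rho$.

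For part (2), I would simply chase the invariance condition through $\pi$. If $\mu\in M(f,X)$, i.e.\ $f_{*}\mu=\mu$, then using $\pi\circ f=g\circ\pi$ I get
\[
g_{*}(\pi_{*}\mu)=(g\circ\pi)_{*}\mu=(\pi\circ f)_{*}\mu=\pi_{*}(f_{*}\mu)=\pi_{*}\mu,
\]
so $\pi_{*}\mu\in M(g,Y)$. Running the same argument with $\pi^{-1}$ (which is itself a conjugation from $(Y,g)$ to $(X,f)$) gives the reverse inclusion, so $\pi_{*}(M(f,X))=M(g,Y)$. Restricting the global isometry of (1) to the closed invariant subset $M(f,X)$ then yields the desired isometric isomorphism onto $M(g,Y)$.

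There is essentially no obstacle; the only thing to be a bit careful about is the choice of the two metrics, since the definition of $\rho$ in the paper depends on an a priori chosen dense sequence in $C(X)$, and different sequences give equivalent but not isometric metrics. Consequently the statement requires us to \emph{select} compatible sequences on the two sides, which is precisely what $\pi^{*}$ lets us do. Once this compatibility is fixed, everything else is a direct computation from \eqref{eq-pi-up-down}.
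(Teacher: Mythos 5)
The paper itself offers no proof of this lemma --- it is stated immediately after the remark ``The following are not hard to check,'' so there is nothing on the paper's side to compare against line by line. Your proposal is correct and supplies exactly the argument the authors presumably had in mind: since $\pi$ is a homeomorphism, the pull-back $\pi^{*}:C(Y)\to C(X)$ is a sup-norm isometric isomorphism, so transporting a dense sequence $\{\psi_j\}\subseteq C(Y)$ to $\{\pi^{*}\psi_j\}\subseteq C(X)$ and using \eqref{eq-pi-up-down} makes the two defining series for $\rho_X$ and $\rho_Y$ agree term by term, while bijectivity comes from $(\pi^{-1})_{*}$ and the weak$^{*}$-homeomorphism claim from the fact (quoted in the paper from \cite{Walters}) that any such metric induces the weak$^{*}$ topology. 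Your part (2) is likewise complete: the computation $g_{*}(\pi_{*}\mu)=(\pi\circ f)_{*}\mu=\pi_{*}\mu$ together with the symmetric argument for $\pi^{-1}$ gives $\pi_{*}(M(f,X))=M(g,Y)$, and the isometry restricts. You also correctly isolate the one genuinely non-automatic point, namely that the lemma's phrasing ``there is a metric $\rho_X$ \dots and a metric $\rho_Y$'' is what licenses choosing the two dense sequences compatibly; different sequences give equivalent but generally non-isometric metrics, so the existential quantifier over metrics is essential to the statement.
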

We say $\mu\in M(X)$ is an ergodic measure if for any Borel set $B$ with $f^{-1}B=B$,   either $\mu(B)=0$ or $\mu(B)=1$.   We denote the set of ergodic measures on $X$ by $M_{erg}(f,  X)$.   It is well known that the ergodic measures are exactly the extreme points of $M(f,  X).  $
The following is an easy observation.

\begin{Lem}\label{lem-decomposition}
If $\mu\in M_{erg}(f,  X)$,   then for any $n\in\N$,   there is a $\nu\in M_{erg}(f^n,  X)$ and
an $m\in\N$ with $m|n$ such that $\mu$ can decompose as
$$\mu=\frac1m(\nu+f_*\nu+\cdots+f_*^{m-1}\nu).  $$
Moreover,   there is a $X_0\subseteq X$ with $\nu(X_0)=1$ and $f^mX_0=X_0$ such that $X$ has a $\mod 0$ measurable partition $X=\bigsqcup_{i=0}^{m-1}f^iX_0.  $
\end{Lem}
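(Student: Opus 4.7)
The plan is to analyze $\mu$ via its ergodic decomposition as an $f^n$-invariant measure. First I would note that $\mu$ is automatically $f^n$-invariant (being $f$-invariant), so it admits a decomposition $\mu = \int \nu_\omega \, dP(\omega)$ into ergodic measures for $f^n$. The key observation is that $f_*$ permutes the $f^n$-ergodic components: if $\nu$ is $f^n$-ergodic then $(f^n)_*(f_*\nu) = f_*(f^n)_*\nu = f_*\nu$, and if $A$ is $f^n$-invariant mod $f_*\nu$ then $f^{-1}A$ is $f^n$-invariant mod $\nu$, so has $\nu$-measure $0$ or $1$, showing that $f_*\nu$ is again $f^n$-ergodic.

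Next, I would fix one ergodic component $\nu$ of $\mu$ under $f^n$ and consider its $f_*$-orbit. Since $f^n_*\nu=\nu$, the orbit must close up: let $m$ be the smallest positive integer with $f^m_*\nu=\nu$; then $m \mid n$, and $\nu, f_*\nu, \ldots, f_*^{m-1}\nu$ are pairwise distinct. The measure $\mu' := \tfrac{1}{m}\sum_{i=0}^{m-1} f_*^i\nu$ is $f$-invariant, and each $f_*^i\nu$ is absolutely continuous with respect to $\mu$ (as a component of its $f^n$-decomposition), hence so is $\mu'$. By $f$-ergodicity of $\mu$, any $f$-invariant probability measure absolutely continuous with respect to $\mu$ must equal $\mu$, yielding $\mu = \tfrac{1}{m}(\nu + f_*\nu + \cdots + f_*^{m-1}\nu)$. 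This establishes the first half of the lemma, and in particular that $\nu \in M_{erg}(f^n,X)$ is a true $f^n$-ergodic measure.

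For the partition, I would use that $\nu, f_*\nu, \ldots, f_*^{m-1}\nu$ are distinct $f^n$-ergodic measures, hence pairwise mutually singular. Choose pairwise disjoint Borel sets $D_0,\ldots,D_{m-1}$ with $f_*^i\nu(D_i)=1$ for each $i$, and set
$$X_0 := \bigcap_{i=0}^{m-1} f^{-i}(D_i).$$
Then $\nu(X_0)=1$ since $\nu(f^{-i}D_i)=f_*^i\nu(D_i)=1$, and $f^iX_0 \subseteq D_i$ forces the sets $f^iX_0$ ($0\leq i\leq m-1$) to be pairwise disjoint. A direct calculation using $f_*^i\nu(f^iX_0)=1$ gives $\mu(f^iX_0)=1/m$, so $X=\bigsqcup_{i=0}^{m-1} f^iX_0$ is a $\mathrm{mod}\,0$ partition with respect to $\mu$.

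The main technical wrinkle is the literal equality $f^mX_0 = X_0$, which I expect to be the only nontrivial bookkeeping step. One passes to the smaller set $\widetilde{X}_0 := \bigcap_{k\geq 0} f^{-km}(X_0)$; since $\nu$ is $f^m$-invariant, $\nu(\widetilde{X}_0)=1$, and one directly checks $f^{-m}(\widetilde{X}_0)=\widetilde{X}_0$, which together with the mod-$0$ partition property forces $f^m\widetilde{X}_0 = \widetilde{X}_0$ up to a $\nu$-null set; discarding this null set and relabelling gives the required $X_0$.
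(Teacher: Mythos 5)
A preliminary remark: the paper states this lemma with no proof at all (it is introduced as ``an easy observation''), so there is no authorial argument to compare against; your proposal must stand on its own. Its skeleton --- decompose $\mu$ ergodically with respect to $f^n$, note that $f_*$ permutes $f^n$-ergodic measures, average over the finite $f_*$-orbit of one component, and invoke $f$-ergodicity --- is the standard route, but two steps have genuine gaps.

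The first gap is the assertion that each $f_*^i\nu$ is absolutely continuous with respect to $\mu$ ``as a component of its $f^n$-decomposition''. That justification appeals to a false principle: a component of an ergodic decomposition need not be absolutely continuous with respect to the decomposed measure (for the identity map on $[0,1]$ with Lebesgue measure, every ergodic component is a Dirac mass, hence singular with respect to it). What makes absolute continuity true here is that the $f^n$-ergodic decomposition of an $f$-\emph{ergodic} measure is finite --- purely atomic with at most $n$ atoms --- and this finiteness, which is really the heart of the lemma, is never established in your proposal. It needs the following short argument: if $f^{-n}A=A$ and $\mu(A)>0$, then $B=\bigcup_{i=0}^{n-1}f^{-i}A$ satisfies $f^{-1}B=B$, so $\mu(B)=1$ by $f$-ergodicity, and $1=\mu(B)\le n\mu(A)$, i.e.\ $\mu(A)\ge 1/n$. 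Hence the $\sigma$-algebra of $f^n$-invariant sets has at most $n$ atoms mod $\mu$; the ergodic components are the conditional measures $\mu(\cdot\,|\,A)$ on these atoms, each dominated by $\mu(A)^{-1}\mu$. With this inserted, the rest of your first half (the orbit closing up with least period $m\mid n$, the average $\mu'$ being $f$-invariant and $\ll\mu$, hence $=\mu$) is correct.

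The second gap is the ``technical wrinkle'' $f^mX_0=X_0$, which your sketch does not actually resolve. For $\widetilde X_0=\bigcap_{k\ge0}f^{-km}X_0$ one only gets $\widetilde X_0\subseteq f^{-m}\widetilde X_0$: a point can lie in $f^{-km}X_0$ for every $k\ge1$ without lying in $X_0$, so the claimed equality $f^{-m}\widetilde X_0=\widetilde X_0$ fails (this part is repairable, e.g.\ $\bigcup_{N\ge0}\bigcap_{k\ge N}f^{-km}X_0$ is exactly $f^{-m}$-invariant). The serious problem is that the lemma covers non-invertible maps (the paper's topologically expanding case): when $f$ is not surjective, exact preimage-invariance $f^{-m}E=E$ yields only $f^mE=E\cap f^m(X)$, and ``discarding a null set'' cannot force exact forward invariance --- the defect reappears after each discard, and the obvious iteration need not stabilize. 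A construction that does work: let $S=\supp\nu$ and let $G$ be the set of generic points of $\nu$ under $f^m$ (note $\nu$ is $f^m$-ergodic, since every $f^m$-invariant set is $f^n$-invariant). Then $f^mS=S$ exactly, because $f^mS$ is compact and equals $\supp(f^m_*\nu)=\supp\nu$; also $f^{-m}G=G$ exactly and $\nu(G)=1$ by the Birkhoff theorem. Setting $X_0=G\cap S$ gives $f^mX_0=f^m\bigl(f^{-m}G\cap S\bigr)=G\cap f^mS=X_0$ and $\nu(X_0)=1$. Moreover the sets $f^iX_0$, $0\le i\le m-1$, are then pairwise disjoint outright --- each point of $f^iX_0$ is an $f^m$-generic point of $f^i_*\nu$, and the measures $f^i_*\nu$ are pairwise distinct --- which simultaneously replaces your $D_i$ construction and, together with $f^i_*\nu(f^iX_0)=1$, yields the mod-$0$ partition. (In the homeomorphism case your original plan does work once $\bigcap_{k\ge0}$ is replaced by $\bigcap_{k\in\Z}$.)
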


\begin{Lem}\label{lem-Lambda-Lambda-0} \cite{DongTian2016-nosyndetic}
Let $\Lambda,  \Lambda_0$ be three closed $f$-invariant subset of $X$ with $\Lambda_0\subseteq \Lambda$.   Suppose for any $x\in\Lambda$,   $\omega_f(x)\subseteq \Lambda_0$.   Then
$$M(f,  \Lambda)=M(f,  \Lambda_0).  $$
\end{Lem}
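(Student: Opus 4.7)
The plan is to prove the nontrivial inclusion $M(f,\Lambda)\subseteq M(f,\Lambda_0)$; the reverse inclusion is immediate, since every $\nu\in M(f,\Lambda_0)$ can be viewed as an element of $M(f,\Lambda)$ by the closed embedding $\Lambda_0\hookrightarrow\Lambda$, and invariance is preserved. So fix $\mu\in M(f,\Lambda)$ and aim to show $\mu(\Lambda_0)=1$.

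The key tool is the Poincar\'e recurrence theorem. Since $X$ is a compact metric space it is second countable; fix a countable base $\{U_n\}_{n\in\N}$ of the topology. For each $n$ with $\mu(U_n\cap\Lambda)>0$, Poincar\'e recurrence applied to the measure-preserving system $(\Lambda,\mu,f|_\Lambda)$ tells us that $\mu$-a.e.\ point of $U_n\cap\Lambda$ returns to $U_n$ infinitely often. Intersecting the resulting full-measure sets over the countable index set, one obtains that $\mu$-a.e.\ $x\in\Lambda$ is recurrent in the sense $x\in\omega_f(x)$, i.e.\ $\mu(Rec(f)\cap\Lambda)=1$.

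Now invoke the hypothesis. For every such recurrent $x\in\Lambda$ one has $x\in\omega_f(x)\subseteq\Lambda_0$, whence $Rec(f)\cap\Lambda\subseteq\Lambda_0$. Consequently $\mu(\Lambda_0)\ge\mu(Rec(f)\cap\Lambda)=1$. Since $\Lambda_0$ is closed (hence Borel) and $f$-invariant, $\mu$ descends to an element of $M(f,\Lambda_0)$, completing the proof.

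There is no real obstacle here; the only point to be careful about is the standard measure-theoretic fact that $\mu(Rec(f))=1$ for every invariant Borel probability measure, which is precisely the countable-base formulation of Poincar\'e recurrence recalled above. No use is made of expansiveness, shadowing or transitivity, so the lemma holds for an arbitrary topological dynamical system.
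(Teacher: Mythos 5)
Your proof is correct. One preliminary remark: this paper does not actually prove Lemma \ref{lem-Lambda-Lambda-0} --- it only cites the companion paper \cite{DongTian2016-nosyndetic} --- so there is no in-text argument to compare against; the comparison below is with the standard argument for statements of this type. Your route is complete and sound: the inclusion $M(f,\Lambda_0)\subseteq M(f,\Lambda)$ is indeed trivial, and for the converse the countable-base form of Poincar\'e recurrence gives $\mu(Rec(f)\cap\Lambda)=1$ for every $\mu\in M(f,\Lambda)$, after which the hypothesis applied to a recurrent point $x\in\Lambda$ yields $x\in\omega_f(x)\subseteq\Lambda_0$, hence $\mu(\Lambda_0)=1$. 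An equally standard alternative, which is how such lemmas are often proved, goes through ergodic decomposition: for ergodic $\mu\in M(f,\Lambda)$, $\mu$-almost every $x$ has dense orbit in $S_\mu$, so $S_\mu\subseteq\omega_f(x)\subseteq\Lambda_0$ and thus $\mu(\Lambda_0)=1$; the general case follows by decomposing an arbitrary invariant measure into ergodic components, each of which is supported on $\Lambda_0$. Your version is if anything more elementary, since it avoids ergodic decomposition and genericity of orbits, and you are right that no expansiveness, shadowing, or transitivity is needed --- the lemma is a fact about arbitrary topological dynamical systems, which is consistent with how the paper states it (the standing hypotheses of the surrounding sections are not invoked).
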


}
%Moreover,   we have the following characterization.
{

}

 The set of $f$-invariant measures and ergodic measures supported on $Y\subseteq X$ are denoted by $M(f,  Y)$ and $M_{erg}(f,  Y)$ respectively.
     Let $S_\mu$ denote the support of $\mu.$
For $x\in X$,   we define the empirical measure of $x$ as
\begin{equation*}
 \mathcal{E}_{n}(x):=\frac{1}{n}\sum_{j=0}^{n-1}\delta_{f^{j}(x)},
\end{equation*}
where $\delta_{x}$ is the Dirac mass at $x$.    We denote the set of limit points of $\{\mathcal{E}_{n}(x)\}$ by $V_{f}(x)$.   As is known,   $V_f(X)$ is  a non-empty compact connected subset of $M(f,  X)$ \cite{DGS}.
     { For any two positive integers $a_k<b_k$,   denote
$[a_k,  b_k]=\{a_k,  a_k+1,  \cdots,  b_k\}$ and $[a_k,  b_k)=[a_k,  b_k-1],  (a_k,  b_k)=[a_k+1,  b_k-1],  (a_k,  b_k]=[a_k+1,  b_k]$.
A point $x$ is called {\it quasi-generic} for some measure $\mu,  $ if there is a sequence of positive integer intervals $I_k=[a_k,  b_k)$ with $b_k-a_k\to\infty$ such that
$$\lim_{k\rightarrow\infty}\frac{1}{b_k-a_k}\sum_{j=a_k}^{b_k-1}\delta_{f^j(x)}=\mu$$
in weak$^*$ topology.
}
Let $V_f^*(x)=\{\mu\in M(f,  X): \,  x \text{ is quasi-generic for } \mu\}$.
 This concept is from \cite{Furst} and from there it is known $V_f^*(x)$ is always nonempty,
 compact and connected.   Note that $V_f(x)\subseteq V_f^*(x).  $

%\begin{Lem}\label{lem-cap-cup} \cite{DongTian2016-nosyndetic}
%%We have the following relations:
%%\begin{description}
% % \item[(1)]
%  $\bigcap_{\mu\in M_{erg}(f,  \Lambda)}S_{\mu}=\bigcap_{\mu\in M(f,  \Lambda)}S_{\mu}$; 
% % \item[(2)] 
% $C^*_\Lambda=\overline{\bigcup_{\mu\in M_{erg}(f,  \Lambda)}S_{\mu}}$.
%%\end{description}
%\end{Lem}

\begin{Thm}\label{thm-density-basic-property} \cite{DongTian2016-nosyndetic} Suppose $(X,  f)$ is a topological dynamical system.
\begin{description}
  \item[(1)] For any $x\in X,   $ $\omega_{\underline{d}}(x)= \bigcap_{\mu\in V_f(x)} S_\mu$.
  \item[(2)] For any $x\in X,   $  $\omega_{\overline{d}}(x)=\overline{\bigcup_{\mu\in V_f(x)} S_\mu}\neq \emptyset$.
  \item[(3)] For any $x\in X,   $  $\omega_{\underline{B}}(x)= \bigcap_{\mu\in V^*_f(x)} S_\mu =  {\bigcap_{\mu\in M(f,  \omega_f(x))}S_{\mu}}= {\bigcap_{\mu\in M_{erg}(f,  \omega_f(x))}S_{\mu}}.  $\,  \,  \,   If \,
   $\omega_{\underline{B}}(x)\neq \emptyset$,   then $ \omega_{\underline{B}}(x)$ is minimal.
   % and  $$ \omega_{\underline{B}}(x)= \omega_{\underline{d}}(x)=\omega_{\overline{d}}(x)= \omega_{\overline{B}}(x)\subseteq \omega_f(x);$$
  %either
  % $\omega_{\underline{B}}(x)=\emptyset$ or $$\emptyset\neq\omega_{\underline{B}}(x)= \omega_{\underline{d}}(x)=\omega_{\overline{d}}(x)= \omega_{\overline{B}}(x)\subseteq \omega_f(x).  $$
  \item[(4)] For any $x\in X,   $ $\omega_{\overline{B}}(x)= \overline{\bigcup_{\mu\in V^*_f(x)} S_\mu}=\overline{\bigcup_{\mu\in M(f,  \omega_f(x))}S_{\mu}}=\overline{\bigcup_{\mu\in M_{erg}(f,  \omega_f(x))}S_{\mu}}\neq \emptyset;$

\end{description}

 \end{Thm}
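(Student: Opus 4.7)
The plan is to handle all four items with one engine: the Portmanteau characterization of weak-$*$ convergence combined with the definitions of $V_f(x)$ (limits of Ces\`aro averages $\mathcal{E}_n(x)$) and $V_f^*(x)$ (limits of empirical averages on arbitrary intervals $I_k$ with $|I_k|\to\infty$). Write $\omega:=\omega_f(x)$ throughout.

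For (1) and (2), first rewrite the defining densities so that $y\in\omega_{\overline d}(x)$ iff $\limsup_n \mathcal{E}_n(x)(V_\epsilon(y))>0$ for every $\epsilon>0$, and similarly with $\liminf$ for $\omega_{\underline d}(x)$. For (2), if $\mu\in V_f(x)$ and $y\in S_\mu$, lower semicontinuity on the open set $V_\epsilon(y)$ yields $\liminf_k\mathcal{E}_{n_k}(x)(V_\epsilon(y))\ge\mu(V_\epsilon(y))>0$ along a subsequence realizing $\mu$, hence $y\in\omega_{\overline d}(x)$; taking closures promotes this to $y\in\overline{\bigcup_\mu S_\mu}$. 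Conversely, for each $\epsilon>0$ I extract $n_k\to\infty$ achieving the positive $\limsup$, pass to a weak-$*$ limit $\mu_\epsilon\in V_f(x)$, and invoke upper semicontinuity on the closed ball $\overline{V_{\epsilon/2}(y)}$ to produce a point of $S_{\mu_\epsilon}$ within $\epsilon$ of $y$; letting $\epsilon\to 0$ places $y$ in the desired closure. Item (1) is the dual: positive lower density forces every $\mu\in V_f(x)$ to charge every closed neighborhood of $y$ via the same upper-semicontinuity bound, and conversely any $\mu\in V_f(x)$ with $\mu(V_\epsilon(y))=0$ immediately contradicts $y\in\omega_{\underline d}(x)$ via the open-set half of Portmanteau.

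The first equalities in (3) and (4) come from running this same argument verbatim with Ces\`aro averages replaced by averages on intervals $I_k=[a_k,b_k)$ with $|I_k|\to\infty$; the edge case $a_k$ bounded reduces to Ces\`aro limits which already lie in $V_f(x)\subseteq V_f^*(x)$. This gives $\omega_{B_*}(x)=\bigcap_{\mu\in V_f^*(x)}S_\mu$ and $\omega_{B^*}(x)=\overline{\bigcup_{\mu\in V_f^*(x)}S_\mu}$. For the remaining two equalities the key is the sandwich $M_{erg}(f,\omega)\subseteq V_f^*(x)\subseteq M(f,\omega)$: the right inclusion is immediate because $\{f^n(x):n\ge N\}$ approaches $\omega$, so any interval-empirical limit is supported in $\omega$; for the left, given ergodic $\nu$ on $\omega$, pick a $\nu$-generic $z\in S_\nu\subseteq\omega$, find $n_k\to\infty$ with $f^{n_k}(x)\to z$, and use continuity of the map $y\mapsto \frac1m\sum_{j=0}^{m-1}\delta_{f^j(y)}$ plus a diagonal choice $k=k(m)$ to make the interval averages $\frac1m\sum_{j=0}^{m-1}\delta_{f^{n_{k(m)}+j}(x)}$ converge to $\nu$. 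Ergodic decomposition then collapses the intersection and the union over $M(f,\omega)$ onto the ones over $M_{erg}(f,\omega)$.

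The remaining claims are quick consequences. If $\omega_{B_*}(x)\ne\emptyset$ and $B\subsetneq\omega_{B_*}(x)$ were a proper closed invariant subset, any $\mu\in M(f,B)\subseteq M(f,\omega)$ would satisfy $\omega_{B_*}(x)\subseteq S_\mu\subseteq B$, absurd, hence $\omega_{B_*}(x)$ is minimal. Nonemptiness of $\omega_{B^*}(x)$ follows by picking any minimal set in the nonempty compact invariant $\omega$ and any ergodic measure on it. The delicate step of the whole proof is the quasi-generic construction $M_{erg}(f,\omega)\subseteq V_f^*(x)$: continuity of finite-time empirical averages in the base point is cheap, but the diagonal $k(m)$ must be chosen carefully so that tail effects of the approach $f^{n_k}(x)\to z$ do not perturb the target $\nu$ in the weak-$*$ limit.
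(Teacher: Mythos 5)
This theorem is not proved in the present paper at all: it is imported verbatim from the companion paper \cite{DongTian2016-nosyndetic}, so the comparison is against that paper's argument. Your proposal follows essentially the same (and really the only natural) route: identify the densities of visiting times with values of the empirical measures $\mathcal{E}_n(x)$ (resp.\ interval averages), use the two halves of the Portmanteau theorem together with subsequence extraction to get the $V_f(x)$- and $V_f^*(x)$-characterizations, prove the sandwich $M_{erg}(f,\omega_f(x))\subseteq V_f^*(x)\subseteq M(f,\omega_f(x))$ via the quasi-generic diagonal construction (this is exactly Furstenberg's device, and is the genuinely delicate step, as you note), collapse $M(f,\omega_f(x))$ onto $M_{erg}(f,\omega_f(x))$ by ergodic decomposition, and deduce minimality of $\omega_{B_*}(x)$ from the fact that any nonempty proper closed invariant $B\subsetneq\omega_{B_*}(x)$ carries an invariant measure $\mu$ with $\omega_{B_*}(x)\subseteq S_\mu\subseteq B$. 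All of these steps are sound, and your treatment of (2), (3), (4) is complete at the level of a sketch.

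There is, however, one genuine (though easily repaired) defect in item (1). Your sentence for the converse reads: ``any $\mu\in V_f(x)$ with $\mu(V_\epsilon(y))=0$ immediately contradicts $y\in\omega_{\underline d}(x)$.'' That implication is the contrapositive of the forward inclusion $\omega_{\underline d}(x)\subseteq\bigcap_{\mu\in V_f(x)}S_\mu$, which your first clause already establishes; so as written you have proved one inclusion twice and the reverse inclusion $\bigcap_{\mu\in V_f(x)}S_\mu\subseteq\omega_{\underline d}(x)$ not at all. The missing direction needs the same extraction you performed for (2): if $y\notin\omega_{\underline d}(x)$, pick $\epsilon>0$ and a subsequence $n_k\to\infty$ with $\mathcal{E}_{n_k}(x)(V_\epsilon(y))\to 0$, pass to a weak-$*$ limit $\mu\in V_f(x)$, and apply the open-set half of Portmanteau, $\mu(V_\epsilon(y))\le\liminf_k\mathcal{E}_{n_k}(x)(V_\epsilon(y))=0$, to conclude $y\notin S_\mu$. (Note also that the implication you actually wrote, were it the one needed, would require the closed-set half, not the open-set half you cite; the open-set inequality only becomes useful after the extraction.) With that clause rewritten, the proof is complete and matches the intended argument.
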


\subsection{Topological Entropy and Metric Entropy}\label{topological-entropy}

\subsubsection{Topological Entropy for Noncompact Set}
As for noncompact sets ,
Bowen also developed a satisfying definition via dimension language \cite{Bowen1973} which we now illustrate.
Let $E\subseteq X$,   and $\mathcal {G}_{n}(E,  \sigma)$ be the collection of all finite or countable covers of $E$ by sets of the form $B_{u}(x,  \sigma)$ with $u\geq n$.   We set
$$C(E;t,  n,  \sigma,  f):=\inf_{\mathcal {C}\in \mathcal {G}_{n}(E,  \sigma)}\sum_{B_{u}(x,  \sigma)\in \mathcal {C}}e^{-tu} \,\,\,\text{   and } C(E;t,  \sigma,  f):=\lim_{n\rightarrow\infty}C(E;t,  n,  \sigma,  f).  $$
Then we define
$$\htop(E;\sigma,  f):=\inf\{t:C(E;t,  \sigma,  f)=0\}=\sup\{t:C(E;t,  \sigma,  f)=\infty\}$$
%\begin{Def}
The \textit{Bowen topological entropy} of $E$ is
\begin{equation}\label{definition-of-topological-entropy}
  \htop(f,  E):=\lim_{\sigma\rightarrow0} \htop(E;\sigma,  f).
\end{equation}

%The following Lemma is from Bowen \cite{Bowen1973}.
\begin{Lem}\label{lem-Bowen} \cite{Bowen1973}
Let $f:X\rightarrow X$ be a continuous map on a compact metric space.   Set
$$QR(t)=\{x\in X~|~\exists\tau\in V_{f}(x)~\textrm{with}~h_{\tau}(T)\leq t\}.  $$
Then
 $h_{top}(f,  QR(t))\leq t.  $
\end{Lem}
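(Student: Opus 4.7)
My plan is to follow Bowen's original approach: bound $h_{top}(f, QR(t))$ from above via a Bowen-ball covering estimate controlled by Katok's local entropy formula, combined with a countable-subcover argument on the space of invariant measures.

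Fix $\delta > 0$ and small $\sigma > 0$; it suffices to show $h_{top}(QR(t); \sigma, f) \leq t + \delta$ and then let $\delta \to 0^+$. For each $\mu \in M(f, X)$, Katok's characterization of metric entropy produces an $\eta_\mu > 0$ and $N_\mu \in \N$ such that for every $n \geq N_\mu$, the set
$$W_n(\mu, \eta_\mu) := \{x \in X : \rho(\mathcal{E}_n(x), \mu) < \eta_\mu\}$$
admits a cover by at most $e^{n(h_\mu(f) + \delta/2)}$ Bowen $(n, \sigma)$-balls. The passage from $\mu$-typical points (where Katok's formula applies directly) to all points whose empirical measure is merely close to $\mu$ is a Shannon--McMillan--Breiman/Chebyshev argument applied to a finite measurable partition of diameter less than $\sigma/2$.

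Next, set $K_t := \{\mu \in M(f, X) : h_\mu(f) \leq t\}$. Since $M(X)$ is weak-$*$ separable and metrizable, the open cover $\{\mathcal{B}(\mu, \eta_\mu/2)\}_{\mu \in K_t}$ of $K_t$ admits, by the Lindel\"of property, a countable subcover $\{\mathcal{B}(\mu_j, \eta_{\mu_j}/2)\}_{j \geq 1}$ with every $\mu_j \in K_t$. For each $x \in QR(t)$, pick $\tau \in V_f(x)$ with $h_\tau(f) \leq t$ and some index $j$ with $\tau \in \mathcal{B}(\mu_j, \eta_{\mu_j}/2)$; then for infinitely many $n$ one has $\rho(\mathcal{E}_n(x), \tau) < \eta_{\mu_j}/2$, hence $x \in W_n(\mu_j, \eta_{\mu_j})$. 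This yields
$$QR(t) \;\subseteq\; \bigcup_{j \geq 1} \limsup_n W_n(\mu_j, \eta_{\mu_j}),$$
and for each $j$ the Bowen-ball cover from the first step bounds the contribution of $\limsup_n W_n(\mu_j, \eta_{\mu_j})$ to $C(\,\cdot\,; t+\delta, \sigma, f)$ by $\sum_{n \geq N_{\mu_j}} e^{n(h_{\mu_j}(f) + \delta/2)} e^{-n(t+\delta)} < \infty$. Summing over $j$ (with the standard $2^{-j}$ weighting absorbed into $\delta$) gives $h_{top}(QR(t); \sigma, f) \leq t + \delta$, and the conclusion follows.

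\textbf{Main obstacle.} The chief technical difficulty is that $\mu \mapsto h_\mu(f)$ need not be upper semi-continuous on $M(f, X)$ without additional hypotheses such as expansiveness, so a crude finite approximation of $K_t$ inside $M(f,X)$ could inflate the entropy estimate above $t + \delta$. The countable subcover in the second paragraph sidesteps this by keeping every $\mu_j$ inside $K_t$, so the exponential sum uses the acceptable bound $h_{\mu_j}(f) \leq t$ directly. A secondary subtlety is establishing the Katok-type cover estimate uniformly on the \emph{full} set $W_n(\mu, \eta_\mu)$ rather than merely on $\mu$-typical points, which is where the Shannon--McMillan--Breiman comparison enters and must be set up with care.
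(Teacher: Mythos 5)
Your target statement is one the paper itself never proves: Lemma \ref{lem-Bowen} is quoted directly from Bowen's 1973 paper, so the benchmark is Bowen's original argument. Your global architecture --- fix $\sigma$ and $\delta$, extract a countable family $\{\mu_j\}\subseteq K_t$ by Lindel\"of, write $QR(t)\subseteq\bigcup_j\limsup_n W_n(\mu_j,\eta_{\mu_j})$, and kill each piece with a geometric series of Bowen-ball covers plus countable subadditivity of $C(\,\cdot\,;s,n,\sigma,f)$ --- is sound, and it is essentially Bowen's scheme, except that Bowen gets his countability/compactness from the finite-dimensional simplex of $m$-block distributions of a fixed finite partition rather than from separability of $M(X)$.

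The genuine gap is the justification of the covering estimate in your second paragraph, which is the entire technical content of the lemma. Katok's entropy formula cannot deliver it. First, Katok's formula is a theorem about \emph{ergodic} measures, whereas the measures you feed into it --- the elements of $K_t$ and the limit measures $\tau\in V_f(x)$ --- are in general not ergodic. Second, and more fundamentally, even for ergodic $\mu$ Katok's formula bounds the number of Bowen $(n,\sigma)$-balls needed to cover \emph{some} set of $\mu$-measure at least $1-\delta$; it says nothing about covering the specific set $W_n(\mu,\eta)$ of \emph{all} points whose time-$n$ empirical measure is near $\mu$, a set which typically contains many points that are not $\mu$-typical and may lie far from $S_\mu$. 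Concretely, take $\mu=\delta_p$ for the fixed point $p=000\cdots$ of the full $2$-shift: $h_\mu=0$ and Katok's covering number is $1$ for every $n$ (the singleton $\{p\}$ has full measure), yet $W_n(\delta_p,\eta)$ contains every word of length $n$ whose density of $1$'s is below some $\epsilon'(\eta)>0$, and covering those requires roughly $e^{nH(\epsilon')}$ Bowen $(n,\sigma)$-balls, where $H$ is the binary entropy function. The same objection defeats the proposed Shannon--McMillan--Breiman/Chebyshev patch: SMB again presupposes ergodicity and controls only $\mu$-almost every point, while $W_n(\mu,\eta)$ carries no $\mu$-mass in general. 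What actually proves the estimate --- and is exactly Bowen's route --- is a purely combinatorial, method-of-types count: choose a finite partition $\mathcal{A}$ of diameter less than $\sigma$ whose boundaries are $\mu$-null; fix $m$ with $\frac1m H_\mu\bigl(\bigvee_{i=0}^{m-1}f^{-i}\mathcal{A}\bigr)\leq h_\mu(f)+\delta/4$; observe that weak-$*$ proximity of $\mathcal{E}_n(x)$ to $\mu$ forces the empirical $m$-block distribution of the $\mathcal{A}$-itinerary of $x$ to be close to that of $\mu$ (this is where the null boundaries enter); then bound the number of length-$n$ itineraries whose $m$-block distribution has normalized Shannon entropy at most $h_\mu+\delta/3$ by $e^{n(h_\mu+\delta/2)}$ for large $n$, and note each itinerary cell lies inside a single Bowen $(n,\sigma)$-ball. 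The estimate you want is in fact true for every invariant measure (it also appears in Pfister--Sullivan), so with this substitution the remainder of your proof goes through.
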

\subsubsection{Metric Entropy}
We call $(X,  \B,  \mu)$ a probability space if $\B$ is a Borel $\sigma-$algebra on $X$ and $\mu$ is a probability measure on $X$.   For a finite measurable partition $\xi=\{A_1,  \cdots,  A_n\}$ of a probability space $(X,  \B,  \mu)$,   define
 $H_\mu(\xi)=-\sum_{i=1}^n\mu(A_i)\log\mu(A_i).  $
Let $f:X\to X$ be a continuous map preserving $\mu$.   We denote by $\bigvee_{i=0}^{n-1}f^{-i}\xi$ the partition whose element is the set $\bigcap_{i=0}^{n-1}f^{-i}A_{j_i},  1\leq j_i\leq n$.   Then the following limit exists:
$$h_\mu(f,  \xi)=\lim_{n\to\infty}\frac1n H_\mu\left(\bigvee_{i=0}^{n-1}f^{-i}\xi\right)$$
and we define the metric entropy of $\mu$ as
$$h_{\mu}(f):=\sup\{h_\mu(f,  \xi):\xi~\textrm{is a finite measurable partition of X}\}.  $$
%One has the following Katok's entropy formula.
%\begin{Lem}\label{lem-Katok-entropy-formula}
%For any $\nu\in M_{erg}(f,  X)$ and any $0<\gamma<1,  $
%\begin{eqnarray*}
%% \nonumber to remove numbering (before each equation)
%  h_\nu(f) &=& \lim_{\eps\to0}\liminf_{n\to\infty}\frac1n\log\inf\{r_n(A,  \eps):\nu(A)\geq\gamma\} \\
 %  &=& \lim_{\eps\to0}\limsup_{n\to\infty}\frac1n\log\inf\{r_n(A,  \eps):\nu(A)\geq\gamma\}.
%\end{eqnarray*}

%\end{Lem}

%%\subsubsection{Characterizing Metric Entropy}
%For $x\in X$,   we define the empirical measure of $x$ as
%\begin{equation*}
% \mathcal{E}_{n}(x):=\frac{1}{n}\sum_{j=0}^{n-1}\delta_{f^{j}(x)},
%\end{equation*}
%where $\delta_{x}$ is the Dirac mass at $x$.   Let $F\subseteq M( X)$ be a neighborhood.   For $n\in\N$,   define
%$$X_{n,  F}:=\{x\in X:\mathcal{E}_{n}(x)\in F\}.  $$
%\begin{Lem}\cite{DOT}\label{useful-Proposition}
 % Let $\mu\in M(f,  X)$ be ergodic with $h_\mu(f)>0$.   Then for any $\eta>0$,   there exists $\eps>0$ such that for each neighbourhood $F$ of $\mu$ in $M(X)$,   there exists $n_F\in\mathbb{N}$ such that for any $n\geq n_F$,   there exists an $(n,  \eps)$-separated set {$\Gamma_n\subset X_{n,  F}\cap S_\mu$}with
%$$|\Gamma_n|\geq 2^{n(h_{\mu}(f)-\eta)}.  $$
%\end{Lem}

\subsection{Pseudo-orbit Tracing Properties}

Bowen \cite{Bowen1975} proved that every Anosov diffeomorphism of a compact manifold has the shadowing property and he used this notion efficiently in the study of  $\omega$-limit sets.   Shadowing property is typically satisfied by families of tent maps \cite{CKY} and subshift of finite type \cite{Kurka}.   Since its introduction,   shadowing property has attracted various attentions.   Moreover,   shadowing property is also satisfies for $C^0$ generic systems \cite{Pilyugin-Plamenevskaya,  KMO2014}.   Meanwhile,   shadowing property is also generalized to various other forms.   For example,   there are studies on limit-shadowing \cite{Pilyugin2007},   s-limit-shadowing \cite{Mazur-Oprocha,  Sakai2012,PilSakai},   average-shadowing \cite{Kwietniak-Oprocha},   asymptotic-average-shadowing \cite{DTY},   thick shadowing \cite{BMR},   $\underline{d}$-shadowing \cite{Dastjerdi-Hosseini,  ODH},   and ergodic shadowing \cite{Fakhari-Gane}.

\subsubsection{Shadowing Property }
\begin{Def} For any $\delta>0$,   a sequence $\{x_n\}_{n=0}^{+\infty}$ is called a \textit{$\delta$-pseudo-orbit} if
$$d(f(x_n),  x_{n+1})<\delta~~\textrm{for}~~n\in\Z^+.  $$
Furthermore,   $\{x_n\}_{n=0}^{+\infty}$ is \textit{$\eps$-shadowed} by some $y\in X$ if
$$d(f^n(y),  x_n)<\eps~~\textrm{for any}~~n\in\Z^+.  $$
Finally,   we say that $(X,  f)$ has the \textit{shadowing property} if for any $\eps>0$,   there exists  $\delta>0$ such that any $\delta$-pseudo-orbit is $\eps$-shadowed by some point in $X$.
\end{Def}

\begin{Lem}\label{lem-omega-A} \cite{DongTian2016-nosyndetic}
Suppose $(X,  f)$ is topologically expanding and transitive (resp.  ,   $(X,  f)$ is a homeomorphism that is topologically hyperbolic and transitive).   Let $A\subsetneq X$ be a closed invariant subset of $X$.   Then there exists an $x\in X$ such that
\begin{equation}\label{A-f-A}
 A\subseteq\omega_f(x)\subseteq \cup_{l=0}^{\infty}f^{-l}A.
\end{equation}
In particular,
\begin{equation}\label{omega-f-x-A}
 \overline{\bigcup_{y\in\omega_f(x)}\omega_f(y)}\subseteq A\subseteq\omega_f(x)\neq X.
\end{equation}

\end{Lem}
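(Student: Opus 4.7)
The plan is to invoke Lemma \ref{Lem-locally-omega-f-equalto-ICT}, which in this transitive, topologically expanding (or hyperbolic) setting identifies the collection of $\omega$-limit sets with the collection $ICT(f)$ of closed $f$-invariant, internally chain transitive subsets of $X$. Hence the proof reduces to constructing a closed $f$-invariant internally chain transitive set $B$ satisfying
\begin{equation*}
A \subseteq B \subseteq \bigcup_{l\geq 0} f^{-l}(A)
\quad\text{and}\quad B \neq X,
\end{equation*}
and then realizing $B = \omega_f(x)$ for some $x$. The ``in particular'' conclusions of \eqref{omega-f-x-A} are then automatic: any $y \in \omega_f(x) = B$ has $f^{k}(y)\in A$ for some $k \geq 0$ by the right inclusion, so $\omega_f(y)=\omega_f(f^k(y))\subseteq A$ by $f$-invariance of $A$; taking the union over $y$ and closing stays in the closed set $A$, and $\omega_f(x) \neq X$ is baked into $B \neq X$.

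To build $B$, I would enumerate a countable dense subset $\{a_n\}_{n\geq 1}\subseteq A$ and enlarge $A$ with \emph{bridges}, meaning orbit segments of the form $b,f(b),\ldots,f^{M-1}(b)$ with $f^{M}(b) \in A$, each of which lies entirely inside $\bigcup_{l\geq 0}f^{-l}(A)$ by construction. Given $a,a'\in A$, a bridge from a neighborhood of $a$ to $a'$ is produced by taking $b$ to be a suitable preimage branch of $a'$ under some high iterate $f^{M}$ chosen so that $d(f(a),b)$ is arbitrarily small; existence of such $b$ uses the chain transitivity of $(X,f)$ (from topological transitivity), the shadowing property, and the richness of preimages in the expanding/hyperbolic setting. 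Stringing together countably many such bridges joining the chain-transitive components of $A$ to one another, with bridge lengths $M\to\infty$ so that the bridges accumulate only on $A$, and adjoining $A$ itself, yields a closed, forward $f$-invariant set $B\subseteq \bigcup_{l\geq 0}f^{-l}(A)$. The internal chain transitivity of $B$ then follows: any two points of $B$ can be joined by first flowing into $A$ along forward orbits already in $B$, then crossing chain-transitive components via the bridges present in $B$.

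The principal obstacle is the last requirement $B \neq X$, because a priori $\bigcup_{l\geq 0}f^{-l}(A)$ may be dense in $X$. To control this, fix $p \in X \setminus A$ with $d(p,A) > 0$ together with a neighborhood $U$ of $p$ disjoint from $A$, and arrange every bridge to avoid $U$. The freedom needed for this avoidance comes from the abundance of preimage branches: expansiveness forces distinct preimage branches of a common point to be uniformly separated (by a quantity controlled by the expansivity constant), so after discarding the few branches that pass through $U$ enough branches remain to complete the construction. Simultaneously, taking the bridge lengths $M_{n}\to \infty$ and the approach points $b_{n}$ carefully ensures that the closure of the bridge orbits accumulates only on $A$, and not on extraneous limit points outside $\bigcup_{l\geq 0}f^{-l}(A)$. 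Once $B$ is in hand, a single appeal to Lemma \ref{Lem-locally-omega-f-equalto-ICT} produces the required $x$ with $\omega_f(x) = B$.
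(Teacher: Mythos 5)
Your overall frame is the intended one: with Lemma \ref{Lem-locally-omega-f-equalto-ICT} available, the lemma reduces to producing a closed invariant internally chain transitive set $B$ with $A\subseteq B\subseteq\bigcup_{l\geq 0}f^{-l}A$, and your derivation of \eqref{omega-f-x-A} from \eqref{A-f-A} is correct. (Note the paper itself gives no proof here; it imports the statement from \cite{DongTian2016-nosyndetic}.) However, what you single out as ``the principal obstacle,'' namely $B\neq X$, is actually free: if $\bigcup_{l\geq 0}f^{-l}A=X$, then by Baire some $f^{-l}A$ has nonempty interior, a point $x_0$ with dense forward orbit enters it, so the tail of its orbit lies in the closed invariant set $A$, whence $\omega_f(x_0)\subseteq A$; but $\omega_f(x_0)=X$ (a transitive infinite compact system has no isolated points), contradicting $A\subsetneq X$. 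So the entire $U$-avoidance apparatus addresses a non-problem, and its justification (``discard the few branches that pass through $U$'') is unsupported in any case.

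The genuine gaps are elsewhere. First, your construction is intrinsically non-invertible: ``richness of preimages'' is vacuous for a homeomorphism, where $f^{-M}(a')$ is a single point, so in the topologically hyperbolic case there is no freedom to place a bridge start $b\in f^{-M}(a')$ near $f(a)$. Worse, for a homeomorphism $\bigcup_{l}f^{-l}A$ can collapse to $A$ itself --- take $A$ a union of two distinct periodic orbits of a transitive Anosov diffeomorphism, so $f^{-l}A=A$ for every $l$ --- and then bridges do not exist at all, while \eqref{A-f-A} would force $\omega_f(x)=A$, which is impossible because $\omega$-limit sets are internally chain transitive (Lemma \ref{Lem-omega-naturally-in-ICT}) and such an $A$ is not; so the hyperbolic half cannot be handled by your method at all, and any argument there must work with orbits converging to $A$ without entering it (which is all that \eqref{omega-f-x-A} and the paper's applications actually use). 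Second, even in the expanding case, the crucial closedness claim --- that taking bridge lengths $M_n\to\infty$ ``ensures the bridges accumulate only on $A$'' --- is asserted, not proved, and the stated mechanism is a non sequitur: your bridges shadow transitivity chains from $f(a)$ to $a'$, whose middle portions pass through essentially arbitrary regions of $X$, and lengthening them does nothing to confine them near $A$. If the union of infinitely many bridges accumulates outside $\bigcup_l f^{-l}A$, then $B$ is either not closed or not contained in $\bigcup_l f^{-l}A$, and Lemma \ref{Lem-locally-omega-f-equalto-ICT} cannot be invoked. Repairing this needs a structural idea missing from your sketch, e.g.\ nesting: for each target $a'$ choose one coherent backward branch $\cdots\mapsto c_{-2}\mapsto c_{-1}\mapsto a'$ whose $\alpha$-limit set lies in $A$ and which returns ever closer to $A$, and let the bridges be initial segments of that single branch, so that closing up adds only points of $A$ (the model picture: in a shift, the points $0^k1^\infty$ accumulate only on the fixed point $0^\infty$). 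Finally, a smaller but real point: shadowing only gives $d(f^M(b'),a')<\eps$; to obtain the exact equality $f^M(b)=a'$ you must correct $b'$ along a contracting inverse branch of the (open) expanding map, and this step --- precisely the one with no homeomorphism analogue --- should be written out rather than implied.
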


\subsubsection{Specification and  Almost Specification Properties}
\begin{Def}
We say that $(X,  f)$ satisfies the \textit{specification} property if for all $\eps>0$,    there exists an integer $m(\eps)$ such that for any collection $\{I_{j}=[a_{j},  b_{j}]\subseteq\mathbb{Z}^{+}:j=1,  \cdots,  k\}$ of finite intervals with $a_{j+1}-b_{j}\geq m(\eps)$ for $j=1,  \cdots,  k-1$ and any $x_{1},  \cdots,  x_{k}$ in $X$,   there exists a point $x\in X$ such that
$$d(f^{a_{j}+t}x,  f^{t}x_{j})<\eps $$ for all $t=0,  \cdots,  b_{j}-a_{j}$ and $j=1,  \cdots,  k$.
\end{Def}

Pfister and Sullivan generalized the specification property to the $g$-approximate product property in the study of large deviation \cite{PS2005}.   Later on,   Thompson  renamed it as the almost specification property in the study of irregular points \cite{Thompson2012}.   The only difference is that the blowup function $g$ can depend on $\eps$ in the latter case.   However,   this subtle difference does not affect our discussion here.

\begin{Def}
Let $\eps_0>0$.   A function $g:\mathbb{N}\times (0,  \eps_0)\to \mathbb{N}$ is called a \emph{mistake function} if for all $\eps\in (0,  \varepsilon_0)$ and all $n\in \mathbb{N}$,   $g(n,  \eps)\leq g(n+1,  \eps)$ and
$$\lim_{n}\frac{g(n,  \eps)}{n}=0.  $$
\end{Def}
If $\eps\geq \eps_0$,   we define $g(n,  \eps)=g(n,  \eps_0)$.
For $n\in \mathbb{N}$ large enough such that $g(n,  \eps)<n$,   let $\Lambda_n=\{0,  1,  \cdots,  n-1\}$.   Define
the $(g;n,  \eps)$-Bowen ball centered at $x$ as the closed set
$$B_n(g;x,  \eps):=\{y\in X~|~\exists~\Lambda\subseteq \Lambda_n,  ~|\Lambda_n\setminus\Lambda|\leq g(n,  \eps)~\text{and}~\max\{f^jx,  f^jy:j\in\Lambda\}\leq\eps\}.  $$
\begin{Def}
The dynamical system $(X,  f)$ has the \emph{almost specification} property with mistake function $g$,   if there exists a function $k_g:(0,  +\infty)\to \N$ such that for any
$\eps_{1}>0,  \cdots,  \eps_{m}>0$,   any points $x_{1},  \cdots,  x_{m}\in X$,   and any integers $n_{1}\geq k_g(\eps_{1}),  \cdots,  n_{m}\geq k_g(\eps_{m})$,   we can find a point $z\in X$ such that
\begin{equation*}
  f^{l_{j}}(z)\in B_{n_{j}}(g;x_{j},  \eps_{j}),  ~j=1,  \cdots,  m,
\end{equation*}
where $n_{0}=0~\textrm{and}~l_{j}=\sum_{s=0}^{j-1}n_{s}$.
\end{Def}

\subsection{{Saturated   and Entropy-dense  }}

In this section,   we consider a topological dynamical system $(X,  f)$.   For $x\in X$,   we denote the set of limit points of $\{\mathcal{E}_{n}(x)\}$ by $V_{f}(x)$.   As is known,   $V_f(X)$ is  a non-empty compact connected subset of $M(f,  X)$ \cite{PS2007}.   So for any non-empty compact connected subset $K$ of $M(f,  X)$,   it is logical to define the following set
$$G_K:=\{x\in X~|~V_f(X)=K\}.  $$
We call $G_K$ the saturated set of $K$.   Particularly,   if $K=\{\mu\}$ for some ergodic measure $\mu$,   then $G_{\mu}$ is just the generic points of $\mu$.   Saturated sets are studied by Pfister and Sullivan in \cite{PS2007}.
  When the dynamical system $f$ satisfies $g$-almost product and uniform separation property,   Pfister and Sullivan proved in \cite{PS2007} that $f$ is saturated and Huang,   Tian and Wang proved in \cite{HTW} that $f$ is transitively-saturated.

%\subsection{Star and Locally-star Saturated Properties}
Let $\Lambda\subseteq X$ be a closed invariant subset and $K$ is a non-empty compact connected subset of $M(f,  \Lambda)$.   Define
$$G_K^{\Lambda}:=G_K\cap\{x\in X~|~\omega_f(x)=\Lambda\}   $$
%\begin{Lem}\label{lem-G-K-star-subset}  \cite{DongTian2016-nosyndetic}
%For $(X,  f)$,   let $\Lambda\subsetneq X$ be closed $f$-invariant and $K\subseteq M(f,  \Lambda)$ be a
% nonempty compact connected set. %\begin{description}
 %  Suppose $\bigcap_{\mu\in K}S_{\mu}=C^*_\Lambda$.   Then
 % $$G_K^{\Lambda}\subseteq\{x\in X:\omega_{\underline{d}}(x)= \omega_{\overline{d}}(x)= \omega_{B^*}(x)= C^*_\Lambda\subseteq \omega_f(x)=\Lambda\}.  $$
  % \end{description}
%\end{Lem}
 %\subsection{Nonrecurrently-star-saturated Property}
 and
$$G_K^N:=G_K\cap NRec(f)~\textrm{and}~G_K^{\Lambda,  N}:=G_K^\Lambda\cap G_K^N.$$
\begin{Def}We say that $\Lambda\subsetneq X$ is nonrecurrently-star-saturated,   if for any non-empty connected compact set $K\subseteq M(f,  \Lambda)$,   one has
\begin{eqnarray} \label{eq-nonrecurrently-star-saturated-definition}
h_{top} (f,  G_K^{\Lambda,  N})=\inf\{h_\mu (f)\,  |\,  \mu\in K\}.
\end{eqnarray}
\end{Def}
\begin{Lem}\label{prop-locally-star-implies-nonrecurrently-star} \cite{DongTian2016-nosyndetic}
Suppose $(X,  f)$ is topologically transitive and topologically expanding (resp.,   a transitive and topologically hyperbolic homeomorphism). Let $\Lambda\subsetneq X$ be a closed $f$-invariant subset.
%Suppose $(\Lambda,  f|_\Lambda)$ is locally-star-saturated.
Then $\Lambda$ is nonrecurrently-star-saturated.

\end{Lem}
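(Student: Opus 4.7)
The plan is to prove the two inequalities separately. For the upper bound $\htop(f, G_K^{\Lambda,N}) \le \inf\{h_\mu(f) : \mu \in K\}$, I would invoke Bowen's dimension-style Lemma \ref{lem-Bowen}: for every $t$ strictly larger than $\inf_{\mu \in K} h_\mu(f)$ pick $\tau \in K$ with $h_\tau(f) < t$; since $V_f(x) = K \ni \tau$ for each $x \in G_K^{\Lambda,N}$, we have $G_K^{\Lambda,N} \subseteq QR(t)$, hence $\htop(f, G_K^{\Lambda,N}) \le t$, and then let $t$ decrease to the infimum.

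For the lower bound I would run a pseudo-orbit concatenation plus shadowing argument. Since $K \subseteq M(f,\Lambda)$ is nonempty, connected and compact, pick a countable sequence $\{\nu_k\} \subseteq K$ that is dense in $K$ and satisfies $\rho(\nu_k, \nu_{k+1}) \to 0$. For each $k$ use an entropy-density construction inside $\Lambda$ (available from the expansive/shadowing structure on subsystems) to obtain an $(n_k, \delta)$-separated family of orbit pieces in $\Lambda$ whose empirical measures weak$^*$-approximate $\nu_k$ within tolerance $\eps_k \downarrow 0$, with cardinality at least $\exp\bigl((h_{\nu_k} - \eta)n_k\bigr)$. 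Because $\Lambda \subsetneq X$, Lemma \ref{lem-omega-A} supplies a reference point $q \in X \setminus \Lambda$ whose forward orbit has $\omega$-limit contained in $\Lambda$. Interleave between consecutive blocks of approximating orbit pieces a gap segment starting at $q$, of sublinear total length, and additionally insert orbit pieces visiting a prescribed countable dense subset of $\Lambda$. Shadowing converts each concatenation into an honest orbit of some $x \in X$, while expansiveness ensures distinct concatenations yield distinct (in fact separated) shadowing points, so the exponential count survives intact.

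The verification checks the three defining constraints of $G_K^{\Lambda,N}$ simultaneously. That $V_f(x) = K$ follows from the sublinearity of the gaps together with density of $\{\nu_k\}$ in $K$ and $\rho(\nu_k,\nu_{k+1}) \to 0$. The inclusion $\omega_f(x) \supseteq \Lambda$ is arranged by the dense-visiting pieces, and $\omega_f(x) \subseteq \Lambda$ is forced by choosing the shadowing precisions synchronized with the block structure and by invoking Lemma \ref{lem-omega-A} so that the gap orbit already has $\omega$-limit inside $\Lambda$. Nonrecurrence of $x$ is obtained from the gap excursions: the shadowing point is close to $q \notin \Lambda$, so $x \notin \Lambda = \omega_f(x)$, placing $x$ in $NRec(f)$. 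The step I expect to be most delicate is the exact equality $\omega_f(x) = \Lambda$: preventing $\omega_f(x)$ from strictly exceeding $\Lambda$ requires the gap portion to be long enough to allow shadowing separation yet sparse enough that no subsequence of iterates escapes to the complement of $\Lambda$, which is the balance enforced by the sublinear gap schedule and the choice of $q$ supplied by Lemma \ref{lem-omega-A}. Once the three constraints are verified, counting the separated shadowing points yields $\htop(f, G_K^{\Lambda,N}) \ge \inf_{\mu \in K} h_\mu(f) - \eta$, and letting $\eta \downarrow 0$ closes the lower bound.
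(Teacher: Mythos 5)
Your upper bound is fine and matches the standard route: for any $t>\inf_{\mu\in K}h_\mu(f)$ pick $\tau\in K$ with $h_\tau(f)<t$, observe $G_K^{\Lambda,N}\subseteq QR(t)$ since $V_f(x)=K$ on this set, and apply Lemma \ref{lem-Bowen}. (Note that the present paper does not reprove this lemma at all — it imports it from \cite{DongTian2016-nosyndetic} — so only the construction for the lower bound is really at issue.) The lower bound, however, contains a step that fails as stated: interleaving, infinitely often, gap segments that start at a fixed point $q$ with $d(q,\Lambda)>0$ is incompatible with the constraint $\omega_f(x)=\Lambda$. The $\omega$-limit set records every location visited infinitely often regardless of time density; sublinear total length of the gaps protects $V_f(x)$, but not $\omega_f(x)$. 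Concretely, the shadowed orbit returns $\eps$-close to $q$ at the start of every gap, so $\omega_f(x)$ contains a point at distance at least $d(q,\Lambda)-2\eps>0$ from $\Lambda$, destroying membership in $G_K^{\Lambda,N}$. The escape to $q$ must happen exactly once, at time zero: then $x$ is $\eps$-close to $q$, hence $x\notin\Lambda$, and once $\omega_f(x)=\Lambda$ is established this already gives $x\in NRec(f)$. Relatedly, you misquote Lemma \ref{lem-omega-A}: it produces a point whose $\omega$-limit set \emph{contains} $A$ (and sits inside $\cup_l f^{-l}A$), not a point outside $\Lambda$ whose $\omega$-limit set lies inside $\Lambda$; the correct device for the initial escape is a transitive point chosen at definite distance from $\Lambda$, whose true orbit is followed until it first enters the $\delta$-neighbourhood of $\Lambda$, after which one jumps into $\Lambda$.

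The second, more fundamental, gap is that a single application of shadowing at precision $\eps$ can never yield $\omega_f(x)\subseteq\Lambda$: it only confines the tail of the orbit to the closed $\eps$-neighbourhood of $\Lambda$, so both exact inclusions $\omega_f(x)\subseteq\Lambda$ and $\Lambda\subseteq\omega_f(x)$ remain unproved. The missing mechanism — and the reason the hypothesis is topological expansiveness rather than shadowing alone — is a decreasing-precision tail argument: arrange the pseudo-orbit inside $\Lambda$ so that for every $k$ its tail beyond some time $T_k$ is a $\delta_k$-pseudo-orbit with $\delta_k\downarrow 0$, let $z_k$ $\eps_k$-shadow that tail with $\eps_k\downarrow 0$, and then use expansiveness (with $\eps+\eps_k$ below the expansive constant) to conclude $f^{T_k}(x)=z_k$; hence the orbit of $x$ is eventually $\eps_k$-close to $\Lambda$ for every $k$, which combined with $\eps_k$-dense visits in the $k$-th tail gives $\omega_f(x)=\Lambda$. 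Your phrase ``shadowing precisions synchronized with the block structure'' gestures at this but omits the expansiveness identification, which is the actual crux. Two further points you pass over: concatenation requires the end of each orbit block to land $\delta$-close to the start of the next, which forces you to insert connecting $\delta$-chains inside $\Lambda$ and silently requires $\Lambda$ to be internally chain transitive (otherwise $G_K^{\Lambda,N}=\emptyset$ and there is nothing to build); and shadowing does not pass to the subsystem $(\Lambda,f|_\Lambda)$, so the separated families of orbit pieces tracking $\nu_k$ cannot come from an ``entropy-density construction inside $\Lambda$'' but must instead be extracted from Katok's entropy formula applied to ergodic measures supported in $\Lambda$, together with convex decomposition of the $\nu_k$.
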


Now we recall the strong-basic-entropy-dense property from \cite{DongTian2016-nosyndetic}.
\begin{Def}\label{def-strong-basic}
We say $(X,  f)$ satisfies the strong-basic-entropy-dense property if for any $K=\cov\{\mu_i\}_{i=1}^m\subseteq M(f,  X)$ and any $\eta,  \zeta>0$,   there exist compact invariant subset $\Lambda_i\subseteq\Lambda\subsetneq X$,   $1\leq i\leq m$ such that
\begin{enumerate}
  \item $\Lambda$ is transitive and has the shadowing property.
  \item For each $1\leq i\leq m$,
  $\htop(f,  \Lambda_i)>h_{\mu_i}-\eta$ and consequently,   $\htop(f,  \Lambda)>\sup\{h_{\kappa}:\kappa\in K\}-\eta$.
  \item $d_H(K,  M(f,  \Lambda))<\zeta$,   $d_H(\mu_i,  M(f,  \Lambda_i))<\zeta$.
\end{enumerate}

\end{Def}

}
\begin{mainlemma}\label{Mainlemma-convex-by-horseshoe} \cite{DongTian2016-nosyndetic}
Suppose $(X,  f)$ is topologically expanding and transitive.   Then $(X,  f)$ satisfies the strong-basic-entropy-dense property.
\end{mainlemma}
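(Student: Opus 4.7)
The plan is to realize $\Lambda$ as a topologically conjugate copy of a transitive subshift of finite type sitting inside a proper closed invariant subset of $X$, with each $\Lambda_i$ corresponding to the subshift obtained by restricting to a single ``block type''. The construction combines three standard ingredients adapted to the topologically expanding setting: entropy-density of ergodic measures, the Katok entropy formula applied with the expansive constant, and the shadowing property used to realize symbolic pseudo-orbits as genuine orbits.

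First I would, by entropy-density of ergodic measures (which holds under shadowing plus expansiveness, since transitivity forces a specification-like property and hence weak$^*$-and-entropy approximation of every invariant measure by ergodic ones), replace each $\mu_i$ by an ergodic $\nu_i$ with $\rho(\nu_i,\mu_i)<\zeta/4$ and $h_{\nu_i}>h_{\mu_i}-\eta/2$. Next, by Katok's formula with respect to the expansive constant $c$, I fix $\delta\in(0,c/8)$ and find, for each $i$ and all sufficiently large $n_i$, a maximal $(n_i,\delta)$-separated set $E_i\subseteq X$ of cardinality at least $\exp(n_i(h_{\nu_i}-\eta/2))$, every point of which has empirical measure $\mathcal{E}_{n_i}(x)$ within $\zeta/4$ of $\nu_i$. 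Then, using shadowing and transitivity, I pick $\delta_0>0$ so that every $\delta_0$-pseudo-orbit is $\delta$-shadowed and choose a finite collection of connecting orbits of a common length $N$ which $\delta_0$-link any endpoint of any $E_i$-block to any starting point of any $E_j$-block. Form all admissible pseudo-orbits by freely concatenating blocks $x\in E_i$ (of length $n_i$) with these transition words (of length $N$); each such pseudo-orbit has a unique $\delta$-shadow by expansiveness, and I let $\Lambda$ be the closure in $X$ of the collection of all such shadow points, while $\Lambda_i$ is the analogous closure using only $E_i$-blocks. Excluding a fixed reference point (or a preperiodic orbit) from all transitions ensures $\Lambda\subsetneq X$.

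The verification then parallels standard horseshoe constructions: the coding map from the full product sequence space onto $\Lambda$ is, by expansiveness, a topological conjugacy onto a transitive subshift of finite type, so $\Lambda$ automatically inherits transitivity and shadowing; the entropy bound $\htop(f,\Lambda_i)\geq \frac{\log|E_i|}{n_i+N}>h_{\mu_i}-\eta$ follows from direct counting for $n_i$ large, and the corresponding bound over all block types gives $\htop(f,\Lambda)>\sup\{h_\kappa:\kappa\in K\}-\eta$. For the measure approximation, any invariant measure on $\Lambda$ is a limit of empirical averages along concatenated orbits; choosing $n_i\gg N$ makes the gap contribution negligible, so every such limit lies within $\zeta$ of a convex combination of the $\nu_i$'s, giving $d_H(K,M(f,\Lambda))<\zeta$ and $d_H(\mu_i,M(f,\Lambda_i))<\zeta$. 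The main obstacle is transferring shadowing from the ambient $X$ down to $\Lambda$ itself: the safest route is to prove that the coding is a genuine homeomorphism onto an SFT so that $\Lambda$ inherits shadowing abstractly. A related subtlety is arranging that only finitely many transition words of one common length $N$ serve all endpoint/start-point pairs simultaneously, which is handled by a preliminary compactness-plus-uniform-continuity argument guaranteeing a single $N$ sufficient up to a $\delta_0/2$ perturbation for all finitely many relevant pairs.
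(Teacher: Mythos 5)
Your overall strategy (Katok-type separated sets with controlled empirical measures, glued by shadowing into a horseshoe coded by a subshift, with $\Lambda_i$ the single-symbol sub-horseshoes) is the same route as the cited proof, whose structure is revealed later in this paper: it produces closed $f^n$-invariant sets $\Delta_i\subseteq\Delta$ with $(\Delta,f^n)$ conjugate to a full shift $(\Sigma_r^+,\sigma)$, and then sets $\Lambda=\bigcup_{l=0}^{n-1}f^l\Delta$, $\Lambda_i=\bigcup_{l=0}^{n-1}f^l\Delta_i$. However, your proposal has a genuine gap exactly at the point you dismiss as a ``related subtlety'': the existence of transition words of one \emph{common} length $N$ between all endpoint/start-point pairs. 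This is not a compactness or uniform-continuity issue; it is an arithmetic one, and it can fail outright. The hypotheses give only transitivity, not mixing. For instance, let $X$ be a transitive SFT whose transition graph is bipartite (classes $A$ and $B$, all edges running between the classes): this $X$ is infinite, expansive, transitive and has shadowing, yet every orbit segment starting and ending in class $A$ has even length, while every segment from class $A$ to class $B$ has odd length. Hence no single $N$ can simultaneously serve as a transition length for pairs within a class and pairs across classes, no matter how the finitely many reference points are perturbed by $\delta_0/2$. Since the start and end points of your $E_i$-blocks will in general populate both classes, the free concatenation your coding requires is impossible for $f$ itself.

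The missing idea is to neutralize this period obstruction \emph{before} gluing: use the spectral decomposition available for transitive expansive maps with shadowing to pass to a power $f^n$ restricted to one (mixing) piece, where transitions of a single common length do exist; build the horseshoe $\Delta$ there, so that $(\Delta,f^n)$ is conjugate to a full shift; and only then define $\Lambda=\bigcup_{l=0}^{n-1}f^l\Delta$, checking that transitivity and shadowing of $(\Delta,f^n)$ pass to $(\Lambda,f)$. This last transfer is also glossed over in your write-up: even in your own construction the coding conjugates the subshift with the action of $f^{\,n+N}$ on the set of shadow points, not with $f$ on $\Lambda$, so $\Lambda$ is a finite union of iterates of an $f^{\,n+N}$-invariant set, and shadowing/transitivity of $(\Lambda,f)$ require the $f$-versus-$f^k$ argument rather than being inherited ``abstractly''. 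Two smaller points: your justification of entropy-density (``transitivity forces a specification-like property'') fails for the same bipartite example, since specification implies mixing; and your constants need adjusting, since an $(n_i,\delta)$-separated family that is only $\delta$-shadowed need not yield distinct shadow orbits --- the shadowing radius must be below half the separation constant.
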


\section{Minimal Entropy-dense Properties}\label{section-minimalentropydense}
Eizenberg,   Kifer and Weiss proved for systems with the specification property that \cite{EKW} any $f$-invariant probability measure $\nu$ is the weak limit of a sequence of ergodic measures
$\{\nu_n\}$,   such that the entropy of $\nu$ is the limit of the entropies of the $\{\nu_n\}$.   This is a central point in
large deviations theory,   which was first emphasized in \cite{FO}.   Meanwhile,   this also plays an crucial part in the computing of Billingsley dimension \cite{Billingsley1960,  Billingsley1961} on shift spaces \cite{PS2003}.
Pfister and Sullivan refer to this property as the \emph{entropy-dense} property \cite{PS2005}.
 In this subsection,   we introduce another entropy-dense property, called minimal-entropy-dense,   which shall serve for our main results.

{
\begin{Prop}\label{prop-f-f-k}
Consider $\Delta\subseteq X$ which is $f^k$-minimal for some $k\in\N$ and let $\Lambda=\bigcup_{i=0}^{k-1}f^i\Delta.  $ If we define $\overline{\nu}=j(\nu)=\frac1k\sum_{i=0}^{k-1}f_*^i\nu$ for each $\nu\in M(f^k,  \Delta)$,   then
\begin{description}
  \item[(1)] $j(M(f^k,  \Delta))\subseteq M(f,  \Lambda).  $
Moreover,   there is a metric $\rho_1$ on $M(f^k,  \Delta)$ and a metric $\rho_2$ on $M(f,  \Lambda)$ such that $j:(M(f^k,  \Delta),  \rho_1) \to (M(f,  \Lambda),  \rho_2)$ is an isometric imbedding.
  \item[(2)] Furthermore,
\begin{eqnarray*}
% \nonumber to remove numbering (before each equation)
  j:(M_{erg}(f^k,  \Delta),  \rho_1) &\to& (M_{erg}(f,  \Lambda),  \rho_2) \\
  \nu &\mapsto& \overline{\nu}
\end{eqnarray*}
is an isometric isomorphism.
\item[(3)] $h_{\nu}(f^k|_\Delta)=kh_{\overline{\nu}}(f).  $
\end{description}

\end{Prop}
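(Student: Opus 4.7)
I will handle the three parts in order. The key underlying observation is that, after replacing $k$ by the minimal period if necessary, the sets $\Delta, f\Delta, \ldots, f^{k-1}\Delta$ are pairwise disjoint: each $f^i\Delta$ is itself $f^k$-minimal, and two distinct $f^k$-minimal sets that meet must coincide. Thus $\Lambda = \bigcup_{i=0}^{k-1} f^i\Delta$ is a disjoint union, $f$ cyclically permutes the pieces, and $f^k$ preserves each piece separately.

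For (1), the computation $f_*\overline\nu = \frac{1}{k}\sum_{i=0}^{k-1}f_*^{i+1}\nu = \overline\nu$ uses $f_*^k\nu = \nu$, and each $f_*^i\nu$ is supported in $f^i\Delta\subseteq \Lambda$, so $j$ lands in $M(f,\Lambda)$. I fix a weak-$*$ compatible metric $\rho_2$ on $M(f,\Lambda)$ built from a dense sequence in $C(\Lambda)$ and set $\rho_1 := j^*\rho_2$. For $\rho_1$ to be a metric, $j$ must be injective: if $j(\nu_1) = j(\nu_2)$ then restricting $\sum_i f_*^i\nu_1 = \sum_i f_*^i\nu_2$ to $\Delta$ annihilates all $i>0$ terms by disjointness, leaving $\nu_1 = \nu_2$. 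Since $j$ is also weak-$*$ continuous between compact spaces, it is a homeomorphism onto its image; hence $\rho_1$ induces the weak-$*$ topology on $M(f^k,\Delta)$, and $j$ is an isometric embedding by construction.

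For (2), I show $j$ restricts to a bijection $M_{erg}(f^k,\Delta) \to M_{erg}(f,\Lambda)$. Forward: if $\nu$ is $f^k$-ergodic and $B \subseteq \Lambda$ is Borel $f$-invariant, then $B\cap\Delta$ is $f^k$-invariant in $\Delta$, so $\nu(B\cap\Delta)\in\{0,1\}$; using $f$-invariance of $B$ together with the cyclic permutation of the pieces gives $\overline\nu(B) = \nu(B\cap\Delta)\in\{0,1\}$. Backward: for $\mu \in M_{erg}(f,\Lambda)$, the cyclic permutation forces $\mu(f^i\Delta) = 1/k$, and the candidate pre-image $\nu := k\mu|_\Delta$ is a probability measure on $\Delta$ that is $f^k$-invariant (direct check using $f^{-k}A \cap \Lambda = A$ for $A\subseteq\Delta$) and $f^k$-ergodic (any $f^k$-invariant $A\subseteq\Delta$ spawns the $f$-invariant set $\bigcup_i f^iA$ of $\mu$-measure $k\mu(A)$, which must be $0$ or $1$). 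That $j(\nu) = \mu$ is checked piece by piece on each $f^i\Delta$, and the isometric isomorphism then follows from (1).

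For (3), I combine Abramov's formula $h_{\overline\nu}(f^k) = k\,h_{\overline\nu}(f)$ with the affinity of entropy. The representation $\overline\nu = \frac{1}{k}\sum_i f_*^i\nu$ is exactly the $f^k$-ergodic decomposition of $\overline\nu$: each $f_*^i\nu$ is $f^k$-ergodic (by (2) applied cyclically), and $f^i$ conjugates $(X,f^k,\nu)$ to $(X,f^k,f_*^i\nu)$, so $h_{f_*^i\nu}(f^k) = h_\nu(f^k)$. Affinity then yields $h_{\overline\nu}(f^k) = h_\nu(f^k) = h_\nu(f^k|_\Delta)$, and Abramov gives the required $h_\nu(f^k|_\Delta) = k\,h_{\overline\nu}(f)$. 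The main technical subtlety throughout is the pairwise disjointness of the pieces $f^i\Delta$, which underwrites the injectivity in (1), the pre-image construction in (2), and the transparent ergodic decomposition in (3).
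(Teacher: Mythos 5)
Your proposal is correct and is essentially the paper's own proof: the same preliminary reduction to pairwise disjoint pieces (the paper replaces $k$ by the smallest $m \mid k$ with $f^m\Delta=\Delta$), the same injectivity argument obtained by restricting $j(\mu)=j(\nu)$ to $\Delta$ and killing the terms $f_*^i\nu$, $i\geq 1$, by disjointness, the same compactness device for choosing the metrics $\rho_1,\rho_2$, the same forward ergodicity argument in (2), and part (3) is word for word the paper's computation (Abramov's formula, affinity of entropy over $\overline{\nu}=\frac1k\sum_{i}f_*^i\nu$, and equality of the entropies $h_{f_*^i\nu}(f^k)$ along the cycle of pieces). The one real divergence is the surjectivity step in (2): the paper invokes its Lemma \ref{lem-decomposition} (an $f$-ergodic measure is an average of $f^k$-ergodic ones) and takes $f_*^{k-i}\nu$ as the preimage, whereas you construct the preimage directly as $\nu:=k\mu|_\Delta$ and verify $f^k$-invariance and ergodicity by hand; this amounts to inlining the proof of that lemma, which makes your argument more self-contained but obliges some measure-theoretic care the paper avoids (for non-invertible $f$ the images $f^iA$ of a Borel set are only analytic, and your ``spawned'' set $\bigcup_i f^iA$ is $f$-invariant only mod $\mu$, which still suffices for ergodicity). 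Two slips in your write-up, neither fatal: the parenthetical identity $f^{-k}A\cap\Lambda=A$ for $A\subseteq\Delta$ is false in general --- what is true, and what suffices, is $f^{-k}A\cap\Lambda=f^{-k}A\cap\Delta$, after which $\nu(f^{-k}A\cap\Delta)=k\mu(f^{-k}A)=k\mu(A)=\nu(A)$ by $f^k$-invariance of $\mu$; and in (3) ``$f^i$ conjugates'' should be weakened to ``is a measure-preserving factor map'' since $f$ need not be invertible --- equality of the entropies along the cycle still holds because the chain of factor inequalities closes up ($f_*^k\nu=\nu$ forces $h_\nu\geq h_{f_*\nu}\geq\cdots\geq h_{f_*^k\nu}=h_\nu$ to collapse), a point the paper glosses over as well.
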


}

\begin{proof} Since $\Delta $   is $f^k$-minimal, we may assume that the union $\bigcup_{i=0}^{k-1}f^i\Delta  $ should be disjoint union. Otherwise, there must
 exist $1\leq m\leq k-1$ with $m | k$ such that $f^m\Delta=\Delta.$ Take $m$ be the smallest one, in this case $\Lambda=\bigcup_{i=0}^{m-1}f^i\Delta  $ should be disjoint union and then one can change $k$ to $m$ to give the proof. 
%\begin{description}
 % \item[(1)]

 (1) For any $\nu\in M(f^k,\Delta)$, $f_*^k\nu=(f^k)_*\nu=\nu$. So $f_*\overline{\nu}=\overline{\nu},$ which implies that $j(M(f^k,\Delta))\subseteq M(f,\Lambda).$ To show that $j$ is injective, let us assume $j(\mu)=j(\nu).$ Note that each $f^i\Delta$ is closed and $f_*^i\nu(f^i\Delta)=1$. So for each $0\leq i<j\leq k-1$,
      $$f_*^i\nu(f^i\Delta\cap f^j\Delta)=f_*^j\nu(f^i\Delta\cap f^j\Delta)=0.$$
      Therefore, for any Borel set $A\subseteq\Delta,$ one has $f_*^l\mu(A)=f_*^l\nu(A)=0,1\leq l\leq k-1.$ This implies that $\mu(A)=kj(\mu)(A)=kj(\nu)(A)=\nu(A).$ Since $A\subseteq\Delta$ is arbitrary, $\mu=\nu.$ Meanwhile, since $f_*$ is continuous, $j$ is continuous, which implies that $j$ is an imbedding. Finally, $M(f^k,\Delta)$ is compact, so the topology does not depend on the selection of metric on $M(f^k,\Delta)$. This enables us to pick the required $\rho_1$ and $\rho_2.$
  %\item[(2)]

  (2) Since $j$ is an isometry, it is sufficient to prove that $j:(M_{erg}(f^k,\Delta),\rho_1) \to (M_{erg}(f,\Lambda),\rho_2)$ is well-defined and is a surjection. Indeed, for any $A\subseteq\Lambda$ with $f^{-1}A=A$, one has $f^{-k}A=A$ and thus $f^{-k}(A\cap \Delta)=A\cap\Delta.$ Moreover, $\nu(A)=f_*\nu(A)=\cdots=f_*^{k-1}\nu(A)$. So if $0<j(\nu)(A)<1$, one has $0<\nu(A)<1,$ contradicting the fact that $\nu\in M(f^k,\Delta).$ This proves that $j(M_{erg}(f^k,\Delta))\subseteq M_{erg}(f,\Lambda).$ Moreover, for any $\overline{\nu}\in M(f,\Lambda)$, by Lemma \ref{lem-decomposition}, there is a $\nu\in M_{erg}(f^k,\Lambda)$, an $m|k$ and a $X_0\subseteq X$ satisfying the properties there. Since $f^{k}\Delta=\Delta$, $\nu(f^i\Delta)=1$ for some $0\leq i\leq k-1.$ Therefore, $f_*^{k-i}\nu\in M_{erg}(f^k,\Delta)$ and $j(f_*^{k-i}\nu)=\overline{\nu}$, proving that $j:(M_{erg}(f^k,\Delta),\rho_1) \to (M_{erg}(f,\Lambda),\rho_2)$ is surjective.

      %\item[(3)]

      (3) Note that $h_{f_*^i\nu}(f^k|_{f^i\Delta})=h_{f_*^i\nu}(f^k|_\Lambda),0\leq i\leq k-1$. Moreover,
      $$h_\nu(f^k|_\Delta)=h_{f_*\nu}(f^k|_{f\Delta})=\cdots=h_{f_*^{k-1}\nu}(f^k|_{f^{k-1}\Delta}).$$
      So
      $$kh_{\overline{\nu}}(f)=h_{\overline{\nu}}(f^k)=\frac1k\sum_{i=0}^{k-1}h_{f_*^i\nu}(f^k|_\Lambda)=\frac1k\sum_{i=0}^{k-1}h_{f_*^i\nu}(f^k|_{f^i\Delta})=h_\nu(f^k|_\Delta).$$
%\end{description}

\end{proof}

We also need the following result whose proof shall be given in the appendix.
\begin{mainproposition}\label{prop-symbolic}
Consider a m.  f.  t.  -subshift $M$ and a continuous function $\varphi$ on $M$.   Let $\{\mu^i\}_{i=1}^k,  k\geq 1$ be $k$ distinct ergodic measure on $M$.
Then for any $\eta>0$ and any $\eps>0,$
%\begin{equation}\label{selection-of-eps}
 % 0<\eps<\min\{\rho(\mu^i,  \mu^j),  \dist(\mu^a,  \cov\{\mu^b\}_{1\leq b\leq k,  b\neq a}):1\leq i<j\leq k,  1\leq a\leq k\},
%\end{equation}
there exists a minimal subsystem $\overline{M}\subseteq M$ supporting exactly $k$ ergodic measures $\{\nu^j\}_{j=1}^k$ such that\\
%\begin{description}
% \item[1]
 (1) \begin{equation}\label{integral}
  \rho(\nu^j,  \mu^j)<\eps/2~\textrm{for each}~j=1,  \cdots,  k.
 \end{equation}
 % \item[2]
 (2) $h_{\nu^j}>h_{\mu^j}-\eta,  j=1,  \cdots,  k$.
 % \end{description}
\end{mainproposition}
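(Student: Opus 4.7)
The plan is to adapt a Grillenberger-style block construction, in the spirit of the Jewett–Krieger minimal model theorem, but producing a minimal subshift that supports exactly $k$ ergodic measures rather than one. The mixing finite-type hypothesis on $M$ gives a uniform transition length $N_0$ with the property that any two admissible words can be concatenated via a bridge of length $N_0$, and this is the only combinatorial input needed from $M$.

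First, for each $j = 1,\ldots,k$ I would apply the classical Shannon–McMillan–Breiman theorem together with Birkhoff's theorem to produce, for any small $\eta_0,\varepsilon_0>0$, a length $L_j$ and a collection $\mathcal{W}^j$ of admissible $L_j$-words such that $|\mathcal{W}^j|\geq \exp(L_j(h_{\mu^j}-\eta_0))$ and the empirical measure $\frac{1}{L_j}\sum_{i=0}^{L_j-1}\delta_{\sigma^i w}$ is $\varepsilon_0$-close to $\mu^j$ in $\rho$ for every $w\in\mathcal{W}^j$. The parameters are chosen so that $\eta_0\ll\eta$ and $\varepsilon_0\ll\varepsilon$, leaving slack for the cost of bridges and of forcing minimality later.

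Next I would recursively define block collections $\mathcal{B}^j_t$ of increasing length. $\mathcal{B}^j_1$ is built by concatenating many words of $\mathcal{W}^j$ with $N_0$-bridges, with a small fraction ($\leq 2^{-1}$) of words from $\mathcal{W}^i$, $i\neq j$, sprinkled in syndetically to guarantee that all types appear; $\mathcal{B}^j_{t+1}$ is a long concatenation of blocks from $\mathcal{B}^1_t,\ldots,\mathcal{B}^k_t$ in which $\mathcal{B}^j_t$-blocks dominate with density $1-2^{-t-1}$ and each $\mathcal{B}^i_t$-block ($i\neq j$) still appears along a prescribed syndetic pattern. Let $\overline{M}$ be the set of sequences every finite subword of which occurs in some $\mathcal{B}^j_t$; this is shift-invariant and closed. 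Minimality follows because by construction every $\mathcal{B}^j_t$-block is a subword of every $\mathcal{B}^i_{t'}$-block for $t'\geq t$ with a bounded gap, so every finite pattern in $\overline{M}$ reappears syndetically. The dominant-density condition at scale $t$ forces any $x\in\overline{M}$ to be associated with a single ``asymptotic type'' $j(x)\in\{1,\ldots,k\}$ that is eventually constant, so the empirical measures along $x$ accumulate at one ergodic measure $\nu^{j(x)}$, which by the entropy and weak-$*$ estimates satisfies $\rho(\nu^{j(x)},\mu^{j(x)})<\varepsilon/2$ and $h_{\nu^{j(x)}}>h_{\mu^{j(x)}}-\eta$.

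The main obstacle is to pin down \emph{exactly} $k$ ergodic measures, rather than a larger simplex obtained from mixing block frequencies. The density-$1-2^{-t}$ domination at scale $t$ is the key device: it guarantees that at the next scale the frequency vector must lie within $2^{-t-1}$ of a vertex $e_j$, so along a subsequence of scales the frequency vector converges to some $e_{j(x)}$ and no true convex combination survives. Making this quantitative—controlling the contribution of the $N_0$-bridges, the syndetic sprinkle that forces minimality, and the geometric series of errors across scales—is the technical heart of the argument. Once the type map $x\mapsto j(x)$ is well-defined on a full-measure set for every invariant measure, a standard ergodic decomposition shows $M_{erg}(\sigma,\overline{M})=\{\nu^1,\ldots,\nu^k\}$; the weak-$*$ closeness and entropy lower bound are then read off from the construction at the first scale.
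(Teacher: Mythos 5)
Your plan is in the same Grillenberger--Denker--Grillenberger--Sigmund family as the paper's proof (the paper also performs a hierarchical symbolic construction and takes a decreasing intersection), but the step you yourself single out as the main obstacle --- pinning down \emph{exactly} $k$ ergodic measures --- does not work as you state it. You argue that the density-$(1-2^{-t-1})$ domination ``guarantees that at the next scale the frequency vector must lie within $2^{-t-1}$ of a vertex $e_j$,'' and conclude that frequency vectors of points converge to vertices, so ``no true convex combination survives.'' This conflates the type-frequency vector \emph{inside a single} level-$(t+1)$ block (which is indeed near a vertex) with the type-frequency vector \emph{of a point} $x\in\overline{M}$, which is an average over many level-$(t+1)$ blocks of possibly different types. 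The consistency relations across scales, $q_t=\sum_l q_{t+1}^l\, c^{l\cdot}_{t+1}$ with the composition matrices $c_{t+1}$ close to the identity, admit mixed solutions such as $q_t\equiv(\tfrac12,\tfrac12,0,\dots,0)$ at every scale; indeed every nontrivial convex combination of your limit measures $\nu^j$ is an invariant measure on $\overline{M}$ realizing exactly such mixed frequencies, so no frequency estimate alone can rule out ``true convex combinations.'' What must be used is ergodicity itself: for an ergodic $\nu$ one shows the type $j_t(x)$ of the level-$t$ block containing the origin satisfies $\nu\left(j_t\neq j_{t+1}\right)=O(2^{-t})$, applies Borel--Cantelli to get that $j_t(x)$ is a.s.\ eventually constant, checks that the limit type is shift-invariant mod $\nu$, and invokes ergodicity to make it a.s.\ a single $j$. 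That argument in turn needs points of $\overline{M}$ to be parseable into level-$t$ blocks in an (a.e.) unambiguous way, and your construction contains no marker or recognizability device. The paper builds exactly this in: its transition words $U_i^{st}$ contain a forbidden-elsewhere block $C_i$ exactly once, so a generic point of any ergodic measure splits \emph{uniquely} into transition words and long pieces from a single $M_i^j$, and a one-scale analysis (cases a) and b) of its Lemma 5.13) shows every ergodic measure is within any $\kappa$ of some $\nu^j$, hence equals one.

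Two further gaps. First, the bound $h_{\nu^j}>h_{\mu^j}-\eta$ cannot be ``read off from the construction at the first scale'': since $\overline{M}$ is minimal it has no proper subsystems, so you cannot localize $\nu^j$ and use the variational principle; you must propagate a word count $\log N_t^j\geq n_t(h_{\mu^j}-\eta)$ through \emph{all} scales (bridges, catalog sprinkles and all) and then convert counting into measure entropy, e.g.\ by exhibiting ergodic measures $\nu_t^j$ of high entropy on auxiliary (non-minimal) shifts converging weak-$*$ to $\nu^j$ and invoking upper semi-continuity of the entropy map, which holds because subshifts are expansive --- this is precisely the paper's route (it keeps a high-entropy ergodic $\nu_i^j$ at every stage via an entropy-dense lemma and passes to the limit by u.s.c.). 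Second, you never argue that each $\nu^j$ is itself ergodic, i.e.\ that the number of ergodic measures is not \emph{less} than $k$: since your type argument only shows $M_{erg}(\sigma,\overline{M})\subseteq\{\nu^1,\dots,\nu^k\}$, you need the $\nu^j$ to be affinely independent, which forces the initial choice $\eps<\min_a \dist\left(\mu^a,\cov\{\mu^b\}_{b\neq a}\right)$ (harmless, as the statement for small $\eps$ implies it for large $\eps$); the paper makes exactly this choice at the start and closes the argument with it.
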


%\subsection{Minimal-entropy-dense Property}

For any $m\in\N$ and $\{\nu_i\}_{i=1}^m \subseteq M(X)$,   we write $\cov\{\nu_i\}_{i=1}^m$ for the convex combination of $\{\nu_i\}_{i=1}^m$,   namely,
$$\cov\{\nu_i\}_{i=1}^m:=\left\{\sum_{i=1}^mt_i\nu_i:t_i\in[0,  1],  1\leq i\leq m~\textrm{and}~\sum_{i=1}^mt_i=1\right\}.  $$

\begin{Def}
We say that $(X,  f)$ has the minimal-entropy-dense property if for any $K=\cov\{\mu_i\}_{i=1}^m\subseteq M(f,  X)$ and any $\eta,  \zeta>0$,   there exists a compact invariant subset $\overline{X} \subsetneq X$,    such that
\begin{enumerate}
 \item there are exactly $k$ ergodic measures $\{\nu_i\}_{i=1}^m$ on $\overline{X}$;
 %\item $\Lambda$ is transitive and has the shadowing property.
  \item For each $1\leq i\leq m$,
  $h_{\nu_i}>h_{\mu_i}-\eta$ and consequently,   $\htop(f,  \overline{X})>\sup\{h_{\kappa}:\kappa\in K\}-\eta$.
  \item  For each $1\leq i\leq m$, $d(\nu_i,\mu_i)<\zeta$ and consequently,  $d_H(K,  M(f,  \Lambda))<\zeta$. %,   $d_H(\mu_i,  M(f,  \Lambda_i))<\zeta$.
\end{enumerate}

%if for any $\mu\in M(f,  X)$ with $\int\varphi d\mu=a$ for some $a\in L_\varphi$,   then for any $\eta>0$,   any $k\geq 2$ and there exists a $\gamma_0>0$ such that for any $0<\gamma<\gamma_0$,   there is a minimal subset $\overline{X}\subseteq X$ so that
%\begin{enumerate}

 % \item $a-\gamma<\int\varphi d\nu_1<a<\frac{1}{2}(a+L_2)<\int\varphi d\nu_2<\cdots<\int\varphi d\nu_k$;
  %\item $h_{\nu_1}>h_{\mu}-3\eta$.
%\end{enumerate}
\end{Def}

{
\begin{mainlemma}\label{lem-minimalentropy-dense}
Suppose $(X,  f)$ is topologically expanding (resp.   topologically hyperbolic).
% and $\varphi$ is a continuous function on $X$ with $I_{\varphi}\neq\emptyset$.
   Then $(X,  f)$ has the minimal-entropy-dense property.
\end{mainlemma}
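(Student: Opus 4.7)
The plan is to reduce the lemma to its symbolic counterpart Proposition \ref{prop-symbolic} via a Markov coding of a transitive shadowing subsystem of $X$. For the given convex hull $K=\cov\{\mu_i\}_{i=1}^m$ and tolerances $\eta,\zeta>0$, I first invoke the strong-basic-entropy-dense property (Mainlemma \ref{Mainlemma-convex-by-horseshoe}), which yields a compact transitive shadowing subsystem $\Lambda\subsetneq X$ together with subsets $\Lambda_i\subseteq\Lambda$ carrying invariant measures that simultaneously approximate $\mu_i$ in weak$^*$ distance and in metric entropy. Combining this with Sigmund-type density of ergodic measures in transitive shadowing systems, I replace each $\mu_i$ by an ergodic $\mu_i'\in M_{erg}(f,\Lambda_i)$ with $\rho(\mu_i',\mu_i)<\zeta/2$ and $h_{\mu_i'}>h_{\mu_i}-\eta/2$.

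Since $(\Lambda,f|_\Lambda)$ is a transitive expansive system with the shadowing property, the Bowen--Fried construction furnishes a subshift of finite type $(\Sigma,\sigma)$ together with a surjective bounded-to-one factor map $\pi:\Sigma\to\Lambda$. Bounded-to-one factors have vanishing fiber entropy, hence $h_{\tilde\nu}(\sigma)=h_{\pi_*\tilde\nu}(f)$ for every $\tilde\nu\in M(\sigma,\Sigma)$, and each $\mu_i'$ admits an ergodic lift $\tilde\mu_i\in M_{erg}(\sigma,\Sigma)$ with $\pi_*\tilde\mu_i=\mu_i'$ and $h_{\tilde\mu_i}=h_{\mu_i'}$. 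I then apply Proposition \ref{prop-symbolic} to the family $\{\tilde\mu_i\}_{i=1}^m$ in $(\Sigma,\sigma)$ with a parameter $\varepsilon'$ small enough to obtain a minimal subsystem $\overline\Sigma\subsetneq\Sigma$ supporting exactly $m$ ergodic measures $\tilde\nu_1,\ldots,\tilde\nu_m$ with $\rho(\tilde\nu_i,\tilde\mu_i)<\varepsilon'$ and $h_{\tilde\nu_i}>h_{\tilde\mu_i}-\eta/2$.

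Setting $\overline X:=\pi(\overline\Sigma)$ produces a compact $f$-invariant subset that is minimal (as the continuous image of a minimal system) and satisfies $\overline X\subseteq\Lambda\subsetneq X$, giving the required strict inclusion. The pushforwards $\nu_i:=\pi_*\tilde\nu_i$ are ergodic on $\overline X$; entropy preservation under $\pi$ combined with the telescoped tolerances yields $h_{\nu_i}=h_{\tilde\nu_i}>h_{\mu_i}-\eta$, while continuity of $\pi_*$ translates $\rho(\tilde\nu_i,\tilde\mu_i)<\varepsilon'$ into $\rho(\nu_i,\mu_i)<\zeta$ once $\varepsilon'$ is chosen small enough.

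The delicate point is verifying that $\overline X$ supports \emph{exactly} $m$ distinct ergodic measures. Since the $\mu_i$ are pairwise distinct, shrinking the perturbation parameters forces the $\nu_i$ to be pairwise distinct, so $|M_{erg}(f,\overline X)|\ge m$. Conversely, since $\pi:\overline\Sigma\to\overline X$ is a factor map and $\overline\Sigma$ has exactly $m$ ergodic measures, every ergodic measure on $\overline X$ lifts to some $\tilde\nu_i$, yielding $|M_{erg}(f,\overline X)|\le m$. The main obstacle is Proposition \ref{prop-symbolic} itself, whose construction of a minimal subshift with a prescribed number of distinct ergodic measures (close to given ones, with controlled entropy loss) is the real engine of the lemma; the passage back to $(X,f)$ via the Markov coding is routine once that coding is in place.
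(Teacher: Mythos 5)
Your overall strategy is the same as the paper's: invoke the strong-basic-entropy-dense property (Lemma \ref{Mainlemma-convex-by-horseshoe}) to get a horseshoe $\Lambda$ with distinguished pieces $\Lambda_i$, transfer the problem to symbolic dynamics, apply Proposition \ref{prop-symbolic} there, and pull the resulting minimal set with exactly $m$ ergodic measures back to $X$. Your auxiliary steps (ergodic lifts exist over a factor map, bounded-to-one factors preserve measure entropy, pushforward of a minimal set is minimal, and the counting argument for $|M_{erg}(f,\overline X)|=m$) are all sound. However, there is a genuine gap at the coding step. Proposition \ref{prop-symbolic} is stated only for \emph{mixing} subshifts of finite type, and the SFT $\Sigma$ you obtain by Bowen--Fried coding of $(\Lambda,f|_\Lambda)$ need not be mixing --- in fact it generally \emph{cannot} be: a factor of a topologically mixing system is topologically mixing, and the $\Lambda$ supplied by Lemma \ref{Mainlemma-convex-by-horseshoe} is only guaranteed transitive. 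Indeed, in the underlying construction $\Lambda=\bigcup_{l=0}^{n-1}f^l\Delta$ where $(\Delta,f^n)$ is conjugate to a full shift, so $\Lambda$ typically carries a nontrivial cyclic (period-$n$) structure and is not mixing. Hence your application of Proposition \ref{prop-symbolic} to $\Sigma$ is unjustified, and no choice of coding can remove the obstruction as long as you insist on coding $\Lambda$ itself under $f$.

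Repairing this forces you to pass to a power of the map and a mixing component, and then to transport minimality, the ergodic-measure count, entropy, and weak$^*$ distance back from $f^n$ to $f$. That transfer is precisely the content the paper supplies and your proposal omits: the paper works with the conjugation $\pi:(\Delta,f^n)\to(\Sigma_r^+,\sigma)$ onto a \emph{full} (hence mixing) shift, applies Proposition \ref{prop-symbolic} there, and then uses Proposition \ref{prop-f-f-k} --- the isometric isomorphism $j$ between $M_{erg}(f^n,\Delta)$ and $M_{erg}(f,\Lambda)$ with $h_\nu(f^n|_\Delta)=n\,h_{j(\nu)}(f)$ --- together with $AP(f^n)=AP(f)$ to conclude $f$-minimality and the exact count of $f$-ergodic measures. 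Without this $f$ versus $f^n$ bookkeeping your argument does not close. Two secondary points: the Fried theorem you cite is for expansive homeomorphisms, so in the positively expansive (non-invertible) case you would need the one-sided analogue and an argument that the SFT cover can be taken transitive; and your appeal to ``Sigmund-type density'' is unnecessary --- the variational principle on $\Lambda_i$ plus the Hausdorff bound $d_H(\mu_i,M(f,\Lambda_i))<\zeta$ already give the ergodic approximations $\mu_i'$, which is what the paper does.
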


{
\begin{proof}

Consider $K:=\cov\{\mu_i\}_{i=1}^k.  $  Fix $\eta>0,\zeta>0.$ We may assume that $\zeta<\frac12 \min\{d(\mu_i,\mu_j)|\,1\leq i\neq j\leq k\}.$  By Lemma \ref{Mainlemma-convex-by-horseshoe},      there exist  closed $f$-invariant   $\Lambda,  \Lambda_i$  satisfying that
\begin{enumerate}
  \item $\Lambda$ is transitive and has the shadowing property.
  \item For each $1\leq i\leq m$,
  $\htop(f,  \Lambda_i)>h_{\mu_i}-\eta$ and consequently,   $\htop(f,  \Lambda)>\sup\{h_{\kappa}:\kappa\in K\}-\eta$.
  \item $d_H(K,  M(f,  \Lambda))<\zeta$,   $d_H(\mu_i,  M(f,  \Lambda_i))<\zeta$.
\end{enumerate}
By the variational principle,   we choose $k$ ergodic measures $\omega_i\in M(f,  \Lambda_i),  i=1,  \cdots,  k$ such that
\begin{equation}\label{eq-nu-omega}
  h_{\omega_i}>\htop(f,  \Lambda_i)-\eta>h_{\mu_i}-2\eta~\textrm{and}~\rho_X(\omega_i,  \mu_i)<\zeta/2.
\end{equation}
From the proof of Lemma \ref{Mainlemma-convex-by-horseshoe} in \cite{DongTian2016-nosyndetic} in fact there exist     closed $f^k$-invariant sets $\Delta,\Delta_i$ such that $\pi:(\Delta,  f^n)\to (\Sigma_r^+,  \sigma)$ and $\pi_i:(\Delta_i,  f^n)\to(\Sigma_r^+,  \sigma)$ are conjugations,    $\Lambda=\bigcup_{l=0}^{n-1}f^l\Delta$ and $\Lambda_i=\bigcup_{l=0}^{n-1}f^l\Delta_i$.

By Proposition \ref{prop-f-f-k},   there is an isometric isomorphism $j$ between $M_{erg}(f^n,  \Delta)$ and $M_{erg}(M,  \Lambda)$.   So for each $\omega_i$,   we obtain an $\widetilde{\omega}_i=j^{-1}(\omega_i)\in M_{erg}(f^n,  \Delta).  $ Moreover,   each $\widetilde{\omega}_i$ is pushed forward by $\pi$ to an ergodic measure $\omega^i=\pi_*\widetilde{\omega}_i\in\M_\sigma(\Sigma_r^+)$.   Then we use Proposition \ref{prop-symbolic} to find a minimal subsystem $\overline{M}\subseteq \Sigma_r^+$ supporting exactly $k$ ergodic measures $\{\nu^j\}_{j=1}^k$ such that
\begin{description}
 \item[1] $\rho(\nu^j,  \omega^j)<\zeta/2~\textrm{for each}~j=1,  \cdots,  k.  $

  \item[2] $h_{\nu^j}>h_{\omega^j}-\eta,  j=1,  \cdots,  k$.
  \end{description}
Let $\overline{X}=\pi^{-1}(\overline{M})$.   It is left to show that $\overline{X}$ is the one we need.   Indeed,   since $\pi$ is a conjugation,   $(\overline{X},  f^n)$ supports exactly $k$ ergodic measures $\widetilde{\nu}_i:=(\pi^{-1})_*\nu^i\subseteq M(f^n,  \overline{X})$.   Note that $\overline{X}$ is $f^n$-minimal and $AP(f^n)=AP(f)$.   So $\overline{X}$ is $f$-minimal.   Now let $\nu_i=j(\widetilde{\nu}_i)$.   Then
$$\rho_{\Delta}(\widetilde{\omega}_i,  \widetilde{\nu}_i)=\rho_{\Sigma_r^+}(\omega^i,  \nu^i)<\zeta/2.  $$
Moreover,   from \cite{Walters} we see
\begin{equation}\label{eq-h-nu-omega}
  h_{\widetilde{\nu}_i}(f^n)=h_{\nu^i}(\sigma)>h_{\omega^i}(\sigma)-\eta=h_{\widetilde{\omega}_i}(f^n)-\eta.
\end{equation}
Furthermore,   by Proposition \ref{prop-f-f-k},   we see
$$\rho_X(\omega_i,  \nu_i)=\rho_{\Delta_i}(\widetilde{\omega}_i,  \widetilde{\nu}_i)<\zeta/2.  $$
This along with \eqref{eq-nu-omega} shows that $\rho_X(\mu_i,  \nu_i)<\zeta$ for each $i=1,  \cdots,  k.  $
In addition,   Proposition \ref{prop-f-f-k} gives that
$$nh_{\nu_i}(f)=h_{\widetilde{\nu}_i}(f^n)~\textrm{and}~h_{\widetilde{\omega}_i}(f^n)=nh_{\omega_i}(f).  $$
This along with    \eqref{eq-nu-omega} and \eqref{eq-h-nu-omega} shows that
$$h_{\nu_i}>h_{\mu_i}-3\eta.  $$
The proof is completed.
\end{proof}
}

%\begin{Cor}\label{cor-C}
%Suppose $(X,  f)$ is topologically expanding (resp.   topologically hyperbolic) and transitive.   If $\varphi$ is a continuous function on $X$ with $I_{\varphi}\neq\emptyset$,   then for any $\eta>0$ and any $\nu\in M(f,  X)$,   there is a minimal subset $\overline{X}\subseteq X$ such that
%\begin{enumerate}
 % \item  there are exactly two ergodic measures $\{\nu_i\}_{i=1}^2$ on $\overline{X}$;
  %\item $\int\varphi d\nu_1<\int\varphi d\nu_2$;
  %\item $h_{\nu_1}>h_{\nu}-\eta$.
%\end{enumerate}
%\end{Cor}

}

\section{Proof of Main Theorems}\label{Proof}
%\subsection{Preliminary Lemmas}

We consider a topological dynamical system $(X,  f)$ and suppose  that $\Lambda\subsetneq X$ is a closed $f$-invariant set and  $\varphi$ is a continuous function on $X$.
%\begin{enumerate}
  %\item $\Lambda\subsetneq X$ is a closed $f$-invariant set;
  %\item $\varphi$ is a continuous function on $X$.
%\end{enumerate}
Define $I^\Lambda_{\varphi}(f)=\{x\in I_\varphi(f)|\,   V_f(x)\subseteq M(f,  \Lambda)\}$.
Denote
$$L^\Lambda_\varphi=\left[\inf_{\mu\in M(f,  \Lambda)}\int\varphi d\mu,  \,  \sup_{\mu\in M(f,  \Lambda)}\int\varphi d\mu\right]=[L_1^\Lambda,  L_2^\Lambda].  $$
For $a\in L^\Lambda_\varphi$,   define $R^\Lambda_{\varphi}(a)=\{x\in R_\varphi(a)|\,   V_f(x)\subseteq M(f,  \Lambda)\}$ and denote
\begin{equation}\label{t-a-Lambda}
  t_a^\Lambda=\sup_{\mu\in M(f,  \Lambda)}\{h_\mu:\,  \int\varphi d\mu=a\}.
\end{equation}
%\subsection{Useful facts}
%The following lemma is not hard to check.
%\begin{Lem}\label{lem-I-Lambda}
%If $I^\Lambda_{\varphi}(f)\neq\emptyset$,   then
%\begin{equation}\label{inf-sup}
 % \inf_{\mu\in M(f,  \Lambda)}\int\varphi d\mu<\sup_{\mu\in M(f,  \Lambda)}\int\varphi d\mu.
%\end{equation}
%\end{Lem}
  Define the measure center of an invariant set $\Lambda\subseteq X$ as
$$C^*_\Lambda:=\overline{\bigcup_{\mu\in M(f,  \Lambda)}S_{\mu}}.  $$ In particular, for any $x\in X$,   we define the measure center of $x$ as
 $C^*_x:=C^*_{\omega_f(x)}.  $

\subsection{Proof of Theorem \ref{thm-CCCCCCCCCCCCC} and Corollary \ref{Corollary-thm-CCCCCCCCCCCCC}}

\begin{mainlemma}\label{lem-irregular-horseshoe}
Suppose $(X,  f)$ is topologically expanding and transitive.   Let $\varphi:X\rightarrow \mathbb{R}$ be a continuous function and assume that $Int(L_\varphi)\neq \emptyset.  $
 Then for any $\eta>0, k\geq 2, $ there exist two  $f$-invariant subsets
  $\Lambda\subsetneq \Theta \subsetneq X$ such that
 \begin{description}
   \item[(1)] $\Lambda$ is minimal  with exactly $k$ ergodic measures and satisfies
   \begin{equation}\label{eq-Lambda-irregular}
  C^*_\Lambda=\Lambda,  ~\Int(L^\Lambda_\varphi)\neq \emptyset,  ~h_{top}(f|_\Lambda)>h_{top}(f)-\eta.
\end{equation}
   \item[(2)] $\Theta$ is internally chain transitive but not topologically transitive and
   \begin{equation}\label{eq-Delta-irregular}
  \Lambda=C^*_{\Theta}\subsetneq\Theta,  ~\Int(L^\Theta_\varphi)\neq \emptyset,  ~h_{top}(f|_\Theta)>h_{top}(f)-\eta.
\end{equation}

 \end{description}

\end{mainlemma}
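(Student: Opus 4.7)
The plan is to produce $\Lambda$ via the minimal-entropy-dense property (Lemma~\ref{lem-minimalentropy-dense}) and then build $\Theta$ as an $\omega$-limit set whose measure center equals $\Lambda$, using Lemma~\ref{lem-omega-A}.

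First I will choose the target ergodic measures. Since $\Int(L_\varphi)\neq\emptyset$, ergodic decomposition of any two invariant measures with distinct $\varphi$-averages yields two ergodic measures, call them $\mu_1,\mu_2\in M_{erg}(f,X)$, with $\int\varphi\,d\mu_1\neq\int\varphi\,d\mu_2$. By Theorem~\ref{thm-A0000000000}, $h_{top}(f)>0$, so by the variational principle I may also pick an ergodic $\mu_3$ with $h_{\mu_3}>h_{top}(f)-\eta/2$. Since $M_{erg}(f,X)$ is infinite (dense periodic orbits from shadowing plus transitivity), I may enlarge $\{\mu_1,\mu_2,\mu_3\}$ to a list of $k$ pairwise distinct ergodic measures $\mu_1,\dots,\mu_k$.

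Next I will apply Lemma~\ref{lem-minimalentropy-dense} to $K:=\cov\{\mu_i\}_{i=1}^k$ with approximation parameter $\zeta>0$ small enough that $\zeta<\tfrac12\min_{i\neq j}\rho(\mu_i,\mu_j)$ and such that any probability measure $\nu$ with $\rho(\nu,\mu_i)<\zeta$ satisfies $|\int\varphi\,d\nu-\int\varphi\,d\mu_i|<\tfrac14|\int\varphi\,d\mu_1-\int\varphi\,d\mu_2|$. The lemma yields a compact minimal $f$-invariant $\Lambda\subsetneq X$ supporting exactly $k$ ergodic measures $\nu_1,\dots,\nu_k$ with $\rho(\nu_i,\mu_i)<\zeta$ and $h_{\nu_i}>h_{\mu_i}-\eta/4$. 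Minimality forces $S_\mu=\Lambda$ for every $\mu\in M(f,\Lambda)$, whence $C^*_\Lambda=\Lambda$; the choice of $\zeta$ ensures $\int\varphi\,d\nu_1\neq\int\varphi\,d\nu_2$, so $L^\Lambda_\varphi$ is a nondegenerate interval with nonempty interior; and $h_{top}(f|_\Lambda)\geq h_{\nu_3}>h_{top}(f)-\eta$ follows from the variational principle.

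Finally I will apply Lemma~\ref{lem-omega-A} to the proper closed invariant set $\Lambda\subsetneq X$ to obtain $x\in X$ with $\Lambda\subseteq\omega_f(x)\subseteq\bigcup_{l\geq 0}f^{-l}\Lambda$, $\overline{\bigcup_{y\in\omega_f(x)}\omega_f(y)}\subseteq\Lambda$, and $\omega_f(x)\neq X$; set $\Theta:=\omega_f(x)$. Lemma~\ref{Lem-omega-naturally-in-ICT} then gives that $\Theta$ is internally chain transitive. If $\Theta$ were topologically transitive, a transitive point $z\in\Theta$ would satisfy $f^l(z)\in\Lambda$ for some $l$, hence $\omega_f(z)\subseteq\Lambda$; but $\omega_f(z)=\Theta$ would force $\Theta\subseteq\Lambda$, contradicting $\Lambda\subsetneq\Theta$. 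The inclusion $\omega_f(y)\subseteq\Lambda$ for every $y\in\Theta$, combined with Lemma~\ref{lem-Lambda-Lambda-0}, yields $M(f,\Theta)=M(f,\Lambda)$, so $C^*_\Theta=C^*_\Lambda=\Lambda$ and $L^\Theta_\varphi=L^\Lambda_\varphi$ has nonempty interior, while $h_{top}(f|_\Theta)\geq h_{top}(f|_\Lambda)>h_{top}(f)-\eta$ is immediate from $\Lambda\subseteq\Theta$. The main obstacle I anticipate is guaranteeing the strict inclusion $\Lambda\subsetneq\omega_f(x)$, since the cited statement of Lemma~\ref{lem-omega-A} only records $\Lambda\subseteq\omega_f(x)$; this should be resolvable either by inspecting the construction in \cite{DongTian2016-nosyndetic} or, failing that, by directly shadowing a pseudo-orbit that tracks $\Lambda$ but inserts arbitrarily long but sparse excursions through a fixed point of $X\setminus\Lambda$, so that such a point enters $\omega_f(x)\setminus\Lambda$ while the excursions remain sparse enough that the measure center of $\omega_f(x)$ is unchanged.
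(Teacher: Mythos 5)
Your construction of $\Lambda$ in part (1) is essentially the paper's: both feed a finite family of well-chosen measures into the minimal-entropy-dense property (Lemma \ref{lem-minimalentropy-dense}) and read off $C^*_\Lambda=\Lambda$ from minimality, $\Int(L^\Lambda_\varphi)\neq\emptyset$ from two ergodic measures with distinct $\varphi$-averages, and the entropy bound from a high-entropy ergodic measure. (One small slip: for $k=2$ you cannot ``enlarge $\{\mu_1,\mu_2,\mu_3\}$ to a list of $k$ pairwise distinct ergodic measures''; instead take the high-entropy ergodic measure itself as one of the two generators and pair it with any ergodic measure of different $\varphi$-average, which exists since $\Int(L_\varphi)\neq\emptyset$.)

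The genuine gap is in part (2), and it is exactly the one you flag: applying Lemma \ref{lem-omega-A} to $\Lambda$ itself can never force $\Lambda\subsetneq\omega_f(x)$. Indeed any $x\in\Lambda$ already satisfies the full conclusion of that lemma with $\omega_f(x)=\Lambda$ (minimality gives $\omega_f(x)=\Lambda\subseteq f^{-0}\Lambda$), so the lemma is consistent with $\Theta=\Lambda$, in which case $\Theta$ is transitive and $C^*_\Theta=\Theta$, and both assertions of (2) fail; moreover your non-transitivity argument presupposes the very strictness that is missing. Your two proposed repairs do not close this: ``inspecting the construction'' is not an argument, and the pseudo-orbit sketch, while it does produce a limit point outside $\Lambda$, destroys the structural inclusion $\Theta\subseteq\bigcup_{l\geq 0} f^{-l}A$ (for a closed invariant $A$ all of whose points have $\omega$-limit sets in $\Lambda$) on which your computation $M(f,\Theta)=M(f,\Lambda)$ via Lemma \ref{lem-Lambda-Lambda-0} rests; sparseness of excursions controls empirical measures of $x$, not which invariant measures are carried by $\omega_f(x)$, and recovering that control amounts to re-proving Lemma \ref{lem-omega-A}. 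The paper's fix stays inside the quoted toolkit and is what you are missing: since $\Lambda$ is an $\omega$-limit set it is internally chain transitive (Lemma \ref{Lem-omega-naturally-in-ICT}), so Lemma \ref{Lem-locally-omega-f-equalto-ICT}, applied with the nonempty open set $U=X\setminus\Lambda$, yields a point $z\notin\Lambda$ with $\omega_f(z)=\Lambda$. One then applies Lemma \ref{lem-omega-A} not to $\Lambda$ but to the strictly larger closed invariant set $A=\orb(z,f)\cup\Lambda$. The resulting $\Theta=\omega_f(x)\supseteq A\supsetneq\Lambda$ is strictly larger for free; every $y\in\Theta$ eventually enters $A$, and every point of $A$ has $\omega$-limit set equal to $\Lambda$, so $M(f,\Theta)=M(f,A)=M(f,\Lambda)$, and your measure-center, interval, entropy and non-transitivity arguments then go through verbatim.
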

\begin{proof}
(1) By the variational principle,   we obtain a $\mu\in M_{erg}(f,  X)$ with $h_{\mu}>\htop(f)-\eta/2$.   Since $Int(L_\varphi)\neq \emptyset$,   we obtain a $\nu\in M_{erg}(f,  X)$ with $\int\varphi d\nu\neq\int\varphi d\mu$.   Then we choose $\theta\in(0,  1)$ close to $1$ such that $\omega:=\theta\mu+(1-\theta)\nu$ satisfies that
$$h_{\omega}=\theta h_{\mu}+(1-\theta)h_{\nu}\geq \theta h_{\mu}>\htop(f)-\eta/2.  $$
Moreover,   one note that $|\int\varphi d\omega-\int\varphi d\mu|=(1-\theta)|\int\varphi d\mu-\int\varphi d\nu|\neq0$.   Now let
$$K:=\{t\mu+(1-t)\omega:0\leq t\leq1\}.  $$
By Lemma \ref{lem-minimalentropy-dense},   we obtain a minimal
 set $\Lambda\subsetneq X$  with exactly $k$ ergodic measures and there are two ergodic measures $\widetilde{\mu},  \widetilde{\omega}\in M(f,  \Lambda)$ such that
 $$h_{\widetilde{\mu}}>h_{\mu}-\eta/2>\htop(f)-\eta,  ~h_{\widetilde{\omega}}>h_{\omega}-\eta/2>\htop(f)-\eta$$
 and
 $$\left|\int\varphi d\widetilde{\mu}-\int\varphi d\mu\right|<\eps/3,  ~\left|\int\varphi d\widetilde{\omega}-\int\varphi d\omega\right|<\eps/3,  $$
 where $\eps:=|\int\varphi d\mu-\int\varphi d\omega|>0$.   By the triangle inequality,   one sees that $|\int\varphi d\widetilde{\mu}-\int\varphi d\widetilde{\omega}|>\eps-\eps/3-\eps/3=\eps/3$.   This implies that $Int(L^\Lambda_\varphi)\neq \emptyset.  $ Moreover,
 $$h_{top}(f|_\Lambda)\geq h_{\widetilde{\mu}}>h_{top}(f)-\eta.  $$
  It is clear that $C^*_\Lambda=\Lambda $, since $\Lambda$ is minimal.
% Besides,   since periodic points are dense in $\Lambda,  $ there is a $\nu\in M(f,  \Lambda)$ with full support \cite[Proposition 21.12]{DGS}.   This implies that $C^*_\Lambda=\Lambda.  $

 (2)  %Choose $\Lambda$ as defined in (1).
  Since $(X,  f)$ is topologically expanding and transitive,   we can find a $z\notin\Lambda$ such that $\omega_f(z)=\Lambda$ by Lemma \ref{Lem-locally-omega-f-equalto-ICT}.   Let $A=\orb(z,  f)\cup \Lambda$.   Note that $A$ is closed and $f$-invariant.   So by Lemma \ref{lem-omega-A},   there is a point $x\in X$ such that
 $$A\subseteq\omega_f(x)\subseteq \cup_{l=0}^{\infty}f^{-l}A.  $$
 In particular,
$$\overline{\bigcup_{y\in\omega_f(x)}\omega_f(y)}\subseteq A\subseteq\omega_f(x)\neq X.  $$
Let $\Theta=\omega_f(x)$.   Then $\Theta$ is internally chain transitive  by Lemma \ref{Lem-omega-naturally-in-ICT}.   By Lemma \ref{lem-Lambda-Lambda-0},
$$M(f,  \Theta)=M(f,  A)=M(f,  \Lambda).  $$
This implies that $C^*_\Theta=\Lambda\subsetneq\Theta$ and $\Int(L_\varphi^\Theta)=\Int(L_\varphi^\Lambda)\neq\emptyset$.   It is left to show that $\Theta$ is not transitive.   Indeed,   suppose the opposite is true,   then there is an $a\in\omega_f(x)$ such that $\omega_f(a)=\omega_f(x)$.   However,   $a=f^nb$ for some $n\in\N$ and $b\in A.  $ In particular,   $\omega_f(a)\subseteq \Lambda\subsetneq \omega_f(x)$,   a contradiction.
\end{proof}

{\bf Proof of Theorem \ref{thm-CCCCCCCCCCCCC}}

 For any $\eta>0, k\geq 2$, we use Lemma \ref{lem-irregular-horseshoe}  to find a minimal subset $\Lambda$ such that $\Lambda$ supports exactly $k$ ergodic measures and there are two ergodic measures $\nu_1,  \nu_2\in M(f,\Lambda)$ with $h_{\nu^1}>h_\nu-\eta/2>\htop(f)-\eta$ and
      $$\int\varphi d\nu^1<\int\varphi d\nu^2.  $$
      Now choose $0<\theta_1<\theta_2<1$ close to $1$ such that $\mu^i=\theta_i\nu^1+(1-\theta_i)\nu^2,  i=1,  2$ satisfy that
      $$h_{\mu^i}\geq \theta_i h_{\nu^1}>h_{\nu}-\eta/2>\htop(f)-\eta.  $$
      Let $K:=\cov\{\mu^1,  \mu^2\}$.   Then by Theorem \ref{thm-density-basic-property} it is not hard to see that 
      $$G_K^{\Lambda,  N}\subseteq \{x\in X\,  |\,  x\text{ satisfies Case (1) } \}\cap \mathbb{E}_k\cap NRec(f)\cap I_{\varphi}(f).  $$
      Moreover,   Lemma \ref{prop-locally-star-implies-nonrecurrently-star} indicates that $(\Lambda,  f)$ is nonrecurrently-star-saturated.   So
      $$\htop(G_K^{\Lambda,  N})=\inf\{h_{\mu^1},  h_{\mu^2}\}>\htop(f)-\eta.  $$
          Since $\eta$ is arbitrary,   we see that
          $\htop(\{x\in X\,  |\,  x\text{ satisfies Case (1) } \}\cap \mathbb{E}_k\cap NRec(f)\cap I_{\varphi}(f))=\htop(f)>0,\, i=1, 2,\,k=2,3,4,\cdots.  $
      %$$\htop(UAP(f)\cap NRec(f)\cap I_\varphi(f))=\htop(f).  $$

Let $\Theta$ be the set of Lemma \ref{lem-irregular-horseshoe}.
One can follow above way by changing $\Lambda$ to $\Theta$ to give the proof that   $\htop(\{x\in X\,  |\,  x\text{ satisfies Case (2) } \}\cap \mathbb{E}_k\cap NRec(f)\cap I_{\varphi}(f))=\htop(f)>0,\, i=1, 2,\,k=2,3,4,\cdots.  $ \qed

     % \item[(5')]  By Proposition \ref{prop-UAP_BUAP}, we only need to prove $\htop(UAP(f)\cap NRec(f)\cap I_\varphi(f))=\htop(f).  $ For any $\eta>0$,   choose $\Lambda_\eta$ as obtained in the proof of (5) above.   Since $(X,  f)$ is topologically expanding and transitive,   we can find a $z\notin\Lambda_\eta$ such that $\omega_f(z)=\Lambda_\eta$.   Let $A=\orb(z,  f)\cup \Lambda_\eta$.   Note that $A$ is closed and $f$-invariant.   So by Lemma \ref{lem-omega-A},   there is a point $x\in X$ such that
 %$$A\subseteq\omega_f(x)\subseteq \cup_{l=0}^{\infty}f^{-l}A.  $$
 %In particular,
%$$\overline{\bigcup_{y\in\omega_f(x)}\omega_f(x)}\subseteq A\subseteq\omega_f(x)\neq X.  $$
%Let $\Theta_\eta=\omega_f(x)$.   Then $\Theta_\eta$ is chain transitive \cite{HSZ}.   By Lemma \ref{lem-Lambda-Lambda-0},
%$$M(f,  \Theta_\eta)=M(f,  A)=M(f,  \Lambda_\eta).  $$
%This implies that $C_{\Theta_\eta}=\Lambda_\eta\subsetneq\Theta_\eta$.   Moreover,   it is not hard to check that
%$$G_K^{\Theta_\eta,  N}\subseteq (BUAP(f)\setminus UAP(f))\cap NRec(f)\cap I_\varphi(f),  $$
%where $K$ is defined in (5).
    %  In addition,   Proposition \ref{G-K-star} and \ref{prop-locally-star-implies-nonrecurrently-star} indicate that $(\Theta_\eta,  f)$ is nonrecurrently-star-saturated.   So
      %$$\htop(G_K^{\Theta_\eta,  N})=\inf\{h_{\mu^1},  h_{\mu^2}\}>\htop(f)-\eta.  $$
        %  Since $\eta$ is arbitrary,   we see that
     % $ \htop(UAP(f)\cap NRec(f)\cap I_\varphi(f))=\htop(f).  $  \qed}

%\end{description}

%\subsection
\bigskip

{\bf Proof of Corollary \ref{Corollary-thm-CCCCCCCCCCCCC}}

Since the considered dynamical system is not uniquely ergodic, there are two invariant measures $\mu_1\neq \mu_2$ and thus
there is a   continuous function
$\varphi:X\rightarrow \mathbb{R}$
such that
 $ \int\varphi d\mu_1\neq  \int\varphi d\mu_2.$ So $Int(L_\varphi)\neq \emptyset$.
 %Take $K$ be the set of convex hull of $\mu_1$ and $\mu_2.$ From \cite{DongTian2016-nosyndetic} we know that $f$ is transitively-saturated so that $G_K\neq \emptyset$ and by Lemma \ref{lem-G-K-I-R} $G_K\subseteq I_\varphi(f)$ so that $I_\varphi(f)\neq \emptyset. $
 Then Theorem \ref{thm-CCCCCCCCCCCCC} implies Corollary \ref{Corollary-thm-CCCCCCCCCCCCC}, since  $I_\varphi(f)\subseteq I(f). $ \qed

\subsection{Proof of Theorem \ref{thm-DDDDDDDDDD}, Theorem \ref{thm-A0000000000} and  Corollary \ref{Cor--thm-DDDDDDDDDD} }

\begin{mainlemma}\label{lem-regualr-horseshoe}
Suppose $f: X\rightarrow X$ is transitive and topologically expanding.   Let $\varphi:X\rightarrow \mathbb{R}$ be a continuous function and assume that $Int(L_\varphi)\neq \emptyset.  $
 Then for any $a\in Int(L_\varphi)$ and any $\eta>0,  k\geq 2, $  there exist two  $f$-invariant subsets
  $\Lambda\subsetneq \Theta \subsetneq X$ such that  $\Lambda$ is minimal  with exactly $k$ ergodic measures $\{\omega_i\}_{i=1}^k$ satisfying that
  %$$\min_{\mu\in M_{erg}(f|_\Lambda)}\{h_\mu\}>t_a-\eta, $$
\begin{equation}\label{eq-sup-leq-inf}
   \int\varphi(x)d\omega_1<a< \int\varphi(x)d\omega_2<\int\varphi(x)d\omega_3<\cdots\int\varphi(x)d\omega_k,
  %\inf_{\mu\in M_f(\Lambda)}\int\varphi(x)d\mu<a<\sup_{\mu\in M_f(\Lambda)}\int\varphi(x)d\mu,
\end{equation}
   and
\begin{equation}\label{eq-Lambda-regular}
\min_{1\leq i\leq k}\{h_{\omega_i}\}>t_a-\eta.
%\min_{\mu\in M_{erg}(f|_\Lambda)}\{h_\mu\}>t_a-\eta.
 % C^*_\Lambda=\Lambda~\textrm{and}~t_a^\Lambda>t_a-\eta.
\end{equation}
Moreover,   %there also exists a closed $f$-invariant subset
 $\Theta$   is internally chain transitive but not topologically transitive such that
\begin{equation}\label{eq-Delta-regular}
  \Lambda=C^*_{\Theta}\subsetneq \Theta~\textrm{and}~t_a^{\Theta}>t_a-\eta.
\end{equation}
\end{mainlemma}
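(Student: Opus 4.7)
The approach mirrors Lemma \ref{lem-irregular-horseshoe}. I will build $k$ invariant measures $\mu^1,\ldots,\mu^k$ whose $\varphi$-integrals straddle $a$ in the prescribed strict order and whose entropies are within $\eta/2$ of $t_a$; then feed $\cov\{\mu^i\}$ into the minimal-entropy-dense Lemma \ref{lem-minimalentropy-dense} to get $\Lambda$, and construct $\Theta$ via Lemma \ref{lem-omega-A} exactly as in the proof of Lemma \ref{lem-irregular-horseshoe}(2). The only genuinely new tasks compared with the irregular case are enforcing the strict ordering \eqref{eq-sup-leq-inf} and producing a measure on $\Theta$ whose $\varphi$-integral is exactly $a$.

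Setting up the target measures: by the definition of $t_a$, pick $\mu_0\in M(f,X)$ with $\int\varphi\,d\mu_0=a$ and $h_{\mu_0}>t_a-\eta/4$; since $a\in\Int(L_\varphi)$, pick ergodic $\nu^-,\nu^+$ with $\int\varphi\,d\nu^-<a<\int\varphi\,d\nu^+$. For small parameters $\theta_0,\theta_1,\ldots,\theta_{k-1}>0$ with $\theta_1<\theta_2<\cdots<\theta_{k-1}$, set
\[
\mu^1:=(1-\theta_0)\mu_0+\theta_0\nu^-,\qquad \mu^i:=(1-\theta_{i-1})\mu_0+\theta_{i-1}\nu^+\quad(2\leq i\leq k).
\]
Using the affineness of both $h_\cdot(f)$ and $\mu\mapsto\int\varphi\,d\mu$ on $M(f,X)$, for all $\theta_j$ sufficiently small one has $h_{\mu^i}>t_a-\eta/2$ while the integrals are strictly ordered as in \eqref{eq-sup-leq-inf}. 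Let $\gamma>0$ denote the smallest of the consecutive gaps among $\{\int\varphi\,d\mu^1,a,\int\varphi\,d\mu^2,\ldots,\int\varphi\,d\mu^k\}$.

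Next, since $\mu\mapsto\int\varphi\,d\mu$ is weak$^{*}$-continuous on the compact space $M(f,X)$, choose $\zeta>0$ so small that $\rho(\nu,\mu^i)<\zeta$ forces $|\int\varphi\,d\nu-\int\varphi\,d\mu^i|<\gamma/3$. Apply Lemma \ref{lem-minimalentropy-dense} to $K=\cov\{\mu^1,\ldots,\mu^k\}$ with error $\eta/4$ and $\zeta$ to obtain a minimal $f$-invariant $\Lambda\subsetneq X$ supporting exactly $k$ ergodic measures $\omega_1,\ldots,\omega_k$ with $h_{\omega_i}>h_{\mu^i}-\eta/4>t_a-\eta$ and $\rho(\omega_i,\mu^i)<\zeta$; the $\zeta$-control preserves \eqref{eq-sup-leq-inf}, and $C^*_\Lambda=\Lambda$ by minimality. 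For $\Theta$, use Lemma \ref{Lem-locally-omega-f-equalto-ICT} to pick $z\in X\setminus\Lambda$ with $\omega_f(z)=\Lambda$, set $A:=\orb(z,f)\cup\Lambda$, and invoke Lemma \ref{lem-omega-A} to obtain $x\in X$ with $A\subseteq\omega_f(x)$ and $\overline{\bigcup_{y\in\omega_f(x)}\omega_f(y)}\subseteq A$. Let $\Theta:=\omega_f(x)$, which is ICT by Lemma \ref{Lem-omega-naturally-in-ICT}; Lemma \ref{lem-Lambda-Lambda-0} yields $M(f,\Theta)=M(f,\Lambda)$, giving $C^*_\Theta=\Lambda\subsetneq\Theta$, while non-transitivity follows verbatim from the argument in Lemma \ref{lem-irregular-horseshoe}(2).

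Finally, to establish $t_a^{\Theta}>t_a-\eta$: since $\int\varphi\,d\omega_1<a<\int\varphi\,d\omega_2$, the intermediate value principle supplies $t\in(0,1)$ such that $\mu:=t\omega_1+(1-t)\omega_2$ satisfies $\int\varphi\,d\mu=a$, and then $h_\mu=th_{\omega_1}+(1-t)h_{\omega_2}\geq\min_i h_{\omega_i}>t_a-\eta$, so $t_a^{\Theta}=t_a^{\Lambda}\geq h_\mu>t_a-\eta$. The main obstacle in this plan is aligning the weak$^{*}$-metric approximation produced by minimal-entropy-dense with the sharp requirements of strict ordering of integrals and an exact integral equal to $a$; this is resolved by committing to a gap $\gamma$ before choosing $\zeta$ and then recovering the exact value $a$ via a two-element convex combination after the perturbation.
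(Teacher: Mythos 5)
Your proposal is correct and takes essentially the same approach as the paper: pick a measure with integral exactly $a$ and entropy near $t_a$, mix it with measures whose integrals lie strictly below and above $a$ to get $k$ distinct measures with the required ordering and entropies, apply the minimal-entropy-dense Lemma \ref{lem-minimalentropy-dense} with $\zeta$ fixed in advance from the integral gaps, and build $\Theta$ verbatim as in Lemma \ref{lem-irregular-horseshoe}(2). Your explicit verification of $t_a^{\Theta}>t_a-\eta$ via the convex combination $t\omega_1+(1-t)\omega_2$ even supplies a detail that the paper leaves implicit when it omits the construction of $\Theta$.
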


{
\begin{proof}
Since $M(f,  X)$ is compact and $\mu\mapsto\int\varphi d\mu$ is continuous,   there exist $\mu_{\max},  \mu_{\min}\in M(f,  X)$ with
$$\int\varphi d\mu_{\max}=\sup_{\mu\in M(f,  X)}\int\varphi d\mu~\textrm{and}~\int\varphi d\mu_{\min}=\inf_{\mu\in M(f,  X)}\int\varphi d\mu.  $$
For any $\eta>0$,   by the definition of $t_a$,   there exists a $\lambda\in M(f,  X)$ with $\int\varphi d\lambda=a$ such that $h_{\lambda}>t_a-\eta/2$.   Now choose $\theta\in(0,  1)$ close to $1$ such that $\nu_1:=\theta\lambda+(1-\theta)\mu_{\min}$ and $\nu_k=\theta\lambda+(1-\theta)\mu_{\max}$ satisfy that
$$h_{\nu_1}=\theta h_{\lambda}+(1-\theta)h_{\mu_{\min}}\geq\theta h_{\lambda}>t_a-\eta/2~\textrm{and}
~h_{\nu_k}=\theta h_{\lambda}+(1-\theta)h_{\mu_{\max}}\geq\theta h_{\lambda}>t_a-\eta/2.  $$
 Take $\theta_i\in(\theta,1)$ with the order that $\theta_2>\theta_3>\cdots>\theta_{k-1}$.
 Then we let $\eps:=\min\{a-\int\varphi d\nu_1,  \int\varphi d\nu_2-a,  \int\varphi d\nu_3-\int\varphi d\nu_2, \cdots, \int\varphi d\nu_k-\int\varphi d\nu_{k-1}\}>0$ and choose $\zeta>0$ such that
$$\rho(\tau,  \kappa)<\zeta\Rightarrow |\int\varphi d\tau-\int\varphi d\kappa|<\eps/2.  $$
By Lemma \ref{lem-minimalentropy-dense},   there are a minimal
 set $\Lambda\subsetneq X$  with exactly $k$ ergodic measures $\omega_i$ such that $d(\omega_i,\nu_i)<\zeta$, $h_{\omega_i}>h_{\nu_i}-\eta/2,  i=1,  2,\cdots, k$.     This implies that
 $|\int\varphi d\omega_i-\int\varphi d\nu_i|<\eps/2$.   So we have
 $$\int\varphi(x)d\omega_1<a< \int\varphi(x)d\omega_2<\int\varphi(x)d\omega_3<\cdots\int\varphi(x)d\omega_k.$$
% $$\sup_{\mu\in M_f(\Lambda_1)}\int\varphi(x)d\mu\leq \int\varphi d\nu_1+\eps/2\leq a-\eps/2~\textrm{and}~\inf_{\mu\in M_f(\Lambda_2)}\int\varphi(x)d\mu\geq\int\varphi d\nu_2-\eps/2\geq a+\eps/2,  $$
 %proving that $\sup_{\mu\in M_f(\Lambda_1)}\int\varphi(x)d\mu<a<\inf_{\mu\in M_f(\Lambda_2)}\int\varphi(x)d\mu.  $

% Finally,   $\htop(\Lambda)\geq h_{\omega_i}>h_{\nu_i}-\eta/2>t_a-\eta,  i=1,  2$.

%Now by variational principle,   we choose two ergodic measures $\nu_i\in M(f,  \Lambda_i),  i=1,  2$ such that
%$$h_{\nu_i}>t_a-\eta.  $$
%Due to \eqref{eq-sup-leq-inf},   there exists a $0<\theta<1$ such that $\nu=\theta\nu_1+(1-\theta)\nu_2$ satisfies that
%$$\int\varphi d\nu=\theta\int\varphi d\nu_1+(1-\theta)\int\varphi d\nu_2=a.  $$
%Moreover,   one has
%$$h_{\nu}=\theta h_{\nu_1}+(1-\theta)h_{\nu_2}\geq\min\{h_{\nu_1},  h_{\nu_2}\}>t_a-\eta.  $$
%This proves that $t_a^\Lambda>t_a-\eta$.   Meanwhile,   note that the periodic points are dense in $\Lambda$,   there is a $\nu\in M(f,  \Lambda)$ with full support \cite{DGS}.   Hence,   $C^*_\Lambda=\Lambda$.   Thus \eqref{eq-Lambda-regular} is proved.

The construction method of $\Theta$ is same as Lemma \ref{lem-irregular-horseshoe}, here we omit the details.
%Furthermore,   choose an arbitrary point $z\notin\Lambda$ such that $\omega_f(z)=\Lambda$.   Let $A=\orb(z,  f)\cup \Lambda$.   Note that $A$ is closed and $f$-invariant.   So by Lemma \ref{lem-omega-A},   there is a point $x\in X$ such that
%$$A\subseteq\omega_f(x)\subseteq \cup_{l=0}^{\infty}f^{-l}A.  $$
% In particular,
%$$\overline{\bigcup_{y\in\omega_f(x)}\omega_f(x)}\subseteq A\subseteq\omega_f(x)\neq X.  $$
%Let $\Theta=\omega_f(x)$.   Then $\Theta$ is chain transitive.   By Lemma \ref{lem-Lambda-Lambda-0},
%$$M(f,  \Theta)=M(f,  A)=M(f,  \Lambda).  $$
%This implies that $t_a^\Theta=t_a^\Lambda>t_a-\eta$.   Moreover,   note that $C^*_\Theta=\Lambda\subsetneq \Theta$.   Thus \eqref{eq-Delta-regular} is proved.   The proof of the fact that $\Theta$ is not topologically transitive follows a similar discussion as in (1).
\end{proof}

}

%\subsection{Multifractal analysis for the Level Set}
%For $\Lambda\subseteq X$,    define $\Xi_\Lambda :=\{S_\mu\,  |\,   S_\mu\subsetneq \Lambda,   \mu\in M(f,  X)\} .  $

%\begin{Thm}\label{thm-2016-used-for-mainthm-A} \cite{DongTian2016-nosyndetic}
%Suppose $(\Lambda,  f|_\Lambda)$ is nonrecurrently-star-saturated and there is a $\lambda\in M(f,  \Lambda)$ such that $S_{\lambda}=C^*_\Lambda$.   If $a\in Int(L_{\varphi}^\Lambda)$,   then
%  $\htop(\{x\in X\,  |\,
  % \omega_{\underline{d}}(x)= \omega_{\overline{d}}(x)= \omega_{B^*}(x)= C^*_\Lambda\subseteq \omega_f(x)=\Lambda\}  \cap NRec(f)\cap R^\Lambda_{\varphi}(a))=t_a^\Lambda.$

%\end{Thm}

 {\bf Proof of Theorem \ref{thm-DDDDDDDDDD}}

The part $\leq $ can be deduced from Lemma \ref{lem-Bowen} directly (letting $t=t_a$).

By Lemma \ref{lem-regualr-horseshoe}, for any $a\in Int(L_\varphi)$ and any $\eta>0,  k\geq 2, $  there exist two  $f$-invariant subsets
  $\Lambda\subsetneq \Theta \subsetneq X$ such that  $\Lambda$ is minimal  with exactly $k$ ergodic measures $\{\omega_i\}_{i=1}^k$ satisfying \eqref{eq-sup-leq-inf}, \eqref{eq-Lambda-regular} and \eqref{eq-Delta-regular}.

Take suitable $\theta_1\in(0,1)$ such that $\mu_{1}:=\theta_1\omega_1+(1-\theta_1)\omega_{2}$ satisfies that $\int \varphi d\mu_1=a.$
If $k\geq 3,$ take suitable $\theta_2\in(0,1)$ such that $\mu_{2}:=\theta_2\omega_1+(1-\theta_2)\omega_{3}$ satisfies that $\int \varphi d\mu_2=a.$ Remark that $\mu_1\neq \mu_2$ since $\omega_i$ are different ergodic measures.

      Let $K_1:=\{\mu_1\}$ and $K_2:=\cov\{\mu_1,  \mu_2\}$.   Then by Theorem \ref{thm-density-basic-property} it is not hard to see that
      $$G_{K_1}^{\Lambda,  N}\subseteq \{x\in X\,  |\,  x\text{ satisfies Case (1) } \}\cap \mathbb{E}_k\cap NRec(f)\cap R_{\varphi}(a)\cap QR  $$
      and  $$G_{K_2}^{\Lambda,  N}\subseteq \{x\in X\,  |\,  x\text{ satisfies Case (1) } \}\cap \mathbb{E}_k\cap NRec(f)\cap R_{\varphi}(a)\cap IR.  $$
      Moreover,   Lemma \ref{prop-locally-star-implies-nonrecurrently-star} indicates that $(\Lambda,  f)$ is nonrecurrently-star-saturated.   So combined with  \eqref{eq-Lambda-regular} we have
      $$\htop(G_{K_1}^{\Lambda,  N})= h_{\mu_1} >t_a-\eta,\,\, \htop(G_{K_2}^{\Lambda,  N})=\inf\{h_{\mu_1},  h_{\mu_2}\}>t_a-\eta.  $$
          Since $\eta$ is arbitrary,   we get the proof of Case (1) in  item (I) and (II).

          One can follow above way by changing $\Lambda$ to $\Theta$ and using \eqref{eq-Delta-regular} to give the proof  of Case (2) in  item (I) and (II). \qed

          %see that
         % $\htop(\{x\in X\,  |\,  x\text{ satisfies Case (1) } \}\cap \mathbb{E}_k\cap NRec(f)\cap I_{\varphi}(f))=\htop(f)>0,\, i=1, 2,\,k=2,3,4,\cdots.  $
      %$$\htop(UAP(f)\cap NRec(f)\cap I_\varphi(f))=\htop(f).  $$

%Let $\Theta$ be the set of Lemma \ref{lem-irregular-horseshoe}.
%that   $\htop(\{x\in X\,  |\,  x\text{ satisfies Case (2) } \}\cap \mathbb{E}_k\cap NRec(f)\cap I_{\varphi}(f))=\htop(f)>0,\, i=1, 2,\,k=2,3,4,\cdots.  $

\bigskip

%\subsection

{\bf Proof of Theorem \ref{thm-A0000000000}}

%For item (I), if $k\geq 2,$ it can be deduced from

(1) Since the considered dynamical system is not uniquely ergodic, there are two invariant measures $\mu_1\neq \mu_2$ and thus
there is a   continuous function
$\varphi:X\rightarrow \mathbb{R}$
such that
 $ \int\varphi d\mu_1\neq  \int\varphi d\mu_2.$ So $Int(L_\varphi)\neq \emptyset$.
 Fix $\varepsilon>0.$ One can  take a number $a\in Int (L_\varphi)$ such that $$h_{top} (R_{\varphi} (a))> h_{top} (f)-\varepsilon.$$
 Recall that   $R_\varphi (f)=\bigsqcup_{b\in\mathbb{R}}R_{\varphi} (b).$  So item (I) except $k=1$   can be deduced from item (I) of  Theorem \ref{thm-DDDDDDDDDD}.

If $k= 1$, for $\varepsilon>0,$ one can take a minimal set $\Lambda$ with exactly one  ergodic measure $\mu$ such that $h_{top}(\Lambda)=h_\mu>h_{top} (f)-\varepsilon.$ Let $K=\{\mu\}$. Note that $G_K^{\Lambda,N}\subseteq \{x\in X\,  |\,  x\text{ satisfies Case (1) } \}\cap \mathbb{E}_1\cap NRec(f)\cap   QR .$ One can use  the fact that  $(\Lambda,  f)$ is nonrecurrently-star-saturated by Lemma \ref{prop-locally-star-implies-nonrecurrently-star} to prove $G_K^{\Lambda,N}$ has entropy larger than $h_{top} (f)-\varepsilon$. This completes the proof for Case (1). For Case (2), one can follow the proof of Lemma  \ref{lem-irregular-horseshoe} to construct a set $\Theta$ such that $C^*_\Theta=\Lambda\subsetneq \Theta$ and then one can follow above way by changing $\Lambda$ to $\Theta$ to give the proof.

(2) For $k\geq 1$ and $\varepsilon>0,$ one can take a minimal set $\Lambda_k$ with exactly $k$ ergodic measures  such that $h_{top}(\Lambda_k)>h_{top} (f)-\varepsilon.$ By variational principle, there is an ergodic measure $\mu$ such that $h_\mu=h_{top}(\Lambda_k)>h_{top} (f)-\varepsilon.$ Note that $\mu(G_\mu\cap \Lambda_k)=1$ so that $$h_{top}(G_\mu\cap \Lambda_k)\geq h_\mu >h_{top} (f)-\varepsilon.$$ Note that $G_\mu\cap \Lambda_k\subseteq \{x\in X\,  |\,  x\text{ satisfies Case (1) } \}\cap \mathbb{E}_k\cap Rec(f)\cap   QR $ so that item (II) is proved.\qed

\bigskip

%\subsection

{\bf Proof of Corollary \ref{Cor--thm-DDDDDDDDDD}}

Note that $QR\subseteq R_\varphi$ so that item (I) and (III) can be deduced from Theorem   \ref{thm-A0000000000}. Now we consider item (II).

 Firstly we consider the case of  $I_\varphi (f)\neq\emptyset.$  It implies that  $Int(L_\varphi)\neq \emptyset$. Fix $\varepsilon>0.$ One can  take a number $a\in Int (L_\varphi)$ such that $$h_{top} (R_{\varphi} (a))> h_{top} (f)-\varepsilon.$$
 Recall that   $R_\varphi (f)=\bigsqcup_{b\in\mathbb{R}}R_{\varphi} (b).$  So   item (II) of Corollary \ref{Cor--thm-DDDDDDDDDD} can be deduced from  item (II) of Theorem \ref{thm-DDDDDDDDDD}.

 If $I_\phi (f)=\emptyset$, then $R_\phi(f)=X$ so that (II) of Corollary \ref{Cor--thm-DDDDDDDDDD} is same as Corollary \ref{Corollary-thm-CCCCCCCCCCCCC} which has been proved.  \qed

 %Moreover, in this case item (I) and (III) of Corollary \ref{Cor--thm-DDDDDDDDDD} are same as item (I) and (II) of Theorem   \ref{thm-A0000000000} respectively.  So we only need to prove Theorem   \ref{thm-A0000000000}.

\subsection{Non-expansive case: Proof of Theorem \ref{thm-AABBBB}}
%We aim to characterize the almost periodic points and non-recurrent points in the general case here.

%\begin{maintheorem}\label{thm-C}
%Suppose $(X,  f)$ satisfies the shadowing property or the almost specification.   Then $$\htop(AP(f))=\htop(f).$$
%%\begin{description}
%  %\item[(1)] $\htop(AP(f))=\htop(f)$.
% % \item[(2)] $\htop(NRec(f))=\htop(f)$.
%%  \end{description}
%\end{maintheorem}

%\subsection{Construction of Symbolic Factor Subsystem }
%Recall the following characterization for systems with the shadowing property.
\begin{Lem} \cite{DOT}\label{horseshoe}
Suppose that $(X,  f)$ has the shadowing property and $\htop(f)>0$.    Then for any $0<\alpha<\htop(f)$ there are $m,  k\in \N$,   $\log (m)/k>\alpha$ and a closed set $\Lambda\subseteq X$
   invariant under $f^k$ such that there is a factor map $\pi\colon (\Lambda,   f^k)\to (\Sigma_{m+1}^+,  \sigma)$.
   % If in addition,
%$(X,  f)$ is positively expansive,   then $\pi$ is a conjugation.
\end{Lem}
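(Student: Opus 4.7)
The plan is to combine the separated-set definition of topological entropy with the shadowing property: for each infinite symbolic sequence I will build a $\delta$-pseudo-orbit obtained by concatenating ``code blocks'' drawn from an exponentially large collection of orbit segments, shadow it by an actual orbit, and then show that the resulting block-coding map is a continuous factor onto the full shift.

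First, I would fix $\beta$ with $\alpha<\beta<\htop(f)$. By the separated-set definition of topological entropy there is some $\eps>0$ and, for arbitrarily large $n\in\N$, an $(n,4\eps)$-separated set $E_n\subseteq X$ with $|E_n|\geq e^{n\beta}$. Applying the shadowing property produces $\delta\in(0,\eps)$ such that every $\delta$-pseudo-orbit of $f$ is $\eps$-shadowed by some point of $X$.

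Second, I would reduce to a chain-transitive piece that carries the entropy. By the variational principle there is an ergodic measure $\mu$ with $h_\mu>\beta$; its topological support $C_0:=S_\mu$ is then closed, $f$-invariant, topologically transitive and satisfies $\htop(f|_{C_0})\geq h_\mu>\beta$. Take $E_n\subseteq C_0$ and cover $X$ by finitely many open balls of diameter less than $\delta/4$; a double pigeonhole extracts $z_0,\ldots,z_m\in E_n$ with $\log(m+1)/n>\alpha$ such that all $z_i$ lie in a common ball $B$ and all $f^n z_i$ lie in a common ball $B'$. Transitivity of $C_0$ yields a $\delta$-chain of some fixed length $T$ from $B'\cap C_0$ into $B\cap C_0$; absorb this into the period by setting $k:=n+T$. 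Then, for any indices $i,j$, the segment $z_i, f z_i,\ldots,f^{n-1}z_i$ followed by the fixed bridge and then by $z_j$ forms a $\delta$-pseudo-orbit, while $\log(m+1)/k>\alpha$ still holds for $n$ large enough relative to $T$.

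Third, for each $a=(a_0,a_1,\ldots)\in\{0,\ldots,m\}^\N$ I would concatenate these blocks into an infinite $\delta$-pseudo-orbit and let $x(a)\in X$ be a shadowing point. Define the intrinsic set
\[ \Lambda:=\Bigl\{y\in X:\ \forall p\geq 0,\ \exists\,j\in\{0,\ldots,m\}\ \text{with}\ \max_{0\leq l<n}d(f^{pk+l}(y),f^l(z_j))\leq\eps\Bigr\}. \]
Then $\Lambda$ is closed and $f^k$-invariant by the shift-invariant form of the defining condition. The $(n,4\eps)$-separation of the $z_j$ forces the witness $j=j(p)$ to be unique at each $p$, so $\pi(y)_p:=j(p)$ gives a well-defined continuous map $\pi\colon\Lambda\to\Sigma_{m+1}^+$; surjectivity is witnessed by the shadowing points $x(a)$, and the intertwining $\pi\circ f^k=\sigma\circ\pi$ is immediate.

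The hardest part is the pigeonhole-plus-bridging step: one must simultaneously (i) extract from the $(n,4\eps)$-separated set a subset of cardinality at least $e^{n\beta}/N^2$ that concentrates in a common ball together with its $f^n$-image, and (ii) pick the bridge length $T$ uniformly and small enough that the exponential growth rate $\log(m+1)/(n+T)$ remains above $\alpha$ after $T$ is absorbed into the period. Establishing $T=O(1)$ independent of $n$ by exploiting the uniform chain-transitivity of $C_0$ is the principal technical point.
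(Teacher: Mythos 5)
Your construction is correct, but note first that there is no in-paper proof to compare it against: Lemma \ref{horseshoe} is imported from \cite{DOT} and never reproved here. Your argument is the standard shadowing-horseshoe scheme, and it is the same genre as the proof in the cited source: exponentially many $(n,4\eps)$-separated orbit segments, a pigeonhole over a finite cover by $\delta/4$-balls, gluing of the resulting $\delta$-pseudo-orbits by shadowing, and a coding map whose well-definedness rests on the fact that a point $\eps$-tracing two different segments would force those segments to be $2\eps$-close in the Bowen metric, contradicting $4\eps$-separation; continuity, surjectivity (via the shadowing points $x(a)$) and the intertwining $\pi\circ f^k=\sigma\circ\pi$ all go through exactly as you say. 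The genuinely distinctive design choice is your bridge: you pass to the transitive support $C_0=S_\mu$ of an ergodic measure with $h_\mu>\beta$ and re-close each block by a fixed transition segment of length $T$, absorbing $T$ into the period $k=n+T$; the alternative, Katok-style, mechanism is to use ergodicity to produce separated segments that start \emph{and} return to one and the same small ball, which removes the bridge at the cost of invoking measure-theoretic counting. Your version buys elementarity (nothing beyond the variational principle and Birkhoff), and it does work, but three details deserve repair: (i) the separated sets must be taken inside $C_0$, so $\eps$ has to be chosen from $\htop(f|_{C_0})\geq h_\mu>\beta$ \emph{after} $C_0$ is fixed, not from $\htop(f)$ beforehand as your ordering suggests; (ii) the uniformity $T=O(1)$ is not a consequence of any ``uniform chain-transitivity'' --- it follows simply because the pigeonholed pair $(B,B')$ ranges over the finitely many pairs of cover elements meeting $C_0$, each such pair admits some finite transition time by transitivity of $(C_0,f|_{C_0})$, and the maximum of these finitely many times bounds $T$ independently of $n$; (iii) the lemma requires $\log(m)/k>\alpha$ while you checked $\log(m+1)/k>\alpha$ --- harmless, since $\log m\geq n\beta-2\log N-\log 2$ still exceeds $\alpha(n+T)$ once $n$ is large.
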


%A similar discussion leads to the following result.

\begin{Lem}\cite{DongTian2016-nosyndetic} \label{horseshoe2}
Suppose $(X,  f)$ satisfies   %$g$-product %$
 almost specification
  property.   Then for any $0<\alpha<\htop(f)$,   there are $m,  k\in \N$,   $\log (m)/k>\alpha$ and a closed set $\Lambda\subseteq X$
   invariant under $f^k$ such that there is a factor map $\pi\colon (\Lambda,   f^k)\to (\Sigma_{m}^+,  \sigma)$.
\end{Lem}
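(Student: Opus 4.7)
The plan is to adapt the proof of Lemma \ref{horseshoe} (the shadowing case) to almost specification by calibrating the parameters so that the sublinear mistake budget $g(n,\varepsilon)$ is harmless for symbolic decoding. Fix $\alpha<\beta<\htop(f)$. By the variational principle I pick an ergodic $\mu$ with $h_\mu>\beta$, and by a Katok-type counting argument (Shannon--McMillan--Breiman combined with the standard entropy-counting lemma applied to $\mu$-typical Bowen balls) I select, for every sufficiently small $\varepsilon>0$ and every sufficiently large $n$, a set $E_n=\{x_1,\dots,x_m\}\subseteq X$ with $m\ge e^{n\beta}$ which is ``$(n,3\varepsilon,1-\delta)$-Hamming-separated'': for any $j\ne j'$ the set of indices $\ell\in\{0,\dots,n-1\}$ with $d(f^\ell x_j,f^\ell x_{j'})>3\varepsilon$ has cardinality at least $(1-\delta)n$, with $\delta$ arbitrarily small. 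Since $g(n,\varepsilon)/n\to 0$, I may also arrange $g(n,\varepsilon)<\delta n$ and $n\ge k_g(\varepsilon)$. Setting $k:=n$ gives $\log m/k\ge \beta>\alpha$, as required.

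Next, I invoke the almost specification property: for every finite word $(a_0,\dots,a_{N-1})\in\{1,\dots,m\}^N$ there is a point $z\in X$ with $f^{ik}(z)\in B_n(g;x_{a_i},\varepsilon)$ for $0\le i<N$. Passing to a limit as $N\to\infty$, for every $\omega=(a_i)_{i\ge 0}\in\Sigma_m^+$ I obtain a $z_\omega\in X$ satisfying $f^{ik}(z_\omega)\in B_n(g;x_{a_i},\varepsilon)$ for every $i$. Let $\Lambda_0:=\{z_\omega:\omega\in\Sigma_m^+\}$ and $\Lambda:=\overline{\Lambda_0}$; by construction $\Lambda$ is closed and $f^k$-invariant. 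Define $\pi:\Lambda_0\to\Sigma_m^+$ by $\pi(z_\omega):=\omega$. This is unambiguous: if some $z$ lay in both $B_n(g;x_j,\varepsilon)$ and $B_n(g;x_{j'},\varepsilon)$ with $j\ne j'$, the union of the two mistake sets would miss at most $2g(n,\varepsilon)<2\delta n$ indices, whereas by Hamming-separation $x_j$ and $x_{j'}$ are $3\varepsilon$-apart on at least $(1-\delta)n$ indices; the intersection would then furnish some index $\ell$ with
\[
3\varepsilon<d(f^\ell x_j,f^\ell x_{j'})\le d(f^\ell x_j,f^\ell z)+d(f^\ell z,f^\ell x_{j'})\le 2\varepsilon,
\]
a contradiction. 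Hence $\pi$ is well-defined on $\Lambda_0$, and the same estimate shows it is uniformly continuous (points close in the $f^k$-orbit metric share arbitrarily long prefixes of their $\pi$-images), so it extends continuously to $\Lambda$. Equivariance $\sigma\circ\pi=\pi\circ f^k$ holds by construction, and surjectivity is immediate from $\pi(\Lambda_0)=\Sigma_m^+$.

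The hard part is securing the Hamming-type separation in the first step: ordinary $(n,\varepsilon)$-separation only guarantees that the orbits of distinct $x_j$ differ at a \emph{single} index, which the mistake set can swallow entirely, breaking the decoding in Step 3. One must count orbits that remain distinct on a \emph{positive density} of indices, which is where the measure-theoretic input (ergodic approximation of entropy combined with Shannon--McMillan--Breiman on typical Bowen balls) enters. Once such a robust separated set is extracted while keeping $\log m/n>\alpha$, the remainder of the argument follows the classical Bowen template and parallels the proof of Lemma \ref{horseshoe}, the only difference being that the pseudo-orbit constructed by shadowing is replaced by the $(g;n,\varepsilon)$-Bowen approximation produced by almost specification.
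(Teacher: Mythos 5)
Your overall architecture---an ergodic measure of nearly maximal entropy via the variational principle, a Hamming-separated set of $e^{n\beta}$ points, concatenation of their orbit segments by almost specification with block length $k=n$, and decoding made possible by the sublinearity of the mistake function $g$---is exactly the standard route to Lemma \ref{horseshoe2} (the present paper gives no proof, citing \cite{DongTian2016-nosyndetic}, whose argument has this same shape), and your gluing, well-definedness and continuity estimates are sound. The genuine gap is the first step, which you yourself call ``the hard part'' and then assert rather than prove: that Shannon--McMillan--Breiman plus ``Katok-type counting'' yields $m\ge e^{n\beta}$ points pairwise $3\varepsilon$-separated on at least $(1-\delta)n$ indices with $\delta$ arbitrarily small. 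Katok-type counting only produces $(n,\varepsilon)$-separated sets, i.e.\ separation at a \emph{single} index (precisely the deficiency you point out), and the upgrade to Hamming separation cannot give what you claim. Indeed, a Plotkin-type averaging bound forbids it: let $K$ be the covering number of $X$ at scale $3\varepsilon/2$; at each index $\ell$ the points $f^{\ell}x_1,\dots,f^{\ell}x_m$ fall into $K$ clusters whose members are pairwise \emph{not} $3\varepsilon$-separated, so by Cauchy--Schwarz the fraction of ordered pairs separated at $\ell$ is at most $1-1/K$; summing over $\ell$ and over pairs forces $\delta\ge (m/K-1)/(m-1)$, which is essentially $1/K(3\varepsilon/2)$ once $m\ge e^{n\beta}$. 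So $\delta$ can never be taken arbitrarily small independently of $\varepsilon$, and even allowing $\delta$ to depend on $\varepsilon$, nothing in SMB/Katok produces exponential cardinality at separation density close to $1$. What is true in general, and what the cited proof actually uses, is the much weaker statement of Pfister--Sullivan \cite{PS2007} (their Proposition 2.1, valid for any ergodic measure with no hypothesis on the system): for $h^*<h_\mu$ there exist \emph{small} $\delta^*>0$ and $\varepsilon^*>0$ such that for all large $n$ there are sets of cardinality at least $e^{nh^*}$ that are $(\delta^*,n,\varepsilon^*)$-separated, i.e.\ pairwise separated on at least $\delta^* n$ indices; the extraction from a Bowen-separated set costs a factor of order $\binom{n}{\delta^* n}\,C(\varepsilon^*)^{\delta^* n}$, which is exactly why only a small proportion $\delta^*$ survives.

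Fortunately the strong form is also unnecessary: your own decoding argument never uses separation on $(1-\delta)n$ indices, only that each pair's separation set has more than $2g(n,\varepsilon)$ elements, and since $g(n,\varepsilon)/n\to 0$ this follows from separation on $\delta^* n$ indices for any fixed $\delta^*>0$ once $n$ is large. Substituting Pfister--Sullivan's lemma for your claimed one repairs the proof with no other structural change. One smaller defect: choosing a single tracing point $z_\omega$ per $\omega$ and setting $\Lambda=\overline{\Lambda_0}$ does not give an $f^k$-invariant set, since $f^k z_\omega$ traces $\sigma\omega$ but need not lie in $\overline{\Lambda_0}$, and ``equivariance by construction'' fails for the same reason; the correct choice is $\Lambda=\{z\in X\ |\ \exists\,\omega\in\Sigma_m^+\ \forall i\ge 0,\ f^{ik}z\in B_n(g;x_{\omega_i},\varepsilon)\}$, which is closed (the sets $B_n(g;x,\varepsilon)$ are closed), satisfies $f^k\Lambda\subseteq\Lambda$, and on which your separation estimate makes $\pi$ well defined, continuous, surjective and equivariant.
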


%\subsection{Proof of Theorem \ref{thm-AABBBB}}

{\bf Proof of Theorem \ref{thm-AABBBB}.}
When $\htop(f)=0$,   there is nothing to prove.   So we suppose $\htop(f)>0$.   By Lemma \ref{horseshoe} and \ref{horseshoe2},   for any $0<\alpha<\htop(f)$,   there are $m,  k\in \N$,   $\log (m)/k>\alpha$ and a closed and $f^k$-invariant set $\Lambda\subseteq X$ with a semiconjugation $\pi:(\Lambda,  f^k)\to(\Sigma_m^+,  \sigma)$.

 %For $(\Sigma^+_m,  \sigma)$,   there is a minimal subsystem $(\Sigma,  \sigma|_{\Sigma})$ such that $\htop(\Sigma)>k\alpha$ \cite{Grillenberger}.   Now $\pi^{-1}(\Sigma)$ is a closed and $f^k$-invariant set   of $X$.
     % %Consider the smallest $f^k$-invariant set $D\subseteq\Lambda$ which contains $\pi^{-1}(\Sigma)$.
    %  By Zorn Lemma,   there exists a $f^k$-minimal set $\Delta\subseteq \pi^{-1}(\Sigma)$.   Then $\pi(\Delta)$ is $\sigma|_{\Sigma}$-invariant and minimal.   Since $\pi(\Delta)\subseteq \Sigma$ and $\Sigma$ is minimal,   $\pi(\Delta)=\Sigma.  $ Then $\pi|_\Delta:(\Delta,  f^k)\to(\Sigma_m^+,  \sigma)$ is a semiconjugation so that
    % $\htop(f^k,  \Delta)\geq \htop(\sigma,  \Sigma)\geq k\alpha$.   Whereby $\htop(f,  \Delta)=\frac1k\htop(f^k,  \Delta)\geq\alpha$.   Meanwhile,   $AP(f^k)=AP(f)$.

 %For $(\Sigma^+_m,  \sigma)$,   there is a minimal subsystem $(\Sigma,  \sigma|_{\Sigma})$ such that $\htop(\Sigma)>k\alpha$ \cite{Grillenberger}.

 Fix $l\geq 1.$ Since $(\Sigma_m^+,  \sigma)$ is topologically expanding and transitive,   by item (II) of Theorem \ref{thm-A0000000000},   we obtain that $\htop(AP(\sigma)\cap   \mathbb{E}_l(\sigma))=\htop(\sigma,  \Sigma_m^+)>k\alpha$.  Now $\pi^{-1}(\Sigma)$ is a closed and $f^k$-invariant set   of $X$.
      %Consider the smallest $f^k$-invariant set $D\subseteq\Lambda$ which contains $\pi^{-1}(\Sigma)$.
      By Zorn Lemma,   there exists a $f^k$-minimal set $\Delta\subseteq \pi^{-1}(\Sigma)$.   Then $\pi(\Delta)$ is $\sigma|_{\Sigma}$-invariant and minimal.   Since $\pi(\Delta)\subseteq \Sigma$ and $\Sigma$ is minimal,   $\pi(\Delta)=\Sigma.  $ Then $\pi|_\Delta:(\Delta,  f^k)\to(\Sigma_m^+,  \sigma)$ is a semi-conjugation so that
      $\htop(f^k,  \Delta)\geq \htop(\sigma,  \Sigma)\geq k\alpha$.   Whereby $\htop(f,  \Delta)=\frac1k\htop(f^k,  \Delta)\geq\alpha$.   Meanwhile,   $AP(f^k)=AP(f)$. So $\Delta\subseteq AP(f)$.  By Proposition \ref{prop-f-f-k} $\Delta\subseteq \cup_{t\geq l}\mathbb{E}_t(f^k)\subseteq \cup_{t\geq l}\mathbb{E}_t(f)$.  Since  and  $\alpha$   is arbitrary,   the proof is completed. \qed

%\begin{description}
%  \item[(1)] For $(\Sigma^+_m,  \sigma)$,   there is a minimal subsystem $(\Sigma,  \sigma|_{\Sigma})$ such that $\htop(\Sigma)>k\alpha$ \cite{Grillenberger}.   Now $\pi^{-1}(\Sigma)$ is a closed and $f^k$-invariant set   of $X$.
%      %Consider the smallest $f^k$-invariant set $D\subseteq\Lambda$ which contains $\pi^{-1}(\Sigma)$.
%      By Zorn Lemma,   there exists a $f^k$-minimal set $\Delta\subseteq \pi^{-1}(\Sigma)$.   Then $\pi(\Delta)$ is $\sigma|_{\Sigma}$-invariant and minimal.   Since $\pi(\Delta)\subseteq \Sigma$ and $\Sigma$ is minimal,   $\pi(\Delta)=\Sigma.  $ Then $\pi|_\Delta:(\Delta,  f^k)\to(\Sigma_m^+,  \sigma)$ is a semiconjugation so that
%      $\htop(f^k,  \Delta)\geq \htop(\sigma,  \Sigma)\geq k\alpha$.   Whereby $\htop(f,  \Delta)=\frac1k\htop(f^k,  \Delta)\geq\alpha$.   Meanwhile,   $AP(f^k)=AP(f)$.

 %     So $\Delta\subseteq AP(f)$.   Since $\alpha$ is arbitrary,   we are done.
 % \item[(2)] Since $(\Sigma_m^+,  \sigma)$ is topologically expanding and transitive,   by Theorem \ref{thm-CCCCCCCCCCCCC},   we obtain that $\htop(NRec(\sigma))=\htop(\sigma,  \Sigma_m^+)>k\alpha$.   However,   by \eqref{AP-Rec},   $\pi(Rec(f^k))=Rec(\sigma)$,   so $NRec(f^k,  \Delta)\supseteq NRec(\sigma)$.   Meanwhile,   note that $NRec(f^k)=NRec(f)$.   Therefore,   since $\pi$ is a semiconjugation,
  %    $$\htop(f,  NRec(f))=\frac1k\htop(f^k,  NRec(f))=\frac1k\htop(f^k,  NRec(f^k))\geq\frac1k\htop(\sigma,  NRec(\sigma))>\alpha.  $$
  %     By the arbitrariness of $\alpha$,   we see
   %    $\htop(NRec(f))=\htop(f).  $  \qed
%\end{description}

{
\subsection{  Proof of Theorem \ref{theorem-minimal-irregular}.} Fix $\,k\geq2.  $  By minimal-entropy-dense propety, there exists a minimal set $\Lambda$ with exactly $k$ ergodic measures such that $\inf_{\mu\in M(f|_\Lambda,\Lambda)}\varphi d\mu < \sup_{\mu\in M(f|_\Lambda,\Lambda)}\varphi d\mu.$ Now we only need to prove that there exists $x_0\in \Lambda \cap I_{\varphi}(f).$

Otherwise, for any $x\in \Lambda$, the limit $\lim_{n\rightarrow \infty}\frac1n\sum_{i=0}^{n-1}\varphi(f^ix)$ exists. Denote the limit function by $\varphi^*$.   $\inf_{\mu\in M(f|_\Lambda,\Lambda)}\int \varphi d\mu < \sup_{\mu\in M(f|_\Lambda,\Lambda)}\int \varphi d\mu$ implies that there exist  two ergodic measures $\mu_1,\mu_2$ supported on $\Lambda$ with $\int \varphi d\mu_1\neq \int \varphi d\mu_2$ so that there exist points $x_1,x_2\in\Lambda$ $\varphi^*(x_1)\neq \varphi^*(x_2).$ Note that $\varphi^*:\Lambda\rightarrow \Lambda$ is the limit of convergent sequence of continuous functions on compact set $\Lambda$. So by Baire theorem, $\varphi^*:\Lambda\rightarrow \Lambda$ has a point $y\in\Lambda$ such that $\varphi^*$ is continuous at $y.$ Let $a:=\varphi^*(y).$ Since $\Lambda$ is minimal, then for  any $x\in \Lambda$, there exists $n_j\rightarrow +\infty$ such that $d(f^{n_j}(x),y)\rightarrow 0$ as $j\rightarrow +\infty$. Thus $\varphi^*(f^{n_j}(x))\rightarrow\varphi^*(y)=a$    as $j\rightarrow +\infty$. Since $\varphi^*$ is constant on every orbit of $\Lambda,$ $\varphi^*(x)=\varphi^*(f^{n_j}(x))=\lim_{j\rightarrow +\infty}\varphi^*(f^{n_j}(x))=\varphi^*(y)=a.$ Thus $\varphi^*:\Lambda\rightarrow \Lambda$ is a constant function. It contradicts that $\varphi^*(x_1)\neq \varphi^*(x_2).$ \qed

}

\section{Appendix: Proof of Proposition \ref{prop-symbolic}}
\subsection{Notions and Notations}
Fix any positive integer $k\geq 2$ and consider the following set
$\Sigma_k^+=\{0,  1,  \dotsc,  k-1\}^{\N_0}$ with the product topology induced by the discrete topology on $\{0,  1,  \dotsc,  k-1\}$.
The space $\Sigma_k^+$ is always endowed with the \textit{shift map} $\sigma$ defined by $\sigma(x)_i=x_{i+1}$ for every integer $i\geq 0$.
It is not hard to verify that $\sigma$ is continuous and $\Sigma_k^+$ is a compact metrizable space.   We endow it with the (compatible) metric
defined by
$d(x,  y)=2^{-k}$ when $x\neq y$ and $k=\min\{i : x_i\neq y_i\}$.   Dynamical system $(\Sigma_k^+,  \sigma)$ or simply $\Sigma_k^+$ for short,
is called \textit{full shift}.   By \textit{subshift} or \textit{shift} we mean any compact and $\sigma$-invariant subset of $\Sigma_k^+$.

A finite sequence $(a_1,  \cdots,  a_N)$ of elements $a_j\in S$ is called a block (or N-block) in $\Sigma_k^+$.   $N$ is called the length of the block.   The block $A=(a_1,  \cdots,  a_N)$ is said to occur in $x\in \Sigma_k^+$ at the place of $m$ if $x_m=a_1,  \cdots,  x_{m+N-1}=a_N$.   In this case one writes $A\prec x$.   The block $A$ is said to occur in the subshift $\Lambda\subseteq \Sigma_k^+$ if there exists an $x\in \Lambda$ such that $A\prec x$.   In this case,   one writes $A\prec\Lambda$.

For any $m\in Z$ and any block $A=(a_1,  \cdots,  z_N)$ in $\Sigma_k^+$.   Let ${}_m[a_1,  \cdots,  a_N]$ denote the set of all $x\in \Sigma_k^+$ such that $(a_1,  \cdots,  a_N)$ occurs in $x$ at the place $m$.   The set ${}_m[a_1,  \cdots,  a_N]$ is called a cylinder of length $N$ based on the block $(a_1,  \cdots,  a_N)$ at the place $m$.

It follows easily from the product topology that a cylinder is both open and closed.   Then it is not hard to show that $\Span\{1_P:P\in Bl(X)\}$ is dense in $C(X)$ via Stone-Weierstrass Theorem.   Here $1_P$ denotes the characteristic function of $P$ and $\Span A$ denotes the linear space generated by $A$.
So we can define a metric on $\Sigma_k^+$ for the $weak^{*}$ topology as follows.
$$\rho(\mu,  \nu)=\sum_{r\geq1 \atop P\in Bl_r(\Omega_k)}\frac{|\mu(P)-\nu(P)|}{2^r\cdot k^r}.  $$
Therefore,   we have the following characterization.
\begin{Lem}\label{measure-on-symbolic}
Then for any $\eps>0$,   there exist $t=t(\eps)>0$ and $\delta=\delta(t,  \eps)>0$ such that
$$\mu,  \nu\in M(\Sigma_k^+),  ~|\mu(P)-\nu(P)|<\delta~\forall~P\in Bl_t(\Sigma_k^+)\Rightarrow \rho(\mu,  \mu)<\eps.  $$
\end{Lem}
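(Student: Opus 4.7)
The plan is to split the series defining $\rho(\mu,\nu)$ at level $t$ into a tail (indices $r>t$) and a head (indices $r\leq t$), and control each separately. The tail is handled by a uniform bound independent of $\mu,\nu$, while the head is handled by the hypothesis together with the observation that each cylinder of length $r<t$ refines into cylinders of length exactly $t$.

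First, I would pick $t=t(\eps)$ to control the tail. For every $r\geq 1$, the cylinders in $Bl_r(\Sigma_k^+)$ partition $\Sigma_k^+$, so
$$\sum_{P\in Bl_r}|\mu(P)-\nu(P)|\leq \sum_{P\in Bl_r}(\mu(P)+\nu(P))=2.$$
Hence
$$\sum_{r>t}\sum_{P\in Bl_r}\frac{|\mu(P)-\nu(P)|}{2^r k^r}\leq \sum_{r>t}\frac{2}{2^r k^r},$$
and this geometric tail can be made smaller than $\eps/2$ by taking $t=t(\eps)$ sufficiently large.

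Next, I would control the head contribution $\sum_{r=1}^{t}\sum_{P\in Bl_r}\frac{|\mu(P)-\nu(P)|}{2^r k^r}$. The key observation is that every cylinder $P\in Bl_r$ with $r\leq t$ is a disjoint union of exactly $k^{t-r}$ cylinders $Q\in Bl_t$ with $Q\subseteq P$. Under the hypothesis $|\mu(Q)-\nu(Q)|<\delta$ for all $Q\in Bl_t$, this yields
$$|\mu(P)-\nu(P)|\leq \sum_{Q\in Bl_t,\,Q\subseteq P}|\mu(Q)-\nu(Q)|<k^{t-r}\delta.$$
Since $|Bl_r|=k^r$, the head is bounded by
$$\sum_{r=1}^{t}\frac{k^{r}\cdot k^{t-r}\delta}{2^r k^r}=\delta k^{t}\sum_{r=1}^{t}\frac{1}{2^r k^r},$$
which can be made smaller than $\eps/2$ by choosing $\delta=\delta(t,\eps)>0$ small enough (since $t$ is already fixed). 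Combining the two estimates gives $\rho(\mu,\nu)<\eps$.

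The main obstacle is essentially just the bookkeeping of choosing the two constants in the correct order: $t$ first (depending only on $\eps$ and the geometric tail bound), then $\delta$ (depending on $t$ and $\eps$). There is no conceptual difficulty, since the argument is a standard approximation of weak$^{*}$ distance by finite-dimensional data on cylinders, using only that cylinders of different lengths are nested.
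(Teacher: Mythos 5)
Your proof is correct: the tail estimate using $\sum_{P\in Bl_r}|\mu(P)-\nu(P)|\leq 2$, the refinement of each length-$r$ cylinder into $k^{t-r}$ length-$t$ cylinders for the head, and the order of quantifiers ($t$ from the tail, then $\delta$ from the head) all check out. The paper itself offers no proof of this lemma --- it is stated as an immediate consequence of the definition of $\rho$ --- and your tail/head splitting is exactly the standard verification that the paper implicitly relies on.
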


We denote the set of all blocks by $Bl$ and the set of blocks of length $r$ by $Bl_r$.   For $P=(p_1,  \cdots,  p_m)\in Bl_m,   Q\in(q_1,  \cdots,  q_n)\in Bl_n$,   we write $PQ=P\cdot Q=(p_1,  \cdot,  p_m,  q_1,  \cdot,  q_n)\in Bl_{m+n}$,   the juxtaposition of $P$ and $Q$.
If $l(Q)\geq l(P)$,   we define
$$\mu_Q(P)=(l(Q)-l(P)+1)^{-1}\cdot\card\{j:1\leq j\leq l(Q)-l(P)+1,  (q_j,  \cdot,  q_{j+l(P)-1})=P\}$$
as the relative frequency of (the occurrence of) $P$ in $Q$ as a subblock.   Then $\mu_Q$ is a probability measure on $Bl_m$ for every $m\leq l(Q)$.   If $Q$ occurs in $x$ at the place $m$,   then one sees that
\begin{equation}\label{eq-P-Q-x}
  \mu_Q(P)=\langle 1_P,  \E_{l(Q)-l(P)+1}(\sigma^m(x))\rangle.
\end{equation}

For $w\in \Sigma_k^+$,   $O_w=\{\sigma^jw~|~j\in\Z\}$ is the orbit of $w$ and if $s\leq t\in \Z$:
$$w_{\langle s,  t\rangle}=(w_s,  \cdots,  w_t)\in Bl_{t-s+1}.  $$
For a subshift $K$ of $\Sigma_k^+$ and $r\in \N$,
$$Bl_r(K)=\{P\in Bl_r~|~{}_0[P]\cap K\neq\emptyset\}=\{w_{\langle1,  r\rangle}~|~w\in K\}$$
is the set of $r$-blocks occurring in $K$,   and $Bl(K)=\bigcup_{r\in \N}Bl_r(K)$ is the set of all $K$-blocks.   For the number of blocks we write
$$\theta_r(K)=\card Bl_r(K),  ~\theta(K)=\theta_1(K).  $$
We shall often assume that we have some subshift $K$ without specifying $\Sigma_k^+$.   In such a case we use $\theta(K)$ as a bound for the number of states.

We write the topological entropy of $K$ as $h(K)=\htop(\sigma|K)$.   We have \cite[Proposition 16.  11]{DGS}:
\begin{equation}\label{entropy-K}
  h(K)=\lim_{r\to\infty}r^{-1}\log\theta_r(K).
\end{equation}

The symbol $\prec$ will express all kinds of occurrence of blocks:

a)  If $P,  Q\in Bl(M)$,   then $P\prec Q$ if $P$ is a subblock of $Q$,   or equivalently,   $\mu_Q(P)>0$.   If $P\prec Bl$,   $\omega\in \Sigma_k^+$,   then
$P\prec\omega$ if $P$ occurs in $\omega$ or equivalently,   $\omega\in\bigcup_{i=-\infty}^{+\infty}{}_i[P]$.   If $P\in Bl(M)$ and $K\subseteq \Sigma_k^+$ is subshift,   then $P\prec K$ if there is an $\omega\in K$ with $P\prec\omega$,   or equivalently,   $P\in Bl(K)$.

b) If $P\in Bl(M)$ and $\omega\in \Sigma_k^+$ (resp.   $K\subseteq \Sigma_k^+$ is a subshift),   then $P\underset{d}{\prec}\omega$ (resp.   $P\underset{d}{\prec}K$) if there is a $k\in\N$ such that the distances of the occurrences of $P$ in $\omega$ (resp.   in $\eta\in K$) are at most $k$,   i.  e.
$$O(\omega)\subseteq\bigcup_{i=0}^{k-1}{}_i[P]~(\text{resp.  }~K\subseteq\bigcup_{i=0}^{k-1}{}_i[P]).  $$
Because of the compactness of $K$,   $P\underset{d}{\prec} K\Leftrightarrow \forall~\eta\in K: P\underset{d}{\prec}\eta$.
We say that $P$ ``occurs densely".

c) If $P\in Bl(M)$,   $\omega\in \Sigma_k^+$ (resp.   $K\subseteq \Sigma_k^+$ is a subshift) and $\eps>0$,   then $P\underset{\eps-reg}{\prec}\omega$ (resp.   $P\underset{\eps-reg}{\prec}K$) if there is some $t>l(P)$ such that
$$Q_1,  Q_2\prec\omega~(\text{resp.  }~Q_1,  Q_2\prec K),  ~l(Q_i)=t\Rightarrow|\mu_{Q_1}(P)-\mu_{Q_2}(P)|<\eps.  $$
Equivalently,   $P\underset{\eps-reg}{\prec}\omega$ (resp.   $P\underset{\eps-reg}{\prec}K$) if there is some $s>l(P)$ such that
$$Q_1,  Q_2\prec\omega~(\text{resp.  }~Q_1,  Q_2\prec K),  ~l(Q_i)\geq s\Rightarrow|\mu_{Q_1}(P)-\mu_{Q_2}(P)|<\eps.  $$
We have the following Lemma \cite[Theorem 26.  7]{DGS}.
\begin{Lem}\label{Lemma-minimal}
The subshift $K\subseteq \Sigma_k^+$ is minimal if and only if $\forall~P\prec K: P \underset{d}\prec K$.

\end{Lem}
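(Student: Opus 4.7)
The plan is to establish both directions by standard compactness arguments on the shift space, reading the formula $O(\omega) \subseteq \bigcup_{i=0}^{k-1} {}_i[P]$ as shorthand for ``every window of length $k$ in $\omega$ contains an occurrence of $P$'', and freely using the identity $\omega \in {}_j[P] \iff \sigma^j\omega \in {}_0[P]$ to translate between ``cover'' and ``orbit-visit'' formulations.

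For the forward implication, I would start from the observation that any $P \prec K$ produces a nonempty relatively open subset $U := {}_0[P] \cap K$ of $K$. Since $K$ is minimal, the forward orbit of every $\omega \in K$ is dense in $K$ and therefore must meet $U$, so for every $\omega \in K$ there exists $j \geq 0$ with $\omega \in \sigma^{-j}(U) = {}_j[P] \cap K$. Hence $K \subseteq \bigcup_{j \geq 0} {}_j[P]$, and compactness of $K$ yields a finite subcover $K \subseteq \bigcup_{j \in F} {}_j[P]$, which I will absorb into an initial interval to obtain $K \subseteq \bigcup_{j=0}^{k-1} {}_j[P]$ for $k := 1 + \max F$. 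Because $K$ is $\sigma$-invariant, applying this to $\sigma^n\omega$ in place of $\omega$ upgrades the conclusion to ``$P$ starts at some position of every window of length $k$ of every $\omega \in K$'', which is exactly $P \underset{d}{\prec} K$.

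For the converse, I would fix $\omega \in K$ and argue that $\overline{\{\sigma^j\omega : j \geq 0\}} = K$, whence the absence of a proper closed $\sigma$-invariant subset of $K$ (i.e.\ minimality) will follow at once. Given $\eta \in K$ and $N \in \N$, set $Q := \eta_{\langle 0, N-1\rangle}$. Then $Q \prec K$ (witnessed by $\eta$), so by hypothesis $Q \underset{d}{\prec} K$, and in particular $Q$ occurs in $\omega \in K$ at some position $j$. This means $\sigma^j\omega$ agrees with $\eta$ in coordinates $0, \ldots, N-1$, giving $d(\sigma^j\omega, \eta) \leq 2^{-N}$; letting $N \to \infty$ places $\eta$ in the orbit closure of $\omega$. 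Since $\eta \in K$ was arbitrary, $K$ equals this orbit closure, so $K$ is minimal. No step is a real obstacle here; the only bookkeeping that deserves care is the consistent use of the $\omega \in {}_j[P] \iff \sigma^j\omega \in {}_0[P]$ equivalence and the extraction of a finite cover via compactness, after which both directions are a few lines each.
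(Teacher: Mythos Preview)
Your proof is correct and is the standard compactness argument for this classical characterization of minimal subshifts. Note, however, that the paper does not supply its own proof of this lemma: it simply cites it as \cite[Theorem 26.7]{DGS}, so there is no in-paper argument to compare against; your write-up would serve perfectly well as the omitted proof.
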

\subsection{Subshifts of Finite Type}

Let $B$ be a set of blocks occurring in $\Sigma_k^+$.   The subshift defined by excluding $B$ is the set
$$\Lambda_B:=\{x\in \Sigma_k^+~|~\text{no block}~\beta\in B~\text{occurs in}~x\}.  $$
Clearly $\Lambda_B$ is shift-invariant and closed for its complement is open.

A subshift $M$ is called a \emph{subshift of finite type} (f.  t.   subshift for short) if there exists a finite excluded block system for $M$.   We say that $N\in\N$ is an order of the f.  t.   subshift $M$ if there exists an excluded block system $B$ for $M$,   which only contains blocks of length $\leq N$.

If $P,  Q\prec M$,   a $M$-transition block from $P$ to $Q$ is a block $U\prec M$ such that $P\cdot U\cdot Q\prec M$.   We say $L\in\N$ is a transition length from $P$ to $Q$ if there exist $M$-transition blocks $U\in Bl_L(M)$ and $U'\in Bl_{L+1}(M)$ from $Q$ to $Q$.
Furthermore,   we say $L_0\in\N$ is a transition length for $M$ if for all $L\geq L_0$ and $P,  Q\prec M$,   $L$ is a transition length from $P$ to $Q$.

We have several equivalent descriptions.
\begin{Lem} \cite{DGS}\label{finite-type-subshift}
The following two conditions are equivalent.
\begin{description}
  \item[1)] $M$ is a f.  t.   subshift of order $N$.
  \item[2)] There exists a block system $B'$ of blocks of length $N$ such that
  $$\Lambda=\{(x_n)_{n\in\Z}~|~\forall n\in \Z: (x_n,  x_{n+1,  \cdots,  x_{n+N-1}})\in B'\}.  $$
  A set $B'$ of blocks defining a subshift $M$ in this way will be called a defining system of blocks for $M$.

\end{description}

\end{Lem}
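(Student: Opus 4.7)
The plan is to prove the equivalence by an elementary translation between ``excluded blocks of length at most $N$'' and ``allowed blocks of length exactly $N$''. No dynamical input is needed; the argument is a finite combinatorial one, and both directions are short.

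For the implication $(1)\Rightarrow(2)$, I would start from a finite excluded system $B$ whose members all have length at most $N$, and define
$$B' := \{W \in Bl_N : \text{no } b \in B \text{ occurs as a subblock of } W\}.$$
I would then verify that $M$ coincides with the set of $x$ such that $(x_n, x_{n+1}, \dotsc, x_{n+N-1}) \in B'$ for every $n$. The inclusion $\subseteq$ is immediate, since if $x \in M$ and $W$ is an $N$-window of $x$, then any subblock of $W$ is also a subblock of $x$ and therefore not in $B$, so $W \in B'$. For the reverse inclusion, I would argue contrapositively: if some $b \in B$ of length $k \leq N$ occurred in $x$ at position $n$, then the $N$-window of $x$ based at $n$ would contain $b$ as a prefix subblock, so that window could not lie in $B'$.

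For the implication $(2)\Rightarrow(1)$, given a defining system $B'$ of $N$-blocks, I would simply put $B := Bl_N \setminus B'$, a finite set of blocks all of length exactly $N$. Unwinding definitions, $x$ avoids every block of $B$ if and only if every $N$-window of $x$ lies in $B'$, which is precisely the condition $x \in M$. Hence $B$ is a finite excluded system with every block of length $\leq N$, so $M$ is a finite type subshift of order $N$.

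I do not anticipate a genuine obstacle; the whole argument is bookkeeping. The only point requiring a moment of care is the $\supseteq$ direction of the first implication, where one needs the ambient sequences to be (one- or two-sided) infinite so that an $N$-window based at the position of any short forbidden occurrence actually exists inside $x$. Once that is noted, both directions are one-line checks.
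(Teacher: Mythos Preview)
Your argument is correct and is the standard elementary proof. Note, however, that the paper does not actually prove this lemma: it is stated with a citation to \cite{DGS} and no proof is given in the paper itself. Your proposal supplies precisely the routine combinatorial verification one finds in that reference, so there is nothing to compare against here.
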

\begin{Lem} \cite{DGS}\label{transitivity-for-symbolic}
The following conditions are equivalent:
\begin{enumerate}
  \item $(M,  \sigma)$ is topologically transitive.
  \item For every $i,  j$ occurring in $M$,   there exists an $n\in\N$ such that $\{x\in M~|~x_0=i,  x_n=j\}\neq\emptyset$.
\end{enumerate}
\end{Lem}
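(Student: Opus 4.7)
The plan is to establish both implications by reducing topological transitivity of $(M,\sigma)$ to a statement about cylinder sets and applying condition~(2) to the blocks defining those cylinders; the finite-type structure of $M$ is used only to ensure admissible splicing when gluing is required.

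For $(1)\Rightarrow(2)$: I would form the cylinder sets $U_i := {}_0[i]\cap M$ and $U_j := {}_0[j]\cap M$. Since $i$ and $j$ occur somewhere in $M$, translating those occurrences to position $0$ via the shift shows that $U_i$ and $U_j$ are both nonempty, and they are plainly open in $M$. Applying topological transitivity to the pair $(U_i,U_j)$ then yields an integer $n\in\N$ and a point $x\in U_i$ with $\sigma^n x\in U_j$, which is precisely the assertion $x_0=i$ and $x_n=j$.

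For $(2)\Rightarrow(1)$: I must exhibit, for every pair of nonempty open sets $U,V\subseteq M$, an $n\in\N$ with $U\cap\sigma^{-n}(V)\neq\emptyset$. Since cylinders form a basis for the topology of $M$ and the conclusion is invariant under the shift, I may assume $U\supseteq {}_0[P]\cap M$ and $V\supseteq {}_0[Q]\cap M$ for some admissible blocks $P,Q\prec M$. Condition~(2) applied to the pair $(P,Q)$ then returns an integer $n$ and a point $x\in M$ with $x|_{[0,|P|-1]}=P$ and $x|_{[n,n+|Q|-1]}=Q$; this $x$ lies in $U$ and satisfies $\sigma^n x\in V$, establishing topological transitivity.

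The principal point of care is the reading of the indices $i,j$ in condition~(2). For arbitrary admissible blocks the above argument is immediate; if $i,j$ are taken strictly as single letters, then naive concatenation of a connecting path produced by~(2) with $P$ and $Q$ can introduce inadmissible length-$N$ subblocks at the boundary, where $N$ is the order of the SFT, so some care with the finite-type constraints is needed. The standard workaround — and the hardest step — is to pass to the $(N-1)$-block recoding of $M$, a $1$-step SFT on alphabet $\mathrm{Bl}_{N-1}(M)$ conjugate to $(M,\sigma)$, in which condition~(2) on $(N-1)$-blocks is literally strong connectivity of the underlying directed graph and is equivalent to topological transitivity of the associated edge shift. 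This reduction reconciles the two interpretations and completes the proof.
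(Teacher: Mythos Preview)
The paper does not prove this lemma; it is quoted from \cite{DGS} without argument, so there is no in-paper proof to compare against. Your treatment of $(1)\Rightarrow(2)$ is correct, and you are right to flag the issue in $(2)\Rightarrow(1)$ concerning whether $i,j$ denote single letters or blocks.

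One caution about your final paragraph, however: passing to the $(N{-}1)$-block recoding shows that condition~(2) \emph{stated for $(N{-}1)$-blocks} is equivalent to transitivity, but it does not rescue the single-letter reading for higher-order SFTs. Indeed, for order $\geq 3$ the single-letter version of~(2) can hold while $M$ is not transitive: on the alphabet $\{0,1\}$ take the order-$3$ SFT with forbidden $3$-blocks $\{011,101,110\}$; then $1^\infty\in M$ is a fixed point that no orbit from ${}_0[00]\cap M$ can approach, yet each ordered pair of letters is realized as $(x_0,x_n)$ for some $x\in M$ and some $n$. Hence your claim that the recoding ``reconciles the two interpretations'' overstates what it accomplishes. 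The lemma should be read --- as it is in \cite{DGS} --- for $1$-step SFTs, where $i,j$ are the states of the transition graph and~(2) is literally strong connectivity; under that hypothesis (equivalently, after recoding and reading $i,j$ as $(N{-}1)$-blocks) your argument is complete.
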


\begin{Lem} \cite{DGS}\label{mixing-for-symbolic}
The following three conditions are equivalent:
\begin{description}
  \item[a)] $M$ is an m.  f.  t.  -subshift;
  \item[b)] there exists a transition length;
  \item[c)] $M$ is topologically transitive and there are two blocks $P,  Q\prec M$ (both at least as long as the order of $M$) such that there exists a transition length from $P$ to $Q$.
\end{description}
\end{Lem}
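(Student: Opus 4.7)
The plan is a cyclic implication (a) $\Rightarrow$ (b) $\Rightarrow$ (c) $\Rightarrow$ (a). For (a) $\Rightarrow$ (b), let $N$ be the order of $M$, so Lemma \ref{finite-type-subshift} presents $M$ through a finite defining system of $N$-blocks. The collection $Bl_N(M)$ is finite and each $P \in Bl_N(M)$ gives a nonempty open cylinder ${}_0[P] \cap M$. Applying topological mixing to the finitely many ordered pairs $(P,Q) \in Bl_N(M) \times Bl_N(M)$ yields a single threshold $L_0$ beyond which every length $L \geq L_0$ is realised by an $M$-transition block from $P$ to $Q$. Because admissibility of a concatenation $PUQ$ in an f.t. subshift of order $N$ only depends on the length-$N$ suffix of $P$, the length-$N$ prefix of $Q$, and the local windows inside $U$, the threshold transfers to arbitrary $P, Q \prec M$, producing transitions of every length $\geq L_0$ and in particular of consecutive lengths $L$ and $L+1$.

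The implication (b) $\Rightarrow$ (c) is immediate: transitivity follows from Lemma \ref{transitivity-for-symbolic}, since (b) in particular guarantees the required nonempty intersections between cylinders generated by any two symbols occurring in $M$, and the second clause of (c) is just (b) specialised to any fixed pair of blocks of length $\geq N$.

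The substantive step is (c) $\Rightarrow$ (a), which I would carry out via the standard graph-theoretic description. Using the $N$-block presentation (Lemma \ref{finite-type-subshift}) one encodes $M$ as the space of one-sided paths in a finite directed graph $G$ whose vertices are the allowed $N$-blocks; topological transitivity of $M$ corresponds to strong connectedness of $G$, and topological mixing of $M$ corresponds to primitivity of its adjacency matrix, equivalently to the $\gcd$ of lengths of closed walks through any vertex being $1$. From (c), transitivity produces a block $W \prec M$ with $QWP \prec M$; concatenating this with the two transition blocks $U \in Bl_L(M)$ and $U' \in Bl_{L+1}(M)$ supplied by (c) gives admissible words $PUQWP$ and $PU'QWP$, which translate to two closed walks in $G$ through a common vertex of lengths differing by exactly $1$. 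Hence the $\gcd$ of cycle lengths through that vertex equals $1$, and, combined with strong connectedness from transitivity, this makes the adjacency matrix of $G$ primitive, yielding topological mixing of $M$. The main bookkeeping obstacle throughout is to ensure that every concatenation performed lies in the language of $M$; this is exactly what the assumption $|P|, |Q| \geq N$ in (c), together with the order-$N$ finite-type condition, takes care of, since admissibility is then a purely local check on overlapping $N$-blocks.
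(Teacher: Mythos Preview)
The paper does not supply its own proof of this lemma: it is quoted verbatim from \cite{DGS} (Denker--Grillenberger--Sigmund) as a known fact, so there is no in-paper argument to compare against. Your cyclic scheme (a)$\Rightarrow$(b)$\Rightarrow$(c)$\Rightarrow$(a) is the standard route and is essentially correct.

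One small imprecision in your (c)$\Rightarrow$(a) step is worth flagging: the words $PUQWP$ and $PU'QWP$ are not literally closed walks in the $N$-block graph $G$, since each starts at the \emph{first} $N$-block of $P$ and ends at the \emph{last} $N$-block of $P$, and these need not coincide when $|P|>N$. What you actually obtain are two walks between the \emph{same} ordered pair of vertices whose lengths differ by exactly $1$; appending any fixed return walk (available by strong connectedness) then produces two genuine closed walks through a common vertex with lengths differing by $1$, and the primitivity/aperiodicity conclusion follows. This is a bookkeeping detail rather than a gap in the idea.
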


\subsection{Basic Lemmas}
\begin{Lem} \cite[Lemma 26.  16]{DGS}\label{slight-extension}
Let $M$ be a m.  f.  t.  -subshift,   $K_1,  K_2\subseteq M$ disjoint subshifts with $K_1\neq\emptyset$ and $\eps>0$.
\begin{description}
  \item[a)] There is an m.  f.  t.  -subshift $M_1$ with
  $$K_1\subseteq M_1\subseteq M,  ~K_2\cap M_1=\emptyset~\textrm{and}~h(M_1)<h(K_1)+\eps.  $$
  \item[b)] If $C,  C'$ are finite sets of $M-$blocks and $K_1$ has one or both of the following properties
  $$P\in C\Rightarrow P\underset{d}{\prec} K_1;~P\in C'\Rightarrow P\underset{\eps-reg}{\prec} K_1,  $$
  then the corresponding properties can be achieved for $M_1$.
\end{description}

\end{Lem}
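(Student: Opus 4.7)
The plan is to construct $M_1$ as a subshift of finite type defined by a carefully chosen system of allowed $N$-blocks, built from $Bl_N(K_1)$ together with a small set of $M$-transition blocks that force mixing while preserving entropy. First, since $K_1$ and $K_2$ are disjoint compact subsets of $\Sigma_k^+$, a standard compactness argument shows that some $N_0$ satisfies $Bl_{N_0}(K_1) \cap Bl_{N_0}(K_2) = \emptyset$: otherwise, for each $n$ one could find $x_n \in K_1$ and $y_n \in K_2$ agreeing on an $n$-block at some position, and shifting plus extracting a convergent subsequence would produce a common limit in $K_1 \cap K_2$, a contradiction. Let $L$ denote a transition length for $M$, which exists by Lemma \ref{mixing-for-symbolic}.

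For $N$ larger than $N_0$, $L$, and the order of $M$, define $M_1 = M_1^{(N)}$ as the SFT whose allowed $N$-blocks are those of $Bl_N(K_1)$ augmented by a polynomially-sized family $\mathcal{T}_N$ of $M$-blocks, chosen so that together with the $K_1$-blocks they render the overlap-graph of allowed $(N-1)$-blocks strongly connected and aperiodic. Then $K_1 \subseteq M_1 \subseteq M$ (the latter inclusion using $N \geq$ order of $M$), and $M_1 \cap K_2 = \emptyset$ since any $x \in K_2$ has every $N$-subblock in $Bl_N(K_2)$, disjoint from $Bl_N(K_1)$, while $\mathcal{T}_N$ is chosen anchored to $K_1$-blocks so as to avoid $K_2$-blocks. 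For the entropy bound, decomposing any $r$-block of $M_1$ into non-overlapping $N$-subblocks gives
$$\theta_r(M_1) \le (|Bl_N(K_1)| + |\mathcal{T}_N|)^{\lceil r/N\rceil}\cdot k^N,$$
and since subadditivity of $\log\theta_N(K_1)$ via Fekete's lemma yields $N^{-1}\log|Bl_N(K_1)| \to h(K_1)$, a polynomial bound on $|\mathcal{T}_N|$ preserves the same limit, so $h(M_1) < h(K_1) + \eps$ for $N$ sufficiently large.

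For part (b), the key point is that all relevant block-scales can be dominated by $N$. If $P \in C$ satisfies $P \underset{d}{\prec} K_1$ with spacing $k_P$, then any $K_1$-block of length $\geq k_P + l(P)$ contains $P$; choosing $N$ to exceed this forces every $N$-subblock in $Bl_N(K_1)$ to contain $P$, and adjusting $\mathcal{T}_N$ suitably ensures the same for the transition blocks, giving $P \underset{d}{\prec} M_1$. For $P \in C'$ with $P \underset{\eps-reg}{\prec} K_1$ at scale $t_P$, fixing $N \geq t_P$ ensures every $t_P$-subblock of a point in $M_1$ lies within some allowed $N$-subblock; for the $K_1$-contributions the regularity is inherited, and for the transitions one further restricts $\mathcal{T}_N$ by imposing the $P$-frequency condition on every $t_P$-subblock, which excludes no $K_1$-blocks and yields $P \underset{\eps-reg}{\prec} M_1$. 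The main obstacle is the joint design of $\mathcal{T}_N$: it must be rich enough to make the overlap graph primitive while being compatible with the density and regularity data on $C, C'$, yet thin enough to keep entropy within $\eps$ of $h(K_1)$; this balance is achieved precisely by exploiting the mixing of $M$, which supplies short universal transitions, together with the freedom gained by taking $N$ large.
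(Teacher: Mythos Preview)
The paper does not prove this lemma; it is cited from \cite[Lemma~26.16]{DGS} without argument, so there is no in-paper proof to compare against.

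Your overall plan---approximate $K_1$ by the SFT with defining system $Bl_N(K_1)$ and add a family $\mathcal{T}_N$ of extra $N$-blocks to force mixing---is the right one and is essentially the DGS approach. But the proposal leaves the central object $\mathcal{T}_N$ entirely unspecified: you list the properties it should enjoy (primitive overlap graph, avoids $K_2$, compatible with the density and $\eps$-regularity data, small enough for the entropy bound) and simply assert they can all be met at once. That simultaneous construction \emph{is} the content of the lemma. In particular, the phrase ``polynomially-sized'' is wrong as stated: since $K_1$ is not assumed transitive, the overlap graph on $Bl_N(K_1)$ may have many strongly connected components, and connecting them all to a common anchor through $M$-transitions of bounded length $L$ produces on the order of $(N+L)\cdot|Bl_N(K_1)|$ new $N$-blocks---exponential in $N$, not polynomial. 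This is still fine for the entropy estimate (because $N^{-1}\log(CN\,|Bl_N(K_1)|)\to h(K_1)$), but your block-counting inequality as written relies on the polynomial claim you have not justified.

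The sketch for part~(b) also has a real gap. The condition $P\underset{\eps-reg}{\prec} M_1$ concerns $\mu_Q(P)$ for \emph{all} long $Q\prec M_1$, and such $Q$ may pass through $\mathcal{T}_N$-material arbitrarily often. Saying that ``every $t_P$-subblock of a point in $M_1$ lies within some allowed $N$-subblock'' and then ``restricting $\mathcal{T}_N$ by imposing the $P$-frequency condition on every $t_P$-subblock'' conflates block membership with frequency control: constraining individual $N$-blocks does not bound how often transitions occur along an $M_1$-orbit. What is actually needed is that each $\mathcal{T}_N$-block be, up to a short $M$-insert of length $O(L)$, a $K_1$-block (so its $P$-frequency is within $O(L/N)$ of the $K_1$-value), or equivalently that transitions be marked and forced to be rare in every orbit. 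Either way this demands a concrete construction of $\mathcal{T}_N$, which your proposal does not supply.
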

\begin{Lem}\label{pre-main-Lemma}
Let $M$ be a m.  f.  t.  -subshift,   $\nu\in \mathfrak{M}_{\sigma}(M)$ be ergodic and $0<\widehat{h}<h_{\nu}$.   Then for any $\widehat{t}\in \N$ and $\delta>0$,   there exists a subshift $\widehat{M}\subsetneq M$ and  $\widehat{r} \in\N$ such that
\begin{description}
    \item[i)] $|\mu_Q(P)-\nu(P)| < \delta$ for all $P\in \bigcup_{t\leq\widehat{t}}Bl_t(M)$ and $Q\in\bigcup_{r\ge\widehat{r}} Bl_r(\widehat{M})$ .
   \item[ii)] $h(\widehat{M})>\widehat{h}$.

\end{description}
\end{Lem}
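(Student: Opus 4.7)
The strategy is to extract exponentially many long blocks of $M$ that are simultaneously close to $\nu$ in short-pattern frequencies and $\nu$-typical in the Shannon--McMillan sense, and then to freely concatenate them using the uniform transition length afforded by $M$ being m.f.t.. The resulting subshift $\widehat{M}$ will have entropy close to $h_\nu$, and the rigidity of the concatenation rule will force every long block of $\widehat{M}$ to be a near-concatenation of good units, which automatically gives uniform frequency control.

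Concretely, fix $\eps>0$ with $\widehat{h}+3\eps<h_\nu$ and let $\mathcal{P}=\bigcup_{t\leq\widehat{t}}Bl_t(M)$, a finite set. Applying the Birkhoff Ergodic Theorem to the indicator functions $\mathbf{1}_{{}_0[P]}$ for $P\in\mathcal{P}$, together with Egorov's theorem, for all sufficiently large $r$ the collection
$$\mathcal{G}_r:=\{Q\in Bl_r(M)\,:\,|\mu_Q(P)-\nu(P)|<\delta/2\text{ for all }P\in\mathcal{P}\}$$
satisfies $\nu\!\left(\bigcup_{Q\in\mathcal{G}_r}{}_0[Q]\right)\geq 1-\eps$. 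By the Shannon--McMillan--Breiman theorem, for large $r$ there is a set $T_r\subseteq Bl_r(M)$ of $\nu$-measure at least $1-\eps$ on which $\nu({}_0[Q])\leq e^{-r(h_\nu-\eps)}$, so $\mathcal{G}:=\mathcal{G}_r\cap T_r$ has cardinality at least $(1-2\eps)e^{r(h_\nu-\eps)}$. Since $M$ is m.f.t., Lemma \ref{mixing-for-symbolic} supplies a uniform transition length $L$; for each ordered pair $(Q,Q')\in\mathcal{G}^2$ pick $U_{Q,Q'}\in Bl_L(M)$ with $QU_{Q,Q'}Q'\prec M$, and take $\widehat{M}$ to be the orbit closure in $M$ of all one-sided sequences admitting the aligned decomposition $Q_0U_{Q_0,Q_1}Q_1U_{Q_1,Q_2}\cdots$ with $Q_i\in\mathcal{G}$. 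The rigidity of this rule forbids almost all $M$-blocks of length $r+L$ at aligned positions, so $\widehat{M}\subsetneq M$. Counting admissible $n$-fold concatenations yields
$$h(\widehat{M})\geq\frac{\log|\mathcal{G}|}{r+L}\geq\frac{r(h_\nu-\eps)+\log(1-2\eps)}{r+L},$$
which exceeds $h_\nu-2\eps>\widehat{h}$ once $r$ is large enough.

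For (i), choose $\widehat{r}$ much larger than $r+L$. Any block $Q'\in Bl_{r'}(\widehat{M})$ with $r'\geq\widehat{r}$ is a subblock of a long admissible concatenation; after discarding an initial and terminal portion of total length at most $r+L$, what remains is composed of complete units $Q_iU_{Q_i,Q_{i+1}}$ with $Q_i\in\mathcal{G}$. On each $Q_i$-portion the frequency of any $P\in\mathcal{P}$ is within $\delta/2$ of $\nu(P)$; the transitions contribute an extra error of order $L/r$ and the boundary truncation an extra error of order $(r+L)/\widehat{r}$. Choosing $r$ large enough that $L/r<\delta/8$ and $\widehat{r}$ large enough that $(r+L)/\widehat{r}<\delta/8$ gives $|\mu_{Q'}(P)-\nu(P)|<\delta$. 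The main obstacle is precisely this uniformity step: condition (i) must hold for \emph{every} sufficiently long block of $\widehat{M}$, not only a $\nu$-generic one, and this is what the rigid concatenation construction delivers as opposed to a mere invocation of the ergodic theorem.
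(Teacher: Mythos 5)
Your proposal takes a genuinely different route from the paper. The paper's proof is soft and short: it forms the neighborhood $G=\{\mu\,:\,|\mu(P)-\nu(P)|<\delta \ \forall P\in\bigcup_{t\le\widehat{t}}Bl_t(M)\}$, invokes the entropy-dense property coming from Lemma \ref{Mainlemma-convex-by-horseshoe} (imported from the companion paper) to get a subshift $\widetilde{M}\subseteq M$ with $h(\widetilde{M})>\widehat{h}$ on which \emph{every} point $x$ satisfies $\mathcal{E}_n(x)\in G$ for all $n\ge n_G$, converts this uniform empirical-measure control into block-frequency control via \eqref{eq-P-Q-x}, and finishes with the extension Lemma \ref{slight-extension}. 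You instead re-prove the needed entropy density from scratch, Grillenberger-style: Birkhoff--Egorov plus Shannon--McMillan--Breiman harvest at least $(1-2\eps)e^{r(h_\nu-\eps)}$ good $r$-blocks, which are then freely concatenated using the transition length from Lemma \ref{mixing-for-symbolic}. Your uniformity argument for (i) (every long block of the orbit closure splits into complete good units up to transition errors of order $L/r$ and boundary errors of order $(r+L)/\widehat{r}$) is sound, as is the entropy bound $h(\widehat{M})\ge \log|\mathcal{G}|/(r+L)$. What you gain is self-containedness; what the paper gains is brevity, at the price of citing two external results.

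There is, however, one genuine gap: the properness claim $\widehat{M}\subsetneq M$. Your justification --- that the rigid rule ``forbids almost all $M$-blocks of length $r+L$ at aligned positions'' --- does not stand as written, for two reasons. First, $\widehat{M}$ is an orbit closure, hence shift-invariant; alignment is not intrinsic to $\widehat{M}$, so excluding blocks at aligned positions of the generating sequences does not by itself exclude them as blocks of $\widehat{M}$. Second, even the aligned count can fail: the allowed aligned $(r+L)$-blocks number up to $|\mathcal{G}|^2$, which need not be small relative to $\theta_{r+L}(M)$ when $h_\nu$ is near $h(M)$ or $\delta$ is large (for the full shift with $\delta\ge 2$ one simply has $\mathcal{G}=Bl_r(M)$). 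This matters downstream: the paper's Lemma \ref{main-Lemma} uses precisely $Bl_s(M)\setminus Bl_s(\widehat{M})\neq\emptyset$ to build a recognizable marker block, so properness cannot be waved away. Two standard repairs fit your construction: (a) entropy comparison --- $h(\widehat{M})\le \log\theta_r(M)/(r+L)$, and for a mixing SFT Perron--Frobenius gives $\log\theta_r(M)\le rh(M)+\log C$, so taking the transition length $L$ with $Lh(M)>\log C$ (transition lengths can be taken arbitrarily large) forces $h(\widehat{M})<h(M)$; or (b) monotonicity --- the conclusion is stronger for larger $\widehat{t}$ and smaller $\delta$, so one may assume these are such that some ergodic measure $\lambda\neq\nu$ of $M$ satisfies $|\lambda(P_0)-\nu(P_0)|>\delta$ for some $P_0\in\bigcup_{t\le\widehat{t}}Bl_t(M)$ (such $\lambda$ exists since $h(M)\ge h_\nu>0$), whence (i) itself rules out $\widehat{M}=M$ by considering $\lambda$-generic points. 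A last, smaller omission: you should check that the infinite concatenations actually lie in $M$; this is where finite type (not just mixing) enters, since once $r$ exceeds the order of $M$ every window of that length sits inside some $Q_iU_iQ_{i+1}\prec M$ and Lemma \ref{finite-type-subshift} applies.
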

\begin{proof}
Consider the following neighborhood $G$ of $\nu$:
$$G:=\{\mu\in\mathfrak{M}(M)~|~|\mu(P)-\nu(P)|<\delta~\text{for}~P\in \bigcup_{t\leq \widehat{t}}Bl_t(M)\}.  $$
Since $M$ has the basic-entropy-dense property,   we
obtain a subshift $\widetilde{M}\subseteq M$ with $h(\widetilde{M})>\widehat{h}$ and  $n_G \in\N$ such that
$\mathcal{E}_n(x)\in G$  for all $x\in \widetilde{M}$ and $n\geq n_G$.   This shows that $\widetilde{M}\neq M.  $
Then it is not hard to use \eqref{eq-P-Q-x} and Lemma \ref{slight-extension} to find the required $\widehat{M}\supseteq \widetilde{M}$ and $\widehat{r}$.

\end{proof}
Furthermore,   we have the following Lemma.
\begin{Lem}\label{main-Lemma}
Let $M$ be a m.  f.  t.  -subshift,   $\nu\in \mathfrak{M}_{\sigma}(M)$ be ergodic and $0<\overline{h}<h_{\nu}$.   Then for any $\overline{t}\in \N$ and $\delta>0$,   there exists a m.  f.  t-subshift $\overline{M}\subseteq X$ and  $\overline{r} \in\N$ such that
\begin{description}
  \item[1)] $P\in Bl_{\overline{t}}(M),  Q\in Bl_{\overline{r}}(\overline{M})\Rightarrow P\prec Q$.
  \item[2)] $|\mu_Q(P)-\nu(P)| < \delta$ for all $P\in \bigcup_{t\leq\overline{t}}Bl_t(M)$ and $Q\in\bigcup_{r\ge\overline{r}} Bl_r(\overline{M})$.
  \item[3)] $h(\overline{M})>\overline{h}$.

\end{description}
\end{Lem}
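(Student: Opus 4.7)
The plan is to upgrade the subshift produced by Lemma \ref{pre-main-Lemma} to an m.f.t.-subshift by combining the transition structure of $M$ with Lemma \ref{slight-extension}, while at the same time arranging that every $\overline{t}$-block of $M$ is forced to occur densely. Fix $\widehat{h}\in(\overline{h},h_\nu)$, a small $\eps>0$, and $\delta'\in(0,\delta/3)$, and apply Lemma \ref{pre-main-Lemma} with parameters $(\widehat{h},\overline{t},\delta')$ to obtain a subshift $\widehat{M}\subsetneq M$ and $\widehat{r}\in\mathbb{N}$ satisfying the frequency condition to within $\delta'$ and with $h(\widehat{M})>\widehat{h}$. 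At this stage $\widehat{M}$ is just an abstract subshift: it need not be m.f.t., nor does it yet force condition 1) of the statement.

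Next I build an intermediate subshift $K_1\subseteq M$ in which each $P\in Bl_{\overline{t}}(M)=\{P_1,\ldots,P_K\}$ occurs densely. Let $L_0$ be a transition length for $M$ guaranteed by Lemma \ref{mixing-for-symbolic}. For each pair of $\widehat{M}$-blocks $Q, Q'$ of length $\overline{t}$ and each index $i$, I pick $M$-transition blocks $U_i^{Q,Q'}, V_i^{Q,Q'}$ of length $\leq L_0+1$ so that $Q\,U_i^{Q,Q'}\,P_i\,V_i^{Q,Q'}\,Q'\prec M$. I then define $K_1$ as the $\sigma$-closure of the set of sequences obtained by concatenating $\widehat{M}$-orbit segments of length $N$ separated by insertion blocks that cycle through $P_1,\ldots,P_K$ via these transitions. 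For $N$ sufficiently large relative to $K(2L_0+\overline{t}+2)$, the insertions contribute vanishing relative length, so the frequency condition survives with total error at most $2\delta'<\delta$ for sufficiently long $K_1$-blocks, the entropy loss is at most $(\widehat{h}-\overline{h})/2$ so that $h(K_1)>\overline{h}$, and each $P_i$ appears in every $K_1$-orbit within windows of a fixed length, so $P_i\underset{d}{\prec}K_1$ for every $i$.

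Since $h(M)>0$, I can choose a periodic $M$-orbit $K_2$ disjoint from $K_1$. Applying Lemma \ref{slight-extension}(a)--(b) to the pair $(K_1,K_2)$ with density class $C=\{P_1,\ldots,P_K\}$ and regularity class $C'=\bigcup_{t\leq\overline{t}}Bl_t(M)$ (the latter being inherited by $K_1$ from the frequency bound, provided $\delta'$ is chosen small enough) yields an m.f.t.-subshift $\overline{M}$ with $K_1\subseteq\overline{M}\subseteq M$, $h(\overline{M})<h(K_1)+\eps$ (in particular $>\overline{h}$), $P_i\underset{d}{\prec}\overline{M}$ for each $i$, and the corresponding regularity for each $P\in C'$. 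Taking $\overline{r}$ large enough then realizes conditions 1) and 2) uniformly for all $Q\in\bigcup_{r\geq\overline{r}}Bl_r(\overline{M})$.

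\textbf{Main obstacle.} The chief quantitative point is to arrange the intermediate construction so that the entropy slack $\widehat{h}-\overline{h}$ simultaneously absorbs both the cost of the insertions and the $\eps$-loss in Lemma \ref{slight-extension}, while the approximate-frequency estimate survives both operations. In particular one must verify that Lemma \ref{slight-extension}(b) applies jointly to the density class $C$ and the regularity class $C'$, and that the free parameters $N$, $\eps$, $\delta'$ can be tuned compatibly to meet the final targets $\overline{h}$ and $\delta$.
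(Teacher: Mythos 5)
Your proposal is correct in substance and follows the paper's skeleton up to a point: both proofs start from Lemma \ref{pre-main-Lemma} to get $\widehat{M}$ with controlled frequencies and entropy, and both then use mixing transition blocks of $M$ (Lemma \ref{mixing-for-symbolic}) to force every block of $Bl_{\overline{t}}(M)$ to occur syndetically in long concatenations of $\widehat{M}$-words, the transitions occupying a vanishing proportion of length so that frequencies stay near $\nu$ and entropy stays above $\overline{h}$. Where you genuinely diverge is in how the m.f.t.\ property is secured. The paper never forms a non-f.t.\ intermediate: it directly declares $\overline{M}$ to be the f.t.\ subshift whose defining block system is $Bl_{k+L+1}(\overline{M})=\{P\prec QUR\}$, with $Q,R\in Bl_k(\widehat{M})$ and $U$ a transition block chosen to contain every $\overline{t}$-block of $M$ together with a marker block $A\in Bl_s(M)\setminus Bl_s(\widehat{M})$; m.f.t.-ness is read off from Lemmas \ref{finite-type-subshift}--\ref{mixing-for-symbolic}, property 1) from $P\prec U$, property 2) from the $(L+1)/(k+L+1)$ estimate, and property 3) from a direct count of concatenations whose injectivity is exactly what the recognizable marker guarantees. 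You instead take the closure $K_1$ of explicit concatenations (with no f.t.\ structure) and invoke Lemma \ref{slight-extension}(b) to fatten it to an m.f.t.\ subshift preserving the density class $C$ and the regularity class $C'$; then 1) follows from density, 3) follows from the inclusion $K_1\subseteq\overline{M}$ (note the $\eps$ in Lemma \ref{slight-extension}(a) bounds the \emph{excess} entropy of $\overline{M}$ over $K_1$, so it cannot eat your slack --- the corresponding worry in your final paragraph is vacuous), and 2) follows by chaining: every long $\overline{M}$-block is $\eps$-regular-close to a long $K_1$-block, which is $O(\delta')$-close to $\nu$. Your route is shorter where it leans on the full strength of Lemma \ref{slight-extension}(b); the paper's explicit construction buys recognizable transitions, which it reuses in the proof of Proposition \ref{prop-symbolic}.

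Three repairable imprecisions you should fix. (i) You index transition blocks by pairs of $\widehat{M}$-blocks of length $\overline{t}$, but whether $W\,U\,P_i\,V\,W'\prec M$ depends on the glued segments through windows of length equal to the order of $M$, not $\overline{t}$; index the transitions by the actual $N$-segments being joined (or by their end-blocks of length at least the order of $M$). (ii) Your bound $h(K_1)>\overline{h}$ presumes that distinct tuples of $N$-segments yield distinct points of $K_1$; this is automatic if all insertion blocks are padded to one common length (so parsing positions are deterministic), or can be obtained with a marker block as in the paper, but as written it is unjustified. (iii) The existence of a periodic orbit $K_2$ disjoint from $K_1$ requires knowing $K_1\neq M$ (which holds, e.g., because points of $K_1$ contain any block $A\in Bl_s(M)\setminus Bl_s(\widehat{M})$ with frequency $O(1/N)$, while $M$ contains periodic points whose $A$-frequency is bounded below); alternatively, simply take $K_2=\emptyset$, which the statement of Lemma \ref{slight-extension} allows since only $K_1$ is required to be nonempty.
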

\begin{proof}
By Lemma \ref{pre-main-Lemma},   we obtain a subshift $\widehat{M}\subsetneq M$ and $\widehat{r}\in\N$ such that
\begin{description}
    \item[a)] $|\mu_Q(P)-\nu(P)| < \delta/2$ for all $P\in \bigcup_{t\leq\overline{t}}Bl_t(M)$ and $Q\in\bigcup_{r\ge\widehat{r}} Bl_r(\widehat{M})$ .
   \item[b)] $h(\widehat{M})>\frac{\overline{h}+h_\nu}{2}$.

\end{description}
Since $\widehat{M}\neq M$,   there exists an $s\in\N$ such that $Bl_s(\widehat{M})\neq Bl_s(M)$.   Now choose a block $A\in Bl_s(M)\setminus Bl_s(\widehat{M})$.   Since $M$ is topological mixing,   there is a transition length $L$ so large that for any $Q,  Q'\in Bl(M)$,   there is $U\in Bl_L(M)$ and $U'\in Bl_{L+1}(M)$ with $QUQ'\prec M$ and $QU'Q'\prec M$.   Furthermore,   $U,  U'$ can be chosen such that (by the topological mixing property)
\begin{equation}\label{P-prec-U}
  P\in Bl_{\overline{t}}(M)\Rightarrow P\prec U~\text{and}~ P\prec U'.
\end{equation}
Moreover,
\begin{equation}\label{A-U-U'}
  A\prec U~\text{and}~A\prec U'
\end{equation}
so that $U,  U'$ are `recognizable'.

For sufficient large $k\in\N$ such that
\begin{enumerate}
  \item
  \begin{equation}\label{eq-theta-m}
    \frac{\ln \theta_m(\widehat{M})}{m}\geq \htop(\widehat{M})~\text{when}~m\geq k.
  \end{equation}
    \item
  \begin{equation}\label{eq-k-L}
    \frac{L+1}{k+L+1}\leq \delta/2.
  \end{equation}
   \item
   \begin{equation}\label{eq-H-k-L}
     \frac{k}{k+L+1}\cdot\frac{\overline{h}+h_\nu}{2}>\overline{h}.
   \end{equation}
 \end{enumerate}
Define the subshift $\overline{M}$ having the following defining block systems:
\begin{eqnarray*}
% \nonumber to remove numbering (before each equation)
  Bl_{k+L+1}(\overline{M}) &=& \{P\in Bl_{k+L+1}(M):P\prec QUR,   ~\text{where}~Q,  R~\text{run through}~Bl_k(\widehat{M})~\text{and} \\
   & &~U\in Bl_L(M)\cup Bl_{L+1}(M)~\text{is a}~M~\text{transition block from}~Q~\text{to}~R~\text{satisfying}~\eqref{P-prec-U}~\text{and}~\eqref{A-U-U'}\}.
\end{eqnarray*}
By Proposition \ref{finite-type-subshift},   \ref{transitivity-for-symbolic} and \ref{mixing-for-symbolic},   $\overline{M}$ is a m.  f.  t.   subshift.   By \eqref{P-prec-U},   one obtains Property 1) of $\overline{M}$.   Let $\overline{r}>\widehat{r}$ be sufficient large.   Moreover,   one observes the fact every $k+L+1$-block in $\overline{M}$ carries a $k$-block in $\widehat{M}$.   Then one uses the property a) of $\widehat{M}$ and \eqref{eq-k-L} to check property 2).   Finally,   let us calculate the topological entropy of $\overline{M}$ to substantiate property 3) of $\overline{M}$.    Indeed,   for any $l\in\N$,   there are blocks of the following form
$$Q_1U_1R_1U_1'Q_2U_2R_2U_2'\cdots Q_lU_lR_l$$
occurring in $\overline{M}$,   where $Q_i,  R_i\in Bl_{k}(\widehat{M})$ and $U_i,  U_i'\in Bl_L(\widehat{M})\cup Bl_{L+1}(\widehat{M})$ are transition blocks.   So one infers by \eqref{eq-theta-m} and \eqref{eq-H-k-L} that
$$\htop(\overline{M})=\lim_{r\to\infty}\frac{\ln \theta_r(\overline{M})}{r}=\lim_{l\to\infty}\frac{\ln \theta_{2l(k+L+1)}(\overline{M})}{2l(k+L+1)}\geq \lim_{l\to\infty}\frac{2l\theta_{k}(\widehat{M})}{2l(k+L+1)}\geq \frac{k}{k+L+1}\cdot\frac{\overline{h}+h_\nu}{2}>\overline{h}.  $$

\end{proof}

\subsection{Proof of Proposition \ref{prop-symbolic}}

For $k=1,$ one can follow the idea of \cite{Grillenberger,DGS} to give the proof.
For completeness of this paper, here we give the proof details  for general $k\geq 2$ by  modifying the proof of \cite{Grillenberger,DGS}.
Let \begin{equation}\label{selection-of-eps}
   0<\eps<\min\{\rho(\mu^i,  \mu^j),  \dist(\mu^a,  \cov\{\mu^b\}_{1\leq b\leq k,  b\neq a}):1\leq i<j\leq k,  1\leq a\leq k\},
 \end{equation}

For each $1\leq j\leq k$,   choose a strictly decreasing sequence $\{h_i^j\}_{i=0}^{\infty}$ such that
$$h_0^j=h_{\mu^j}~\textrm{and}~\lim_{i\to\infty}h_i^j=h_{\mu^j}-\eta.  $$
We now inductively construct $k+1$ sequences of m.  f.  t.   subshifts $\{M_i\}_{i=0}^\infty$ and $\{M_i^j\}_{i=0}^\infty$,   $j=1,  \cdots,  k$.   Let $M_0=M$.   According to Lemma \ref{measure-on-symbolic},   for each $i\in\N$,   there exist  $\overline{t}_{i-1}>0$ and $\overline{\delta}_{i-1}>0$ such that
\begin{equation}\label{t-delta-i-1}
  \lambda,  \tau\in \mathfrak{M}(M),  ~|\lambda(P)-\tau(P)|<\overline{\delta}_{i-1}~\forall~P\in Bl_{\overline{t}_{i-1}}(M_{0})\Rightarrow \rho(\lambda,  \tau)<3^{-i-1}\eps.
\end{equation}
Let $t_0=\max\{\overline{t}_0,  1\}$ and $\delta_0=\min\{\overline{\delta}_0,  1\}$.   Then using
Lemma \ref{main-Lemma},   we obtain $k$ m.  f.  t.   subshift $M_0^1,  M_0^2,  \cdots,  M_0^k$ and $\overline{r}_0\in\N$ satisfying that
\begin{description}
    \item[i)] $|\lambda_0^j(P)-\mu^j(P)|<\delta_0$ for all $\lambda_0^j\in \mathfrak{M}_{\sigma}(M_0^j)$ and $P\in \bigcup_{t\leq t_0}Bl_t(M_0)$ ($j=1,  \cdots,  k$).   Consequently,
        \begin{equation}\label{eq-nu-0-j}
          \rho(\lambda_0^j,  \mu^j)<3^{-1}\eps~\textrm{for all}~\lambda_0^j\in \mathfrak{M}_{\sigma}(M_0^j).
        \end{equation}

  \item[ii)] $h(M_0^j)>h_1^j$ ($j=1,  \cdots,  k$).

\end{description}
Therefore,   \eqref{eq-nu-0-j} and the selection of $\eps$ in \eqref{selection-of-eps} indicate that
\begin{equation}\label{rho-nu-0-1-2}
  \rho(\lambda_0^s,  \lambda_0^t)>3^{-1}\eps>0~\text{for any}~\lambda_0^s\in \mathfrak{M}_{\sigma}(M_0^s)~\textrm{and}~\lambda_0^t\in \mathfrak{M}_{\sigma}(M_0^t),  ~1\leq s< t\leq k.
\end{equation}
Consequently,   $M_0^s\cap M_0^t=\emptyset$.   For otherwise the nonempty intersection is a closed invariant set which supports some invariant measure belonging to both $\mathfrak{M}_{\sigma}(M_0^s)$ and $\mathfrak{M}_{\sigma}(M_0^t)$,   a contradiction to \eqref{rho-nu-0-1-2}.

Suppose in the $(i-1)$th step of the construction we have obtained  a m.  f.  t.  -subshift $M_{i-1}$ and disjoint  m.  f.  t.  -subshifts $\{M_{i-1}^j\}_{j=1}^k$ of $M_{i-1}$ with
\begin{equation}\label{close-to-full-entropy}
    h(M_{i-1}^j)>h_{i-1}^j,  ~j=1,  \cdots,  k.
  \end{equation}
Then there exist $k$ ergodic measures $\nu_{i-1}^j\in \mathfrak{M}_{\sigma}(M_{i-1}^j)(j=1,  \cdots,  k)$ such that
$$h_{\nu_{i-1}^j}>h_{i-1}^j,  ~j=1,  \cdots,  k.  $$
We now choose $t_{i-1}>\max\{\overline{t}_{i-1},  2^{i-1}\}$ so large that
  $$Bl_{t_{i-1}}(M_{i-1}^s)\cap Bl_{t_{i-1}}(M_{i-1}^t)=\emptyset,  1\leq s<t\leq k$$
  and
  \begin{equation}\label{entropy-t-i}
    \log \theta_{t_{i-1}}(M_{i-1}^j)>t_{i-1}h_{i-1}^j,  ~j=1,  \cdots,  k.
  \end{equation}
Here \eqref{entropy-t-i} comes from \eqref{entropy-K}.
For $h_i^j$ and $0<\delta_{i-1}<\min\{\overline{\delta}_{i-1},  2^{-i+1}\}$,   a convenient use of Lemma \ref{main-Lemma} produces $k$ m.  f.  t.   subshifts $\{M_i^j\}_{j=1}^k$ and $\overline{r_i}>t_{i-1}$ such that
\begin{description}
  \item[i)]
  \begin{equation}\label{P-Q}
    P\in Bl_{t_{i-1}}(M_{i-1}),  Q\in Bl_{\overline{r_i}}(M_i^j)\Rightarrow P\prec Q(j=1,  \cdots,  k).
  \end{equation}
  \item[ii)] $|\mu_{Q}(P)-\nu_{i-1}^j(P)|<\delta_{i-1}$ for all $P\in \bigcup_{t\leq t_{i-1}}Bl_t(M_{i-1}),  Q\in\bigcup_{r\geq\overline{r_i}}Bl_r(M_i^j)$.   Consequently,
   \begin{equation}\label{nu-i-nu-i-1}
     |\lambda_i^j(P)-\nu_{i-1}^j(P)|<\delta_{i-1}~\textrm{for all}~\lambda_i^j\in \mathfrak{M}_{\sigma}(M_i^j)~\textrm{and}~P\in \bigcup_{t\leq t_{i-1}}Bl_t(M_{i-1})(j=1,  \cdots,  k).
   \end{equation}
  \item[iii)] $h(M_i^j)>h_i^j$ ($j=1,  \cdots,  k$).

\end{description}
Moreover,   note that   $M_i^1\cup \cdots \cup M_i^k\neq M_{i-1}$.   So $\overline{r}_i$ can be chosen such that
\begin{equation}\label{1-2-M}
  Bl_{\overline{r_i}}(M_i^1\cup M_i^2\cup\cdots M_i^k)\neq Bl_{\overline{r_i}}(M_{i-1}).
\end{equation}

Now choose a block $C_i\in Bl_{\overline{r_i}}(M_{i-1})\backslash Bl_{\overline{r_i}}(M_i^1\cup\cdots\cup M_i^k)$ and a $M_{i-1}$ transition length $L_i$ which is so large that for $P^s\in Bl(M_i^s)$ and $P^t\in Bl(M_i^t)$,   $1\leq s\neq t\leq k$,   there are blocks $U_i^{st}\in Bl_{L_i}(M_{i-1})$ with
\begin{equation}\label{eq-a}
   P^sU_i^{st}P^t\prec M_{i-1}.
    \end{equation}
    and
   \begin{equation}\label{eq-b}
      C_i\prec U_i^{st}~\text{but only once,   namely at the place}~[\frac{L_i}{2}].
    \end{equation}
Choose an integer $r_i>4(\overline{r_i}+L_i)\cdot\eps_{i-1}^{-1}$ and bigger than the order of $M_{i-1}$.
Then choose $k$ fixed blocks $S_i^j\in Bl_{r_i}(M_i^j)(j=1,  \cdots,  k)$.   Furthermore,   choose $(n^2-n)$ fixed transition blocks $U_i^{st}(1\leq s\neq t\leq k)$ satisfying \eqref{eq-a},   \eqref{eq-b} and such that
$$S_i^sU_i^{st}S_i^t\prec M_{i-1},  1\leq s\neq t\leq k.  $$

Let $M_i$ is the m.  f.  t.  -subshift having
  $$Bl_{r_i}(M_i)=Bl_{r_i}(M_i^1\cup\cdots\cup M_i^k)\bigcup\{Q\in Bl_{r_i}(M_{i-1})~|~Q\prec S_i^sU_i^{st}S_i^t,  1\leq s\neq t\leq t\}$$
  as defining block system.   By Proposition \ref{finite-type-subshift},   \ref{transitivity-for-symbolic} and \ref{mixing-for-symbolic},   $M_i$ is a m.  f.  t.   subshift.

We now define $\overline{M}$ as
$$\overline{M}:=\bigcap_{i\geq 1}M_i.  $$
For each $1\leq j\leq k$,   by \eqref{nu-i-nu-i-1},   \eqref{t-delta-i-1} and the selection of $\delta_i$,   we see
\begin{equation}\label{rho-nu-i-i-1}
  \rho(\nu_i^j,  \nu_{i-1}^j)<3^{-i-1}\eps,  i\in\N.
\end{equation}
So $\{\nu_i^j\}_{i\in\N}$ constitutes a Cauchy sequence.   Let $\nu^j$ denotes its limit.
\begin{Lem}
We have the following facts
\begin{description}\label{lem-four-facts}
  \item[(1)] $\overline{M}$ is minimal.
  \item[(2)] Each $\nu^j$ is $\sigma$-invariant and supported on $\overline{M}$.
  \item[(3)] $h_{\nu^j}>h_{\mu^j}-\eta,  j=1,  \cdots,  k$.
  \item[(4)]
  \begin{equation}\label{eq-rho-nu-j-mu-j}
    \rho(\nu^j,  \mu^j)<\eps/2,  j=1,  \cdots,  k.
  \end{equation}
 \end{description}

\end{Lem}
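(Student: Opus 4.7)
The plan is to split the four items by difficulty. Items (2) and (4) fall out of the telescoping estimates already built into the inductive construction. For (2), the $\sigma$-invariance of $\nu^j$ is inherited from the invariant measures $\nu_i^j$ under the weak$^*$ limit, and for the support: the subshifts are nested, $M_i\subseteq M_{i-1}$, and $\nu_i^j$ is carried by $M_i^j\subseteq M_i$, so for each fixed $l$ one has $\nu_i^j(M_l)=1$ whenever $i\ge l$. Since $M_l$ is closed, weak$^*$ convergence gives $\nu^j(M_l)\ge\limsup_i\nu_i^j(M_l)=1$, and intersecting over $l$ puts $\supp\nu^j$ in $\bigcap_l M_l=\overline{M}$. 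For (4), combining \eqref{eq-nu-0-j} and \eqref{rho-nu-i-i-1} via the triangle inequality:
\[
\rho(\nu^j,\mu^j)\;\le\;\rho(\mu^j,\nu_0^j)+\sum_{i\ge 1}\rho(\nu_i^j,\nu_{i-1}^j)\;<\;\tfrac{\eps}{3}+\sum_{i\ge 1}3^{-i-1}\eps\;=\;\tfrac{\eps}{3}+\tfrac{\eps}{6}\;<\;\tfrac{\eps}{2}.
\]

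For item (3) I will appeal to upper semi-continuity of the metric entropy as a function of the measure on an expansive system (any subshift in particular). Because $\nu_i^j\to\nu^j$ and $h_{\nu_i^j}>h_i^j$ with $h_i^j\downarrow h_{\mu^j}-\eta$, this immediately yields $h_{\nu^j}\ge\limsup_i h_{\nu_i^j}\ge h_{\mu^j}-\eta$. The strict inequality in the statement is then recovered by running the construction once with $\eta/2$ in place of $\eta$, which is harmless since $\eta$ is arbitrary in the proposition.

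Item (1), minimality, is the main obstacle. The plan is to invoke Lemma \ref{Lemma-minimal} and show that every $P\prec\overline{M}$ satisfies $P\underset{d}{\prec}\overline{M}$. Fix such a $P$. For all sufficiently large $i$, $|P|\le t_{i-1}$ and $P\in Bl_{t_{i-1}}(M_{i-1})$, since $\overline{M}\subseteq M_{i-1}$ and $t_{i-1}\to\infty$. Property \eqref{P-Q} then forces $P\prec Q$ for every $Q\in Bl_r(M_i^j)$ with $r\ge\overline{r}_i$, \emph{uniformly} in $j=1,\ldots,k$. What remains, and is the genuine work, is to show that any $x\in\overline{M}$ contains, inside every window of some uniform length $N=N(P)$, a complete $M_i^j$-admissible sub-block of length at least $\overline{r}_i$. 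This reduces to a careful reading of the defining $r_i$-block system of $M_i$: each $r_i$-window of $x$ is either a block of some $M_i^j$ or a sub-block of $S_i^s U_i^{st} S_i^t$; since $r_i$ was chosen $\gg L_i$ and $\gg\overline{r}_i$, the transition stretches (of length at most $r_i+L_i$) cannot accumulate, and between any two successive transitions there is a run of length $\ge r_i$ lying inside a single $M_i^j$, inside which $P$ must appear.

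I expect the only non-routine step to be this last structural observation about $M_i$: one must justify cleanly from the $r_i$-block system that transitions are genuinely isolated rather than merging into arbitrarily long non-$M_i^j$ stretches. The measure-theoretic parts (2)--(4) are then a straightforward assembly of weak$^*$-convergence facts, upper semi-continuity of entropy, and the built-in geometric-series estimates.
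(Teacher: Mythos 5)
Your proposal follows the paper's own proof essentially step for step: weak$^*$ limits over the nested closed shifts $M_l$ for (2), upper semi-continuity of the entropy map on the expansive shift for (3), the telescoping geometric series built from \eqref{eq-nu-0-j} and \eqref{rho-nu-i-i-1} for (4), and Lemma \ref{Lemma-minimal} combined with \eqref{P-Q} for (1) (the paper states this last step even more tersely than you do, and also upgrades (2) to $S_{\nu^j}=\overline{M}$ using minimality). The structural step you flag in (1) is not actually a gap: by the defining $r_i$-block system, every $r_i$-window of any $x\in M_i$ is either a block of some $M_i^j$ or a sub-block of $S_i^sU_i^{st}S_i^t$, and since the single transition stretch $U_i^{st}$ has length only $L_i$ while $r_i>4(\overline{r}_i+L_i)$, such a window must contain a complete $M_i^j$-sub-block of length at least $\overline{r}_i$ (hence an occurrence of $P$), so the per-window analysis already yields $P\underset{d}{\prec}\overline{M}$ with no need for global parsing or recognizability of transitions; likewise your $\eta/2$ re-run in (3) is a legitimate patch for a strict inequality that the paper asserts without comment.
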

\begin{proof}
\begin{description}
  \item[(1)] Observe that $\overline{M}$ can be characterized as $Bl_{t_i}(\overline{M})=Bl_{t_i}(M_i)$.   Indeed,   $Bl_{t_i}(\overline{M})\subseteq Bl_{t_i}(M_i)$ is obvious since $\overline{M}\subseteq M_i$.   On the other hand,   for every $P\in Bl_{t_i}(M_i)$,   one has $P\prec M_{i+1}$.   Inductively,   one gets that $P\prec M_j$ for any $j\ge i$.   Consequently,   one has $P\prec \overline{M}=\bigcap_{j\geq i}M_j$ since $\{M_i\}_{i\in \N}$ is a decreasing sequence.   Therefore,   one has $Bl_{t_i}(\overline{M})\supset Bl_{t_i}(M_i)$.   Then minimality of $\overline{M}$ follows from \eqref{P-Q} and Lemma \ref{Lemma-minimal}.
  \item[(2)] Note that the space of $\sigma$-invariant measures on $M$ are closed,   so each $\nu^j$ is $\sigma$-invariant.   Moreover,   for each $1\leq j\leq k$ and $i\in\N$,   $1=\limsup_{i\to\infty}\nu_i^j(M_i)\leq \nu^j(M_i)$ \cite{Walters}.   So $\mu^j(M_i)=1$ for each $i$,   which implies that $\nu^j(\overline{M})=\nu^j(\bigcap_{i\in\N}M_i)=1$.   However,   $\overline{M}$ is minimal.   Thus $S_{\nu^j}=\overline{M}$.
  \item[(3)] Since $M$ is expansive,   the entropy map is upper semi-continuous \cite{Walters}.   So one has
  $$h_{\nu^j}\geq\limsup_{i\to\infty}h_{\nu_i^j}>h_{\mu^j}-\eta.  $$
  \item[(4)] Indeed,   \eqref{eq-nu-0-j} and \eqref{rho-nu-i-i-1} indicate that
   $\rho(\nu^j,  \mu^j)\leq \sum_{i=1}^{\infty}3^{-i}\eps=\eps/2.  $
\end{description}
\end{proof}
Let $K:=\cov\{\nu^j\}_{j=1}^k$.
\begin{Lem}\label{lem-M-sigma-subseteq-K}
$\M_{\sigma}(\overline{M})\subseteq K$.
\end{Lem}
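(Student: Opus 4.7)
The plan is to show that any $\mu\in\M_\sigma(\overline{M})$ lies within arbitrary $\rho$-distance of $K$, which suffices since $K=\cov\{\nu^j\}_{j=1}^k$ is closed (the convex hull of finitely many points in a compact set). Fix $\eps>0$ and pick $i$ so large that $\rho(\nu_i^j,\nu^j)<\eps/4$ for all $j$ (using \eqref{rho-nu-i-i-1}) and so that the transition-density bound $L_i/r_i<\eps_{i-1}/4$ guaranteed by the choice $r_i>4(\overline{r_i}+L_i)\eps_{i-1}^{-1}$ yields total weak$^*$ error below $\eps/4$.

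The key structural observation is that, by the defining block system of $M_i$, every $x\in\overline{M}\subseteq M_i$ can be canonically partitioned into alternating \emph{pure} segments, each of length $\geq r_i$ and lying entirely in some $M_i^j$, and \emph{transition} segments, each of length $L_i$ or $L_i+1$, that are sub-blocks of some $S_i^s U_i^{st} S_i^t$. The marker $C_i\in Bl_{\overline{r_i}}(M_{i-1})\setminus Bl_{\overline{r_i}}(M_i^1\cup\cdots\cup M_i^k)$ appears inside every $U_i^{st}$ exactly once, and does not appear inside any block of $M_i^j$, so the partition is unambiguously recoverable from $x$. The pure segment surrounding a fixed coordinate is labelled by the unique $j$ such that the segment lies in $M_i^j$.

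Now take a $\mu$-generic $x\in\overline{M}$, so $\mathcal{E}_n(x)\to\mu$ in the weak$^*$ topology. For each $n$, write $[0,n)$ as a disjoint union of the pure-segment intersections (separated by class $j\in\{1,\dots,k\}$) and the transition-segment intersections. Let $\lambda_j^{(n)}$ denote the fraction of $[0,n)$ occupied by pure $j$-segments and let $\tau_j^{(n)}$ be the empirical measure averaged over those coordinates (set to an arbitrary invariant measure if the fraction is $0$). Then
\begin{equation*}
\mathcal{E}_n(x)=\sum_{j=1}^k \lambda_j^{(n)}\tau_j^{(n)}+\rho^{(n)},
\end{equation*}
where $\|\rho^{(n)}\|\leq L_i/r_i+o(1)<\eps/4+o(1)$, the $o(1)$ coming from the two incomplete end-segments. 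Pass to a subsequence so that $\lambda_j^{(n)}\to\lambda_j\geq0$ with $\sum_j\lambda_j=1$ and $\tau_j^{(n)}\to\tau_j\in\M_\sigma(\overline{M})$. Each $\tau_j$ is a weak$^*$ limit of empirical distributions on subblocks of $M_i^j$, hence is an invariant measure whose values on cylinders of length $\leq t_{i-1}$ agree with those of $\nu_i^j$ up to $\delta_{i-1}$, by property ii) of step $i$; combining with \eqref{t-delta-i-1} this gives $\rho(\tau_j,\nu_i^j)<\eps/4$, and then $\rho(\tau_j,\nu^j)<\eps/2$. Consequently $\mu=\sum_j\lambda_j\tau_j+\lim\rho^{(n)}$ satisfies $\rho(\mu,\sum_j\lambda_j\nu^j)<\eps$, placing $\mu$ in the $\eps$-neighborhood of $K$. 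As $\eps$ was arbitrary, $\mu\in K$.

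The main obstacle is the careful bookkeeping of the pure/transition decomposition: one must verify that $C_i$ really does single out the transition segments in $\overline{M}$ (using \eqref{eq-b} and the fact that $C_i\notin Bl_{\overline{r_i}}(M_i^j)$ by \eqref{1-2-M}), and that the resulting partition is shift-measurable so that the densities $\lambda_j^{(n)}$ and normalised empirical measures $\tau_j^{(n)}$ behave as stated under averaging. Once this combinatorial structure is in place, the rest is routine weak$^*$ analysis.
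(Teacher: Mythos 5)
Your argument has a genuine gap at its very first step: you take a $\mu$-generic point $x\in\overline{M}$ for an \emph{arbitrary} invariant measure $\mu\in\M_{\sigma}(\overline{M})$. Generic points are guaranteed to exist only for \emph{ergodic} measures (for ergodic $\mu$, Birkhoff's theorem makes $\mu$-a.e.\ point generic); a non-ergodic invariant measure need not have any generic point at all, and here $\overline{M}$ is minimal but (for $k\geq 2$) certainly not uniquely ergodic, since the distinct measures $\nu^1,\dots,\nu^k$ all live on it. Nothing in the construction of $\overline{M}$ supplies generic points for convex combinations, so for non-ergodic $\mu$ your decomposition argument never gets started. The repair is exactly the reduction with which the paper opens its proof: $K=\cov\{\nu^j\}_{j=1}^k$ is compact and convex, $\M_{\sigma}(\overline{M})$ is convex with the ergodic measures as its extreme points, so by ergodic decomposition it suffices to prove $\nu\in K$ for every \emph{ergodic} $\nu\in\M_{\sigma}(\overline{M})$ --- and those do admit generic points. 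With that one sentence inserted, your argument should be run only for ergodic $\nu$, which is all the lemma needs.

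Two further remarks. First, your assertion that the subsequential limits satisfy $\tau_j\in\M_{\sigma}(\overline{M})$ is false: the averaging windows (the pure segments) have length of order $r_i$, which stays bounded as $n\to\infty$, so $\tau_j$ is only invariant up to an error of order $1/r_i$. This is harmless, because what you actually use is not invariance but the cylinder estimate coming from property ii) of step $i$ (each pure segment is a block of $M_i^j$ of length at least $\tfrac34 r_i>\overline{r}_i$, so its internal frequencies of short blocks lie within $\delta_{i-1}$ of those of $\nu_{i-1}^j$, up to boundary effects of relative size $t_{i-1}/r_i$), and Lemma \ref{measure-on-symbolic} applies to arbitrary, not necessarily invariant, Borel probability measures; just delete the word ``invariant''. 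Second, once the ergodic reduction is in place, your route genuinely differs from the paper's: you use windows of length $n\to\infty$ and split them into pure and transition parts, concluding that each ergodic measure lies somewhere in the simplex $K$; the paper instead picks a single window of length $n$ with $\overline{r}_i<n<\eps_{i-1}r_i$ --- long enough for genericity, yet so short that it meets at most one transition block --- and its cases a) and b) (the degenerate form of your partition) yield the stronger conclusion that each ergodic measure is within $\kappa$ of a single vertex, hence \emph{equals} some $\nu^j$. The paper's choice eliminates all of your bookkeeping (densities $\lambda_j^{(n)}$, renormalization of the defect term $\rho^{(n)}$, shift-measurability of the marker partition), while your version, after repair, gives only membership in $K$, which nevertheless suffices for this lemma.
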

\begin{proof}
Since $\M_{\sigma}(\overline{M})$ and $K$ are convex,   we only need to show that for any ergodic measure $\nu\in\mathfrak{M}_{\sigma}(\overline{M})$,   $\nu\in K$.   Let $\kappa>0$.   There exist $t\in\N$ and $\eta>0$ such that for $\alpha,  \alpha'\in\mathfrak{M}_{\sigma}(\overline{M})$:
\begin{equation}\label{alpha-rho}
  \forall~P\in Bl_t(M):|\alpha(P)-\alpha'(P)|<4\eta\Rightarrow \rho(\alpha,  \alpha')<\kappa.
\end{equation}
Let $i_0$ is so large that $8\cdot2^{-i_0+1}<\eta$ and $2^{i_0-1}\geq t$.   Then for all $i\geq i_0,  r\geq \overline{r}_i$,   one has
$$t\leq 2^{i_0-1}\leq 2^{i-1}<t_{i-1}~\text{and}~8\eps_{i-1}<8\cdot2^{-i+1}\leq 8\cdot2^{-i_0+1}<\eta.  $$
Therefore,   for any
$P\in Bl_t(M),  Q\in Bl_r(M_i^j)$,   one has
$$|\mu_Q(P)-\nu_{i-1}^j(P)|<\delta_{i-1}<\eta~\text{and thus}~|\nu_i^j(P)-\nu_{i-1}^j(P)|<\delta_{i-1}<\min\{2^{-i+1},  \eta\}.  $$
Consequently,
\begin{eqnarray*}
% \nonumber to remove numbering (before each equation)
   & & |\mu_Q(P)-\nu^j(P)| \\
   &\leq& |\mu_Q(P)-\nu_{i-1}^j(P)|+\sum_{k=i}^{\infty}|\nu_{k-1}^j(P)-\nu_k^j(P)| \\
   &\leq& \eta+\sum_{k=i}^{\infty}2^{-k+1}=\eta+2\cdot2^{-i+1}<\eta+\eta=2\eta.
\end{eqnarray*}

Now choose a generic point $w\in\overline{M}$ of $\nu$.
Then there is an $n_0\in\N$ such that for all $n\geq n_0$ and all $P\in Bl_t(M)$
\begin{equation}\label{w-generic}
  |\mu_{w_{\langle0,  n-1\rangle}}(P)-\nu(P)|=\left|\frac1n\sum_{i=0}^{n-1}1_P(\sigma^i(\omega))-\nu(P)\right|<\eta.
\end{equation}
Choose an $i\geq i_0$ such that $\overline{r}_i\geq n_0$.   Then choose an $n\in \N$ such that
$$\overline{r}_i<n<\eps_{i-1}\cdot r_i.  $$
This is possible by the selection of $r_i$.
Since all the transition blocks $U_i^{st}$ are recognizable by the block $C_i$,   the sequence $w$ can be split up uniquely into the transition blocks and into blocks belonging to the subshifts $M_i^j$,   the latter with length at least $\frac34r_i$,   so we have the following two cases:
\begin{description}
  \item[a)] $w_{\langle0,  n\rangle}$ falls completely into a $M_i^j$-piece.   Then $w_{\langle0,  n-1\rangle}\in Bl_{n}(M_i^j)$ with $n> \overline{r}_i$.   So
      $$|\mu_{w_{\langle0,  n-1\rangle}}(P)-\nu^j(P)|<2\eta.  $$
      This along with \eqref{w-generic} indicates that
      $$|\nu(P)-\nu^j(P)|<3\eta~\text{for any}~P\in Bl_t(M).  $$
      Hence  $d(\nu,  \nu^j)<\kappa$ by \eqref{alpha-rho}.
  \item[b)] $w_{\langle0,  n-1\rangle}$ overlaps one of the transition blocks $U$.   Then at the right of $U$ begins a $M_i^j$-piece.   For the relative frequency $\mu_{w_{\langle0,  n+L_i+[\frac{r_i}{2}]-1\rangle}}$,   only the part $w_{\langle n+L_i,  n+L_i+[\frac{r_i}{2}]-1\rangle}$ which belongs to a $M_i^j$-block is essential.   This is due to $(n+L_i)<\eps_{i-1}\cdot r_i+\eps_{i-1}\cdot r_i=2\eps_{i-1}\cdot r_i$.   More exactly,   for $P\in Bl_t(\overline{M})$,
      $$\mu_{w_{\langle0,  n+L_i+[\frac{r_i}{2}]-1\rangle}}(P)=\frac{(n+L_i)\mu_{w_{\langle0,  n+L_i+r-1\rangle}}(P)+([\frac{r_i}{2}]-r+1)\mu_{w_{\langle n+L_i,  n+L_i+[\frac{r_i}{2}]-1\rangle}}(P)}{n+L_i+[\frac{r_i}{2}]-r+1}.  $$
      This yields that
      \begin{equation}\label{estimation-1}
        \left|\mu_{w_{\langle0,  n+L_i+[\frac{r_i}{2}]-1\rangle}}(P)-\mu_{w_{\langle n+L_i,  n+L_i+[\frac{r_i}{2}]-1\rangle}}(P)\right|<2\cdot\frac{n+L_i}{[\frac{r_i}{2}]}<8\eps_{i-1}<\eta.
      \end{equation}
      Now we employ the argument in case $a)$ and get that
      \begin{equation}\label{estimation-2}
        \left|\mu_{w_{\langle n+L_i,  n+L_i+[\frac{r_i}{2}]\rangle}}(P)-\nu^j(P)\right|<2\eta.
      \end{equation}
      In the light of \eqref{w-generic},   \eqref{estimation-1} and \eqref{estimation-2},   one gets $|\nu(P)-\nu^j(P)|<4\eta$ which implies again $d(\nu,  \nu^j)<\kappa$ by \eqref{alpha-rho}.
\end{description}
Since $\kappa>0$ can be chosen arbitrarily small,   one sees that $\nu$ must coincide with some $\nu^j$ with $1\leq j\leq k$.   This proves that there are no other ergodic measures supported on $\overline{M}$ than $\{\nu^j\}_{j=1}^k$.   Since $K$ is convex,   we obtain $\M_{\sigma}(\overline{M})\subseteq K$.
\end{proof}
\begin{Lem}
$\M_{\sigma}(\overline{M})= K$.   In particular,   $\overline{M}$ supports exactly $k$ ergodic measures $\{\nu^j\}_{j=1}^k.  $
\end{Lem}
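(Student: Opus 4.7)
The plan is to combine Lemma \ref{lem-M-sigma-subseteq-K} (the inclusion $\mathfrak{M}_{\sigma}(\overline{M})\subseteq K$ which has already been established) with the reverse inclusion $K\subseteq \mathfrak{M}_{\sigma}(\overline{M})$, and then to identify the ergodic measures of $\overline{M}$ with the generators $\{\nu^j\}_{j=1}^k$ by showing that the latter are the extreme points of $K$.

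For the reverse inclusion, I would invoke Lemma \ref{lem-four-facts}(2): each $\nu^j$ is $\sigma$-invariant and $S_{\nu^j}\subseteq\overline{M}$, hence $\nu^j\in\mathfrak{M}_{\sigma}(\overline{M})$ for every $1\leq j\leq k$. Because $\mathfrak{M}_{\sigma}(\overline{M})$ is convex, it follows immediately that $K=\cov\{\nu^j\}_{j=1}^k\subseteq\mathfrak{M}_{\sigma}(\overline{M})$, giving equality.

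To prove the ``exactly $k$ ergodic measures'' clause, I would show that each $\nu^a$ is an extreme point of $K$, for then the ergodic measures of $\overline{M}$ (which are precisely the extreme points of $\mathfrak{M}_{\sigma}(\overline{M})=K$) are exactly the $\nu^j$'s. Suppose $\nu^a=\sum_{b\neq a}t_b\nu^b$ for some convex coefficients $t_b$. By Lemma \ref{lem-four-facts}(4), $\rho(\nu^j,\mu^j)<\eps/2$ for every $j$, and the triangle inequality (together with convexity of $\rho$ in each argument) gives
\[
\rho\!\left(\mu^a,\sum_{b\neq a}t_b\mu^b\right)\leq \rho(\mu^a,\nu^a)+\rho\!\left(\nu^a,\sum_{b\neq a}t_b\nu^b\right)+\sum_{b\neq a}t_b\,\rho(\nu^b,\mu^b)<\tfrac{\eps}{2}+0+\tfrac{\eps}{2}=\eps,
\]
contradicting the choice of $\eps$ in \eqref{selection-of-eps}, which forces $\dist(\mu^a,\cov\{\mu^b\}_{b\neq a})>\eps$. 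Hence each $\nu^a$ is extreme in $K$, and since the $\nu^j$'s are pairwise distinct (again by \eqref{selection-of-eps} and Lemma \ref{lem-four-facts}(4)), they constitute exactly $k$ distinct ergodic measures supported on $\overline{M}$.

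The only mildly delicate point is the last triangle-inequality step, where I rely on the specific spacing of the original ergodic measures $\{\mu^j\}_{j=1}^k$ imposed in \eqref{selection-of-eps} to rule out affine dependencies among the approximants $\{\nu^j\}_{j=1}^k$; everything else is a direct consequence of the already established Lemmas \ref{lem-four-facts} and \ref{lem-M-sigma-subseteq-K}.
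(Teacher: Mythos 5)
Your proposal is correct and follows essentially the same route as the paper: the core step in both is the triangle-inequality separation, using the spacing in \eqref{selection-of-eps} together with $\rho(\nu^j,\mu^j)<\eps/2$ from \eqref{eq-rho-nu-j-mu-j}, to conclude that no $\nu^a$ lies in $\cov\{\nu^b\}_{1\leq b\leq k,\,b\neq a}$ (the paper's inequality \eqref{nu-a-neq}), combined with Lemma \ref{lem-M-sigma-subseteq-K}. The only difference is one of packaging: you argue via the direct reverse inclusion $K\subseteq \M_\sigma(\overline{M})$ plus identification of the extreme points of $K$ with the ergodic measures, whereas the paper compresses the same facts into a single contradiction argument.
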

\begin{proof}
By \eqref{eq-rho-nu-j-mu-j},   for any $1\leq a\leq k$,
\begin{equation}\label{cov-subset}
  \dist(\cov\{\mu^b\}_{1\leq b\leq k,  b\neq a},  \cov\{\nu^b\}_{1\leq b\leq k,  b\neq a})<\eps/2.
\end{equation}
This along with \eqref{selection-of-eps} implies for any $1\leq a\leq k$ that
\begin{eqnarray}
% \nonumber to remove numbering (before each equation)
   & & \dist(\nu^a,  \cov\{\nu^b\}_{1\leq b\leq k,  b\neq a})\nonumber \\
   &\geq& \dist(\mu^a,  \cov\{\mu^b\}_{1\leq b\leq k,  b\neq a})-\dist(\nu^a,  \mu^a)-\dist(\cov\{\mu^b\}_{1\leq b\leq k,  b\neq a},  \cov\{\nu^b\}_{1\leq b\leq k,  b\neq a}) \nonumber\\
   &>&  \eps-\eps/2-\eps/2=0.  \label{nu-a-neq}
\end{eqnarray}
So if $\M_{\sigma}(\overline{M})\neq K$,   then by Lemma \ref{lem-M-sigma-subseteq-K},   there is some $1\leq a\leq k$ such that
$$\nu^a\in\cov\{\nu^b\}_{1\leq b\leq k,  b\neq a},  $$
contradicting \eqref{nu-a-neq}.
\end{proof}
Therefore,   $\overline{M}$ has exactly $k$ ergodic measures $\{\nu^j\}_{j=1}^k$ and the proof of Proposition \ref{prop-symbolic} is completed.  \qed

\bigskip

\bigskip

{\bf Acknowledgements.  } The authors are grateful to Prof.   Yu Huang,   Wenxiang Sun and Xiaoyi Wang for their numerous remarks and fruitful discussions.   The research of X.   Tian was  supported by National Natural Science Foundation of China (grant no.   11671093).


\begin{thebibliography}%{[MDF]}
{99}

\raggedright
\bibitem{AKM} R.  L.   Adler,   A.  G.   Konheim,   M.  H.   McAndrew,   \textit{Topological entropy},   Trans.   Amer.   Math.   Soc.   \textbf{114} (1965),   309-319.



\bibitem{AAN}  P. Ashwin, P. J. Aston and M. Nicol.  {\it On the unfolding of a blowout bifurcation.}  Phys. D 111(1-4)
(1998), 81-95.
\bibitem{AF}  P. Ashwin and M. Field, {\it Heteroclinic networks in coupled cell systems.}  Arch. Ration. Mech. Anal.
148(2) (1999), 107-143.



%\bibitem{Akin97} E.   Akin,   Recurrence in topological dynamics.
%Furstenberg families and Ellis actions.   The University Series in Mathematics.   Plenum Press,   New York,   1997.   x+265 pp.   ISBN: 0-306-45550-1.
\bibitem{Akin} E.   Akin,   The General Topology of Dynamical Systems,   in: Grad.   Stud.   Math.  ,   Vol.   1,   American Mathematical
Society,   Providence,   RI,   1993.








\bibitem{Bar2008} L. Barreira, {\it Dimension and recurrence in hyperbolic dynamics}. Basel: Birkh\"auser, 2008.
\bibitem{Bar2011} L. Barreira, {\it Thermodynamic formalism and applications to dimension theory}. Springer Science $\&$ Business Media, 2011.
\bibitem{BD} L. Barreira and P. Doutor, {\it Almost additive multifractal analysis.}  Journal de math\'ematiques pures et appliqu\'ees {\bf 92} (2009), 1--17.
%\bibitem{Barreira-Saussol} L.   Barreira and B.   Saussol.   \emph{Variational principles and mixed multifractal spectra.  } Trans.   Amer.   Math.
%Soc.   353(10) (2001),   3919-3944.
\bibitem{BP} L. Barreira, Y. B. Pesin,  {\it Nonuniform hyperbolicity,} Cambridge Univ. Press, Cambridge (2007).





\bibitem{Barreira---Saussol2001}  L.   Barreira,   B.   Saussol,    \emph{Variational principles and mixed multifractal spectra.  }
Trans.   Amer.   Math.   Soc.   353 (2001),   no.   10,   3919-3944.
\bibitem{BSS}  L.   Barreira,   B.   Saussol,   J.   Schmeling,   \emph{Higher-dimensional multifractal analysis.  } J.   Math.   Pures Appl.   (9) 81 (2002),   no.   1,   67-91.



\bibitem{Barreira-Schmeling2000} L.   Barreira,   J.   Schmeling.   \textit{Sets of non-typical points have full topological entropy and full Hausdorff dimension},   Israel Journal of Mathematics,   \textbf{116} (2000),   29-70.\bibitem{Billingsley1960} P.   Billingsley,   \emph{Hausdorff dimension in probability theory.  } Illinois J.   Math.   4 1960 187-209.
\bibitem{Billingsley1961} P.   Billingsley,   \emph{Hausdorff dimension in probability theory.  } II.   Illinois J.   Math.   5 1961 291-298.


\bibitem{Birkhoff} G. D. Birkhoff,
{\it Dynamical Systems,}
Amer. Math. Soc. Colloq. Publ.  (revised ed.), vol. IX,  Amer. Math. Soc., Providence, RI  (1966).

\bibitem{Bowen1970} R.   Bowen,    \emph{Markov partitions for Axiom A diffeomorphisms.  }
Amer.   J.   Math.   92 1970 725-747.

\bibitem{Bowen1970-minimal} R.   Bowen,   \emph{Markov partitions and minimal sets for Axiom A diffeomorphisms.  }
Amer.   J.   Math.   92 1970 907-918.
\bibitem{Bowen1970-Axiom-A} R.   Bowen,
Topological entropy and axiom A.   1970 Global Analysis (Proc.   Sympos.   Pure Math.  ,   Vol.   XIV,   Berkeley,   Calif.  ,   1968) pp.   23-41 Amer.   Math.   Soc.  ,   Providence,   R.  I.
\bibitem{Bowen1973} R.   Bowen,   \emph{Topological entropy for noncompact sets.  } Trans.   Amer.   Math.   Soc.   184 (1973),   125-136.
\bibitem{BowenEntropy} R.   Bowen,   \textit{Entropy for group endomorphisms and homogeneous spaces},   Trans.   Amer.   Math.   Soc.  ,   \textbf{153} (1971),   401--414.
\bibitem{Bowen1975} R.   Bowen,      \emph{$\omega$-limit sets for axiom A diffeomorphisms.  }
J.   Differential Equations 18 (1975),   no.   2,   333-339.
\bibitem{Bowen-Lanford}   R.   Bowen,   O.   Lanford,
\emph{Zeta functions of restrictions of the shift transformation.  } 1970 Global Analysis  pp.   43-49.
\bibitem{BPS}  Barreira L,   Pesin Y and Schmeling J,   \emph{On a general concept of multifractality: multifractal spectra for
dimensions,   entropies,   and Lyapunov exponents.  } Chaos 7 (1997),   no.   1,   27-38.
\bibitem{BMR}  W.   Brian,   J.   Meddaugh,   B.   Raines,
\emph{Chain transitivity and variations of the shadowing property.  }
Ergodic Theory Dynam.   Systems 35 (2015),   no.   7,   2044-2052.





%Bowen. Topological entropy for noncompact sets. Trans. Amer. Math. Soc. 184 (1973), 125–136.


\bibitem{Brown} J. R. Brown {\it Ergodic Theory and Topological Dynamics} (Pure and Applied Mathematics, 70). Academic
Press, New York, 1976.
\bibitem{CTS}   E.   Chen,   K.   Tassilo and L.   Shu,   \textit{Topological entropy for divergence points},   Ergod.   Th.   Dynam.   Sys.  ,   \textbf{25} (2005),   1173-1208.
%\bibitem{Climenhaga2013} V.   Climenhaga,      \emph{Topological pressure of simultaneous level sets.  }
%Nonlinearity 26 (2013),   no.   1,   241-268.
%\bibitem{Climenhaga2014} V.   Climenhaga,   \emph{The thermodynamic approach to multifractal analysis.  }
%Ergodic Theory Dynam.   Systems 34 (2014),   no.   5,   1409-1450.
\bibitem{CKY} E.   Coven,    I.   Kan,   J.   Yorke,     \emph{Pseudo-orbit shadowing in the family of tent maps.  }
Trans.   Amer.   Math.   Soc.   308 (1988),   no.   1,   227-241.
%\bibitem{Cyr-Kra}    Cyr V,   Kra B.   \emph{Counting generic measures for a subshift of linear growth},   preprint.
\bibitem{Dastjerdi-Hosseini} D.   A.   Dastjerdi and M.   Hosseini.   \emph{Sub-shadowings.  } Nonlinear Anal.   72 (2010),   3759-3766.
\bibitem{DGS} M.   Denker,   C.   Grillenberger and K.   Sigmund,    Ergodic theory on the
  compact space.   Lecture Notes in Mathematics {\text{527}}.





\bibitem{DOT} Dong Y,   Oprocha P,   Tian X.   \emph{On the irregular points for systems with the shadowing property.  } Ergod.   Th.   Dynam.   Sys.  ,   to appear.


\bibitem{DongTian2016-nosyndetic} Y. Dong, X. Tian, {\it  {Different Statistical Future of Dynamical Orbits over Expanding
or Hyperbolic Systems (I): Empty Syndetic Center}}, arXiv:1701.01910.



 \bibitem{DTY}  Y.   Dong,   X.   Tian,   X.   Yuan,
\emph{Ergodic properties of systems with asymptotic average shadowing property.  }
J.   Math.   Anal.   Appl.   432 (2015),   no.   1,   53-73.
\bibitem{EKW} A.   Eizenberg,   Y.   Kifer,   B.   Weiss,   \emph{Large deviations for $\Z^d$-actions.  } Comm.   Math.   Phys.   164 (1994),   no.   3,   433-454.
\bibitem{Fakhari-Gane} A.   Fakhari and F.   H.   Gane.   \emph{On shadowing: ordinary and ergodic.  } J.   Math.   Anal.   Appl.   364 (2010),   151-155.
%\bibitem{Fan-Feng} A.   Fan,   D.   Feng,   \emph{On the distribution of long-term time averages on symbolic space.  }
%J.   Statist.   Phys.   99 (2000),   no.   3-4,   813¨C-856.
\bibitem{FFW} A.   Fan,   D.   Feng and J.   Wu,   \textit{Recurrence,   dimensions and entropy},   J.   London Math.   Soc.  ,   \textbf{64} (2001),   229-244.
\bibitem{FengH} Feng, D.-J., Huang, W., {\it  Lyapunov spectrum of asymptotically sub-additive potentials.} Commun. Math.
Phys. 297, 1-43 (2010).
\bibitem{FO} H.   F$\ddot{o}$llmer,   S.   Orey,   \emph{Large deviations for the empirical field of a Gibbs measure.  } Ann.   Probab.   16 (1988),   no.   3,   961-977.
\bibitem{Furst} Furstenberg H.   {\it Recurrence in Ergodic Theory and Combinatorial Number Theory.  }
Princeton University Press,   1981.
\bibitem{Good-Meddauph} C.   Good and J.   Meddaugh,   \emph{Orbital shadowing,   internal chain transitivity and $\omega$-limit sets.  } Ergodic theory and Dynamical Systems.   Published online: 13 July 2016,   1-12.



\bibitem{Gottschalk44} W. H. Gottschalk, {\it Orbit-closure decompositions and almost periodic properties,} Bull. Amer. Math. Soc. 50 (1944) 915-919.


\bibitem{Gottschalk46} W. H. Gottschalk, {\it Almost period points with respect to transformation semi-groups,} Ann. of Math. 47 (4) (1946) 762-766.

\bibitem{Gottschalk44-2222} W. H. Gottschalk, {\it Powers of homeomorphisms with almost periodic properties,}
Bull. Amer. Math. Soc. vol. 50 (1944), 222-227.
\bibitem{Grillenberger} C.   Grillenberger,
\emph{Constructions of strictly ergodic systems.   I.   Given entropy.  }
Z.   Wahrscheinlichkeitstheorie und Verw.   Gebiete 25 (1972/73),   323-334.




\bibitem{HZ} W. He,   Z. Zhou,  He, {\it A topologically mixing system whose measure center is a singleton,}  (Chinese) Acta Math. Sinica (Chin. Ser.) 45 (2002), no. 5, 929-934.



\bibitem{HZZ} H.   Hu,   Y.   Zhou,   Y.   Zhu,
\emph{Quasi-shadowing for partially hyperbolic diffeomorphisms.  } Ergodic Theory Dynam.   Systems 35 (2015),   no.   2,   412¨C430.
\bibitem{HSZ} M.   W.   Hirsch,   H.   L.   Smith and X.  -Q.   Zhao.   \emph{Chain transitivity,   attractivity,   and strong repellors for semidynamical systems.  } J.   Dynam.   Differential Equations 13 (2001),   no.   1,   107-131.
\bibitem{HTW} Y.   Huang,   X.   Tian,   X.   Wang,   {\it Transitively-Saturated Property,  Banach Recurrence and Lyapunov Regularity},    arXiv：1610.02166.   % Recurrence and Regularity revisited,  } in preparation.

\bibitem{Kal}  V. Y. Kaloshin, {\it An extension of the Artin-Mazur Theorem,}  Ann. Math.,  1999, 150, 729-41.


\bibitem{Katok}  A.   Katok,   \textit{Lyapunov exponents,   entropy and periodic orbits for diffeomorphisms},   Publ.   Math.   I.  H.  E.  S.  ,   \textbf{51} (1980),   137-174.


\bibitem{Katok1}A. Katok and B. Hasselblatt, {\it Introduction to the modern theory of
dynamical systems}, Encyclopedia of Mathematics and its Applications
54, Cambridge Univ. Press, Cambridge (1995).



    \bibitem{Koscielniak2} P.   Ko\'scielniak,    \textit{On genericity of shadowing and periodic shadowing property},    J.   Math.   Anal.   Appl.  ,   \textbf{310} (2005),   188--196.
\bibitem{Koscielniak} P.   Ko\'scielniak,    \textit{On the genericity of chaos},   Topology Appl.  ,   \textbf{154} (2007),   1951-1955.
    \bibitem{KMO2014} P.   Koscielniak,   M.   Mazur,   P.   Oprocha and P.   Pilarczyk,   \emph{Shadowing is generic-a continuous map case.  } Discrete Contin.   Dyn.   Syst.   34 (2014),   no.   9,   3591-3609.
\bibitem{Kruger-Troubetzkoy}       T.    Kr$\ddot{u}$ger,   S.   Troubetzkoy,
\emph{Markov partitions and shadowing for non-uniformly hyperbolic systems with singularities.  }
Ergodic Theory Dynam.   Systems 12 (1992),   no.   3,   487-508.


\bibitem{Kupka1} I. Kupka, {\it Contribution $ \grave{\text{e}} $ la th$\acute{\text{e}}$orie des champs g$\acute{\text{e}}$n$\acute{\text{e}}$riques,}  Contribution to Differential Equations
2 (1963) 457-484.
\bibitem{Kurka}        P.   Kurka,   Topological and symbolic dynamics,   Cours Sp\'ecialis¡ä\'es [Specialized Courses],   vol.   11.   Soci\'et\'e
Math\'ematique de France,   Paris,   2003.
\bibitem{Kwietniak-Oprocha} D.   Kwietniak,   P.   Oprocha,    \emph{A note on the average shadowing property for expansive maps.  } Topology Appl.   159 (2012),   no.   1,   19-27.



\bibitem{LLST} Liang, C., Liao, G., Sun, W.,   Tian, X. , \emph{ Variational equalities of entropy in nonuniformly hyperbolic systems. Transactions of the American Mathematical Society}, 2017,  369(5), 3127-3156.


\bibitem{Mai} Mai J H, Sun W H, {\it Almost periodic points and minimal sets in $\omega$-regular spaces}, Topology and its Applications, 2007, 154 (15): 2873-2879.
\bibitem{Meddaugh-Raines}  J.   Meddaugh and B.   E.   Raines,    \emph{Shadowing and internal chain transitivity.  } Fund.   Math.   222 (2013),   no.   3,   279-287.


    \bibitem{Mane1} R. Ma$\tilde{\text{n}}$$\acute{\text{e}}$,  {\it An ergodic closing lemma,}  Ann. of Math., 116, 503-540, (1982).


\bibitem{Mazur-Oprocha} M.   Mazur,   P.   Oprocha,   \emph{S-limit shadowing is $C^0$-dense.  } J.   Math.   Anal.   Appl.   408 (2013),   no.   2,   465-475.

\bibitem{Moo2011} Moothathu, T.K.S., {\it Implications of pseudo-orbit tracing property for continuous maps on compacta.}
Top. Appl. 158, 2232-2239 (2011).

\bibitem{Morse}   M.   Morse,
\emph{A One-to-One Representation of Geodesics on a Surface of Negative Curvature.  }
Amer.   J.   Math.   43 (1921),   no.   1,   33-51.
\bibitem{Olsen} L.   Olsen,   \emph{Multifractal analysis of divergence points of deformed measure theoretical Birkhoff averages.  } J.   Math.   Pures Appl.   (9) 82 (2003),   no.   12,   1591-1649.
\bibitem{ODH}    P.   Oprocha,   D.   A.   Dastjerdi and M.   Hosseini.   \emph{On partial shadowing of complete pseudo-orbits.  } J.   Math.
Anal.   Appl.   411 (2014),   454-463.
\bibitem{Oxtoby} J.   C.   Oxtoby,   \emph{Ergodic sets.  } Bull.   Amer.   Math.   Soc.   58,   116-136 (1952).
\bibitem{Parry1964} W.   Parry,
\emph{Intrinsic Markov chains.  }
Trans.   Amer.   Math.   Soc.   112 1964 55-66.
\bibitem{Parry1966} W.   Parry,
\emph{Symbolic dynamics and transformations of the unit interval.  }
Trans.   Amer.   Math.   Soc.   122 1966 368-378.
\bibitem{Pesin} Y.   Pesin,   Dimension Theory in Dynamical Systems.   Contemporary views and applications.   Chicago Lectures in Mathematics.   University of Chicago Press,   Chicago,   IL,   1997.   xii+304 pp.   ISBN: 0-226-66221-7; 0-226-66222-5.

\bibitem{Pesin-Pitskel1984}Ya. Pesin and B. Pitskel', {\it Topological pressure and the variational principle
for noncompact sets,} Functional Anal. Appl. 18 (1984), 307-318.
%\bibitem{Pesin-Sadovskaya}Ya.   B.   Pesin,   V.   Sadovskaya,      \emph{Multifractal analysis of conformal Axiom A flows.  }
%Comm.   Math.   Phys.   216 (2001),   no.   2,   277-312.
\bibitem{PS2003} C.   E.   Pfister and W.   G.   Sullivan,   \emph{Billingsley dimension on shift spaces.  } Nonlinearity 16 (2003),   no.   2,   661-682.
\bibitem{PS2005} C.   E.   Pfister and W.   G.   Sullivan,   \emph{Large deviations estimates for dynamical systems
  without the specification property.  } Application to the $\beta$-shifts,   Nonlinearity 18 (2005),
  237-261.
\bibitem{PS2007} C.   E.   Pfister and W.   G.   Sullivan,   \textit{On the topological entropy of saturated sets}.   Erg.   Th.   Dynam.   Syst.  ,   \textbf{27} (2007),   929-956.
%\bibitem{Pilyugin1997}   S.   Yu.   Pilyugin,
%\emph{Shadowing in structurally stable flows.  }
%J.   Differential Equations 140 (1997),   no.   2,   238-265.
\bibitem{Pilyugin} S.   Pilyugin,     Shadowing in Dynamical Systems (Lecture Notes in Mathematics,   1706).   Springer,   Berlin,
1999.
\bibitem{Pilyugin2007}  S.   Pilyugin,   Sergei Yu.   \emph{Sets of dynamical systems with various limit shadowing properties.  } J.   Dynam.   Differential Equations 19 (2007),   no.   3,   747-775.
\bibitem{Pilyugin-Plamenevskaya} S.   Yu.   Pilyugin and O.   B.   Plamenevskaya,   \textit{Shadowing is generic}.   Topology Appl.   97 (1999),
  253-266.

  \bibitem{PilSakai} Pilyugin S Y, Sakai K, {\it Shadowing and hyperbolicity.}  Springer, Vol. 2193, 2017.

\bibitem{Pugh2} C. Pugh, {\it The closing lemma,}  Amer. J. Math. 89 (1967) 956-1009.



\bibitem{Pugh1} C. Pugh, {\it An improved closing lemma and a general density theorem,}  Amer. J. Math.,
89, 101-1021, (1967).


%\bibitem{Rees} M. Rees, {\it A minimal positive entropy homeomorphism of the 2-torus,} J. London Math. Soc. 23 (1981) 537-550.

\bibitem{Ruelle} D. Ruelle, {\it Historic behaviour in smooth dynamical systems,} Global Analysis of Dynamical
Systems (H. W. Broer, B. Krauskopf, and G. Vegter, eds.), Bristol: Institute of Physics
Publishing, 2001.

\bibitem{Ru} D. Ruelle, {\it  An inequality for the entropy of differentiable maps,} Bulletin of Brazilian Mathematical Society, 1978, Volume 9, Issue 1,  83-87.

\bibitem{Sakai}  K.   Sakai,    \emph{Various shadowing properties for positively expansive maps.  }
Topology Appl.   131 (2003),   no.   1,   15-31.
%\bibitem{Sakai2007}  K.   Sakai,   \emph{Structural stability of vector fields with shadowing.  }
%J.   Differential Equations 232 (2007),   no.   1,   303-313.
\bibitem{Sakai2012}  K.   Sakai,    \emph{Diffeomorphisms with the s-limit shadowing property.  } Dyn.   Syst.   27 (2012),   no.   4,   403-410.
\bibitem{Sigmund1974}    K.   Sigmund,   \emph{On dynamical systems with the specification property.  } Trans.   Amer.   Math.   Soc.   190 (1974),   285-299.
%\bibitem{Sinai} Ja.   G.   Sinai,
%\emph{Markov partitions and C-diffeomorphisms.  } (Russian)
 %                   Funkcional.   Anal.   i Prilo$\check{z}$en 2 1968 no.   1,   64-89.


\bibitem{Smale1} S. Smale, {\it Stable manifolds for differential equations and diffeomorphisms,}  Ann. Scula Norm. Pisa
3 (1963) 97-116.
\bibitem{Smale1965} S.   Smale,
            \emph{Diffeomorphisms with many periodic points.  } 1965 Differential and Combinatorial Topology (A Symposium in Honor of Marston Morse) pp.   63-80 Princeton Univ.   Press,   Princeton,   N.  J.
%\bibitem{Smale1970} S.   Smale,
    %         \emph{Notes on differentiable dynamical systems.  } 1970 Global Analysis (Proc.   Sympos.   Pure Math.  ,   Vol.   XIV,   Berkeley,   Calif.  ,   1968) pp.   277-287 Amer.   Math.   Soc.  ,   Providence,   R.  I.

\bibitem{SunTian-C1Pesinset} W.   Sun,   X.   Tian,
            \emph{Diffeomorphisms with Liao-Pesin set,  } arXiv:1004.0486.

\bibitem{STV} W.   Sun,   X.   Tian,   E.   Vargas,
            \emph{Non-uniformly hyperbolic flows and shadowing.  }
               J.   Differential Equations 261 (2016),   no.   1,   218-235.
\bibitem{Takens} F. Takens,  {\it Orbits with historic behaviour, or non-existence of averages,}  Nonlinearity,  21
                   (2008), 33-36.
\bibitem{Takens-Verbitskiy} F.   Takens and E.   Verbitskiy,   \emph{On the variational principle for the topological entropy of certain
non-compact sets}.   Ergodic Theory Dynam.   Systems 23 (2003),   no.   1,   317-348.
\bibitem{Thompson2009}   D.   J.   Thompson,   \emph{A variational principle for topological pressure for certain non-compact sets.  }
J.   Lond.   Math.   Soc.   (2) 80 (2009),   no.   3,   585-602.
\bibitem{Thompson2012}   D.   J.   Thompson,   \textit{Irregular sets,   the $\beta$-transformation and the almost specification property},   Trans.   Amer.   Math.   Soc.  ,   \textbf{364} (2012),   5395-5414.
    \bibitem{T16} X.   Tian,   {\it Different Asymptotic Behaviour versus Same Dynamical Complexity:  Recurrence $\&$ (Ir)Regularity,  }  Advances in Mathematics,     vol.   288,   2016,   464-526.




%\bibitem{T17-Lyapunovexponents} X.   Tian, {\it Nonexistence of Lyapunov exponents for matrix cocycles,}  Annales de l'Institut Henri Poincar$\acute{e}$- Probabilit$\acute{e}$s et Statistiques, 2017, Volume 53 (1), 493-502.


\bibitem{TianVarandas} Tian, X.,   Varandas, P., \emph{ Topological entropy of level sets of empirical measures for non-uniformly expanding maps.} Discrete \& Continuous Dynamical Systems-A, 2017, 37(10), 5407-5431. 
\bibitem{Walters}   P.   Walters,   \textit{An introduction to ergodic theory}.   Springer-Verlag,   Berlin,   2001.
\bibitem{Walters2} P. Walters, \textit{On the pseudo-orbit tracing property and its relationship to stability.} The structure of attractors in dynamical systems (Proc. Conf., North Dakota State Univ., Fargo, N.D., 1977), pp. 231--244, Lecture Notes in Math., 668, Springer, Berlin, 1978.

\bibitem{Williams} R.   F.   Williams,
\textit{Classification of symbol spaces of finite type.  }
Bull.   Amer.   Math.   Soc.   77 (1971),   439-443.
\bibitem{Zhou93} Z. Zhou,  {\it Weakly almost periodic point and measure centre,} Science in China (Ser.A), 36 (1993)142-153.
\end{thebibliography}
\end{document}